\documentclass[12pt,leqno,british]{amsart}
\usepackage{ifpdf}

\usepackage[T1]{fontenc}

\usepackage{eucal}
\usepackage{url}
\usepackage[pagebackref]{hyperref}
\usepackage{amsfonts}
\usepackage{amsthm}
\usepackage{amsmath}
\usepackage{amscd}
\usepackage{amssymb}
\usepackage{rotating}

\oddsidemargin 0.0in
\evensidemargin 0.0in
\textwidth 6.5in



\def\CC {{\mathbb C}}     
\def\HH {{\mathbb H}}     
\def\NN {{\mathbb N}}     
\def\RR {{\mathbb R}}     
\def\ZZ {{\mathbb Z}}     


\def\ring#1{\ifmmode \mathaccent'027 #1\else \rm\accent'027 #1\fi}

\newcommand{\rI}{{\mathrm I}}
\newcommand{\rd}{{\mathrm d}}

\newcommand{\ri}{{\mathrm i}}

\def\ol  {\overline}
\def\ul  {\underline}

\def\wt  {\widetilde}

\def\ker {\mathfrak{ker}}

\def\mc {\mathcal}
\def\mk {\mathfrak}

\def\be  {\begin{eqnarray}}
\def\ee  {\end{eqnarray}}
\def\ben {\begin{eqnarray*}}
\def\een {\end{eqnarray*}}

\def\bpr {\begin{proof}[Proof]}
\def\epr {\end{proof}}
\def\bsp {\begin{split}}
\def\esp {\end{split}}
\def\bprr {\begin{proof}[solution]}
\def\bpru {\begin{proof}[hint]}
\def\bpro {\begin{proof}[answer]}
\def\bcd {\begin{CD}}
\def\ecd {\end{CD}}

\newcommand{\abs}[1]{\left\vert#1\right\vert}
\newcommand{\scal}[1]{\left\langle#1\right\rangle}

\newcommand{\sco}[1]{\left(#1\right)}

\newcommand{\ksco}[1]{\left[#1\right]}

\newtheorem{theorem}{Theorem}[section]
\newtheorem{lemma}[theorem]{Lemma}
\newtheorem{prop}[theorem]{Proposition}
\newtheorem{coro}[theorem]{Corollary}
\newtheorem{remark}[theorem]{Remark}
\newtheorem{df}[theorem]{Definition}

\begin{document}

\title[Homogeneous Hypercomplex Structures II]%
{Homogeneous Hypercomplex Structures II -  Coset Spaces of compact Lie Groups}

\author{George Dimitrov}
\address[Dimitrov]{University of Vienna\\
Faculty of Mathematics\\ Vienna, Austria,}
\email{gkid@abv.bg}

\author{Vasil Tsanov}
\address[Tsanov]{University of Sofia "St. Kl. Ohridski"\\
Faculty of Mathematics and Informatics,\\ Sofia, Bulgaria,}
\email{tsanov@fmi.uni-sofia.bg}

\begin{abstract}
We obtain a complete classification of  hypercomplex manifolds, on which a compact group of automorphisms acts transitively.
 The description of the spaces as well as the proofs of our results use only the structure theory of reductive groups,
 in particular the notion of "stem" of a reduced root system, introduced in the first paper of this series.
\end{abstract}

\maketitle
\setcounter{tocdepth}{2}
\tableofcontents

\section{Introduction}
This paper is the second part in a series. In the first part \cite{DimTsan10} we introduced a remarkable subset\footnote{determined by a choice of a basis,
that is up to the Weyl group action}  of a reduced root system, called the {\bf stem} $\Gamma = \Gamma(\Delta^+)$. Using this we obtained a complete
 description of all left-invariant  hypercomplex structures on a compact Lie group ${\bf U}$.

Here we describe all left-invariant hypercomplex structures (for brevity we often write LIHCS further) on a coset space  $M = {\bf U}/{\bf K}_u$,
 where ${\bf U}$ is a compact, connected Lie group and ${\bf K}_u$ is a closed subgroup. In particular we describe all simply-connected, compact, homogeneous,
 hypercomplex spaces. We have chosen to assume that ${\bf U}$ acts effectively on $M$ (we always may), with this restriction the
  underlying manifold of  ${\bf U}$, does not in general admit a LIHCS, but factoring  by an appropriate subgroup ${{\bf K}_u}$, we may get a coset space $M$,
   which carries a LIHCS.

Nevertheless, it is natural to try to obtain  a homogeneous hypercomplex coset space  starting with a group ${\bf U}$, which admits LIHCS and
factoring by a subgroup ${\bf K}_u$, which is invariant under the given LIHCS. A careful reading of \cite{DimTsan10} and the present arguments should
convince the reader that the idea works.

Following this idea in section \ref{hcp0} we  define the notion of {\it   hypercomplex pair}: a pair $(\mk{u},\mk{k}_{\mk{u}})$ of compact Lie algebras $\mk{u} \supset \mk{k}_u$ satisfying certain conditions, essentially expressed in terms of the stems of the (reductive) complexifications $\mk{g}, \mk{k}$ of $\mk{u},\mk{k}_u$ respectively.

Further we construct a LIHCS on any coset space ${\bf U}/{\bf K}_u$ provided that the pair of Lie algebras $(\mk{u},\mk{k}_{\mk{u}})$ corresponding to ${\bf U},{\bf K}_u$ is a hypercomplex pair and ${\bf K}_u$ is connected.

In Section \ref{nc0} we prove that if ${\bf U}/{\bf K}_u$ admits a LIHCS  then $(\mk{u},\mk{k}_{\mk{u}})$ is a hypercomplex pair and the LIHCS has the form described in Section \ref{hcp0}. For the sake of completeness, in the first part of this section we describe in detail and prove  (some maybe known) structural results on the nonkaehler complex homogeneous spaces.

Actually in sections \ref{hcp0}, \ref{nc0} we solve  the invariant hypercomplex equations
on a coset space  ${\bf U}/{\bf K}_u$:
\begin{gather*} I, J \in C^{\infty}_{inv}( {\bf U}/{\bf K}_u, End(T({\bf U}/{\bf K}_u)) \nonumber \\
 I\circ J = - J\circ I, \qquad I^2=J^2 = -1, \qquad N_J=N_I=0,
\end{gather*}
where $N_I, N_J $ are the Nijenhuis tensors of $I,J$ respectively. So, we find all LIHCS
on  ${\bf U}/{\bf K}_u$  and describe them on the language of the stem.

In section \ref{shcp0} we show that the hypercompex pairs $(\mk{u},\mk{k}_u)$ with semi-simple $\mk{u}$ are composed of $su$-algebras. Then we show  that any  compact, simply connected homogeneous hypercomplex space is associated to a hypercompex pair $(\mk{u},\mk{k}_u)$ with semi-simple $\mk{u}$. Finally, using our description of semisimple hypercomplex pairs we give complete list of the HC spaces.

Thus we give, in terms of structure theory only, a complete classification of the homogeneous hypercomplex spaces with a transitive action of a compact group.

Less explicit results under additional constraints on the input
data in the context of differential geometry have appeared in
\cite{BeGoPo211}.

There is still a large class of compact homogeneous hypercomplex spaces unclassified, essentially those obtained from a hypercomplex structure on a nonabelian noncompact Lie group, by factorization with an uniform lattice.

\section{The stem revisited} \label{notations 0}

In this section we fix some notations used throughout the paper and recall the notion of {\bf stem}, introduced in \cite{DimTsan10}, and some of its properties.
\subsection{Notations}
 By ${\bf U}$ and $\mk{u}$ we shall denote a compact Lie group and its Lie algebra.
We have a  decomposition of $\mk{u}$ as follows
\be \label{mk{u} = mk{z}(mk{u}) oplus mk{u}_s} \mk{u} = \mk{c}_u \oplus \mk{u}_s,\ee
where $\mk{c}_u$ is the center of $\mk{u}$ and $\mk{u}_s$ is a semisimple subalgebra.

 We shall denote by $\mk{g}$ the complexification of $\mk{u}$ and by $\tau$ - the respective conjugation. The complexification of \eqref{mk{u} = mk{z}(mk{u}) oplus mk{u}_s} will be denoted
$$
 \mk{g}=\mk{c} \oplus \mk{g}_s.
 $$
By  $\mk{h}, \Delta, \Delta^+, \Pi$ we shall denote respectively{ a  $\tau$-invariant { \bf Cartan  subalgebra } of $\mk{g}$, the { \bf root system } of $\mk{g}$ w. r. to $\mk{h}$, {\bf  set of positive roots }, and  a { \bf  basis } $\Pi$ of $\Delta$.

We use the following notations related to the the Cartan subalgebra $\mk{h}$:
$$
  \mk{h}= \mk{c} \oplus \mk{h}_s, \quad \mbox{where} \ \ \mk{h}_s = \mk{h}\cap  \mk{g}_s, \quad \mk{h}_u = \mk{h}\cap \mk{u}.
$$
For $\alpha \in \Delta$, we denote $\mk{g}(\alpha) = \{ X \in \mk{g} \vert [H,X] = \alpha(H)X,\quad H \in \mk{h}\}$.

Let $h_\alpha \in \mk{h}_s$ be determined by
$Kill(H,h_\alpha) = \alpha(H)$  for $ H \in \mk{h}_s$, we denote
$H_\alpha = (2/\alpha(h_\alpha))h_\alpha$.

We also choose $E_\alpha \in \mk{g}(\alpha) $ so that
\begin{gather}\label{wcb}
\tau(E_{\alpha})= \tau_s(E_{\alpha})=- E_{-\alpha},\quad
[E_{\alpha}, E_{\beta}] = N_{\alpha, \beta}E_{\alpha +
\beta},\quad [E_{\alpha}, E_{- \alpha}] = H_\alpha.
\end{gather}
and the structural constants $N_{\alpha, \beta}$ are integer.

 Thus $\{E_\alpha,\vert \alpha \in \Delta\}\cup\{H_\beta\vert \beta \in \Pi\}$ is a  Weyl - Chevalley basis of $\mk{g}_s$.
 Having fixed a $\tau$-invariant Cartan subalgebra $\mk{h}$ and a $\Delta^+$, we have the standard nilpotent and Borelian subalgebras, which will be denoted respectively:
\begin{gather*}
\mk{n}^+ = \bigoplus_{\alpha\in \Delta^+}\mk{g}(\alpha),\quad \mk{n}^- = \tau(\mk{n}^+); \qquad \mk{b}^+ = \mk{n}^+ \oplus \mk{h},\quad \mk{b}^- = \tau(\mk{b}^+).
\end{gather*}

For $X,Y \in \mk{g}$, we denote by $\scal{X,Y}$ an ad-invariant symmetric bilinear form such that its restriction to the compact real form $\mk{u}$
is negative definite. We assume that $\scal{.,.}$ coincides with   the Killing form on the semisimple part $\mk{g}_s$.

\subsection{The stem}

In \cite{DimTsan10}, for any  $\zeta \in \Delta^+$ we defined
\be \label{prop of Delta+ def of B_{gamma}} \Phi_{\zeta}^+ &=& \{ \beta \in \Delta^+ \vert \  \zeta- \beta \in \Delta^+ \}, \qquad \Phi_{\zeta}=\Phi_{\zeta}^+ \cup \left (- \Phi_{\zeta}^+ \right ).  \ee

We proved in \cite{DimTsan10}, that there exists exactly one
subset $\Gamma \subset \Delta^+$, such that

\be
 & & \label{pdu1}  \Delta^+ = \Gamma \cup \bigcup_{\gamma \in \Gamma} \Phi_{\gamma}^+  \qquad \mbox{disjoint union}
  \ee
and we called this $\Gamma$ the {\bf stem} of $\Delta^+$. By
construction, the stem carries a certain uniquely determined
partial order $\prec$ which is extremely important for the results
(and proofs) in this paper, so we underline this fact by adopting
the notation $(\Gamma, \prec)$ for the stem.\footnote{ If $w$ is
an automorphism of the root system $\Delta$, then $w(\Gamma)$ is
the stem of $w(\Delta^+)$. We can recover $\Delta^+$ from the data
$(\Gamma, \prec)$ but different choices of Weyl chambers may lead
to the same set of roots $\Gamma$.}

The following  properties of the stem ($\Gamma $, $ \prec $) will be used throughout the paper:

\begin{prop}\label{prop of Delta+} Let $\gamma, \delta \in \Gamma$ and $\alpha \in \Phi_{\gamma}^+ \cup \{\gamma\}$, $\beta \in \Phi_{\delta}^+ \cup \{\delta\}$ then
\begin{gather} \label{pdep 00}    \delta \succ \gamma \ \mbox{and} \  \alpha \pm \beta \in \Delta  \Rightarrow \alpha \pm \beta \in \Phi_{\gamma}^+  \\
 \label{pdep 0}    \delta = \gamma \ \mbox{and} \  \alpha+ \beta \in \Delta  \Rightarrow \alpha + \beta = \gamma  \\
\label{pdep 1} \delta \prec \gamma \Rightarrow \alpha \pm \delta \not \in \Delta, \ \ \mbox{in particular} \ \ \alpha(H_{\delta})=0; \\ \label{pdep 6}  \mbox{if not} \ ( \gamma
\preceq   \delta  \ \mbox{or} \ \gamma \succeq  \delta ) \ \
\mbox{then} \ \
\alpha \pm \beta \not \in \Delta, \ \ \mbox{in particular} \ \ \alpha(H_{\delta})=0; \\
\label{pdep 2}  \alpha - \gamma \in \Delta^-,  \alpha + \gamma \not \in \Delta,   \ \ \mbox{in particular} \ \ \alpha(H_{\gamma})>0;   \\
\label{pdep 3}   \mbox{there exists a maximal root} \ \widetilde{\gamma} \in \Gamma \ \ \mbox{s. t.} \ \  \gamma \succeq \widetilde{\gamma}; \\
\label{pdep 7} \delta \prec \gamma \Rightarrow
 \ \   \exists \beta_1, \beta_2  \in  \Phi_{\delta}^+  \ \  \  \alpha = \beta_1 - \beta_2.
\end{gather}
\end{prop}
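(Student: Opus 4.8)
My plan is to derive all seven assertions from the recursive construction of the stem, in which one peels the highest roots of the irreducible components of $\Delta$ and then recurses on the orthogonal complements of those highest roots. The first step is to assemble the dictionary this construction provides — quoting \cite{DimTsan10} where possible, and otherwise re-proving it by induction on $|\Delta^+|$. To each $\gamma\in\Gamma$ is attached the irreducible root subsystem $\Delta_\gamma:=\Delta\cap\mathrm{span}_{\RR}(\Phi_\gamma^+\cup\{\gamma\})$, of which $\gamma$ is the highest root; in particular $\gamma$ is a long root, $\Phi_\gamma^+\cup\{\gamma\}\subseteq\Delta_\gamma^+$, and $\Phi_\gamma^+$ is the same whether computed inside $\Delta_\gamma$ or inside $\Delta$. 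Moreover $\Delta_\gamma=\Delta\cap\mathrm{span}(\Delta_\gamma)$ — so a root of $\Delta$ lying in $\mathrm{span}(\Delta_\gamma)$ already lies in $\Delta_\gamma$ — and $\mathrm{span}(\Phi_\gamma^+)=\mathrm{span}(\Delta_\gamma)$ whenever $\Delta_\gamma\ne A_1$. The order is $\delta\preceq\gamma\iff\Delta_\gamma\subseteq\Delta_\delta$; thus $\delta\prec\gamma$ forces $\Delta_\gamma\subseteq\Delta_\delta\cap\delta^\perp$, whereas for incomparable $\gamma,\delta$ the subsystems $\Delta_\gamma,\Delta_\delta$ are orthogonal and disconnected in $\Delta$ (no element of $\Delta$ is of the form $\alpha\pm\beta$ with $\alpha\in\Delta_\gamma$, $\beta\in\Delta_\delta$, since such an element would lie in an intermediate subsystem $\Delta\cap V$ while having nonzero component in each of two of its irreducible components). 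On top of this the proof needs only two standard facts about a long root $\theta$: $(\star)$ $\beta(H_\theta)\in\{-1,0,1\}$ for every root $\beta\ne\pm\theta$; and $(\star\star)$ if $\theta$ is the highest root of an irreducible system $S$, then every positive root $\xi$ of $S$ satisfies $\xi(H_\theta)\in\{0,1,2\}$, the value being $2$ only for $\xi=\theta$ and $1$ exactly for $\xi\in\Phi_\theta^+$, and $\theta+\xi\notin S$.

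Granting the dictionary, the statements about a single stem root are immediate. Property \eqref{pdep 2} is $(\star\star)$ for $S=\Delta_\gamma$: $\gamma+\alpha\notin\Delta_\gamma$, hence — lying in $\mathrm{span}(\Delta_\gamma)$ — $\gamma+\alpha\notin\Delta$; for $\alpha\in\Phi_\gamma^+$ we have $\alpha-\gamma=-(\gamma-\alpha)\in\Delta^-$ by the definition of $\Phi_\gamma^+$; and $\alpha(H_\gamma)\in\{1,2\}$, so $\alpha(H_\gamma)>0$. For \eqref{pdep 0}: by \eqref{pdep 2}, $\alpha(H_\gamma),\beta(H_\gamma)\ge1$, so if $\alpha+\beta$ is a root it lies in $\Delta_\gamma$ with $(\alpha+\beta)(H_\gamma)\ge2$, whence $\alpha+\beta=\gamma$ by $(\star)$; and if $\alpha$ or $\beta$ equals $\gamma$ then $(\alpha+\beta)(H_\gamma)\ge3$, so $\alpha+\beta$ is not a root and the implication holds vacuously. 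Property \eqref{pdep 3} follows from the shape of $(\Gamma,\prec)$: it is a finite forest whose $\prec$-minimal elements are exactly the highest roots of the irreducible components of $\Delta$ (the \emph{maximal roots} of the statement), and every $\gamma$ lies above the one belonging to its own component.

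The statements involving two stem roots split by the position of $\gamma$ relative to $\delta$. If $\gamma,\delta$ are incomparable, then $\Delta_\gamma$ and $\Delta_\delta$ are orthogonal and disconnected in $\Delta$, so for $\alpha\in\Delta_\gamma$, $\beta\in\Delta_\delta$ we get $\alpha\pm\beta\notin\Delta$ at once, and (taking $\beta=\delta$) $\alpha(H_\delta)=0$: this is \eqref{pdep 6}. If $\delta\prec\gamma$, then $\alpha\in\Delta_\gamma\subseteq\Delta_\delta\cap\delta^\perp$, so $\alpha(H_\delta)=0$; $\alpha+\delta$, lying in $\mathrm{span}(\Delta_\delta)$, would be a positive root strictly above the highest root $\delta$, hence is not a root, and then $\alpha-\delta$ is not a root either, since $\alpha(H_\delta)=0$ makes the $\delta$-string through $\alpha$ symmetric: this is \eqref{pdep 1}. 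If $\delta\succ\gamma$, then $\beta\in\Delta_\delta\subseteq\Delta_\gamma\cap\gamma^\perp$, so $(\alpha\pm\beta)(H_\gamma)=\alpha(H_\gamma)$; for $\alpha=\gamma$ this is $2$, which would force a hypothetical root $\alpha\pm\beta$ to equal $\gamma$ and so $\beta=0$, so the premise of \eqref{pdep 00} cannot hold; for $\alpha\in\Phi_\gamma^+$ it is $1$, and since positive roots of $\Delta_\gamma$ have nonnegative value on $H_\gamma$ by $(\star\star)$, any root $\alpha\pm\beta\in\Delta_\gamma$ with value $1$ on $H_\gamma$ is positive and lies in $\Phi_\gamma^+$: this is \eqref{pdep 00}. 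Finally, for \eqref{pdep 7}, given $\delta\prec\gamma$ fix $\alpha\in\Phi_\gamma^+\cup\{\gamma\}\subseteq\Delta_\gamma\subseteq\Delta_\delta$; as $\Delta_\delta$ properly contains $\Delta_\gamma$ it is not $A_1$, so $\mathrm{span}(\Phi_\delta^+)=\mathrm{span}(\Delta_\delta)$ contains $\alpha\ne0$ and some $\beta\in\Phi_\delta^+$ has $\alpha(H_\beta)\ne0$. If $\alpha(H_\beta)<0$, then $\beta+\alpha$ is a root of $\Delta_\delta$, is positive, and has $(\beta+\alpha)(H_\delta)=\beta(H_\delta)=1$, so $\beta+\alpha\in\Phi_\delta^+$ and $\alpha=(\beta+\alpha)-\beta$. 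If $\alpha(H_\beta)>0$, then $\beta-\alpha$ is a root of $\Delta_\delta$ with $(\beta-\alpha)(H_\delta)=1$, which cannot be a negative root (negative roots of $\Delta_\delta$ have nonpositive value on $H_\delta$), so $\beta-\alpha\in\Phi_\delta^+$ and $\alpha=\beta-(\beta-\alpha)$.

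I expect the main obstacle to lie entirely in the first paragraph: establishing the dictionary between $(\Gamma,\prec)$ and the nested family $\{\Delta_\gamma\}$, and in particular the invariance of $\Phi_\gamma^+$ under restriction to $\Delta_\gamma$, the identity $\Delta_\gamma=\Delta\cap\mathrm{span}(\Delta_\gamma)$, and the disconnectedness of the subsystems attached to incomparable stem roots. Once that is available (from \cite{DimTsan10} or from the peeling induction), the seven properties follow, as sketched, from the elementary facts $(\star)$ and $(\star\star)$ about long highest roots.
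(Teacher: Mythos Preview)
Your argument is correct, and your overall strategy---derive everything from the recursive peeling construction of the stem plus the elementary facts $(\star)$, $(\star\star)$ about long highest roots---matches the paper's, since the paper obtains \eqref{pdep 00}, \eqref{pdep 0}, \eqref{pdep 1}, \eqref{pdep 2}, \eqref{pdep 3}, \eqref{pdep 6} by citing precisely those structural results from \cite{DimTsan10} (Corollaries~2.11 and~2.17, Propositions~2.9 and~2.15). Your ``dictionary'' paragraph is an accurate summary of what those references contain; in particular your treatment of \eqref{pdep 6} (incomparable $\gamma,\delta$ land in different irreducible components of an intermediate closed subsystem, hence $\alpha\pm\beta\notin\Delta$) is exactly the paper's argument.

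The one genuine divergence is \eqref{pdep 7}. The paper does not prove it uniformly: it says ``case by case through Section~3.3 of \cite{DimTsan10} one easily verifies \eqref{pdep 7}'', i.e.\ a type-by-type check. Your argument---pick $\beta\in\Phi_\delta^+$ with $\alpha(H_\beta)\neq 0$, then use the $\beta$-string through $\alpha$ together with $\alpha(H_\delta)=0$ to produce $\beta\pm\alpha\in\Phi_\delta^+$---is a clean classification-free proof and a real improvement. The only point to nail down is the premise $\mathrm{span}(\Phi_\delta^+)=\mathrm{span}(\Delta_\delta)$ for $\Delta_\delta\neq A_1$: this follows since $\Phi_\delta^+$ is nonempty and closed under $\beta\mapsto\delta-\beta$, so $\delta\in\mathrm{span}(\Phi_\delta^+)$, and then connectedness of the Dynkin diagram of $\Delta_\delta$ lets you reach every simple root (each simple root either lies in $\Phi_\delta^+$ or is adjacent, via a root string, to one that does). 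Stating this explicitly would make your \eqref{pdep 7} argument self-contained.
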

\bpr First we recall (see \cite{Helgason78}, p. 456/457) that for any $\alpha, \beta \in \Delta$ we have $2 \frac{\scal{\alpha,\beta}}{\scal{\beta,\beta}}= \alpha(H_{\beta})=-p-q$, where   $\{\alpha +n\beta,\ p\leq n \leq q\}$ is the $\beta$-series of $\alpha$.

Statements \eqref{pdep 00}, \eqref{pdep 1}, \eqref{pdep 2}  follow from formulas (23), (21), (22) in  Corollary 2.17 of \cite{DimTsan10}, respectively. Now \eqref{pdep 0} follows from a) in Corrolary 2.11 of \cite{DimTsan10}.

We go to the proof of \eqref{pdep 3}. Let $\widetilde{\Delta}$ be the irreducible component of  $\Delta $, s. t. $\gamma \in \widetilde{\Delta}$. Then  $\widetilde{\Delta}^+=\widetilde{\Delta} \cap \Delta^+ $ is a system of positive roots in $\widetilde{\Delta}$. Let's take the highest root of $(\widetilde{\Delta}, \widetilde{\Delta}^+)$ and denote it by $\widetilde{\gamma}$. Then $\widetilde{\gamma}$ is a maximal root of $(\Delta, \Delta^+)$. From the construction of the stem in Proposition 2.9, \cite{DimTsan10} we know that $\widetilde{\gamma} \in \Gamma$. From Proposition 2.15, \cite{DimTsan10}(and its proof) we know that $\Theta_{\widetilde{\gamma}} = \widetilde{\Delta}$. Thus we see that $\gamma \in \Theta_{\widetilde{\gamma}}$, which by definition (Definition 2.16 in \cite{DimTsan10}) means $\gamma \succeq \widetilde{\gamma}$.

   We go to the proof of \eqref{pdep 6}. Let $\Pi$ be the basis of $\Delta$ with positive roots $\Delta^+$.  From Proposition 2.9, \cite{DimTsan10} we  get a sequence $\Delta = \Delta_1 \supset \Delta_2 \supset \dots \supset \Delta_d$ of closed root subsystems with bases $\Pi_k = \Pi\cap\Delta_k$,
 corresponding sets of positive roots $\Delta_k^+  = \Delta^+ \cap\Delta_k \supset \Phi_{\gamma_k}^+$,   and maximal roots $\gamma_k \mbox{ of } \Delta_k^+,\quad k = 1,\dots,d$. Furthermore the stem is $ \Gamma=\{ \gamma_1, \gamma _2, \dots, \gamma_d \}$. Let $\gamma = \gamma_i$, $\delta= \gamma_j$. We can assume that $i < j$.   Now $\gamma, \delta \in \Delta_i^+$, $ \Phi_{\gamma}^+,  \Phi_{\delta}^+ \subset \Delta_i^+$. Let $\Delta'$, $\Delta''$ be the irreducible components of $\Delta_i$, s. t.  $\gamma \in \Delta'$, $\delta \in \Delta''$. Since $\gamma$ is a maximal root of $\Delta_i$, from Proposition 2.15, \cite{DimTsan10} we see  that $\Theta_{\gamma} = \Delta'$ and since $\gamma$ and $\delta$ are incomparable then $\delta \not \in \Delta'$. Hence $\Delta'$ and $\Delta''$ are different irreducible components of $\Delta_i$.  Since $\Delta'$ and $\Delta''$ are  irreducible components of $\Delta_i$, $\gamma \in \Delta'$, $\delta \in \Delta''$  and  $\Phi_{\gamma}^+,  \Phi_{\delta}^+ \subset \Delta_i^+$ one can easily observe that
 \ben
 \Phi_{\gamma}^+= \{\zeta \in \Delta_i^+ \vert \gamma - \zeta \in \Delta_i^+\} \subset \Delta'  \qquad \quad  \Phi_{\delta}^+=\{\zeta \in \Delta_i^+ \vert \delta - \zeta \in \Delta_i^+\} \subset \Delta''.
 \een
 Hence $\alpha \in \Delta'$ and $\beta \in \Delta''$. Since $\Delta'$ and $\Delta''$ are different irreducible components of $\Delta_i$ then $\alpha \pm \beta \not \in \Delta_i$. Since $\Delta_i$ is a closed subsystem of $\Delta$, then $\alpha \pm \beta \not \in \Delta$.

 Obviously,  for the proof of \eqref{pdep 7} it is sufficient to consider irreducible $\Delta$ with $\delta $ - the highest root of $\Delta^+$. Case by case through Section 3.3 of \cite{DimTsan10} one easily verifies \eqref{pdep 7}.

 \epr

\begin{coro} \label{gss1} $\Gamma$ may be represented as a sequence $\{\gamma_1,\dots, \gamma_d\}$, in which the first members are the  maximal roots (i. e. there is a number $1 \leq d' \leq d$, such that $\gamma_i$ is a maximal root iff $1 \leq i \leq d'$) and for any $\gamma_i, \gamma_j \in \Gamma$ we have
$\gamma_i \prec \gamma_j \Rightarrow i<j.$
\end{coro}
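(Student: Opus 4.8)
The plan is to reduce this to the elementary fact that any finite partially ordered set admits an enumeration refining the order, once the maximal roots have been located inside $(\Gamma,\prec)$.

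The key lemma I would establish first is: if $\widetilde\gamma\in\Gamma$ is a maximal root of $(\Delta,\Delta^+)$ and $\delta\in\Gamma$ satisfies $\delta\preceq\widetilde\gamma$, then $\delta=\widetilde\gamma$. Equivalently, the maximal roots are precisely the $\prec$-minimal elements of $(\Gamma,\prec)$, and distinct maximal roots are $\prec$-incomparable. For the proof I would reuse the ingredients already used to prove \eqref{pdep 3}: applying \eqref{pdep 3} to $\delta$ yields a maximal root $\mu$ with $\mu\preceq\delta\preceq\widetilde\gamma$; recalling (Definition 2.16 of \cite{DimTsan10}) that $\mu\preceq\widetilde\gamma$ means $\mu\in\Theta_{\widetilde\gamma}$, and (Proposition 2.15 of \cite{DimTsan10}) that $\Theta_{\widetilde\gamma}$ is the irreducible component of $\Delta$ whose highest root is $\widetilde\gamma$, we get that $\mu$ lies in that component; but $\mu$ is itself the highest root of its own component, hence $\mu=\widetilde\gamma$, and then $\widetilde\gamma=\mu\preceq\delta\preceq\widetilde\gamma$ forces $\delta=\widetilde\gamma$. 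The converse inclusion, that every $\prec$-minimal element is a maximal root, is immediate from \eqref{pdep 3}.

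Granted the lemma, I would build the sequence by a topological sort. Let $\gamma_1,\dots,\gamma_{d'}$ be the maximal roots listed in an arbitrary order (legitimate since, by the lemma, they are pairwise $\prec$-incomparable), and let $\gamma_{d'+1},\dots,\gamma_d$ be any linear extension of the partial order induced on $\Gamma\setminus\{\gamma_1,\dots,\gamma_{d'}\}$. By the lemma a root is a maximal root iff it is $\prec$-minimal iff it is among the first $d'$ terms, which gives the ``$\gamma_i$ is a maximal root iff $1\le i\le d'$'' clause. For monotonicity, suppose $\gamma_i\prec\gamma_j$. If $j\le d'$ then $\gamma_j$ is $\prec$-minimal, so nothing lies strictly below it, a contradiction; hence $j>d'$. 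If moreover $i\le d'$, then $i\le d'<j$ and so $i<j$; if instead $i>d'$, then both indices exceed $d'$ and $i<j$ because $\gamma_{d'+1},\dots,\gamma_d$ was chosen to extend the induced order. This is the required enumeration.

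The only step with genuine content is the lemma identifying the maximal roots with the $\prec$-minimal elements; everything there is a short deduction from \eqref{pdep 3} together with the facts about $\Theta_{\widetilde\gamma}$ already recalled in the proof of Proposition \ref{prop of Delta+}, and no root-system computation is needed. The remainder is the standard finite-poset linear-extension argument.
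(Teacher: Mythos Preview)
Your argument is correct and is a natural way to flesh out what the paper leaves unproved (the paper states Corollary \ref{gss1} without proof, relying implicitly on property \eqref{pdep 3} and the sequential construction of the stem in \cite{DimTsan10}). Your key lemma---that the maximal roots are exactly the $\prec$-minimal elements of $\Gamma$---is the right observation, and the topological-sort step is routine.

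One small slip, harmless for the conclusion: by the convention recalled in the proof of \eqref{pdep 3}, the relation $\gamma\succeq\widetilde\gamma$ is \emph{defined} as $\gamma\in\Theta_{\widetilde\gamma}$. Hence $\mu\preceq\widetilde\gamma$ means $\widetilde\gamma\in\Theta_\mu$, not $\mu\in\Theta_{\widetilde\gamma}$ as you wrote. The argument is unaffected: since $\mu$ is a maximal root, $\Theta_\mu$ is the irreducible component of $\Delta$ whose highest root is $\mu$ (Proposition 2.15 of \cite{DimTsan10}); thus $\widetilde\gamma\in\Theta_\mu$ forces $\widetilde\gamma$ and $\mu$ to lie in the same irreducible component, and being highest roots of their respective components they coincide. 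So the lemma and the rest of your proof stand as written once this direction is corrected.
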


For any reductive Lie algebra $\mk{g}$ we define a number
$srank(\mk{g})$. If $\mk{g}$ is not abelian we take any Cartan
subalgebra $\mk{h}$ with root system $\Delta$ and for any choice
of a Weyl chamber we get a set of positive roots $\Delta^+$, hence
we get the stem $(\Gamma, \prec ) $ of $\Delta^+$. From the
uniqueness of the stem ( \cite{DimTsan10}, Theorem 2.12),  we see
that the number $ \# (\Gamma) $ is a well defined characteristic
of $\mk{g}$.
\begin{df}\label{srank}
If $\mk{g}$ is abelian we set $srank(\mk{g})=0$. For a nonabelian  reductive Lie algebra $\mk{g}$ we denote:
$ srank(\mk{g}) = 2 \# (\Gamma). $
\end{df}

\subsection{Decompositions}

At the end of this section we recall some useful decompositions related to the stem. Next we define important subspaces of $\mk{h}$ and $\mk{h}_u$.

\begin{df} We denote
$$
\mk{w} = span_\CC\{H_\gamma \vert \gamma \in\Gamma\},\quad \mk{o} = \bigcap_{\gamma \in \Gamma} \ker(\gamma); \qquad \mk{w}_u=\mk{w}\cap \mk{u},\quad \mk{o}_u=\mk{o}\cap \mk{u}.
$$
\end{df}
Obviously we have orthogonal decompositions:
\begin{gather}
\mk{h} = \mk{w} \oplus \mk{o},\quad \mk{h}_u = \mk{w}_u \oplus \mk{o}_u.
\end{gather}

\begin{df}\label{X_gamma etc}
 Let $\rho=\{\rho_\gamma \vert \gamma \in \Gamma \}$ be  a family of complex numbers on the unit circle. For each   $\gamma \in \Delta^+$ we denote:
 \begin{gather*}
W_\gamma = \frac{\ri}{2}H_\gamma,\quad X_\gamma(\rho) = \frac{1}{2}(\rho_\gamma E_\gamma - \ol{\rho_\gamma} E_{-\gamma}),\quad Y_\gamma(\rho) = \frac{\ri}{2}(\rho_\gamma E_{\gamma} + \ol{\rho_\gamma} E_{-\gamma}),\\
sl_\gamma(2) = span_\CC \{E_\gamma, E_{-\gamma}, H_\gamma\}, \quad
  su_\gamma(2) = span_\RR\{X_\gamma, Y_\gamma,W_\gamma \}.
\end{gather*}
 \end{df}

\begin{df}
As in \cite{DimTsan10}, for each $\gamma \in \Gamma$, we shall denote:
 \begin{gather*}  \mc{V}_{\gamma}^+ = \sum_{\alpha \in \Phi_{\gamma}^+} \mk{g}(\alpha), \quad \mc{V}_{\gamma}^- = \sum_{\alpha \in \Phi_{\gamma}^-}  \mk{g}(\alpha),\quad \mc{V}_{\gamma} = \mc{V}_{\gamma}^+ \oplus \mc{V}_{\gamma}^-,\quad
\mc{V}_{\gamma}^u = \mk{u}\cap \mc{V}_{\gamma}.
\end{gather*}
\end{df}
We have again orthogonal decompositions:
\begin{gather}\label{dkh1}
\mk{g} = \mk{o} \oplus \bigoplus_{\gamma \in \Gamma } ( sl_\gamma (2) \oplus {\mc V}_\gamma),\quad \mk{u} = \mk{o}_u \oplus \bigoplus_{\gamma \in \Gamma } ( su_\gamma (2) \oplus {\mc V}_\gamma^u).
\end{gather}

\section{Hypercomplex pairs}\label{hcp0}

\subsection{Background and definition}

 Let $I$ be a left-invariant complex structure on ${\bf U}$, and let us denote also by $I$ its restriction to the tangent space at the unit element, which we identify with $\mk{u}$. We still denote by $I$ the $\CC$- linear extention of $I$ to $\mk{g}$. Then it is well known (see e.g. \cite{Snow86}), that we may find a $\tau$-invariant Cartan subalgebra $\mk{h}$, a basis $\Pi$ of the root system $\Delta$ w.r. to $\mk{h}$, and a decomposition
 $$
 \mk{h} = \mk{h}^+ \oplus \mk{h}^-,\qquad \tau(\mk{h}^+) = \mk{h}^-,
 $$
  so that the regular subalgebras:
 $$
\mk{g}^+ = \mk{h}^+ \oplus \mk{n}^+,\quad \mk{g}^- = \mk{h}^-\oplus \mk{n}^-
$$
are respectively the $\ri$ , $-\ri$ eigenspaces of $I$.

In \cite{DimTsan10} we have shown that a left-invariant complex structure $I$  on  ${\bf U}$, presented in the above manner, admits a matching complex structure $J$ (i.e. $IJ = - JI$) if and only if
\begin{gather}\label{nsc}
\mk{z} = I\mk{w} \subset \mk{o}, \qquad \dim(\bf U) \in 4 \ZZ .
\end{gather}

We introduce  notations for the entities related to the complex structure $I$:
\begin{df}  \label{Z_gamma,P_gamma} We denote
\begin{gather*}
Z_\gamma = IW_\gamma,\quad P_\gamma = W_\gamma - \ri Z_\gamma,\quad Q_\gamma = W_\gamma + \ri Z_\gamma.\\
  u_\gamma(2) = su_\gamma(2)\oplus \RR Z_\gamma,\quad  gl_\gamma(2) = sl_\gamma(2)\oplus \CC Z_\gamma.
\end{gather*}
\end{df}

In \cite{DimTsan10} we have shown that any invariant hypercomplex structure $I,J$ on  ${\bf U}$ splits the Lie algebra $\mk{g}$ in the following form
\begin{gather}\label{dkh}
\mk{g} = \mk{j} \oplus \bigoplus_{\gamma \in \Gamma } ( gl_\gamma (2) \oplus {\mc V}_\gamma),\quad \mk{u} = \mk{j}_u \oplus \bigoplus_{\gamma \in \Gamma } ( u_\gamma (2) \oplus {\mc V}_\gamma^u),
\end{gather}
where $\mk{o}_u = \mk{j}_u \oplus I(\mk{w}_u) ,\quad  dim_\RR(\mk{j}_u ) \in  4\ZZ $. In terms of this decomposition the hypercomplex structure $(I,J)$ is  defined    for $A\in \mc{V}_\gamma^u\ $ by:
\begin{gather} \label{csj1}
 I A  = 2 [W_\gamma,A],\quad  J A =- 2[Y_\gamma,A],
 \end{gather}
 for each $\gamma \in \Gamma$  it is determined   on $u_\gamma(2)$ by:
\begin{gather}\label{eee3}
IX_\gamma =  Y_\gamma,\quad IW_\gamma = Z_\gamma;\qquad JX_\gamma =  W_\gamma ,\quad JZ_\gamma = Y_\gamma,
\end{gather}
and the hypercomplex structure on $\mk{j}$ is defined by identifying it with $\HH^k$. It is obvious that the decomposition subspaces $\mk{j}_u$, $\mk{u}_\gamma(2)$, ${\mc V}_\gamma^u$ are $(I,J)$-invariant.

We shall obtain a left-invariant hypercomplex structure on  a coset space ${\bf U}/{\bf K}_u$, when $\mk{k}$ is a partial sum (of vector subspaces)
$$
\mk{k} = \mk{j}_\mk{k} \oplus \bigoplus_{\gamma \in \Gamma_\mk{k} } ( gl_\gamma (2) \oplus {\mc V}_\gamma),
$$
where the subset $\Gamma_\mk{k} \subset \Gamma$ and the subspace $\mk{j}_\mk{k} \subset \mk{j}$ are properly chosen, so that the above $\mk{k}$ is a subalgebra\footnote{such that the subgroup generated by $\mk{k}\cap \mk{u}$ is closed}.

Taking a complement  $\mk{j}_\mk{p}\subset \mk{j}$ so that $\mk{p} = \mk{j}_\mk{p} \oplus \mk{j}_\mk{k}$ and denoting $\Gamma_\mk{p} =\Gamma \setminus \Gamma_\mk{k}$ we obtain a complement (see the decomposition \eqref{dkh}) of $\mk{k}$ in $\mk{g}$:
$$
\mk{p} = \mk{j}_\mk{p} \oplus \sum_{\gamma \in \Gamma \setminus \Gamma_{\mk{k}}} (\mc{V}_{\gamma} \oplus gl_{\gamma}(2)).
$$
Then we use the same formulas as in \eqref{csj1}, \eqref{eee3} to define a hypercomplex structure on $\mk{p}$.
In this section we carry out this program in detail. Further we prove that all LIHCS on  our kind of coset spaces are obtained in this way\footnote{It is easy to see from the examples in \cite{DimTsan10} and Section \ref{shcp0} of this paper, that even if we discard the "toral" part $\mk{j}$, the action of ${\bf U}$ will not be effective in general}. We start by studying subalgebras $\mk{k}$ as above.   We start by studying subalgebras $\mk{k}$ as above.

    Obviously, for $\mk{k}$ we have $\mk{o}\cap \mk{k}=\mk{j}_{\mk{k}}+ span_\CC\{I(W_\gamma)\}_{\gamma \in \Gamma_{\mk{k}}}$ and we can represent  $ \mk{k}$ as follows
    \begin{gather} \label{pdp1}
     \mk{k} = (\mk{o}\cap \mk{k}) + \sum_{\gamma \in \Gamma_{\mk{k}} } ( sl_\gamma (2) + {\mc V}_\gamma).
     \end{gather}
From property \eqref{pdep 7} of the stem $\Gamma$, it follows easily that if $\gamma, \delta  \in \Gamma$, then
\be \label{property of Gamma_mk{k}}
 ( \gamma \in \Gamma_{\mk{k}} \ \mbox{and} \ \delta \succ \gamma ) \Rightarrow \delta \in \Gamma_{\mk{k}}.
 \ee
Furthermore, we have
\ben \mk{h} \cap \mk{k} &=& \mk{j}_{\mk{k}}\oplus span_\CC\{W_\gamma\}_{\gamma \in \Gamma_\mk{k}}\oplus span_\CC\{I(W_\gamma)\}_{\gamma \in \Gamma_{\mk{k}}};; \\
 \mk{o}  &=& \mk{j}_{\mk{k}} \oplus \mk{j}_{\mk{p}} \oplus span_\CC \{I(W_\gamma)\}_{\gamma \in \Gamma}\een
 therefore
\begin{gather}\label{pdu6}
 rank(\mk{g}) + srank(\mk{k}) - rank(\mk{k}) - srank(\mk{g}) = dim(\mk{j}_{\mk{p}})\geq 0.
 \end{gather}
It is convenient to introduce
\begin{df}
A subset  $\Gamma_{\mk{k}} \subset \Gamma$, which satisfies \eqref{property of Gamma_mk{k}}, will be called a \textbf{substem of} $(\Gamma, \prec)$.
\end{df}
We have

\begin{lemma} \label{pdp3}

Let $\Gamma_{\mk{k}} \subset \Gamma $  be a substem of $(\Gamma, \prec)$ and let $\Delta_{\mk{k}}^+ = \bigcup_{\gamma \in \Gamma_{\mk{k}}} \Phi_{\gamma}^+ \cup \{\gamma\},$ $\Delta_{\mk{k}} = \bigcup_{\gamma \in \Gamma_{\mk{k}}} \Phi_{\gamma} \cup \{\gamma,-\gamma \}$.
Then there exists a subset ${\mc M}\subset \Gamma$,  which consists of pairwise incomparable with respect to $\prec$ elements  and such that
\be \label{pdp2}
 \Delta_{\mk{k}} = \bigcup_{\gamma \in {\mc M}} \Theta_\gamma, \qquad  \Delta_{\mk{k}}^+ = \bigcup_{\gamma \in {\mc M}} \Theta_\gamma^+ \qquad \quad \mbox{disjoint unions}\ee
(for the definition of $\{ \Theta_\gamma \}_{\gamma \in \Gamma}$
see Proposition 2.15 in \cite{DimTsan10}).

 The stem of  $\Delta_{\mk{k}}^+ $ is $\Gamma_{\mk{k}}$.
\end{lemma}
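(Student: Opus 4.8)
The plan is to take $\mc M$ to be the set of $\prec$-minimal elements of the substem $\Gamma_{\mk{k}}$ and to show that the irreducible subsystems $\Theta_\gamma$, $\gamma\in\mc M$, are the irreducible components of $\Delta_{\mk{k}}$. First I record two preliminary observations. Since $\Gamma_{\mk{k}}$ satisfies \eqref{property of Gamma_mk{k}} it is up-closed for $\prec$, and $(\Gamma,\prec)$ being finite, every $\delta\in\Gamma_{\mk{k}}$ dominates a $\prec$-minimal element of $\Gamma_{\mk{k}}$; hence $\mc M$ is an antichain and, using $\delta\succeq\gamma\Leftrightarrow\delta\in\Theta_\gamma$ (Definition 2.16 of \cite{DimTsan10}),
\[
\Gamma_{\mk{k}}=\bigcup_{\gamma\in\mc M}\{\delta\in\Gamma:\delta\succeq\gamma\}=\bigcup_{\gamma\in\mc M}(\Gamma\cap\Theta_\gamma).
\]
Moreover, two distinct elements of $\mc M$ are incomparable, and then, exactly as in the proof of \eqref{pdep 6} (using Proposition 2.15 of \cite{DimTsan10}), $\Theta_\gamma$ and $\Theta_{\gamma'}$ lie in distinct, hence orthogonal, irreducible components of a common term $\Delta_i$ of the filtration built in the construction of the stem, so $\Theta_\gamma\cap\Theta_{\gamma'}=\emptyset$ and $\Theta_\gamma\perp\Theta_{\gamma'}$.

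The core is the identity, for every $\gamma\in\Gamma$,
\[
\Theta_\gamma^+=\bigcup_{\delta\in\Gamma,\ \delta\succeq\gamma}\bigl(\Phi_\delta^+\cup\{\delta\}\bigr),
\]
and its $\pm$-symmetrization $\Theta_\gamma=\bigcup_{\delta\succeq\gamma}(\Phi_\delta\cup\{\delta,-\delta\})$. For "$\supseteq$", $\delta\succeq\gamma$ gives $\delta\in\Theta_\gamma$ and $\Theta_\delta\subseteq\Theta_\gamma$, while $\Phi_\delta^+\subseteq\Theta_\delta$; all of this is explicit in the construction recalled in the proof of \eqref{pdep 6} (there $\Delta_k=\Delta\cap\mathrm{span}(\Pi\cap\Delta_k)$, $\Theta_{\gamma_k}$ is the irreducible component of $\Delta_k$ through $\gamma_k$, and $\Phi_{\gamma_k}^+=\{\zeta\in\Delta_k^+:\gamma_k-\zeta\in\Delta_k^+\}\subset\Theta_{\gamma_k}$). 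For "$\subseteq$", take $\mu\in\Theta_\gamma^+$; by \eqref{pdu1} there is a unique $\delta\in\Gamma$ with $\mu\in\Phi_\delta^+\cup\{\delta\}$, and we must show $\delta\succeq\gamma$. Write $\gamma=\gamma_i$, $\delta=\gamma_j$. If $\mu=\delta$ then $\delta\in\Theta_\gamma$ and we are done; if $\mu\in\Phi_{\gamma_j}^+$ then $\mu\notin\Delta_{j+1}$ since the roots $\pm(\Phi_{\gamma_j}^+\cup\{\gamma_j\})$ are removed in passing from $\Delta_j$ to $\Delta_{j+1}$, whereas $\mu\in\Theta_\gamma\subset\Delta_i$; this forces $j\ge i$, so $\delta\in\Delta_j\subset\Delta_i$, and since $\mu(H_\delta)>0$ by \eqref{pdep 2} the roots $\mu,\delta\in\Delta_i$ are non-orthogonal, hence lie in the same irreducible component $\Theta_\gamma$ of $\Delta_i$; thus $\delta\in\Gamma\cap\Theta_\gamma$, i.e. $\delta\succeq\gamma$. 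Inserting the description of $\Gamma_{\mk{k}}$ above,
\[
\Delta_{\mk{k}}^+=\bigcup_{\delta\in\Gamma_{\mk{k}}}\bigl(\Phi_\delta^+\cup\{\delta\}\bigr)=\bigcup_{\gamma\in\mc M}\Theta_\gamma^+,\qquad \Delta_{\mk{k}}=\bigcup_{\gamma\in\mc M}\Theta_\gamma,
\]
and the unions on the right are disjoint by the second preliminary observation; this is \eqref{pdp2}.

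It remains to identify the stem of $\Delta_{\mk{k}}^+$. Since the $\Theta_\gamma$, $\gamma\in\mc M$, are mutually orthogonal irreducible reduced root systems, $\Delta_{\mk{k}}$ is a reduced root system and $\Delta_{\mk{k}}^+=\Delta_{\mk{k}}\cap\Delta^+$ a positive system in it. For $\delta\in\Gamma_{\mk{k}}$, say $\delta\succeq\gamma$ with $\gamma\in\mc M$, one has $\Phi_\delta^+\subset\Theta_\delta\subset\Theta_\gamma\subset\Delta_{\mk{k}}\subset\Delta$ with $\Theta_\gamma=\Delta\cap\mathrm{span}(\Theta_\gamma)$, so $\{\zeta:\delta-\zeta\ \text{a positive root}\}$ yields the same set $\Phi_\delta^+$ whether computed in $\Delta$, in $\Theta_\gamma$, or in $\Delta_{\mk{k}}$. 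Hence the identity above rewrites the defining property \eqref{pdu1} of the stem for the pair $(\Delta_{\mk{k}},\Delta_{\mk{k}}^+)$ with indexing set $\Gamma_{\mk{k}}$, so by the uniqueness of the stem (\cite{DimTsan10}, Theorem 2.12) the stem of $\Delta_{\mk{k}}^+$ is $\Gamma_{\mk{k}}$.

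I expect the main obstacle to be the "$\subseteq$" direction of the identity for $\Theta_\gamma^+$: one must rule out that a positive root of $\Theta_\gamma$ is captured by a block $\Phi_\delta^+\cup\{\delta\}$ with $\delta\not\succeq\gamma$, which comes down to tracking at which stage of the layered construction of $(\Gamma,\prec)$ in \cite{DimTsan10} each root is deleted. Granting that, the antichain property, the disjointness of the unions in \eqref{pdp2}, and the passage to the stem via its uniqueness are all routine.
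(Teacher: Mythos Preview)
Your proof is correct and follows essentially the same approach as the paper: both take $\mc M$ to be the set of $\prec$-minimal elements of $\Gamma_{\mk{k}}$, establish the key identity $\Theta_\gamma^+=\bigcup_{\delta\succeq\gamma}(\Phi_\delta^+\cup\{\delta\})$, and conclude from it. The only difference is cosmetic: the paper quotes this identity (and the fact that the stem of $\Theta_\gamma^+$ is $\Gamma\cap\Theta_\gamma$) directly from \cite{DimTsan10}, whereas you re-derive it from the filtration $\Delta_1\supset\cdots\supset\Delta_d$; and for the final step the paper assembles the stem of $\Delta_{\mk{k}}^+$ as the union of the stems of the $\Theta_\gamma^+$, while you appeal directly to the uniqueness theorem for the stem.
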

\begin{proof}  We denote by ${\mc M}$  the set of all minimal elements in $\Gamma_{\mk{k}}$ (with respect to the ordering $\prec $). Let us take any element $\alpha \in \Delta_{\mk{k}}^+$ then by $\Delta_{\mk{k}}^+ = \bigcup_{\gamma \in \Gamma_{\mk{k}}} \Phi_{\gamma}^+ \cup \{\gamma\}$ it follows that $\alpha \in \Phi_{\gamma}^+ \cup \{\gamma\}$ for some $\gamma \in \Gamma_{\mk{k}}$. Since $\Gamma_{\mk{k}}$ is finite then there exists a minimal element $\delta\in {\mc M}$ such that $\delta \preceq  \gamma$ and therefore  $\gamma \in \Theta_{\delta}^+$ and $\Phi_{\gamma}^+ \subset  \Theta_{\delta}^+$. So we see that
$  \Delta_{\mk{k}}^+ \subset \bigcup_{\gamma \in {\mc M}} \Theta_\gamma^+. $

Let us take any $\gamma \in {\mc M}$. We recall  that the stem of $\Theta_\gamma^+$ is  $\Gamma \cap \Theta_\gamma$ and
\be \label{pdp33} \Theta_\gamma^+= \bigcup_{\delta \in \Gamma \cap \Theta_\gamma} \Phi_{\delta}^+ \cup \{\delta\}.\ee
  Since all the elements in  $\Gamma \cap \Theta_\gamma$ are greater than $\gamma \in \Gamma_{\mk{k}}$ and by \eqref{property of Gamma_mk{k}}  it follows that $\Gamma \cap \Theta_\gamma \subset \Gamma_{\mk{k}}$. Therefore (see  \eqref{pdp33}) it follows that $\Delta_{\mk{k}}^+ \supset \Theta_\gamma^+$, hence   $  \Delta_{\mk{k}}^+ \supset \bigcup_{\gamma \in {\mc M}} \Theta_\gamma^+.$

 Thus far, we proved   \eqref{pdp2}. Obviously that for any pair of different elements $\gamma, \delta \in {\mc M}$ neither $\gamma \preceq  \delta$ nor $\gamma \succeq \delta$. Hence in \eqref{pdp2} we have disjoint unions.

For $\gamma \in {\mc M}$ the stem of $(\Theta_\gamma,\Theta_\gamma^+)$ is $\Gamma \cap \Theta_\gamma$, then the stem of $(\Delta_{\mk{k}}, \Delta_{\mk{k}}^+) $ is $\cup_{\gamma \in {\mc M}} \Gamma \cap \Theta_\gamma = \Gamma \cap \Delta_{\mk{k}}= \Gamma_{\mk{k}}$ and the corollary is completely proved.
\end{proof}
For a subalgebra $\mk{k}$, which satisfies \eqref{pdp1}, we give a definition
\begin{df} Let $(\Gamma, \prec)$ be a stem of a reductive algebra $\mk{g}$.  Let  $\Gamma_{\mk{k}} \subset \Gamma$ be a   substem of $(\Gamma, \prec)$. A subalgebra $\mk{k} \subset \mk{g}$ of the type \eqref{pdp1} will be called a \textbf{$\Gamma_{\mk{k}}$-stemmed subalgebra of $\mk{g}$}.

\end{df}

We come to the most important notion in this section
\begin{df} \label{shp} Let   $\mk{u}\supset \mk{k}_\mk{u}$ be a pair of compact Lie algebras. Let  $\mk{g}\supset \mk{k}$ be the complexifications of $(\mk{u},\mk{k}_u)$.

We  call   $(\mk{u},\mk{k}_u)$  a \textbf{hypercomplex pair} if the following conditions are satisfied:
\begin{itemize}
\item $dim(\mk{u}) - dim(\mk{k}_u)>0$ is divisible by four,
    \item  There exist a Cartan subalgebra $\mk{h}_u \subset \mk{u}$ with complexification $\mk{h}\subset \mk{g}$, set of positive roots $\Delta^+ \subset \Delta$  w. r. to $\mk{h}$, and  stem  $\Gamma \subset \Delta^+$,  such that   $\mk{k}$ is a $\Gamma_{\mk{k}}$-stemmed subalgebra of $\mk{g}$ for some  substem $\Gamma_{\mk{k}} \subset \Gamma $.
    \item $rank(\mk{g}) +  srank( \mk{k}) \geq rank( \mk{k}) + srank(\mk{g})$.
\end{itemize}

 Let $(\mk{u}, \mk{k}_u)$ be a hypercomplex pair. A pair of groups $({\bf U}, {\bf K}_u)$ is said to be {\bf associated to the hypercomplex pair} $(\mk{u}, \mk{k}_u)$, if ${\bf U}$ is a compact, connected Lie group, $\mk{u}$ is its Lie algebra, ${\bf K}_u \subset {\bf U}$ is a closed subgroup with Lie algebra $\mk{k}_u$.
\end{df}

Throughout the rest of this section  $(\mk{u}, \mk{k}_u)$ is a  hypercomplex pair and $(\mk{g}, \mk{k})$ is its complexification.

We  assume also  that  we are given a pair of groups $({\bf U},
{\bf K}_u)$,  associated with $(\mk{u}, \mk{k}_u)$, where  ${\bf
K}_u$ is a connected subgroup of ${\bf U}$ \footnote{So, by
assumption, for the Lie algebra $\mk{k}_u$   we have  that the
subgroup of ${\bf U}$ generated by $\mk{k}_u$ is a closed
subgroup, in addition to the properties listed in definition
\ref{shp}.}. In this section we obtain a hypercomplex structure on
any coset space $M={\bf U}/{\bf K}_u$ of this type. We shall
denote by $o \in M$ the coset ${\bf K}_u  \in {\bf U}/{\bf K}_u$.
Let $\mk{h}$ and $\Gamma$ be  as in Definition \ref{shp}, applied
to $(\mk{g},\mk{k})$, so we have \be
\mk{k}= \mk{o}_{\mk{k}}\oplus \bigoplus_{\gamma \in \Gamma_\mk{k} } ( sl_\gamma (2) \oplus {\mc V}_\gamma), \ \  \mbox{where} \quad \mk{o}_{\mk{k}} = \mk{k} \cap \mk{o} \nonumber \\[-2mm] \label{the mk{k}} \\[-2mm] rank(\mk{g}) + srank(\mk{k}) - rank(\mk{k})- srank(\mk{g}) \geq 0, \nonumber
\ee
where $\Gamma_{\mk{k}}$ is a substem of $\Gamma$.

\subsection{Construction of a hypercomplex structure on \texorpdfstring{$T_o(M)$}{\space} }
\label{chcs}
\subsubsection{The subspace $\mk{p}_u\subset \mk{u}$}

Let $\mk{p}_u$ be the orthogonal complement of $\mk{k}_u$ with respect to $\scal{,}$ and let $\mk{p}$ be the complexification of $\mk{p}_u$. We shall denote
\begin{gather}
  \mk{h}_{\mk{p}_u} =\mk{h} \cap \mk{\mk{p}}_u \qquad   \mk{h}_{\mk{p}} = \mk{h} \cap \mk{\mk{p}} \qquad  \mk{h}_{\mk{k}_u} =\mk{h} \cap \mk{\mk{k}}_u  \qquad \mk{h}_{\mk{k}} = \mk{h} \cap \mk{\mk{k}} \nonumber \\[-2mm]
 \label{mk{h}_{mk{p}} etc} \\[-2mm]
 \mk{o}_{\mk{p}_u} =  \mk{o}\cap \mk{p}_u \qquad \mk{o}_{\mk{p}}=\mk{o} \cap \mk{p} \qquad  \mk{o}_{\mk{k}_u} =  \mk{o}\cap \mk{k}_u. \nonumber
  \end{gather}

In order to describe $\mk{p}$ in terms of the stem, it is convenient to introduce  the following notatins
\begin{gather} \label{wcb2}
\Gamma_{\mk{p}} = \Gamma \setminus \Gamma_{\mk{k}} \qquad \mk{w}_{\mk{p}} = span_\RR\{W_\gamma\vert \gamma \in \Gamma_{\mk{p}}\}  \qquad \mk{w}_{\mk{k}} = span_\RR\{W_\gamma\vert \gamma \in \Gamma_{\mk{k}}\} \\
\label{wcb3} \Delta_{\mk{p}}^+ = \bigcup_{\gamma \in \Gamma_{\mk{p}}} \Phi_{\gamma}^+ \cup \{\gamma\} \quad \Delta_{\mk{p}}= \Delta_{\mk{p}}^+ \cup (-\Delta_{\mk{p}}^+) \quad \Delta_{\mk{k}}^+ = \bigcup_{\gamma \in \Gamma_{\mk{k}}} \Phi_{\gamma}^+ \cup \{\gamma\} \quad \Delta_{\mk{k}}= \Delta_{\mk{k}}^+ \cup (-\Delta_{\mk{k}}^+).
\end{gather}

From \eqref{the mk{k}} we see that $\mk{k}=\mk{h}_{\mk{k}} + \sum_{\alpha \in \Delta_{\mk{k}}} \mk{g}(\alpha) $ and $\mk{g}=\mk{k}\oplus \mk{p}$. Hence we get
\begin{lemma} \label{lemma for mk{p}} The subspace $\mk{p}$ may be decomposed as follows:
\begin{gather} \label{decomposition of mk{m}}
\mk{p} = \mk{h}_{\mk{p}} \oplus \bigoplus_{\alpha \in \Delta_{\mk{p}}} \mk{g}(\alpha).
\end{gather}
We have $\mk{h}=\mk{h}_{\mk{k}} \oplus \mk{h}_{\mk{p}}$.
\end{lemma}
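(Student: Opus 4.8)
The plan is to identify $\mk{p}$ with the direct sum of those $\mathrm{ad}(\mk{h})$-weight spaces of $\mk{g}$ that are not contained in $\mk{k}$, reading off the relevant weights from the explicit shape of $\mk{k}$ given by \eqref{the mk{k}}.

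First I would record, as already observed in the paragraph preceding the lemma, that $\mk{k}$ is $\mathrm{ad}(\mk{h})$-stable and that
\[
\mk{k} = \mk{h}_{\mk{k}} \oplus \bigoplus_{\alpha \in \Delta_{\mk{k}}} \mk{g}(\alpha), \qquad \mk{h}_{\mk{k}} = \mk{h}\cap\mk{k};
\]
this follows from \eqref{the mk{k}} upon using $sl_\gamma(2) = \CC H_\gamma \oplus \mk{g}(\gamma)\oplus\mk{g}(-\gamma)$, $\mc{V}_\gamma = \bigoplus_{\alpha\in\Phi_\gamma}\mk{g}(\alpha)$, $\mk{o}_{\mk{k}}\subset\mk{h}$ and $\mk{g}(\alpha)\cap\mk{h}=0$ for $\alpha\neq 0$, with $\Delta_{\mk{k}}$ the set in \eqref{wcb3}. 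Next I would check the disjoint decomposition $\Delta = \Delta_{\mk{k}} \sqcup \Delta_{\mk{p}}$. By \eqref{pdu1} the positive system is the disjoint union $\Delta^+ = \bigcup_{\gamma\in\Gamma}(\{\gamma\}\cup\Phi_\gamma^+)$ with all terms pairwise disjoint; restricting the index set to $\Gamma_{\mk{k}}$, respectively to $\Gamma_{\mk{p}}=\Gamma\setminus\Gamma_{\mk{k}}$, gives $\Delta^+ = \Delta_{\mk{k}}^+ \sqcup \Delta_{\mk{p}}^+$ in the notation of \eqref{wcb2}, \eqref{wcb3}, and applying $\alpha\mapsto-\alpha$ yields $\Delta = \Delta_{\mk{k}} \sqcup \Delta_{\mk{p}}$.

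Now, since $\scal{,}$ is $\mathrm{ad}$-invariant and $\mk{k}$ is $\mathrm{ad}(\mk{h})$-stable, its orthogonal complement $\mk{p}$ is $\mathrm{ad}(\mk{h})$-stable as well: for $H\in\mk{h}$, $X\in\mk{p}$, $Y\in\mk{k}$ one has $\scal{[H,X],Y}=-\scal{X,[H,Y]}=0$. Hence $\mk{g}=\mk{k}\oplus\mk{p}$ is a splitting of $\mathrm{ad}(\mk{h})$-modules, so every weight space splits, $\mk{g}_\lambda = (\mk{k}\cap\mk{g}_\lambda)\oplus(\mk{p}\cap\mk{g}_\lambda)$. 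For $\lambda=0$ the weight space $\mk{g}_0$ equals $\mk{h}$, because a Cartan subalgebra of a reductive Lie algebra is self-centralising; this gives $\mk{h}=\mk{h}_{\mk{k}}\oplus\mk{h}_{\mk{p}}$, which is the second assertion of the lemma. For $\lambda=\alpha\in\Delta$ the weight space is the line $\mk{g}(\alpha)$, hence lies entirely in $\mk{k}$ or entirely in $\mk{p}$; by the displayed form of $\mk{k}$ it lies in $\mk{k}$ exactly when $\alpha\in\Delta_{\mk{k}}$, hence in $\mk{p}$ exactly when $\alpha\in\Delta\setminus\Delta_{\mk{k}}=\Delta_{\mk{p}}$. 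Collecting the $\mk{p}$-components of all the weight spaces produces $\mk{p} = \mk{h}_{\mk{p}} \oplus \bigoplus_{\alpha\in\Delta_{\mk{p}}}\mk{g}(\alpha)$, which is \eqref{decomposition of mk{m}}.

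The only delicate point is the identity $\mk{h}=\mk{h}_{\mk{k}}\oplus\mk{h}_{\mk{p}}$: one must not assume in advance that the $\mk{k}$- and $\mk{p}$-components of an element of $\mk{h}$ lie again in $\mk{h}$, and it is precisely the weight-space argument above --- resting on the $\mathrm{ad}(\mk{h})$-stability of $\mk{p}$, i.e.\ on $\mathrm{ad}$-invariance of $\scal{,}$, together with the self-centralising property of the Cartan subalgebra --- that legitimises it. Everything else is bookkeeping with root spaces and with the disjointness of the pieces built into the stem by \eqref{pdu1}.
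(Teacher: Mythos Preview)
Your proof is correct and follows essentially the same approach as the paper, which simply notes that $\mk{k}=\mk{h}_{\mk{k}} + \sum_{\alpha \in \Delta_{\mk{k}}} \mk{g}(\alpha)$ and $\mk{g}=\mk{k}\oplus\mk{p}$ and then states the lemma without further argument. You have supplied the details the paper omits, in particular the $\mathrm{ad}(\mk{h})$-stability of $\mk{p}$ via $\mathrm{ad}$-invariance of $\scal{,}$ and the resulting splitting of each weight space, which is exactly what makes the conclusion legitimate.
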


From \eqref{property of Gamma_mk{k}} and $\Gamma_{\mk{k}}\cap \Gamma_{\mk{p}} = \emptyset$ it follows that if $\gamma \in \Gamma_{\mk{p}},\quad  \delta  \in \Gamma_{\mk{k}}$, then either $\gamma \prec \delta$ or $\gamma,\ \delta$ are not comparable, so using Proposition \ref{prop of Delta+},   we have
\begin{gather} \label{gmg2}
 \gamma \in \Gamma_{\mk{p}}, \ \beta \in \Delta_{\mk{k}}, \ \alpha \in \Phi_{\gamma}, \ \alpha + \beta \in \Delta \quad \Rightarrow \quad \alpha+\beta \in \Phi_{\gamma}, \gamma \pm \beta \not \in \Delta
 \end{gather}

\begin{lemma} \label{ggg1}  $\mk{w}_{\mk{p}} \subset  \mk{p}_{\mk{u} } $.
\end{lemma}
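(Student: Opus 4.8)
The claim is that $\mathfrak{w}_{\mathfrak{p}} = \operatorname{span}_\RR\{W_\gamma \mid \gamma \in \Gamma_{\mathfrak{p}}\}$ sits inside $\mathfrak{p}_u$, the $\scal{\cdot,\cdot}$-orthogonal complement of $\mathfrak{k}_u$ in $\mathfrak{u}$. First note $W_\gamma = \tfrac{\ri}{2}H_\gamma \in \mathfrak{h}_u \subset \mathfrak{u}$, so it suffices to show $\scal{W_\gamma, \mathfrak{k}_u} = 0$ for each $\gamma \in \Gamma_{\mathfrak{p}}$; equivalently, since $\scal{\cdot,\cdot}$ is nondegenerate and complexifies, $\scal{H_\gamma, X} = 0$ for all $X \in \mathfrak{k}$. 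Using the decomposition $\mathfrak{k} = \mathfrak{o}_{\mathfrak{k}} \oplus \bigoplus_{\delta \in \Gamma_{\mathfrak{k}}}(sl_\delta(2) \oplus \mathcal{V}_\delta)$ from \eqref{the mk{k}}, I would check orthogonality against each summand separately.

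The plan is to treat three pieces. (i) For the root-space parts: $H_\gamma \in \mathfrak{h}$ is orthogonal to every $\mathfrak{g}(\alpha)$ with $\alpha \neq 0$, so $H_\gamma \perp \mathcal{V}_\delta$ and $H_\gamma \perp \CC E_{\pm\delta}$ automatically, for every $\delta$. (ii) For the Cartan parts coming from the $sl_\delta(2)$'s: here I need $\scal{H_\gamma, H_\delta} = 0$ for all $\delta \in \Gamma_{\mathfrak{k}}$. Since $\gamma \in \Gamma_{\mathfrak{p}} = \Gamma \setminus \Gamma_{\mathfrak{k}}$ and $\delta \in \Gamma_{\mathfrak{k}}$, property \eqref{property of Gamma_mk{k}} forbids $\delta \succ \gamma$, so either $\delta \prec \gamma$ or $\gamma,\delta$ are incomparable. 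In the first case \eqref{pdep 1} gives $\gamma(H_\delta) = 0$ (taking $\alpha = \gamma \in \Phi_\gamma^+ \cup \{\gamma\}$); in the incomparable case \eqref{pdep 6} likewise gives $\gamma(H_\delta) = 0$. But $\scal{H_\gamma, H_\delta}$ is, up to a positive scalar, a multiple of $\delta(H_\gamma)$, which by symmetry of the Cartan matrix entries has the same sign/vanishing behaviour as $\gamma(H_\delta)$; more directly, $\scal{H_\gamma,H_\delta} = \tfrac{2}{\scal{\delta,\delta}}\scal{H_\gamma, h_\delta}\cdot(\text{const})$ and $\scal{H_\gamma,h_\delta} = \delta(H_\gamma)$, and $\delta(H_\gamma) = 2\scal{\delta,\gamma}/\scal{\gamma,\gamma}$ vanishes iff $\gamma(H_\delta) = 2\scal{\gamma,\delta}/\scal{\delta,\delta}$ does. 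Hence $\scal{H_\gamma,H_\delta}=0$. (iii) For the toral part $\mathfrak{o}_{\mathfrak{k}} = \mathfrak{k}\cap\mathfrak{o}$: since $\mathfrak{o} = \bigcap_{\delta\in\Gamma}\ker(\delta)$ and $H_\gamma \in \mathfrak{w} = \operatorname{span}\{H_\delta\}$, we have $\mathfrak{w} \perp \mathfrak{o}$ by the stated orthogonal decomposition $\mathfrak{h} = \mathfrak{w}\oplus\mathfrak{o}$, so $H_\gamma \perp \mathfrak{o}_{\mathfrak{k}}$. Combining (i)–(iii), $H_\gamma \perp \mathfrak{k}$, hence $\scal{W_\gamma,\mathfrak{k}_u}=0$ and $W_\gamma \in \mathfrak{p}_u$.

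The only delicate point is step (ii): one must be careful that the pairing $\scal{H_\gamma,H_\delta}$ is controlled by $\gamma(H_\delta)$ (or $\delta(H_\gamma)$), so that the vanishing of the root-string quantity in \eqref{pdep 1} and \eqref{pdep 6} translates into orthogonality of the coroots. This is the standard identity $\scal{H_\gamma, H_\delta} = \gamma(H_\delta)\cdot\scal{H_\gamma,H_\gamma}/2$ within an irreducible component (and $\scal{H_\gamma,H_\delta}=0$ trivially when $\gamma,\delta$ lie in different irreducible components, which is already covered by the incomparable case). Everything else is bookkeeping with the orthogonal decompositions \eqref{dkh1}–\eqref{dkh} and the fact that $\mathfrak{h}\perp\mathfrak{g}(\alpha)$ for $\alpha\neq 0$. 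I would write the argument in roughly four lines: reduce to $H_\gamma\perp\mathfrak{k}$, dispose of root spaces and $\mathfrak{o}_{\mathfrak{k}}$ immediately, then invoke \eqref{property of Gamma_mk{k}} together with \eqref{pdep 1} and \eqref{pdep 6} for the coroot pairing.
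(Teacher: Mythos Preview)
Your proof is correct and follows essentially the same approach as the paper: reduce to $\scal{W_\gamma,\mk{k}}=0$, then use the decomposition \eqref{the mk{k}} and handle the three pieces via (a) $\mk{h}\perp\mk{g}(\alpha)$, (b) $\mk{w}\perp\mk{o}$, and (c) the observation that for $\gamma\in\Gamma_{\mk{p}}$, $\delta\in\Gamma_{\mk{k}}$ one has $\gamma\pm\delta\notin\Delta$ (from the substem property together with \eqref{pdep 1} and \eqref{pdep 6}), hence $\scal{W_\gamma,W_\delta}=0$. The paper's proof is terser—it simply states (c) as $\gamma\pm\delta\notin\Delta$ without your extended discussion of why this yields orthogonality of the coroots—but the logic is identical.
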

\bpr Let $\gamma \in \Gamma_{\mk{p}}$. We have to prove that for all $X \in \mk{k}$ we have $\scal{W_{\gamma}, X} = 0$. To that end we recall \eqref{the mk{k}} and the properties:

a) For each $\alpha \in \Delta$ the subspaces $\mk{g}(\alpha)$ and $\mk{h} $ are orthogonal  with respect to $\scal{,}$.

b)  $W_\gamma $ is orthogonal to $ \mk{o}$ with respect to $\scal{,}$.

c) For  each $\delta \in \Gamma_{\mk{k}}$ $\gamma \pm \delta \not \in \Delta$, in particular $\scal{W_\gamma,W_{\delta}} = 0$.
\epr

\begin{coro} \label{mk{o}=mk{o}_{mk{k}} oplus mk{o}_{mk{p}}} The following decompositions $\mk{h}_{\mk{p}}=\mk{o}_{\mk{p}} \oplus \mk{w}_{\mk{p}}^{\CC}$,  $\mk{o}=\mk{o}_{\mk{k}}  \oplus \mk{o}_{\mk{p}}$ are valid. In particular $dim(\mk{o}) = dim(\mk{o}_{\mk{k}}) + dim(\mk{o}_{\mk{p}})$. Furthermore, $dim(\mk{h})=dim(\mk{o}) + \#(\Gamma)$, $dim(\mk{h}_{\mk{k}} )=dim(\mk{o}_{\mk{k}}) + \#(\Gamma_{\mk{k}})$.
\end{coro}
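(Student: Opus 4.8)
The plan is to reduce all four assertions to a bookkeeping analysis of how the Cartan subalgebra $\mk{h}$ sits inside the splitting $\mk{g}=\mk{k}\oplus\mk{p}$, the only genuine input being the metric structure of $\mk{w}=span_\CC\{H_\gamma\mid\gamma\in\Gamma\}$. First I would record that, for distinct $\gamma,\delta\in\Gamma$, either $\gamma,\delta$ are $\prec$-incomparable or one of them is $\prec$ the other; in the first case \eqref{pdep 6} (applied with $\alpha=\gamma$, $\beta=\delta$) gives $\gamma(H_\delta)=0$, and in the second case, say $\delta\prec\gamma$, \eqref{pdep 1} applied to $\alpha=\gamma\in\Phi_\gamma^+\cup\{\gamma\}$ again gives $\gamma(H_\delta)=0$. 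Via the identification $\scal{h_\alpha,\cdot}=\alpha$ on $\mk{h}_s$ this means $\scal{H_\gamma,H_\delta}=0$ for $\gamma\ne\delta$, while $\scal{H_\gamma,H_\gamma}>0$ and $\scal{H_\gamma,\mk{o}}=0$ (the latter since every root vanishes on $\mk{o}$). In particular the $H_\gamma$ are linearly independent, so $\dim\mk{w}=\#(\Gamma)$, and the already stated orthogonal splitting $\mk{h}=\mk{w}\oplus\mk{o}$ yields $\dim\mk{h}=\dim\mk{o}+\#(\Gamma)$.

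Next I would compute $\mk{h}_{\mk{k}}=\mk{h}\cap\mk{k}$. From \eqref{the mk{k}}, $\mk{k}=\mk{o}_{\mk{k}}\oplus\bigoplus_{\gamma\in\Gamma_{\mk{k}}}(sl_\gamma(2)\oplus\mc{V}_\gamma)$, and each summand $sl_\gamma(2)\oplus\mc{V}_\gamma$ is $\CC H_\gamma$ plus a sum of (nonzero) root spaces $\mk{g}(\alpha)$, $\alpha\in\Delta_{\mk{k}}$; intersecting with $\mk{h}$ annihilates all root-space parts, so $\mk{h}_{\mk{k}}=\mk{o}_{\mk{k}}\oplus span_\CC\{H_\gamma\mid\gamma\in\Gamma_{\mk{k}}\}=\mk{o}_{\mk{k}}\oplus\mk{w}_{\mk{k}}^{\CC}$, using $W_\gamma=\tfrac{\ri}{2}H_\gamma$. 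Since the $H_\gamma$ are independent this gives $\dim\mk{h}_{\mk{k}}=\dim\mk{o}_{\mk{k}}+\#(\Gamma_{\mk{k}})$, the last identity; it also gives $\mk{w}=\mk{w}_{\mk{k}}^{\CC}\oplus\mk{w}_{\mk{p}}^{\CC}$, hence $\mk{h}=\mk{o}\oplus\mk{w}_{\mk{k}}^{\CC}\oplus\mk{w}_{\mk{p}}^{\CC}$, which I will use as the working decomposition of $\mk{h}$.

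For $\mk{h}_{\mk{p}}=\mk{o}_{\mk{p}}\oplus\mk{w}_{\mk{p}}^{\CC}$: the inclusion ``$\supseteq$'' is formal, since $\mk{o}_{\mk{p}}=\mk{o}\cap\mk{p}\subset\mk{h}\cap\mk{p}=\mk{h}_{\mk{p}}$ and $\mk{w}_{\mk{p}}^{\CC}\subset\mk{p}$ by Lemma~\ref{ggg1} (complexified) while $\mk{w}_{\mk{p}}^{\CC}\subset\mk{h}$, so $\mk{w}_{\mk{p}}^{\CC}\subset\mk{h}_{\mk{p}}$, and the sum is direct because $\mk{o}\cap\mk{w}=0$. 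For ``$\subseteq$'' take $X\in\mk{h}_{\mk{p}}$ and split $X=Z_0+Z_1+Z_2$ with $Z_0\in\mk{o}$, $Z_1\in\mk{w}_{\mk{k}}^{\CC}$, $Z_2\in\mk{w}_{\mk{p}}^{\CC}$; then $Z_0+Z_1=X-Z_2\in\mk{p}=\mk{k}^{\perp}$, so pairing with each $H_\gamma$, $\gamma\in\Gamma_{\mk{k}}$ (which lies in $\mk{w}_{\mk{k}}^{\CC}\subset\mk{k}$) and using $\scal{\mk{o},H_\gamma}=0$ gives $\scal{Z_1,H_\gamma}=0$ for all $\gamma\in\Gamma_{\mk{k}}$; writing $Z_1=\sum c_\gamma H_\gamma$ and using that $\{H_\gamma\}_{\gamma\in\Gamma_{\mk{k}}}$ is orthogonal with $\scal{H_\gamma,H_\gamma}\ne0$ forces $Z_1=0$, whence $Z_0=X-Z_2\in\mk{o}\cap\mk{p}=\mk{o}_{\mk{p}}$. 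Finally $\mk{o}=\mk{o}_{\mk{k}}\oplus\mk{o}_{\mk{p}}$ follows by decomposing $X\in\mk{o}$ via $\mk{h}=\mk{h}_{\mk{k}}\oplus\mk{h}_{\mk{p}}$ (Lemma~\ref{lemma for mk{p}}) as $X=X_{\mk{k}}+X_{\mk{p}}$: by the two descriptions just proved $X_{\mk{k}}\in\mk{o}_{\mk{k}}\oplus\mk{w}_{\mk{k}}^{\CC}$ and $X_{\mk{p}}\in\mk{o}_{\mk{p}}\oplus\mk{w}_{\mk{p}}^{\CC}$, and since $X$ has zero component in $\mk{w}=\mk{w}_{\mk{k}}^{\CC}\oplus\mk{w}_{\mk{p}}^{\CC}$ both $\mk{w}$-parts vanish, giving $X_{\mk{k}}\in\mk{o}_{\mk{k}}$, $X_{\mk{p}}\in\mk{o}_{\mk{p}}$; directness is clear from $\mk{k}\cap\mk{p}=0$, and $\dim\mk{o}=\dim\mk{o}_{\mk{k}}+\dim\mk{o}_{\mk{p}}$ follows.

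The only non-formal step, and the one I expect to be the real content, is the ``projecting out'' of the $\mk{w}_{\mk{k}}^{\CC}$-component in the third paragraph: it rests on the pairwise orthogonality and nonzero length of the $H_\gamma$, i.e.\ on the nondegeneracy of $\scal{\,,\,}$ restricted to $\mk{w}_{\mk{k}}^{\CC}$, which in turn is exactly what the stem properties \eqref{pdep 1} and \eqref{pdep 6} deliver. Everything else is routine manipulation with the root-space decomposition, together with the two preceding lemmas, which supply $\mk{w}_{\mk{p}}^{\CC}\subset\mk{p}$ and $\mk{h}=\mk{h}_{\mk{k}}\oplus\mk{h}_{\mk{p}}$.
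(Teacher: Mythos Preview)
Your proof is correct and follows essentially the same route as the paper: both hinge on Lemma~\ref{ggg1} (giving $\mk{w}_{\mk{p}}^{\CC}\subset\mk{h}_{\mk{p}}$) together with the orthogonality of $\{H_\gamma\}_{\gamma\in\Gamma}$ inside $\mk{h}$. The only real difference is presentational: the paper works over the compact real form $\mk{u}$, where $\scal{\,,\,}$ is definite and orthogonal complements behave well automatically, and then complexifies, whereas you argue directly over $\CC$ and compensate by making explicit (via \eqref{pdep 1} and \eqref{pdep 6}) that the Gram matrix of $\{H_\gamma\}_{\gamma\in\Gamma_{\mk{k}}}$ is diagonal with nonzero entries, so that $\scal{\,,\,}$ is nondegenerate on $\mk{w}_{\mk{k}}^{\CC}$.
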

\bpr
Since for $\alpha \in \Delta $ and $H \in \mk{h}$ we have $\alpha(\tau(H))=-\ol{\alpha(H)}$,then $\tau(\mk{o}) = \mk{o}$, hence
\ben
\mk{o} = \mk{o}_u \oplus \ri \mk{o}_u  \qquad \mk{o}_{\mk{k}} = \mk{o}_{\mk{k}_u} \oplus \ri \mk{o}_{\mk{k}_u} \qquad
\mk{o}_{\mk{p}} =  \mk{o}_{\mk{p}_u} \oplus \ri \mk{o}_{\mk{p}_u}.
\een
The orthogonal complement of $span_{\RR}\{W_{\gamma}\vert \gamma \in \Gamma \}$ in $\mk{h}_u$ is $\mk{o}_u$, hence $span_{\RR}\{W_{\gamma}\vert \gamma \in \Gamma \} \oplus \mk{o}_u = \mk{h}_u$ and therefore $dim(\mk{h}_u)= dim(\mk{o}_u) + \#(\Gamma)$ and $ dim_\CC(\mk{h})= dim_\CC(\mk{o}) + \#(\Gamma)$. In Lemma \ref{ggg1} we proved that $W_{\gamma} \in \mk{p}_u $ for $\gamma \in \Gamma_{\mk{p}}$, besides by \eqref{the mk{k}} we have  $W_{\gamma} \in \mk{k}_u $ for $\gamma \in \Gamma_{\mk{k}}$. Since $\mk{p}_u$ is orthogonal to $\mk{k}_u$, in particular $\mk{p}_u$ is orthogonal to $\mk{h}_{\mk{k}_u}$ as well(we recall that $\mk{h}_{\mk{k}_u} = \mk{k}_u
\cap \mk{h}_u $),   hence $\mk{o}_{\mk{k}_u}=\mk{o} \cap \mk{k}_u=\mk{o} \cap \mk{h}_{\mk{k}_u}$  is the orthogonal complement of $span\{W_{\gamma}\vert \gamma \in \Gamma_{\mk{k}}\}$ in $\mk{h}_{\mk{k}_u}$, which implies  $\mk{w}_{\mk{k}} \oplus \mk{o}_{\mk{k}_u} = \mk{h}_{\mk{k}_u}$, hence $ dim(\mk{h}_{\mk{k}})= dim(\mk{o}_{\mk{k}}) + \#(\Gamma_{\mk{k}})$. Besides $\mk{o}_{\mk{p}_u}=\mk{o} \cap \mk{p}_u=\mk{o} \cap \mk{h}_{\mk{p}_u}$ is the orthogonal complement of $span\{W_{\gamma}\vert \gamma \in \Gamma_{\mk{p}}\}$ in $\mk{h}_{\mk{p}_u}$.
Thus far, we showed  that
\begin{gather*} span_{\RR}\{W_{\gamma}\vert \gamma \in \Gamma \} \oplus \mk{o}_u = \mk{h}_u,\\  \mk{w}_{\mk{k}} \oplus \mk{o}_{\mk{k}_u} = \mk{h}_{\mk{k}_u}, \quad \mk{w}_{\mk{p}} \oplus \mk{o}_{ \mk{p}_u } = \mk{h}_{\mk{p}_u}. \end{gather*}
 These  equations together with $\mk{h}_u=\mk{h}_{\mk{k}_u} \oplus \mk{h}_{\mk{p}_u}$ imply $\mk{o}_u = \mk{o}_{\mk{k}_u} \oplus \mk{o}_{ \mk{p}_u}$. Therefore  $\mk{o}=\mk{o}_{\mk{k}}  \oplus \mk{o}_{\mk{p}}$.\epr

\begin{lemma} \label{edm1} The following integers are equal to each other:
\begin{itemize}
    \item[a)] $rank(\mk{g}) + srank(\mk{k}) - rank(\mk{k}) - srank(\mk{g})$
    \item[b)] $ dim(\mk{h}_{\mk{p}} ) - 2 \# (\Gamma_{\mk{p}})$
    \item[c)] $ dim(\mk{o}_{\mk{p}}) -  \# (\Gamma_{\mk{p}})$.
\end{itemize}

\end{lemma}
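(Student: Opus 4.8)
The three quantities are linear combinations of ranks and $\#(\Gamma)$-type invariants, so the plan is to rewrite each one using only the dimension identities already collected, chiefly those in Corollary~\ref{mk{o}=mk{o}_{mk{k}} oplus mk{o}_{mk{p}}}, and check that the three expressions coincide term by term. First I would recall the definition of $srank$ (Definition~\ref{srank}): since $\Gamma$, $\Gamma_{\mk{k}}$ are the stems of $\mk{g}$ and of $\mk{k}$ respectively (for $\mk{k}$ this is the last assertion of Lemma~\ref{pdp3}, applied to the substem $\Gamma_{\mk{k}}$), one has $srank(\mk{g}) = 2\#(\Gamma)$ and $srank(\mk{k}) = 2\#(\Gamma_{\mk{k}})$, provided the relevant semisimple parts are nonabelian; the abelian/edge cases where a stem is empty are handled by the convention $srank = 0$ and cause no trouble since then the corresponding $\#(\Gamma_\bullet)=0$ as well. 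Also $rank(\mk{g}) = \dim_\CC(\mk{h})$ and $rank(\mk{k}) = \dim_\CC(\mk{h}_{\mk{k}})$, because $\mk{h}$ is a Cartan subalgebra of $\mk{g}$ and $\mk{h}_{\mk{k}} = \mk{h}\cap\mk{k}$ is a Cartan subalgebra of $\mk{k}$ (this last point is implicit in the stemmed form \eqref{the mk{k}} of $\mk{k}$).

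With these substitutions, quantity (a) becomes
\begin{gather*}
\dim(\mk{h}) + 2\#(\Gamma_{\mk{k}}) - \dim(\mk{h}_{\mk{k}}) - 2\#(\Gamma).
\end{gather*}
Now I invoke Lemma~\ref{lemma for mk{p}}, which gives $\dim(\mk{h}) = \dim(\mk{h}_{\mk{k}}) + \dim(\mk{h}_{\mk{p}})$, so (a) collapses to $\dim(\mk{h}_{\mk{p}}) + 2\#(\Gamma_{\mk{k}}) - 2\#(\Gamma) = \dim(\mk{h}_{\mk{p}}) - 2\#(\Gamma_{\mk{p}})$, using $\#(\Gamma) = \#(\Gamma_{\mk{k}}) + \#(\Gamma_{\mk{p}})$ from $\Gamma = \Gamma_{\mk{k}}\sqcup\Gamma_{\mk{p}}$. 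This is exactly (b). Next, to pass from (b) to (c), I use the decomposition $\mk{h}_{\mk{p}} = \mk{o}_{\mk{p}} \oplus \mk{w}_{\mk{p}}^{\CC}$ from Corollary~\ref{mk{o}=mk{o}_{mk{k}} oplus mk{o}_{mk{p}}}, together with $\dim_\CC(\mk{w}_{\mk{p}}^{\CC}) = \#(\Gamma_{\mk{p}})$ (the vectors $W_\gamma$, $\gamma\in\Gamma_{\mk{p}}$, are linearly independent, being scalar multiples of the linearly independent coroots $H_\gamma$). Hence $\dim(\mk{h}_{\mk{p}}) = \dim(\mk{o}_{\mk{p}}) + \#(\Gamma_{\mk{p}})$, and therefore $\dim(\mk{h}_{\mk{p}}) - 2\#(\Gamma_{\mk{p}}) = \dim(\mk{o}_{\mk{p}}) - \#(\Gamma_{\mk{p}})$, which is (c). This completes the chain (a)$=$(b)$=$(c).

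The only genuinely delicate point — really the one place where care is needed rather than the obstacle it might appear to be — is the bookkeeping around $srank$ in the presence of abelian factors: one must be sure that each time I replace $srank$ by $2\#(\cdot)$ the sign and the abelian correction match up, and in particular that $srank(\mk{k}) = 2\#(\Gamma_{\mk{k}})$ is legitimate even when $\mk{k}$ has a large central part, which is fine because $srank$ only sees the semisimple part and its stem, and that stem is precisely $\Gamma_{\mk{k}}$ by Lemma~\ref{pdp3}. Everything else is a sequence of the orthogonal direct-sum decompositions already established, so no new structural input is required.
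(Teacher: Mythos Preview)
Your proof is correct and follows essentially the same approach as the paper: both rely on the dimension identities from Corollary~\ref{mk{o}=mk{o}_{mk{k}} oplus mk{o}_{mk{p}}} and Lemma~\ref{lemma for mk{p}}, together with $srank(\mk{g})=2\#(\Gamma)$ and $srank(\mk{k})=2\#(\Gamma_{\mk{k}})$. The only cosmetic difference is the order of the chain --- you prove (a)$=$(b)$=$(c), whereas the paper computes (c)$=$(a) and then (b)$=$(a) --- but the substance is identical.
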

\bpr By Corollary \ref{mk{o}=mk{o}_{mk{k}} oplus mk{o}_{mk{p}}}  we have $ dim(\mk{o})= dim(\mk{o}_{\mk{k}})+ dim(\mk{o}_{\mk{p}})$, besides $ dim(\mk{h})= dim(\mk{o}) + \#(\Gamma)$,   $ dim(\mk{h}\cap\mk{k})= dim(\mk{o}_{\mk{k}}) + \#(\Gamma_{\mk{k}})$. In the  calculations that follow we take into account these equalities
\begin{gather*}  dim(\mk{o}_{\mk{p}})-\#(\Gamma_{\mk{p}})=  dim(\mk{o})-  dim(\mk{o}_{\mk{k}})-\#(\Gamma_{\mk{p}})=  dim(\mk{h})- \#(\Gamma) \\-( dim(\mk{h}\cap\mk{k})- \#(\Gamma_{\mk{k}}))  -(\#(\Gamma) -\#(\Gamma_{\mk{k}}))
= rank(\mk{g})- rank(\mk{k}) - 2 \#(\Gamma) + 2 \#(\Gamma_{\mk{k}}) \\
=rank(\mk{g})- rank(\mk{k}) - srank(\mk{g}) + srank(\mk{k})
\end{gather*}
and the equality between $a)$ and $c)$ is proved. The equality between  a) and b) follows from
\begin{gather*}
dim(\mk{h}\cap\mk{p})-2  \#( \Gamma_{\mk{p}})= rank(\mk{g})-rank(\mk{k}) -2 ( \#(\Gamma) - \#(\Gamma_{\mk{k}}) )\nonumber \\ =
 rank(\mk{g})-rank(\mk{k})+ srank(\mk{k})-srank(\mk{g}).
 \end{gather*}
 \epr

Since on one hand $\mk{h}_{\mk{p}}\cap \mk{u}$ and $\mk{h}_{\mk{k}}\cap\mk{u}$ are orthogonal to each other   and on the other hand $H_{\gamma} \in \mk{h}_{\mk{p}}$ for $\gamma \in \Gamma_{\mk{p}}$ (see Lemma \ref{ggg1}), $H_{\alpha} \in \mk{h}_{\mk{k}}$ for $\alpha \in \Delta_{\mk{k}}$ then one can easily verify that for any choice of $\gamma \in \Gamma_{\mk{p}},\ \alpha \in \Delta_{\mk{k}}$ we have
\be \label{gmg8}
\gamma_{\vert \mk{h}_{\mk{k}}} = 0; \qquad \qquad  \alpha_{\vert \mk{h}_{\mk{p}}} = 0.
\ee

We recall that   $ rank(\mk{g}) + srank(\mk{k}) - rank(\mk{k})- srank(\mk{g}) \geq 0 $, hence, by Lemma \ref{edm1}    we may write
\be \label{dim(mk{h}_{mk{m}})=2...}  dim(\mk{h}_{\mk{p}}) \geq 2 \#(\Gamma_{\mk{p}}) \qquad  dim(\mk{o}_{\mk{p}}) \geq \#(\Gamma_{\mk{p}}). \ee

Let us give some comments on the dimension of $\mk{o}_{\mk{p}}$. First of all  by \eqref{dim(mk{h}_{mk{m}})=2...}  we have  $ dim(\mk{o}_{\mk{p}}) \geq \#(\Gamma_{\mk{p}})$, therefore $ dim_\RR (\mk{o}_{\mk{p}_u}) \geq \#(\Gamma_{\mk{p}})$.
 By \eqref{wcb3} we see that  $\# (\Delta_{\mk{p}}^+) + \# (\Gamma_{\mk{p}}) $ is  even (for any $\gamma \in \Gamma$ the set $\Phi_{\gamma}^+$ contains of even number of elements).
From \eqref{decomposition of mk{m}} it follows that $ dim(\mk{o}_{\mk{p}}) - \#(\Gamma_{\mk{p}})=  dim(\mk{p}) - 2 \#(\Gamma_{\mk{p}}) - 2 \#(\Delta_{\mk{p}}^+)= dim(\mk{p}) - 2 ( \#(\Gamma_{\mk{p}}) + \#(\Delta_{\mk{p}}^+) )$. Since $ dim(\mk{p})$ is divisible by four as well as $ 2 ( \#(\Gamma_{\mk{p}}) + \#(\Delta_{\mk{p}}^+) )$, then  $ dim(\mk{o}_{\mk{p}}) - \#(\Gamma_{\mk{p}})$ is divisible by four. Therefore $ dim(\mk{o}_{\mk{p}_u}) - \#(\Gamma_{\mk{p}})$ is divisible by four and we can decompose $\mk{o}_{\mk{p}_u} $ as follows
\be \label{mk{o}_{mk{p}_0} = mk{o}_{mk{p}_0}^{a}} \mk{o}_{\mk{p}_u} = \mk{j}_{\mk{p}_u} \oplus  \mk{z}_{\mk{p}} \qquad \qquad  dim_\RR (\mk{z}_{\mk{p}})= \#(\Gamma_{\mk{p}}), \ \ \   dim_\RR(\mk{j}_{\mk{p}_u})/4 \in \NN. \ee We complexify this decomposition and obtain
\be \label{mk{o}=mk{j}_{mk{p}} oplus mk{o}} \mk{o}_{\mk{p}} = \mk{j}_{\mk{p}} \oplus  \mk{z}_{\mk{p}}^\CC \qquad \qquad  dim_\CC(\mk{z}_{\mk{p}}^\CC)= \#(\Gamma_{\mk{p}}), \ \ \   dim(\mk{j}_{\mk{p}})/4 \in \NN. \ee

The following conditions for integrability are obvious (and well known):

\begin{prop} \label{lfim} Let $C : \mk{p}_u \rightarrow \mk{p}_u$ be a complex structure on $\mk{p}_u$. Let $C: \mk{p} \rightarrow \mk{p}$ be the complexification of $C$. Let $\mk{a}^+$ and $\mk{a}^-$ be the $\ri$ and $(-\ri)$-eigen subspaces of $C$ in $\mk{p}$. Then the following conditions are equivalent
\begin{gather*}  \forall X, Y \in \mk{p}_u \ \ [C(X),C(Y)]_{\mk{p}_u}-[X,Y]_{\mk{p}_u}-C([C(X),Y]_{\mk{p}_u})-C([X,C(Y)]_{\mk{p}_u})=0;\\
  \forall X, Y \in \mk{a}^\pm \qquad
 [X,Y]_{\mk{p}} \in \mk{a}^\pm .
 \end{gather*}
\end{prop}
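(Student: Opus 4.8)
The statement to prove is Proposition \ref{lfim}: the equivalence of the Nijenhuis-vanishing condition for a complex structure $C$ on $\mk{p}_u$ with the involutivity (closure under bracket) of its $\pm\ri$-eigenspaces $\mk{a}^\pm$ in the complexification $\mk{p}$. This is the homogeneous-space analogue of the classical Newlander--Nirenberg-type reformulation, and since $C$ is a \emph{linear} endomorphism of a fixed vector space (not a tensor field on a manifold), no hard analysis is needed — the proof is purely algebraic, and the whole point is to record it precisely in the notation of the paper.

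The plan is as follows. First I would introduce the (a priori $\mk{p}$-valued) Nijenhuis expression
\[
N_C(X,Y) = [C X, C Y]_{\mk{p}} - [X,Y]_{\mk{p}} - C([C X,Y]_{\mk{p}}) - C([X,C Y]_{\mk{p}}),
\]
and observe that $N_C$ is $\CC$-bilinear and skew-symmetric on $\mk{p}$, so it is determined by its values on pairs of vectors each taken from $\mk{a}^+$ or $\mk{a}^-$; moreover $N_C$ is real, i.e.\ it maps $\mk{p}_u\times\mk{p}_u$ into $\mk{p}_u$ and commutes with conjugation $\tau$, so $N_C$ vanishes on $\mk{p}_u\times\mk{p}_u$ iff it vanishes on $\mk{p}\times\mk{p}$. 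The second displayed condition in the statement is exactly the vanishing of $N_C$ on $\mk{a}^+\times\mk{a}^+$ (hence, by conjugation, also on $\mk{a}^-\times\mk{a}^-$).

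Next I would compute $N_C$ on the three types of pairs. For $X,Y\in\mk{a}^+$ we have $CX=\ri X$, $CY=\ri Y$, so
\[
N_C(X,Y) = [\ri X,\ri Y]_{\mk{p}} - [X,Y]_{\mk{p}} - C([\ri X,Y]_{\mk{p}}) - C([X,\ri Y]_{\mk{p}}) = -2[X,Y]_{\mk{p}} - 2\ri\, C([X,Y]_{\mk{p}}),
\]
so $N_C(X,Y)=0$ iff $C([X,Y]_{\mk{p}}) = \ri [X,Y]_{\mk{p}}$, i.e.\ iff $[X,Y]_{\mk{p}}\in\mk{a}^+$. Similarly $N_C$ vanishes on $\mk{a}^-\times\mk{a}^-$ iff $\mk{a}^-$ is bracket-closed (which is the conjugate statement). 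Finally, for $X\in\mk{a}^+$, $Y\in\mk{a}^-$ one checks directly that $N_C(X,Y)=0$ identically: with $CX=\ri X$, $CY=-\ri Y$ the four terms become $[\ri X,-\ri Y]_{\mk{p}} - [X,Y]_{\mk{p}} - C([\ri X,Y]_{\mk{p}}) - C([X,-\ri Y]_{\mk{p}}) = [X,Y]_{\mk{p}} - [X,Y]_{\mk{p}} - \ri C([X,Y]_{\mk{p}}) + \ri C([X,Y]_{\mk{p}}) = 0$. Hence $N_C\equiv 0$ on $\mk{p}\times\mk{p}$ is equivalent to $N_C$ vanishing on $\mk{a}^+\times\mk{a}^+$ alone, which by the computation above is equivalent to $[\mk{a}^+,\mk{a}^+]_{\mk{p}}\subset\mk{a}^+$; and since everything is $\tau$-stable this is in turn equivalent to the first (real) condition. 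Combining the three cases finishes the proof.

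There is essentially no serious obstacle here; the only point requiring a word of care is the reduction from the real condition on $\mk{p}_u$ to the $\CC$-bilinear condition on $\mk{p}$ — namely that a real bilinear map extended $\CC$-bilinearly vanishes on the real form iff it vanishes on the complexification — and the bookkeeping that the two displayed conditions refer to $\mk{a}^+$ and $\mk{a}^-$ symmetrically (so that one need only impose closure of $\mk{a}^+$, the closure of $\mk{a}^-=\tau(\mk{a}^+)$ being automatic). I would state these two remarks explicitly and otherwise let the three short bracket computations speak for themselves.
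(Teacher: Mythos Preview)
Your proof is correct and is exactly the standard algebraic argument; the paper itself omits the proof entirely, remarking only that the conditions are ``obvious (and well known),'' so there is nothing to compare against. Your three eigenspace computations and the reality/$\tau$-equivariance remark are precisely what one would write if spelling this out.
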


\begin{prop} \label{mkm4} Let $\kappa \in {\bf Aut}(\mk{g})$ be an automorphism of Lie algebras, such that $\kappa(\mk{p})=\mk{p}$, $\kappa(\mk{k})=\mk{k}$. Let $\mk{a} \subset \mk{p}$ be a subspace, such that
 for $ X, Y \in \mk{a}$ we have $
 [X,Y]_{\mk{p}} \in \mk{a} $.  Then
 \begin{gather*} \forall X, Y \in \kappa(\mk{a}) \qquad
 [X,Y]_{\mk{p}} \in \kappa(\mk{a}). \end{gather*}
\end{prop}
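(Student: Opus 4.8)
The plan is to show that the hypothesis on $\mk{a}$, namely $[X,Y]_{\mk{p}}\in\mk{a}$ for $X,Y\in\mk{a}$, can be restated intrinsically — that is, without reference to the basis adapted to the stem — and that this intrinsic formulation is manifestly preserved by any automorphism $\kappa$ that stabilizes both $\mk{k}$ and $\mk{p}$. The key observation is that $\mk{g}=\mk{k}\oplus\mk{p}$ is a reductive decomposition: since $\mk{k}$ is a subalgebra, $[\mk{k},\mk{p}]\subset\mk{p}$ follows from $\mathrm{ad}$-invariance of $\scal{,}$ together with $\mk{p}=\mk{k}^{\perp}$, so the projection $X\mapsto X_{\mk{p}}$ along $\mk{k}$ is exactly the orthogonal projection onto $\mk{p}$. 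Consequently, for $X,Y\in\mk{p}$ the element $[X,Y]_{\mk{p}}$ is the $\scal{,}$-orthogonal projection of the honest bracket $[X,Y]\in\mk{g}$ onto $\mk{p}$.

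The second ingredient is that $\kappa$ respects orthogonal projection onto $\mk{p}$. One needs to know that an automorphism of Lie algebras stabilizing $\mk{k}$ and $\mk{p}$ is, in the relevant sense, compatible with $\scal{,}$. The cleanest route: since $\kappa(\mk{k})=\mk{k}$ and $\kappa(\mk{p})=\mk{p}$ and $\mk{g}=\mk{k}\oplus\mk{p}$, the projection $\pi_{\mk{p}}$ onto $\mk{p}$ along $\mk{k}$ satisfies $\kappa\circ\pi_{\mk{p}}=\pi_{\mk{p}}\circ\kappa$ purely as a consequence of $\kappa$ preserving the direct sum decomposition — no metric is needed at all, because $[X,Y]_{\mk{p}}$ was defined as the $\mk{p}$-component relative to the decomposition $\mk{g}=\mk{k}\oplus\mk{p}$, not via the metric. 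So in fact the argument is: for $X',Y'\in\kappa(\mk{a})$ write $X'=\kappa(X)$, $Y'=\kappa(Y)$ with $X,Y\in\mk{a}$; then
\begin{gather*}
[X',Y']_{\mk{p}}=\pi_{\mk{p}}([\kappa(X),\kappa(Y)])=\pi_{\mk{p}}(\kappa([X,Y]))=\kappa(\pi_{\mk{p}}([X,Y]))=\kappa([X,Y]_{\mk{p}})\in\kappa(\mk{a}),
\end{gather*}
using in turn that $\kappa$ is a Lie algebra homomorphism and that $\kappa$ commutes with $\pi_{\mk{p}}$.

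So the three steps are: (i) record that $\pi_{\mk{p}}$, the projection onto $\mk{p}$ along $\mk{k}$, is well defined because $\mk{g}=\mk{k}\oplus\mk{p}$; (ii) observe $\kappa\circ\pi_{\mk{p}}=\pi_{\mk{p}}\circ\kappa$ from $\kappa(\mk{k})=\mk{k}$, $\kappa(\mk{p})=\mk{p}$; (iii) run the four-term chain above. The only genuinely substantive point — and thus where a reader might want a sentence of justification rather than a bare assertion — is step (ii): if one insists on viewing $[\cdot,\cdot]_{\mk{p}}$ as the orthogonal projection with respect to $\scal{,}$ (rather than as the algebraic projection along $\mk{k}$), then one must argue that $\kappa$ sends $\mk{p}=\mk{k}^{\perp}$ to $\kappa(\mk{k})^{\perp}=\mk{k}^{\perp}=\mk{p}$, which uses $\scal{,}$-compatibility of $\kappa$, or else simply note that the two descriptions of $[\cdot,\cdot]_{\mk{p}}$ coincide so one may use whichever is convenient. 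I expect the main obstacle is purely expository: making clear that no new hypotheses on $\kappa$ (beyond those already assumed) are needed, and that the statement is essentially the functoriality of the bracket modulo $\mk{k}$ under automorphisms stabilizing $\mk{k}$.
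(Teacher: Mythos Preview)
Your proof is correct and is exactly the natural argument; the paper in fact states this proposition without proof, treating it as obvious, so your write-up simply makes explicit the one-line computation $\kappa\circ\pi_{\mk{p}}=\pi_{\mk{p}}\circ\kappa$ that the authors left to the reader.
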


\subsubsection{The complex structure $I$}  We define  a complex structure $I: \mk{p}_u  \rightarrow \mk{p}_u$ (with complexification $I: \mk{p} \rightarrow \mk{p}$) by \eqref{mk{o}_{mk{p}_0} = mk{o}_{mk{p}_0}^{a}} and \eqref{wcb2})
\begin{gather}   I( \mk{w}_{\mk{p}}) = \mk{z}_{\mk{p}} \qquad
\label{afi4} I(\mk{j}_{\mk{p}_u})= \mk{j}_{\mk{p}_u} \qquad I(\mk{j}_{\mk{p}})= \mk{j}_{\mk{p}}\\
 \forall \alpha \in \Delta_{\mk{p}}^+ \qquad   I(E_{\alpha}) =  \ri E_{\alpha}, \quad I(E_{-\alpha}) =-  \ri E_{-\alpha}.  \nonumber \end{gather}

We can rewrite  \eqref{decomposition of mk{m}} and \eqref{the mk{k}}, using Corollary \ref{mk{o}=mk{o}_{mk{k}} oplus mk{o}_{mk{p}}} and \eqref{mk{o}=mk{j}_{mk{p}} oplus mk{o}},  as follows:
\be \label{gmg6} & & \mk{p} = \mk{j}_{\mk{p}} \oplus \sum_{\gamma \in \Gamma_{\mk{p}}} (\mc{V}_{\gamma} \oplus gl_{\gamma}(2)) \qquad \qquad \mk{k} = \mk{o}_{\mk{k}} \oplus \sum_{\gamma \in \Gamma_{\mk{k}}} (\mc{V}_{\gamma} \oplus sl_{\gamma}(2)), \ee
where  $gl_\gamma(2)$ is as in Definition \ref{Z_gamma,P_gamma}.

We have
 \be \label{mk{m}_1^n}
 \mk{p}_s^+ &=&  \sum_{\gamma \in \Gamma_{\mk{p}}}(\CC P_\gamma + \CC E_\gamma + {\mc V}_\gamma^+ )  \\ \label{mk{m}_2^n}
   \mk{p}_s^- &=&   \sum_{\gamma \in \Gamma_{\mk{p}}}(\CC Q_\gamma + \CC E_{-\gamma} + {\mc V}_\gamma^- ) = \tau(\mk{p}_s^+) \\
 \label{mk{m}^n} \mk{p}_s&=& \mk{p}_s^+ \oplus \mk{p}_s^- \ \   \Rightarrow   \ \ \mk{p} = \mk{j}_{\mk{p}} \oplus \mk{p}_s, \ \  \mk{g} = \mk{j}_{\mk{p}} \oplus \mk{p}_s \oplus \mk{k}.  \ee
 Obviously for any choice of $X, Y \in \mk{p}_s^\pm $ we have $[X,Y]_{\mk{p}}  \in \mk{p}_s^\pm$. We have proved:
 \begin{prop} \label{prop for mk{p}_s oplus mk{a}} Let $\mk{a} \subset \mk{j}_{\mk{p}}$ be an arbitrary subspace of $\mk{j}_{\mk{p}}$. Then for any $X, Y \in \mk{p}_s^\pm + \mk{a}$ we have $[X,Y]_{\mk{p}} \in \mk{p}_s^\pm + \mk{a}$.
  \end{prop}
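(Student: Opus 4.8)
The plan is to observe that $\mk{p}_s^{+}+\mk{a}$ lies in the Borel subalgebra $\mk{b}^{+}$ and $\mk{p}_s^{-}+\mk{a}$ in $\mk{b}^{-}$, so that every bracket of two such elements lands in $\mk{n}^{+}$, respectively $\mk{n}^{-}$; the statement then reduces to the fact that the positive, respectively negative, root spaces split — according to their stem index — into a part lying in $\mk{k}$ and a part lying in $\mk{p}_s^{+}$, respectively $\mk{p}_s^{-}$. Throughout, $[X,Y]_{\mk{p}}$ denotes the component of $[X,Y]$ along $\mk{p}$ in the splitting $\mk{g}=\mk{k}\oplus\mk{p}$.

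First I would verify $\mk{p}_s^{+}+\mk{a}\subset\mk{b}^{+}$. For $\gamma\in\Gamma_{\mk{p}}$ one has $Z_\gamma=IW_\gamma\in I(\mk{w}_{\mk{p}})=\mk{z}_{\mk{p}}\subset\mk{o}_{\mk{p}}\subset\mk{o}\subset\mk{h}$ by \eqref{afi4} and \eqref{mk{o}=mk{j}_{mk{p}} oplus mk{o}} (this is precisely where the matching condition \eqref{nsc}, i.e.\ $I\mk{w}\subset\mk{o}$, enters), whence $P_\gamma=W_\gamma-\ri Z_\gamma\in\mk{h}$; since also $E_\gamma\in\mk{g}(\gamma)\subset\mk{n}^{+}$ and $\mc{V}_\gamma^{+}\subset\mk{n}^{+}$ for $\gamma\in\Gamma_{\mk{p}}$, the formula \eqref{mk{m}_1^n} defining $\mk{p}_s^{+}$ gives $\mk{p}_s^{+}\subset\mk{h}\oplus\mk{n}^{+}=\mk{b}^{+}$, while $\mk{a}\subset\mk{j}_{\mk{p}}\subset\mk{o}\subset\mk{h}\subset\mk{b}^{+}$. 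Since $[\mk{b}^{+},\mk{b}^{+}]\subset\mk{n}^{+}$ (because $[\mk{h},\mk{h}]=0$, $[\mk{h},\mk{n}^{+}]\subset\mk{n}^{+}$ and $[\mk{n}^{+},\mk{n}^{+}]\subset\mk{n}^{+}$), it follows that $[X,Y]\in\mk{n}^{+}=\bigoplus_{\alpha\in\Delta^{+}}\mk{g}(\alpha)$ for all $X,Y\in\mk{p}_s^{+}+\mk{a}$.

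Next I would split $\mk{n}^{+}$ along $\Delta^{+}=\Delta_{\mk{k}}^{+}\sqcup\Delta_{\mk{p}}^{+}$, which is a genuinely disjoint union because $\Gamma=\Gamma_{\mk{k}}\sqcup\Gamma_{\mk{p}}$ and the sets $\Phi_\gamma^{+}\cup\{\gamma\}$, $\gamma\in\Gamma$, partition $\Delta^{+}$ by \eqref{pdu1}. Putting $N_{\mk{k}}=\bigoplus_{\alpha\in\Delta_{\mk{k}}^{+}}\mk{g}(\alpha)=\sum_{\gamma\in\Gamma_{\mk{k}}}(\CC E_\gamma+\mc{V}_\gamma^{+})$ and $N_{\mk{p}}=\bigoplus_{\alpha\in\Delta_{\mk{p}}^{+}}\mk{g}(\alpha)=\sum_{\gamma\in\Gamma_{\mk{p}}}(\CC E_\gamma+\mc{V}_\gamma^{+})$, so that $\mk{n}^{+}=N_{\mk{k}}\oplus N_{\mk{p}}$, the decompositions \eqref{gmg6} of $\mk{k}$ and $\mk{p}$ give $N_{\mk{k}}\subset\mk{k}$ and $N_{\mk{p}}\subset\mk{p}_s^{+}$. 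Writing $[X,Y]=n_{\mk{k}}+n_{\mk{p}}$ accordingly, one has $n_{\mk{k}}\in\mk{k}$ and $n_{\mk{p}}\in N_{\mk{p}}\subset\mk{p}$, so uniqueness of the decomposition $\mk{g}=\mk{k}\oplus\mk{p}$ forces $[X,Y]_{\mk{p}}=n_{\mk{p}}\in N_{\mk{p}}\subset\mk{p}_s^{+}\subset\mk{p}_s^{+}+\mk{a}$. The case of $\mk{p}_s^{-}$ is obtained by the identical argument with $\mk{b}^{+},\mk{n}^{+},\Delta^{+}$ replaced by $\mk{b}^{-},\mk{n}^{-},\Delta^{-}$ and \eqref{mk{m}_1^n} replaced by \eqref{mk{m}_2^n}.

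I do not expect a real obstacle. The two points that need care are the inclusion $\mk{p}_s^{+}\subset\mk{b}^{+}$ — which rests on the matching condition \eqref{nsc} placing $I\mk{w}$ inside $\mk{o}\subset\mk{h}$, so that the Cartan summands $\CC P_\gamma$ of $\mk{p}_s^{+}$ really lie in $\mk{h}$ — and the identifications $N_{\mk{k}}\subset\mk{k}$ and $N_{\mk{p}}\subset\mk{p}_s^{+}$, which are just a reading off of \eqref{gmg6} together with the disjointness of $\Delta_{\mk{k}}^{+}$ and $\Delta_{\mk{p}}^{+}$. The arbitrary subspace $\mk{a}\subset\mk{j}_{\mk{p}}\subset\mk{h}$ costs nothing, being absorbed into $\mk{b}^{\pm}$ from the outset; in fact the same computation yields the slightly sharper $[\mk{p}_s^{\pm}+\mk{a},\,\mk{p}_s^{\pm}+\mk{a}]_{\mk{p}}\subset\mk{p}_s^{\pm}$.
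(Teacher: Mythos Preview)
Your proof is correct. The paper does not actually give a proof here --- it simply asserts ``Obviously for any choice of $X, Y \in \mk{p}_s^\pm$ we have $[X,Y]_{\mk{p}} \in \mk{p}_s^\pm$'' immediately before stating the proposition --- and your argument supplies precisely the clean verification the paper omits: the inclusion $\mk{p}_s^{+}+\mk{a}\subset\mk{b}^{+}$ reduces everything to the root-space splitting $\mk{n}^{+}=N_{\mk{k}}\oplus N_{\mk{p}}$ with $N_{\mk{k}}\subset\mk{k}$ and $N_{\mk{p}}\subset\mk{p}_s^{+}$. One small remark: your appeal to \eqref{nsc} is slightly misplaced in this section, since here $I$ is being \emph{constructed} and $I(\mk{w}_{\mk{p}})=\mk{z}_{\mk{p}}\subset\mk{o}_{\mk{p}}$ holds by the definition \eqref{afi4} together with \eqref{mk{o}_{mk{p}_0} = mk{o}_{mk{p}_0}^{a}}, not as a consequence of the necessary condition \eqref{nsc} from the earlier paper; but this does not affect the logic.
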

If we denote by $\mk{p}^+$ and $\mk{p}^-$ the $ \ri$ and $(-\ri)$-eigen spaces of $I$, then  (see \eqref{afi4})
$ \mk{p}^\pm = \mk{p}_s^\pm + (\mk{p}^\pm \cap \mk{j}_{\mk{p}})$ and from Proposition \ref{prop for mk{p}_s oplus mk{a}} it follows
\be \label{[X,Y]_{mk{m}} in mk{m}_iI} \forall X, Y \in \mk{p}^\pm \qquad [X,Y]_{\mk{p}}  \in \mk{p}^\pm.  \ee

\subsubsection{The complex structure $J$}
For each $\gamma\in\Gamma$ we have an inner automorphism\footnote{Rather a circle of automorphisms depending on the parameter $\rho_\gamma$ (see Definition \ref{X_gamma etc})} of $\mk{g}$
\be \label{phi_p}
 {\bf c}_\gamma = \exp\left(\frac{\pi}{2} ad X_\gamma \right ) \qquad \qquad \gamma \in \Gamma,
\ee
where $X_\gamma$ is given in Definition \ref{X_gamma etc}.

We recall (see \cite{DimTsan10}) some properties of the inner automorphisms ${\bf c}_\gamma$.
\begin{lemma} \label{props of phi_gamma} Let $\gamma, \delta \in \Gamma$. Let $\alpha \in \Phi_{\delta}^+$ and $H \in \mk{h}$.  Then

\begin{gather}
\nonumber  {\bf c}_\gamma \circ \tau = \tau \circ {\bf c}_\gamma,\quad {\bf c}_\gamma \circ {\bf c}_\delta = {\bf c}_\delta \circ {\bf c}_\gamma; \\
\label{prop of phi_gamma 1}    \delta \neq \gamma \ \ \   \Rightarrow \ \ \ \ {\bf c}_\gamma({\mc V}_{\delta}^+)= {\mc V}_{\delta}^+  \\
 \label{prop of phi_gamma 2}    \delta = \gamma \ \ \ \Rightarrow \ \ \  {\bf c}_\gamma(E_{\alpha})= \frac{\sqrt{2}}{2} \left ( E_{\alpha} + \overline{\rho_\gamma} N_{\gamma, - \alpha} E_{\alpha -\gamma} \right )  \\
\label{prop of phi_gamma 4}   {\bf c}_\gamma(E_{\gamma}) =
E_{\gamma}+ \ri  \overline{\rho_\gamma}(Y_\gamma-W_{\gamma})= \overline{\rho_\gamma}(X_\gamma- \ri W_{\gamma})\\
\label{prop of phi_gamma 5}  {\bf c}_\gamma(H) =
H+ \ri \gamma(H) (Y_\gamma+W_{\gamma}).
\end{gather}
\end{lemma}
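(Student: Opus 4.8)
The plan is to prove every assertion by a direct computation in the Weyl--Chevalley basis \eqref{wcb}, using two elementary facts: the brackets among $X_\gamma,Y_\gamma,W_\gamma$ are $[X_\gamma,Y_\gamma]=W_\gamma$, $[Y_\gamma,W_\gamma]=X_\gamma$, $[W_\gamma,X_\gamma]=Y_\gamma$ (which follows from \eqref{wcb}, $|\rho_\gamma|=1$ and $\gamma(H_\gamma)=2$), and that $\mathrm{ad}\,X_\gamma$ shifts a root space $\mk{g}(\alpha)$ only into $\mk{g}(\alpha+\gamma)\oplus\mk{g}(\alpha-\gamma)$. From the first fact, $\mathrm{ad}\,X_\gamma$ kills $X_\gamma$ and rotates the plane $\CC Y_\gamma\oplus\CC W_\gamma$ by $Y_\gamma\mapsto W_\gamma\mapsto -Y_\gamma$, so ${\bf c}_\gamma$ fixes $X_\gamma$ and sends $Y_\gamma\mapsto W_\gamma$, $W_\gamma\mapsto -Y_\gamma$. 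Writing $E_\gamma=\overline{\rho_\gamma}(X_\gamma-\ri Y_\gamma)$ and $H_\gamma=-2\ri W_\gamma$ then gives \eqref{prop of phi_gamma 4} at once; and decomposing an arbitrary $H\in\mk{h}$ as $H_0+\tfrac12\gamma(H)H_\gamma$ with $H_0\in\ker(\gamma)$ — on which $\mathrm{ad}\,X_\gamma$ vanishes since $[E_{\pm\gamma},H_0]=\mp\gamma(H_0)E_{\pm\gamma}=0$ — gives \eqref{prop of phi_gamma 5}. Finally, $X_\gamma\in\mk{u}$ is $\tau$-fixed and $\tfrac{\pi}{2}$ is real, so $\tau$ commutes term by term with $\exp(\tfrac{\pi}{2}\mathrm{ad}\,X_\gamma)$, which yields ${\bf c}_\gamma\circ\tau=\tau\circ{\bf c}_\gamma$.

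For ${\bf c}_\gamma\circ{\bf c}_\delta={\bf c}_\delta\circ{\bf c}_\gamma$ with $\gamma\ne\delta$ it suffices that $[X_\gamma,X_\delta]=0$. Expanding, this bracket is a combination of the spaces $\mk{g}(\pm\gamma\pm\delta)$; if $\gamma,\delta$ are comparable, say $\delta\prec\gamma$, then \eqref{pdep 1} applied with $\alpha=\gamma$ gives $\gamma\pm\delta\notin\Delta$, and if they are incomparable then \eqref{pdep 6} gives the same, so the bracket vanishes and the exponentials commute. For \eqref{prop of phi_gamma 1}, fix $\delta\ne\gamma$ and $\alpha\in\Phi_\delta^+$; it is enough to show $\mathrm{ad}\,X_\gamma\bigl(\mk{g}(\alpha)\bigr)\subset\mc{V}_\delta^+$, for then $\mc{V}_\delta^+$ is ${\bf c}_\gamma$-invariant and, ${\bf c}_\gamma$ being invertible, ${\bf c}_\gamma(\mc{V}_\delta^+)=\mc{V}_\delta^+$. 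The image lies in $\mk{g}(\alpha+\gamma)\oplus\mk{g}(\alpha-\gamma)$, and: if $\delta\succ\gamma$ then $\alpha\pm\gamma\notin\Delta$ by \eqref{pdep 1}; if $\delta\prec\gamma$ then by \eqref{pdep 00} each of $\alpha\pm\gamma$, whenever it is a root, lies in $\Phi_\delta^+$; and if $\gamma,\delta$ are incomparable then $\alpha\pm\gamma\notin\Delta$ by \eqref{pdep 6}. In every case the image is in $\mc{V}_\delta^+$.

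The substantive point is \eqref{prop of phi_gamma 2}. Take $\gamma\in\Gamma$ and $\alpha\in\Phi_\gamma^+$. By \eqref{pdep 2}, $\alpha+\gamma\notin\Delta$ and $\alpha-\gamma\in\Delta^-$; moreover $\gamma-\alpha\in\Phi_\gamma^+$, so \eqref{pdep 2} applied to $\gamma-\alpha$ gives $2\gamma-\alpha\notin\Delta$, i.e.\ $\alpha-2\gamma\notin\Delta$. Hence the $\gamma$-string through $\alpha$ is $\{\alpha-\gamma,\alpha\}$, $\alpha(H_\gamma)=1$, and $\mathrm{ad}\,X_\gamma$ preserves the plane $\mk{g}(\alpha)\oplus\mk{g}(\alpha-\gamma)$ with $E_\alpha\mapsto\tfrac12\overline{\rho_\gamma}N_{\gamma,-\alpha}E_{\alpha-\gamma}$ and $E_{\alpha-\gamma}\mapsto\tfrac12\rho_\gamma N_{\gamma,\alpha-\gamma}E_\alpha$ (using $N_{-\gamma,\alpha}=-N_{\gamma,-\alpha}$, which comes from $\tau(E_\beta)=-E_{-\beta}$ and integrality of the $N$'s). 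Evaluating $[E_{-\gamma},[E_\gamma,E_{\alpha-\gamma}]]$ by the Jacobi identity, using $(\alpha-\gamma)(H_\gamma)=-1$ and $[E_{-\gamma},E_{\alpha-\gamma}]=0$, one obtains $N_{\gamma,\alpha-\gamma}N_{-\gamma,\alpha}=1$, hence $N_{\gamma,-\alpha}N_{\gamma,\alpha-\gamma}=-1$ and $(\mathrm{ad}\,X_\gamma)^2=-\tfrac14$ on that plane. Consequently $\exp(\tfrac{\pi}{2}\mathrm{ad}\,X_\gamma)=\cos\tfrac{\pi}{4}+2\sin\tfrac{\pi}{4}\,\mathrm{ad}\,X_\gamma=\tfrac{\sqrt2}{2}\bigl(1+2\,\mathrm{ad}\,X_\gamma\bigr)$ there, and applying this to $E_\alpha$ produces exactly $\tfrac{\sqrt2}{2}\bigl(E_\alpha+\overline{\rho_\gamma}N_{\gamma,-\alpha}E_{\alpha-\gamma}\bigr)$. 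The only step that requires more than bookkeeping is the normalization identity $N_{\gamma,-\alpha}N_{\gamma,\alpha-\gamma}=-1$, which is precisely what forces the coefficient $\tfrac{\sqrt2}{2}$; I expect it to be the main obstacle, and it is dispatched by the string-and-Jacobi argument just sketched.
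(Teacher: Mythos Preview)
Your proof is correct and complete. The paper itself does not prove this lemma but merely recalls it from \cite{DimTsan10}; your direct computation in the Weyl--Chevalley basis, including the Jacobi/string argument yielding $N_{\gamma,-\alpha}N_{\gamma,\alpha-\gamma}=-1$ (which the paper elsewhere cites as Proposition 2.27 of \cite{DimTsan10}), is exactly the expected verification.
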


Let us define the {\bf Cayley transform}
\be \label{theta_{mk{m}}}
 {\bf c}= \prod_{\gamma \in \Gamma} {\bf c}_\gamma,
\ee
where multiplication is the composition of automorphisms (which commute).

By an easy computation, using Lemma \ref{props of phi_gamma}, one can derive (we recall that for $\gamma \in \Gamma_{\mk{p}}$ we denote  $  P_\gamma = W_\gamma - \ri I W_\gamma $, $  Q_\gamma = W_\gamma + \ri I W_\gamma $ and that $IW_\gamma \in \mk{o}$)
\be
 \forall \gamma \in \Gamma \qquad {\bf c}(\mc{V}_\gamma^+) = \sum_{\alpha \in \Phi_{\gamma}^+} \CC(E_{\alpha} + \overline{\rho_\gamma} N_{\gamma, - \alpha} E_{\alpha -\gamma} )  \qquad  {\bf c}(sl_\gamma(2)) = sl_\gamma(2) \nonumber\\[-2mm] \label{gmg7}   \\[-2mm]
\forall \gamma \in \Gamma_{\mk{p}} \qquad  {\bf c}(span\{ P_\gamma, E_\gamma \}) =span\{ E_\gamma - \ri \ol{\rho_\gamma}Q_\gamma, P_\gamma - \ri \ol{\rho_\gamma} E_{-\gamma}\},  \nonumber
 \ee
  hence by \eqref{gmg6} we obtain immediately
\be \label{gmg5}
{\bf c}(\mk{k}) = \mk{k}, \qquad {\bf c}(\mk{p}) = \mk{p}.
\ee
Furthermore, using \eqref{gmg7} we obtain (see \eqref{mk{m}_1^n} also)

\be
{\bf c}(\mk{p}_s^+)&=& \sum_{\gamma \in \Gamma_{\mk{p}}}\sum_{\alpha \in \Phi_{\gamma}^+} \CC(E_{\alpha} + \overline{\rho_\gamma} N_{\gamma, - \alpha} E_{\alpha -\gamma} ) \nonumber \\[-2mm] \label{theta_{mk{m}}(mk{m}_1^n)} \\[-2mm] & &+ \sum_{\gamma \in \Gamma_{\mk{p}}} \CC(E_\gamma - \ri \ol{\rho_\gamma}Q_\gamma) + \sum_{\gamma \in \Gamma_{\mk{p}}}\CC(P_\gamma - \ri \ol{\rho_\gamma} E_{-\gamma}). \nonumber \ee
Now using $\mk{p}_s^- = \tau(\mk{p}_s^+)$ and $\tau \circ {\bf c} = {\bf c} \circ \tau $ we see that
\be {\bf c}(\mk{p}_s^-)&=& \sum_{\gamma \in \Gamma_{\mk{p}}}\sum_{\alpha \in \Phi_{\gamma}^+} \CC(E_{-\alpha} + \rho_\gamma N_{\gamma, - \alpha} E_{-\alpha +\gamma} ) \nonumber \\[-2mm] \label{theta_{mk{m}}(mk{m}_2^n)} \\[-2mm] & &+ \sum_{\gamma \in \Gamma_{\mk{p}}} \CC(E_{-\gamma} - \ri \rho_\gamma P_\gamma) + \sum_{\gamma \in \Gamma_{\mk{p}}}\CC(Q_\gamma - \ri \rho_\gamma E_{\gamma}). \nonumber \ee

Now we  define  a complex structure $J: \mk{p}_u  \rightarrow \mk{p}_u$ with  complexification $J: \mk{p} \rightarrow \mk{p}$  such that (we recall that $ dim_\RR(\mk{j}_{\mk{p}_u})/4 \in \NN$ and $I(\mk{j}_{\mk{p}_u})=\mk{j}_{\mk{p}_u}$)
\begin{gather}
J(\mk{j}_{\mk{p}_u})=\mk{j}_{\mk{p}_u}  \qquad J_{\vert \mk{j}_{\mk{p}_u}} \circ I_{ \vert \mk{j}_{\mk{p}_u}}=- I_{ \vert \mk{j}_{\mk{p}_u}} \circ  J_{\vert \mk{j}_{\mk{p}_u}} \nonumber\\
 \label{afi3}  J(E_{\gamma}) = \ol{\rho_{\gamma}} Q_\gamma  \qquad J(E_{-\gamma}) =-\rho_{\gamma}P_\gamma \qquad\gamma \in \Gamma_{\mk{p}}    \\
 J(E_\alpha) = \ri N_{\gamma,- \alpha}\ol{\rho_{\gamma}} E_{\alpha-\gamma} \quad  J(E_{-\alpha}) =-\ri N_{\gamma,- \alpha} \rho_{\gamma} E_{\gamma-\alpha} \qquad \alpha \in \Phi_{\gamma}^+,\gamma \in \Gamma_{\mk{p}}. \nonumber
 \end{gather}
 It is easy to check that the complex structures given in \eqref{csj1}, \eqref{eee3} define the same hypercomplex structure as \eqref{afi3}.

We see that $J(\mk{p}_s)=\mk{p}_s$ and by \eqref{afi4} we compute $J_{\vert \mk{p}_s} \circ I_{ \vert \mk{p}_s}=- I_{ \vert \mk{p}_s} \circ  J_{\vert \mk{p}_s}$. Therefore
$J \circ I = - I \circ J$.

Let $\mk{p}^+_J$ and $\mk{p}^-_J$ be  the $ \ri$ and $(-\ri)$-eigen spaces of $J$. Comparing \eqref{afi3} and \eqref{theta_{mk{m}}(mk{m}_1^n)}, \eqref{theta_{mk{m}}(mk{m}_2^n)}, we see that
$$
\mk{p}^\pm_J = {\bf c}(\mk{p}_s^\pm) + (\mk{p}^\pm_J \cap \mk{j}_{\mk{p}})= {\bf c}(\mk{p}_s^\pm + (\mk{p}^\pm_J \cap \mk{j}_{\mk{p}}) ).
$$
Now Propositions \ref{mkm4}, \ref{prop for mk{p}_s oplus mk{a}} and formula \eqref{gmg5} imply
\be \label{gmg3} \forall X, Y \in \mk{p}^\pm_J \qquad [X,Y]_{\mk{p}} \in \mk{p}^\pm_J.
 \ee

\subsubsection{$ad(\mk{k})$-invariance of $I$ and $J$}
From the decomposition \eqref{gmg6} and the definitions of $I$ and
$J$ (formulas \eqref{afi4} and \eqref{afi3}) we see that
$\mk{j}_{\mk{p}}$, $\mc{V}_{\gamma}$ and $gl_{\gamma}(2)$  for
$\gamma \in \Gamma_{\mk{p}}$ are  $I$ and $J$ invariant. We have
(see \eqref{csj1} also)
\begin{gather}\label{mkm5}
I_{\vert \mc{V}_{\gamma}} = ad(2 W_{\gamma})_{\vert \mc{V}_{\gamma}},\quad
 J_{\vert \mc{V}_{\gamma}} = - ad(  2 Y_\gamma )_{\vert \mc{V}_{\gamma}}, \qquad \gamma \in \Gamma_{\mk{p}}.
 \end{gather}

It turns out that the subspaces $\mc{V}_{\gamma}$ and $gl_{\gamma}(2)$ are $ad(\mk{k})$-invariant as well, which is a straightforward consequence of  \eqref{gmg2} and \eqref{gmg8}, more precisely  one may compute:
\be \label{ad(X)(V_gamma) subset V_gamma} \forall \gamma \in \Gamma_{\mk{p}},\,   \forall X \in \mk{k} \ \ \qquad ad(X)(\mc{V}_{\gamma}) \subset \mc{V}_{\gamma}, \quad ad(X)(gl_{\gamma}(2))=\{0\}.
\ee

 \begin{lemma} \label{lijm0} The complex structures  $I$, $J$ give a hypercomplex structure on $\mk{p}_u$, i.e. for $X, Y \in \mk{p}_u$ we have
\begin{gather*}
[J(X),J(Y)]_{\vert \mk{p}_u } - [X,Y]_{\vert \mk{p}_u} -J([X, J(Y)]_{\vert \mk{p}_u}) - J([J(X),Y]_{\vert \mk{p}_u})=0; \\
[I(X),I(Y)]_{\vert \mk{p}_u } - [X,Y]_{\vert \mk{p}_u} -I([X, I(Y)]_{\vert \mk{p}_u}) - I([I(X),Y]_{\vert \mk{p}_u})=0.
\end{gather*}
Moreover for $X \in \mk{k}_u, \ Y \in \mk{p}_u$ we have
\begin{gather}
 \label{dec 5}    I \circ ad(X)(Y) = ad(X) \circ I (Y), \quad J \circ ad(X)(Y) =  ad(X) \circ J (Y).
 \end{gather}
 \end{lemma}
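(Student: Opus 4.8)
The statement has two parts: the integrability (hypercomplex) equations for $I,J$ on $\mk{p}_u$, and the $ad(\mk{k}_u)$-equivariance \eqref{dec 5}. The plan is to deduce the integrability from the already-established eigenspace facts \eqref{[X,Y]_{mk{m}} in mk{m}_iI} and \eqref{gmg3} via Proposition \ref{lfim}, and to deduce \eqref{dec 5} from the $ad(\mk{k})$-invariance statements \eqref{ad(X)(V_gamma) subset V_gamma} together with the explicit formulas \eqref{mkm5}, handling the $\mk{j}_{\mk{p}}$-summand separately.

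\medskip

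First I would treat integrability. We have complex structures $I,J$ on $\mk{p}_u$ whose complexifications to $\mk{p}$ have the stated $\pm\ri$-eigenspaces $\mk{p}^\pm$ (for $I$) and $\mk{p}^\pm_J$ (for $J$). Formula \eqref{[X,Y]_{mk{m}} in mk{m}_iI} says $[\mk{p}^\pm,\mk{p}^\pm]_{\mk{p}}\subset \mk{p}^\pm$ and formula \eqref{gmg3} says $[\mk{p}^\pm_J,\mk{p}^\pm_J]_{\mk{p}}\subset\mk{p}^\pm_J$; by Proposition \ref{lfim} these are exactly equivalent to the vanishing of the two Nijenhuis expressions in the statement. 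So this half is immediate once we invoke \eqref{lfim}. I should only double-check that $\mk{p}^+$ and $\mk{p}^-$ are genuinely the eigenspaces of the complexified $I$ and likewise for $J$: for $I$ this is the content of the paragraph containing \eqref{afi4} together with $\mk{p}^\pm = \mk{p}_s^\pm + (\mk{p}^\pm\cap\mk{j}_{\mk{p}})$; for $J$ it is the paragraph containing \eqref{afi3} identifying $\mk{p}^\pm_J$. Nothing further is needed.

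\medskip

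Next, the equivariance \eqref{dec 5}. It suffices to prove it on the complexification, i.e. $I\circ ad(X) = ad(X)\circ I$ and $J\circ ad(X)=ad(X)\circ J$ as maps $\mk{p}\to\mk{p}$ for $X\in\mk{k}$, since \eqref{dec 5} is the restriction to real forms. By \eqref{ad(X)(V_gamma) subset V_gamma}, for $\gamma\in\Gamma_{\mk{p}}$ and $X\in\mk{k}$ we have $ad(X)(\mc V_\gamma)\subset\mc V_\gamma$ and $ad(X)(gl_\gamma(2))=\{0\}$; and $ad(X)(\mk{j}_{\mk{p}})=\{0\}$ because $[\mk{k},\mk{o}_{\mk{p}}]\subset[\mk{h}_{\mk{k}},\mk{o}]\oplus[\mc V_{\mk{k}},\mk{o}]$ — more directly, $\mk{j}_{\mk{p}}\subset\mk{o}$ and elements of $\mk{k}$ either lie in $\mk{h}_{\mk{k}}$ (which commutes with $\mk{o}\subset\mk{h}$) or in root spaces $\mk{g}(\alpha)$, $\alpha\in\Delta_{\mk{k}}$, and $\alpha_{|\mk{h}_{\mk{p}}}=0$ by \eqref{gmg8} so $[E_\alpha,\mk{j}_{\mk{p}}]=0$. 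Hence $ad(X)$ preserves each $(I,J)$-invariant summand in the decomposition $\mk{p}=\mk{j}_{\mk{p}}\oplus\bigoplus_{\gamma\in\Gamma_{\mk{p}}}(gl_\gamma(2)\oplus\mc V_\gamma)$ and acts as zero on $\mk{j}_{\mk{p}}$ and on each $gl_\gamma(2)$, where $I$ and $J$ act as fixed linear maps; so $I,J$ trivially commute with $ad(X)$ there. On $\mc V_\gamma$ we use \eqref{mkm5}: $I_{|\mc V_\gamma}=ad(2W_\gamma)_{|\mc V_\gamma}$ and $J_{|\mc V_\gamma}=-ad(2Y_\gamma)_{|\mc V_\gamma}$, so the required identity becomes $[X,[W_\gamma,A]]=[W_\gamma,[X,A]]$ for $A\in\mc V_\gamma$, i.e. $[[X,W_\gamma],A]=0$ by Jacobi, and similarly with $Y_\gamma$. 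Thus it remains to show $[X,W_\gamma]$ and $[X,Y_\gamma]$ act as zero on $\mc V_\gamma$ for all $X\in\mk{k}$.

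\medskip

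The main obstacle is precisely this last point: verifying $ad([X,W_\gamma])_{|\mc V_\gamma}=0$ and $ad([X,Y_\gamma])_{|\mc V_\gamma}=0$ for $\gamma\in\Gamma_{\mk{p}}$, $X\in\mk{k}$. For $X\in\mk{h}_{\mk{k}}$ we have $[X,W_\gamma]=0$ (both in $\mk{h}$) and $[X,Y_\gamma]\in\mk{g}(\gamma)\oplus\mk{g}(-\gamma)$ with coefficient $\gamma(X)=0$ by \eqref{gmg8}, so it vanishes. For $X=E_\beta$, $\beta\in\Delta_{\mk{k}}$: $[E_\beta,W_\gamma]$ is a multiple of $E_\beta$ with coefficient $-\beta(W_\gamma)\propto\beta(H_\gamma)$, and by \eqref{gmg2} applied with $\alpha=\gamma$ (so $\gamma\pm\beta\notin\Delta$) one gets $\beta(H_\gamma)=0$ — indeed $\gamma-\beta\notin\Delta$ and $\gamma+\beta\notin\Delta$ force the $\beta$-string through $\gamma$ to be trivial, hence $\gamma(H_\beta)=0$, equivalently $\beta(H_\gamma)=0$. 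So $[E_\beta,W_\gamma]=0$. Finally $[E_\beta,Y_\gamma]=\tfrac{\ri}{2}(\rho_\gamma[E_\beta,E_\gamma]+\ol{\rho_\gamma}[E_\beta,E_{-\gamma}])$; since $\gamma+\beta\notin\Delta$ and $\gamma-\beta\notin\Delta$ (again \eqref{gmg2} with $\alpha=\gamma$), both brackets vanish, so $[E_\beta,Y_\gamma]=0$. This exhausts a spanning set of $\mk{k}$ and completes the verification of \eqref{dec 5}; combining with the integrability from Proposition \ref{lfim} proves the lemma. The only genuinely delicate step is marshalling \eqref{gmg2}/\eqref{gmg8} correctly — that $\gamma\in\Gamma_{\mk{p}}$ and $\beta\in\Delta_{\mk{k}}$ are incomparable-or-$\gamma\prec\delta$ types, so $\gamma\pm\beta\notin\Delta$ — everything else is Jacobi-identity bookkeeping.
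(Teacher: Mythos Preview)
Your proposal is correct and follows essentially the same approach as the paper's proof: Proposition \ref{lfim} plus \eqref{[X,Y]_{mk{m}} in mk{m}_iI}, \eqref{gmg3} for integrability, and the decomposition \eqref{gmg6} with \eqref{mkm5}, \eqref{ad(X)(V_gamma) subset V_gamma}, \eqref{gmg8} for the equivariance. One remark: your final paragraph is redundant, since $W_\gamma, Y_\gamma \in gl_\gamma(2)$ and you have already invoked $ad(X)(gl_\gamma(2))=\{0\}$ from \eqref{ad(X)(V_gamma) subset V_gamma}, which immediately gives $[X,W_\gamma]=[X,Y_\gamma]=0$ without the case-by-case check.
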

\begin{proof}
Vanishing of the Nijenhuis tensors follows from Lemma \ref{lfim} and formulas  \eqref{[X,Y]_{mk{m}} in mk{m}_iI}, \eqref{gmg3}.

 Now we go to the proof of \eqref{dec 5}. Using \eqref{mkm5} and \eqref{ad(X)(V_gamma) subset V_gamma} we readily deduce  that for $\gamma \in \Gamma_{\mk{p}}$, $X \in \mk{k}$ we have
\begin{gather}  (ad(X) \circ I )_{\vert \mc{V}_{\gamma}} = (I \circ ad(X))_{\vert \mc{V}_{\gamma}} \quad (ad(X) \circ J )_{\vert \mc{V}_{\gamma}} = (J \circ ad(X))_{\vert \mc{V}_{\gamma}}, \nonumber \\  \qquad (ad(X) \circ I )_{\vert gl_{\gamma}(2)} = (I \circ ad(X))_{\vert gl_{\gamma}(2)}= (ad(X) \circ J )_{\vert gl_{\gamma}(2)} = (J \circ ad(X))_{\vert gl_{\gamma}(2)} = 0. \nonumber
\end{gather}
Finally, from \eqref{gmg8} and since $\mk{j}_{\mk{p}} \subset
\mk{h}_{\mk{p}}$ we see that
$$
  \forall \alpha \in \Delta_{\mk{k}} \ \ \ \alpha_{\vert \mk{j}_{\mk{p}}} = 0 \ \ \ \Rightarrow \ \ \ \forall X \in \mk{k} \ \ \  ad(X)_{\vert \mk{j}_{\mk{p}}} = 0.
$$
Hence, $\mk{j}_{\mk{p}}$ being $I$ and $J$ invariant, for each $X \in \mk{k}$ we get
\begin{gather*}
(ad(X) \circ I )_{\vert \mk{j}_{\mk{p}}} = (I \circ ad(X))_{\vert \mk{j}_{\mk{p}}}= (ad(X) \circ J )_{\vert \mk{j}_{\mk{p}}} = (J \circ ad(X))_{\vert \mk{j}_{\mk{p}}} = 0.
\end{gather*}
Using the decomposition \eqref{gmg6} of $\mk{p}$, we see that for
all $X \in \mk{k},\quad Y \in \mk{p}$ we have
\begin{gather*}
 I \circ ad(X)(Y) = ad(X) \circ I (Y) \quad J \circ ad(X)(Y) = ad(X) \circ J (Y).
\end{gather*}
Recalling that $I$ and $J$ commute with $\tau$, we obtain \eqref{dec 5}.
\end{proof}

So we are given a pair of groups $({\bf U}, {\bf K}_u)$ with connected ${\bf K}_u$ associated with a hypercomplex pair $(\mk{u}, \mk{k}_u)$. We constructed anti-commuting complex structures $I$ and $J$ on $\mk{p}_u$, which have the properties listed in Lemma \ref{lijm0} and also we have the reductivity condition $[\mk{k}_u, \mk{p}_u] \subset \mk{p}_u$. Then, it is known (see e.g. \cite{Kobayashi69}, Ch. X), that $I$ and $J$ determine  left-invariant, integrable complex structures on the coset space ${\bf U}/{\bf K}_u$, which thus becomes a compact, homogeneous, hypercomplex space.

\begin{df} \label{hypercomplex space associated with a hyp} Let $(\mk{u}, \mk{k}_u)$ be a hypercomplex pair and let $({\bf U},{\bf K}_u)$ be a pair of groups associated with it. Let ($I_M, J_M$) be a LIHCS on $M = {\bf U}/{\bf K}_u$ and let $\pi : {\bf U} \rightarrow M$ be the natural projection. We shall say that the  homogeneous hypercomplex space $(M, I_M, J_M)$ {\bf is associated with the hypercomplex pair $(\mk{u}, \mk{k}_u)$} if the complex structures $I, J$ on $\mk{p}_u$ constructed above satisfy:
\begin{gather*}
 I = ((\rd
\pi )_{\vert \mk{p}_u} )^{-1}\circ I_M(o) \circ (\rd \pi
)_{\vert \mk{p}_u}, \ \ J = ((\rd \pi )_{\vert \mk{p}_u}
)^{-1}\circ J_M(o) \circ (\rd \pi )_{\vert \mk{p}_u}.
\end{gather*}
\end{df}

In the next section we prove that any ${\bf U}$- left-invariant, hypercomplex structure on a coset space ${\bf U}/{\bf K}_u$ is associated with the hypercomplex pair $(\mk{u}, \mk{k}_u)$.

\section{Necessary conditions } \label{nc0}
\subsection{Introduction}
\label{introduction nc0} In this section we shall discuss  a coset space  $M={\bf U}/{\bf K}_u$, where ${\bf U}$ is a compact, connected Lie group
and ${\bf K}_u$ is a closed subgroup. We shall assume that ${\bf U}$ acts effectively on $M$. Furthermore we shall assume that there are   two left-invariant
complex structures   $ I_M $ and $J_M$, which anti-commute, on $M$. By left-invariant we mean that all the left translations $l_u : M \rightarrow M$,
where $u$ varies through ${\bf U}$, are holomorphic with respect to both $I_M$ and $J_M$.

We are going to prove that $M$ is associated to a hypercomplex pair.

\subsection{\texorpdfstring{$M$}{\space} as a coset space of a complex Lie group.} \label{coset space of a complex Lie group}
So, we are given a  compact coset space  $M={\bf U}/{\bf K}_u$ and two complex structures $I_M$ and $J_M$ on it. Let ${\bf G}_{I0}$ be the identity component of the group of
all biholomorphisms of the complex manifold $(M, I_M)$. By results of Bochner
and Montgomery in \cite{MontgomeryBoch1} and \cite{MontgomeryBoch} it follows that one can define a
complex structure on ${\bf G}_{I0}$, such that ${\bf G}_{I0}$ becomes a
complex Lie group and the action on the complex manifold $(M, I_M)$
\be \label{action of {bf G}_I} {\bf G}_{I0} \times M \rightarrow M
\qquad (f, m) \in {\bf G}_I \times M \mapsto f(m) \in M \ee is
holomorphic.

Let $\pi$ be the natural projection
\be \label{projection} \pi: {\bf U} \rightarrow {\bf U}/{\bf K}_u=M \qquad \quad \pi(e)=o. \ee
We are given, that ${\bf U}$ is a compact, connected subgroup of ${\bf G}_{I0}$ which acts effectively and transitively on $M$.
Let $\mk{u}$ and $\mk{g}_{I0}$ be the Lie algebras of ${\bf U}$ and ${\bf G}_{I0}$, respectively. Since $\mk{u}$ is compact, then we have the decomposition
 $ \mk{u} = \mk{c}_u \oplus \mk{u}_s $ (see \eqref{mk{u} = mk{z}(mk{u}) oplus mk{u}_s} ). We recall \cite[page 132]{Helgason78} that the semi-simple part  $\mk{u}_s$ is  a compact Lie algebra also. We denote by ${\bf U}_s$ the connected subgroup of ${\bf U}$ with Lie algebra $\mk{u}_s$. Since $\mk{u}_s$ is a compact, semi-simple Lie algebra, then (see \cite[page 133]{Helgason78}) ${\bf U}_s$ is a compact Lie group, hence it is a closed, compact, semi-simple  subgroup of ${\bf G}_{I0}$.

\begin{lemma} \label{lemma about complexification of bf u_s}
The real subalgebra $\mk{u}_s$  of the complex Lie algebra $\mk{g}_{I0}$
satisfies  \be \label{lemma about complexification of bf u_seq}
\mk{u}_s \cap \ri \mk{u}_s = \{0\}.\ee In particular the subspace
$\mk{u}_s+ \ri \mk{u}_s\subset \mk{g}_{I0}$ is a complex
semi-simple subalgebra of $\mk{g}_{I0}$, which is a
complexification of $\mk{u}_s$, we shall denote it by $\mk{g}_s$.
\end{lemma}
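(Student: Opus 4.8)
The plan is to show $\mk{u}_s \cap \ri \mk{u}_s = \{0\}$ by exploiting that $\mk{u}_s$ is compact and semisimple while ${\bf G}_{I0}$ is a \emph{complex} Lie group acting on a compact manifold. First I would set $\mk{a} = \mk{u}_s \cap \ri \mk{u}_s$. Being an intersection of the two real forms, $\mk{a}$ is a real subalgebra of $\mk{g}_{I0}$ that is invariant under multiplication by $\ri$ (if $X \in \mk{u}_s$ and $\ri X \in \mk{u}_s$, then $\ri X \in \mk{u}_s$ and $\ri(\ri X) = -X \in \mk{u}_s$, so $\ri X \in \mk{a}$); hence $\mk{a}$ is a \emph{complex} subalgebra of $\mk{g}_{I0}$. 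Moreover $\mk{a} \subset \mk{u}_s$ is an ideal: for $X \in \mk{a}$ and $Y \in \mk{u}_s$ we have $[X,Y] \in \mk{u}_s$ and also $[\ri X, Y] = \ri[X,Y] \in \mk{u}_s$ since $\ri X \in \mk{u}_s$, so $[X,Y] \in \mk{a}$. Since $\mk{u}_s$ is semisimple, the ideal $\mk{a}$ has a complementary ideal, and $\mk{a}$ is itself semisimple (as an ideal of a semisimple Lie algebra).

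Now the key tension: $\mk{a}$ is a semisimple Lie algebra carrying a \emph{complex} structure compatible with its bracket (because it is a complex subalgebra of $\mk{g}_{I0}$), but it is also a subalgebra of the \emph{compact} Lie algebra $\mk{u}_s$, hence $\mk{a}$ is a compact semisimple Lie algebra. A compact semisimple Lie algebra that also admits a complex Lie algebra structure must be trivial: the Killing form of $\mk{a}$ is negative definite (compactness), but a complex semisimple Lie algebra of positive dimension has Killing form that takes both signs on its underlying real space — more directly, multiplication by $\ri$ would be a Lie algebra automorphism squaring to $-\mathrm{id}$, forcing $\mathrm{Kill}(\ri X, \ri X) = \mathrm{Kill}(X,X)$ while complex-bilinearity gives $\mathrm{Kill}(\ri X, \ri X) = -\mathrm{Kill}(X,X)$, so $\mathrm{Kill}(X,X) = 0$ for all $X \in \mk{a}$, contradicting non-degeneracy unless $\mk{a} = \{0\}$. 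This yields \eqref{lemma about complexification of bf u_seq}.

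For the second assertion, once $\mk{u}_s \cap \ri \mk{u}_s = \{0\}$ the real subspace $\mk{g}_s := \mk{u}_s + \ri\mk{u}_s$ is a direct sum $\mk{u}_s \oplus \ri \mk{u}_s$ of real dimension $2\dim_\RR \mk{u}_s$, it is closed under the bracket of $\mk{g}_{I0}$ (expand $[\,X + \ri Y, X' + \ri Y'\,] = [X,X'] - [Y,Y'] + \ri([X,Y'] + [Y,X'])$ using that $\mk{u}_s$ is a subalgebra and the bracket is $\CC$-bilinear), and it is stable under multiplication by $\ri$, hence a complex subalgebra of $\mk{g}_{I0}$ of complex dimension $\dim_\RR \mk{u}_s$. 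It contains $\mk{u}_s$ as a real form, so it is a complexification of $\mk{u}_s$; since $\mk{u}_s$ is semisimple, so is its complexification. The main obstacle is the first part — establishing $\mk{a} = \{0\}$ — and the cleanest route is the Killing-form sign argument above, which only uses that $\mk{a}$ is simultaneously a compact Lie algebra (negative definite Killing form, inherited from $\mk{u}_s$) and a complex Lie algebra (so its Killing form is $\CC$-bilinear, forcing isotropy); no structure theory of $\mk{g}_{I0}$ beyond its being a complex Lie group is needed.
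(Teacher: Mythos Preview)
Your overall strategy is correct and considerably more elementary than the paper's, but one sentence needs repair. You claim that multiplication by $\ri$ is a Lie algebra automorphism of $\mk{a}$; it is not (for a $\CC$-bilinear bracket one has $[\ri X,\ri Y]=-[X,Y]\neq \ri[X,Y]$), so the equality $\mathrm{Kill}(\ri X,\ri X)=\mathrm{Kill}(X,X)$ is unjustified. What you actually need is simply that $\mathrm{ad}_{\ri X}=\ri\,\mathrm{ad}_X$ as $\CC$-linear endomorphisms of $\mk{a}$, whence $\mathrm{ad}_{\ri X}^2=-\mathrm{ad}_X^2$ and the \emph{real} Killing form of $\mk{a}$ satisfies $\kappa(\ri X,\ri X)=-\kappa(X,X)$. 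Since $\mk{a}$ is an ideal of the compact semisimple algebra $\mk{u}_s$, its real Killing form is the restriction of that of $\mk{u}_s$ and hence negative definite; for $X\neq 0$ this gives $\kappa(X,X)<0$ and $\kappa(\ri X,\ri X)<0$ simultaneously, contradicting the sign flip. That forces $\mk{a}=\{0\}$ directly, without the spurious ``automorphism'' step.

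By contrast, the paper embeds ${\bf U}_s$ in a maximal connected complex semisimple subgroup ${\bf G}_m\subset{\bf G}_{I0}$ coming from a Levi decomposition, then uses conjugacy of maximal compact subgroups in ${\bf G}_m$ to place $\mk{u}_s$ inside a compact real form $\widetilde{\mk{u}}$ of $\mk{g}_m$; since $\widetilde{\mk{u}}\cap\ri\widetilde{\mk{u}}=\{0\}$ for any real form, the conclusion follows. Your argument bypasses Levi decompositions and the structure theory of maximal compact subgroups entirely, replacing them with a two-line Killing-form computation intrinsic to $\mk{u}_s$; the paper's route, on the other hand, yields as a by-product that $\mk{u}_s$ sits inside a genuine compact real form of a complex semisimple subgroup of ${\bf G}_{I0}$, which is not needed for the lemma itself but is in the spirit of the parabolic/normalizer arguments used later.
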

\begin{proof}

Let ${\bf G}_m$ be a maximal connected semi-simple
subgroup of ${\bf G}_{I0}$, which contains ${\bf U}_s$. Actually (see the proof of Theorem 2.3 in \cite{Wang54}),
${\bf G}_m$ is generated by the semi-simple part of a
Levi-decomposition of the complex Lie algebra of ${\bf G}_{I0}$.
Therefore ${\bf G}_m$ is a complex semi-simple  Lie group and
${\bf U}_s$ is its connected, compact, real Lie subgroup. Let
$\mk{g}_m$ be  the complex Lie algebra of ${\bf G}_m$. Then the
Lie algebra $\mk{u}_s$ of ${\bf U}_s$ is a real
subalgebra of $\mk{g}_m$.

Now ${\bf G}_m$ is a connected complex semi-simple Lie
group, therefore it is semi-simple as a real Lie group also. Let
us take any compact real form  $\mk{b}$ of $\mk{g}_m$. Then
the subgroup ${\bf B}$ of ${\bf G}_m$ generated by $\mk{b}$ is a maximal
compact subgroup. Let ${\bf B}_u$ be
a maximal compact subgroup of ${\bf G}_m$, containing ${\bf U}_s$,
let the Lie algebra of ${\bf B}_u$ be $\widetilde{\mk{u}}$. The
subgroup ${\bf B}_u$ is connected (all maximal compact subgroups
in a semi-simple connected group are connected  and conjugate to each other- see
\cite[page 356]{Helgason78}).  Therefore
there exists an element $g \in {\bf G}_m$ such that $\alpha_g({\bf
B})=g {\bf B} g^{-1} = {\bf B}_u $, where by $\alpha_g: {\bf G}_m
\rightarrow {\bf G}_m$ we denote the conjugation of ${\bf G}_m$
with the element $g$. Since ${\bf G}_m$ is a complex group then
$(\rd \alpha_g)_e = Ad(g): \mk{g}_m \rightarrow \mk{g}_m$ is
an automorphism of the complex Lie algebra $\mk{g}_m$. Since
$\alpha_g({\bf B})= {\bf B}_u $ then, obviously, $Ad(g)(\mk{b})
= \widetilde{\mk{u}}$. Since $\mk{b}$ is a compact form, then
$\widetilde{\mk{u}}$ is a compact form of $\mk{g}_m$ and
$\widetilde{\mk{u}} \cap \ri \widetilde{\mk{u}} = \{0\}$. On
the other hand $\mk{u}_s \subset \widetilde{\mk{u}}$ and
\eqref{lemma about complexification of bf u_seq} follows.
\end{proof}

Let us denote \be \label{mk{u}} \mk{g}_I= \mk{u} + \ri \mk{u}\subset \mk{g}_{I0}. \ee
If we denote $ \mk{c}_I =\mk{c}_u +\ri \mk{c}_u \subset \mk{g}_I$ then
$\mk{c}_I$ is abelian and obviously $[\mk{c}_I, \mk{g}_s] \subset
\{0\}$. Therefore, since $\mk{g}_s$ is semi-simple, we  have $
\mk{c}_I \cap \mk{g}_s = \{0\}$ and we may write \be
\label{mk{g} = mk{c} oplus mk{g}_s} \mk{g}_I = \mk{c}_I \oplus \mk{g}_s \qquad
\qquad\mk{c}_I=\mk{c}_u + \ri \mk{c}_u.\ee

Let ${\bf G}_I$ be the connected subgroup of ${\bf G}_{I0}$ generated by the subalgebra $\mk{g}_I \subset \mk{g}_{I0}$.
Obviously ${\bf G}_I \supset {\bf U}$ and therefore ${\bf G}_I$ acts transitively on $M = {\bf U}/{\bf K}_u$.
Hence the complex manifold $(M,I_M)$ is biholomorphic with the complex coset space ${\bf G}_I/{\bf L}$, where (we recall that $\pi(e)=o$)
\be {\bf L}=\{g  \in {\bf G}_I \vert g(o) = o \}. \ee
Obviously $ {\bf K}_u = {\bf U} \cap {\bf L}. $
Let us denote by $\mk{l}$ the complex subalgebra of $\mk{g}_I$, corresponding to the complex subgroup ${\bf L}$ and by $\mk{k}_u$ the subalgebra of $\mk{u}$,
corresponding to the subgroup ${\bf K}_u$.

Let $p$ be the natural projection
\ben p: {\bf G}_I \rightarrow M \simeq {\bf G}_I/{\bf L}.\een
Since $p \circ in = \pi $, where $in: {\bf U} \rightarrow {\bf G}_I$ is the embedding of ${\bf U}$ in ${\bf G}_I$, then
the restriction of $\rd p$ to $\mk{u}$ is equal to $\rd \pi : \mk{u} \rightarrow T_o(M)$, i. e.
\be \label{rd p(X) = rd pi (X)} \forall X \in \mk{u}  \quad \rd p(X) = \rd \pi (X). \ee

 Furthermore, since the projection $p$ is holomorphic then  \be
\label{rd p(i X) = I_o (rd p(X) )} \forall X \in \mk{g}_I \qquad
\rd p (\ri X) = I_M (\rd p (X) )= \ri \rd p (X).\ee Since $ {\bf K}_u = {\bf U} \cap {\bf L} $
then
$ \mk{k}_u = \mk{u} \cap \mk{l}.$ Furthermore
 \be \label{ker rd pi} \ker(\rd \pi) = \mk{k}_u \qquad  \ker(\rd p) = \mk{l} \qquad  \mk{u} \cap \mk{l} =
\mk{k}_u.\ee

\noindent

Therefore the  subspace \be \label{frak{m}_0 = X in
frak{u};forall Y in frak{k}_0} \mk{p}_u = \{X \in \mk{u}\vert \ \
\forall Y \in \mk{k}_u \scal{X, Y}=0 \}\ee satisfies
 \be
\label{properties of frak{m}_0} \mk{u}= \mk{k}_u \oplus \mk{p}_u.
\ee
Since $\mk{k}_u$
is $Ad({\bf K}_u)$-invariant  then
obviously $\mk{p}_u$ is $Ad({\bf K}_u)$-invariant:
\be
\label{properties of frak{m}_0 1} Ad({\bf K}_u) (\mk{p}_u)
\subset \mk{p}_\mk{u} \ \ \Rightarrow \ \ [\mk{k}_u, \mk{p}_u]
\subset \mk{p}_u.
\ee

 Let $\mk{g}$ and $\mk{k}$   be the complexifications of $\mk{u}$, $\mk{k}_u$ and let $\tau $ be the conjugation of $\mk{g}$, such that $\mk{u}=\mk{g}^{\tau}$.  We shall denote the  center of $\mk{g}$ by $\mk{c}$.

Obviously, we have
\be \label{mk{k}} \mk{k} = \mk{k}_u + \ri \mk{k}_u \subset \mk{g} \qquad \mk{k}_u \cap \ri \mk{k}_u=\{0 \}.\ee

\begin{remark} \label{the complexification of mk{u}} Let us summarize. We have $\mk{g}_{I0}$ - the complex Lie algebra of the complex Lie group ${\bf G}_{I0} $ (the identity component of the group of
all biholomorphisms of the complex manifold $(M, I_M)$). In
$\mk{g}_{I0}$ live our main actors $\mk{u} \subset \mk{g}_{I0}$,
$\mk{k}_u \subset \mk{g}_{I0}$ which are the real Lie algebras  of
${\bf U}$ and ${\bf K}_u$. And we denote by $\mk{g}_I$ the complex
subalgebra of $\mk{g}_{I0}$ generated by $\mk{u}$, i. e.
$\mk{g}_I= \mk{u} + \ri \mk{u}\subset \mk{g}_{I0}$, ${\bf G}_{I}
\subset {\bf G}_{I0}  $ is the corresponding complex Lie group. By
$\mk{g}$ and $\mk{k}\subset \mk{g}$ we denote the usual
complexifications of the real Lie algebras $\mk{u}$ and
$\mk{k}_u$, and $\tau$ is the conjugation.

Since $\mk{u}$ is compact we can decompose it as direct sum of  center and semisimple part: $\mk{u} = \mk{c}_u \oplus \mk{u}_s $.
From Lemma \ref{lemma about complexification of bf u_s}  we see that the complex subalgebra of $\mk{g}$ generated by $\mk{u}_s$ is isomorphic to
the subalgebra $\mk{g}_s = \mk{u}_s+\ri \mk{u}_s \subset \mk{g}_I$ of $\mk{g}_I$. Therefore we can identify these two algebras (the identification is: for $X, Y \in \mk{u}_s$ $X + \ri Y \in \mk{g} \mapsto X + \ri Y \in \mk{g}_I$), and we shall   denote them  by the common notation: $\mk{g}_s$. With this convention we have  $\mk{g}=\mk{c} \oplus \mk{g}_s$,
where $\mk{c}$ is the  center of $\mk{g}$.

Denoting by  $\tau_s$ the restriction of $\tau$ to $\mk{g}_s$, we have
\begin{gather}\label{tau new}
 \forall Y, Z \in \mk{u}  \quad  \tau_s(Y+ \ri Z) =  Y - \ri Z, \quad  \tau(\mk{c})=\mk{c}.  \end{gather}
 We represented our homogeneous space $M$ as complex factor $ {\bf G}_I/{\bf L}$ and denoted by $\mk{l}$  the complex subalgebra of $\mk{g}_I$, corresponding to the complex subgroup ${\bf L} \subset {\bf G}_I$.

The Lie algebras $\mk{g}_{I} \supset \mk{l}$ are auxiliary and will be used to prove the  regularity of  $\mk{k}, \mk{p}$.

\end{remark}

\subsection{Construction of I, J} \label{some notations}
Let  us  denote by $\mk{p}$ the complex subspace of $\mk{g}_I$, generated by $\mk{p}_{\mk{u}}$:
\be \label{mk{k} and mk{m}}
 \mk{p} = \mk{p}_u + \ri \mk{p}_u \subset \mk{g}_I.
 \ee
We shall prove soon (Corollary \ref{mkm6}) that the sum above is actually direct sum of real vector spaces, i. e. that $\mk{p}$ is a complexification of $\mk{p}_u$, therefore  we may regard $\mk{p}$ as a subspace of $\mk{g}$.

We define  complex structures $I, J$ on $\mk{p}_u$, using the complex
structures $I_M(o), J_M(o)$ on $T_o(M)$, as follows
\begin{gather}
\label{wcb5}
I = ((\rd
\pi )_{\vert \mk{p}_u} )^{-1}\circ I_M(o) \circ (\rd \pi
)_{\vert \mk{p}_u}, \ \ J = ((\rd \pi )_{\vert \mk{p}_u}
)^{-1}\circ J_M(o) \circ (\rd \pi )_{\vert \mk{p}_u}.
\end{gather}

\begin{lemma} \label{extension of I} The complex structure $I$ on the real vector space $\mk{p}_u$ can be uniquely extended to a complex structure on the complex vector space $\mk{p}$. We shall denote this extension by the same letter $I : \mk{p} \rightarrow \mk{p}$.
\end{lemma}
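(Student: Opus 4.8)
The plan is to read off the extension from \eqref{wcb5} and the holomorphy of the projection $p$. First note that an extension $I:\mk{p}\to\mk{p}$ which is $\CC$-linear for the complex structure $\mk{p}$ carries as a subspace of $\mk{g}_I$ (equivalently, commutes with multiplication by $\ri$) and restricts to the given $I$ on $\mk{p}_u$ must satisfy $I(X+\ri Y)=I(X)+\ri I(Y)$ for all $X,Y\in\mk{p}_u$, the right-hand side using the given complex structure of $\mk{p}_u$; this is the natural reading of the statement and it immediately yields uniqueness. For existence I would adopt this formula as the definition of $I$ and verify that it is well defined. The point is that at this stage we do \emph{not} yet know that $\mk{p}=\mk{p}_u+\ri\mk{p}_u$ is a direct sum — that is Corollary \ref{mkm6}, proved only later — so well-definedness, rather than anything else, is the substance of the lemma.

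Well-definedness means: if $X,Y\in\mk{p}_u$ and $X+\ri Y=0$ in $\mk{g}_I$, then $I(X)+\ri I(Y)=0$. From $X+\ri Y=0$ we get $X=-\ri Y$ and $Y=\ri X$, so $X,\ri X,Y,\ri Y$ all lie in $\mk{p}_u$, and the claim reduces to the following. \textbf{Key claim.} \emph{If $Z\in\mk{p}_u$ and $\ri Z\in\mk{p}_u$, then $I(Z)=\ri Z$.} Granting it, $I(X)=\ri X=Y$ and $I(Y)=\ri Y=-X$, hence $I(X)+\ri I(Y)=Y-\ri X=0$, as desired.

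The key claim — which I expect to be the only real step — is where the hypotheses are used. By \eqref{ker rd pi} and \eqref{properties of frak{m}_0}, $\rd\pi$ kills $\mk{k}_u$ and $\mk{u}=\mk{k}_u\oplus\mk{p}_u$, so $\rd\pi|_{\mk{p}_u}:\mk{p}_u\to T_o(M)$ is an $\RR$-linear isomorphism and by \eqref{wcb5} we have $I=(\rd\pi|_{\mk{p}_u})^{-1}\circ I_M(o)\circ\rd\pi|_{\mk{p}_u}$. By \eqref{rd p(X) = rd pi (X)} the map $\rd p$ restricts to $\rd\pi$ on $\mk{u}$, and by \eqref{rd p(i X) = I_o (rd p(X) )}, $p$ being holomorphic, $\rd p(\ri W)=I_M(o)(\rd p(W))$ for $W\in\mk{g}_I$. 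Now let $Z\in\mk{p}_u$ with $\ri Z\in\mk{p}_u$; since $Z,\ri Z\in\mk{u}$ we get $\rd\pi(\ri Z)=\rd p(\ri Z)=I_M(o)(\rd p(Z))=I_M(o)(\rd\pi(Z))$, and applying $(\rd\pi|_{\mk{p}_u})^{-1}$ — legitimate because $\ri Z\in\mk{p}_u$ — gives $\ri Z=(\rd\pi|_{\mk{p}_u})^{-1}(I_M(o)(\rd\pi(Z)))=I(Z)$, proving the claim.

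It then remains only to record the routine verifications that the resulting $I:\mk{p}\to\mk{p}$ is $\RR$-linear, commutes with $\ri$ (hence is $\CC$-linear), satisfies $I^2=-\mathrm{id}$ since the given $I$ does on $\mk{p}_u$, and restricts to the given $I$ on $\mk{p}_u$ (take $Y=0$); with the uniqueness noted above this proves the lemma. I do not anticipate any difficulty other than the key claim, which rests solely on the holomorphy of $p$ (equation \eqref{rd p(i X) = I_o (rd p(X) )}).
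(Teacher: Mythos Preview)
Your proposal is correct and follows essentially the same approach as the paper: both define the extension by $I(X+\ri Y)=I(X)+\ri I(Y)$ and verify well-definedness using the holomorphy of $p$ via \eqref{rd p(i X) = I_o (rd p(X) )}, the decisive step in each being that $Z,\ri Z\in\mk{p}_u$ forces $I(Z)=\ri Z$. Your version is arguably tidier in that you isolate this ``key claim'' explicitly and reduce to the case $X+\ri Y=0$, whereas the paper carries two representations $X_1+\ri Y_1=X_2+\ri Y_2$ throughout; but the underlying argument is the same.
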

\begin{proof} Let us take any element $Z \in \mk{p}$. Let us assume that we have two representations of $Z$ of the type
\be \label{extension of I equation}
 Z = X_1 + \ri Y_1 = X_2 + \ri Y_2, \qquad X_1, X_2, Y_1, Y_2 \in \mk{p}_u
 \ee
 therefore
\begin{gather*}
\rd p(X_1 + \ri Y_1) = \rd p(X_2 + \ri Y_2) \ \ \Rightarrow \ \  \rd p(X_1) + \ri \rd p (Y_1) = \rd p(X_2) + \ri \rd p(Y_2) \ \ \Rightarrow \  \\
  \rd \pi(X_1) + \ri \rd \pi (Y_1) = \rd \pi (X_2) + \ri \rd \pi(Y_2) \ \ \Rightarrow \ \  \rd \pi(X_1 +I(Y_1)) = \rd \pi (X_2 + I(Y_2)).\end{gather*}
  The last equation implies $X_1 +I(Y_1) = X_2 + I(Y_2)$ (taking into account that $X_1 +I(Y_1) \in \mk{p}_u$ and $X_2 +I(Y_2) \in \mk{p}_u$), hence
  \be \label{extension of I1} X_1 - X_2 = I(Y_2 - Y_1) \ \ \Rightarrow \ \ I(X_1-X_2)= Y_1 - Y_2. \ee
  Furthermore, we have   $\ri (Y_2 - Y_1) = X_1 - X_2$ (see \eqref{extension of I equation}), therefore $\ri (Y_1 - Y_2) \in \mk{p}_u$, besides
 \ben \rd \pi (\ri (Y_1 - Y_2)) = \rd p (\ri (Y_1 - Y_2)) = \ri \rd p (Y_1 - Y_2) = \ri \rd \pi (Y_1 - Y_2) = \rd \pi (I(Y_1 - Y_2)),\een
 which implies
 \be \label{extension of I2} \ri (Y_1 - Y_2) = I(Y_1 - Y_2) \ \ \Rightarrow \ \   Y_1 - Y_2 + \ri I(Y_1 - Y_2)=0.\ee From \eqref{extension of I1} it follows that
 \ben I(X_1 - X_2) + \ri I(Y_1 - Y_2) = Y_1 - Y_2 +\ri I(Y_1 - Y_2)\een and now from \eqref{extension of I2}  we infer
 \ben  I(X_1 - X_2) + \ri I(Y_1 - Y_2) =0 \qquad \Rightarrow \qquad  I(X_1) + \ri I(Y_1) = I(X_2) + \ri I(Y_2).\een
 Hence we see that the operator on $\mk{p}$ defined by the rule
 \be X + \ri Y \mapsto I(X) + \ri I(Y) \qquad \quad X,Y \in \mk{p}_u \nonumber \ee
 is well defined and this is the  extension of $I$ from $\mk{p}_u$ to $\mk{p}$.
\end{proof}

 Let
$\mk{p}^+$ and $\mk{p}^-$ be the $\ri$ and
$(-\ri)$-eigenspaces of $I$, respectively, i. e. \be \label{frak{m}_1 and frak{m}_2} \mk{p}^+=\{X \in \mk{p}\vert \ \ I(X)= \ri X \} \quad \quad
\mk{p}^-=\{X \in \mk{p}\vert \ \ I(X)= -\ri X \}.  \ee
In addition to \eqref{rd p(i X) = I_o (rd
p(X) )} we have \be \label{rd p( I(X)) = I_o (rd p (X)) frak{m}}
\forall X \in \mk{p} \qquad  \rd p( I(X)) = \ri  \rd p (X). \ee
Indeed, let $X_1 , X_2 \in \mk{p}_u$ then
\begin{gather*}  \rd p( I(X_1 + \ri X_2)) = \rd p( I(X_1) + \ri I(X_2))= \rd \pi (I(X_1)) + \ri \rd \pi (I(X_2))  \\
=\ri \rd \pi (X_1) + \ri \ri  \rd \pi (X_2) = \ri (\rd \pi (X_1) +  \ri  \rd \pi (X_2)) = \ri (\rd p (X_1) +   \rd p (\ri X_2)) \\
=\ri \rd p (X_1 + \ri X_2).  \end{gather*}
\begin{lemma} \label{mkm3} The following equality is valid
$$
\mk{l} \cap \mk{p} = \mk{p}^-.\qquad  \mk{g}_I = \mk{l} \oplus \mk{p}^+.
$$
\end{lemma}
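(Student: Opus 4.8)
The plan is to identify $\mk{l}\cap\mk{p}$ with $\mk{p}^-$ by a dimension-counting argument combined with a direct inclusion, and then to read off the splitting $\mk{g}_I=\mk{l}\oplus\mk{p}^+$ from it. First I would prove the inclusion $\mk{p}^-\subseteq\mk{l}\cap\mk{p}$. Take $X\in\mk{p}^-$, so $X\in\mk{p}$ and $I(X)=-\ri X$. Applying $\rd p$ and using \eqref{rd p( I(X)) = I_o (rd p (X)) frak{m}} we get $\ri\,\rd p(X)=\rd p(I(X))=\rd p(-\ri X)=-\ri\,\rd p(X)$, hence $\rd p(X)=0$, i.e.\ $X\in\ker(\rd p)=\mk{l}$ by \eqref{ker rd pi}. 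Thus $\mk{p}^-\subseteq\mk{l}\cap\mk{p}$.

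For the reverse inclusion I would argue by counting dimensions. Write $n=\dim_\RR\mk{p}_u=\dim_\RR T_o(M)=\dim_\CC(M,I_M)\cdot 2$, which is even; by Lemma \ref{extension of I}, $\mk{p}$ is a complex vector space with $\dim_\CC\mk{p}=n$ (its real dimension as a subspace of $\mk{g}_I$ being $2n$, once we know — as will be shown in Corollary \ref{mkm6} — that the sum in \eqref{mk{k} and mk{m}} is direct), and $I$ splits it as $\mk{p}=\mk{p}^+\oplus\mk{p}^-$ with $\dim_\CC\mk{p}^\pm=n/2$, i.e.\ $\dim_\RR\mk{p}^\pm=n$. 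On the other hand, $\mk{g}_I={\bf Lie}({\bf G}_I)$ and $\mk{l}={\bf Lie}({\bf L})$, so since $(M,I_M)\cong{\bf G}_I/{\bf L}$ we have $\dim_\RR\mk{g}_I-\dim_\RR\mk{l}=\dim_\RR M=n$. Using $\mk{u}=\mk{k}_u\oplus\mk{p}_u$ (see \eqref{properties of frak{m}_0}) and $\mk{g}_I=\mk{u}+\ri\mk{u}$, $\mk{l}\supseteq\mk{k}=\mk{k}_u+\ri\mk{k}_u$ (indeed $\mk{k}_u=\mk{u}\cap\mk{l}$ and $\mk{l}$ is a complex subalgebra, so $\ri\mk{k}_u\subseteq\mk{l}$), one gets $\dim_\RR\mk{g}_I=\dim_\RR\mk{l}+n$ consistently, and more to the point $\dim_\RR(\mk{l}\cap\mk{p})=\dim_\RR\mk{p}-\dim_\RR(p(\mk{p}))=2n-n=n$, since $\rd p$ restricted to $\mk{p}$ is surjective onto $T_o(M)$ (because $\rd p|_{\mk{u}}=\rd\pi$ is surjective onto $T_o(M)$ with kernel $\mk{k}_u\subseteq\mk{l}$, and $\rd p$ is complex-linear). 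Hence $\dim_\RR(\mk{l}\cap\mk{p})=n=\dim_\RR\mk{p}^-$, and combined with $\mk{p}^-\subseteq\mk{l}\cap\mk{p}$ this forces equality $\mk{l}\cap\mk{p}=\mk{p}^-$.

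Finally, for $\mk{g}_I=\mk{l}\oplus\mk{p}^+$: the map $\rd p:\mk{g}_I\to T_o(M)$ is complex-linear and surjective with kernel $\mk{l}$, so it induces an isomorphism $\mk{g}_I/\mk{l}\xrightarrow{\sim}(T_o(M),I_M(o))$ of complex vector spaces. Restricted to $\mk{p}^+$, the composite $\rd p|_{\mk{p}^+}:\mk{p}^+\to T_o(M)$ is injective: its kernel is $\mk{p}^+\cap\mk{l}=\mk{p}^+\cap(\mk{l}\cap\mk{p})=\mk{p}^+\cap\mk{p}^-=\{0\}$. Comparing dimensions ($\dim_\RR\mk{p}^+=n=\dim_\RR T_o(M)$) shows $\rd p|_{\mk{p}^+}$ is an isomorphism, hence $\mk{g}_I=\mk{l}\oplus\mk{p}^+$.

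The main obstacle is the bookkeeping needed to justify that $\dim_\CC\mk{p}=n$ and that the sum $\mk{p}=\mk{p}_u+\ri\mk{p}_u$ is direct — that is, that $\mk{p}$ is genuinely a complexification of $\mk{p}_u$ inside $\mk{g}_I$. Strictly this is the content of Corollary \ref{mkm6}, which comes after; so in practice one would either prove the needed direct-sum fact here by a short independent argument (e.g.\ if $X+\ri Y=0$ with $X,Y\in\mk{p}_u$ then applying $\rd p$ gives $\rd\pi(X)+\ri\,\rd\pi(Y)=0$ in $T_o(M)$, and $\rd\pi|_{\mk{p}_u}$ injective forces $\rd\pi(X)=\rd\pi(Y)=0$, hence $X=Y=0$), or phrase the whole argument in terms of $\rd p$ without committing to real dimensions, using only that $\rd p|_{\mk{p}}$ is complex-linear, surjective onto $T_o(M)$, with $\ker(\rd p|_{\mk{p}})=\mk{l}\cap\mk{p}$, and that $I$ on $\mk{p}$ corresponds under $\rd p$ to $I_M(o)$ on the quotient. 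Either way the eigenspace decomposition of $I$ and the complex-linearity of $\rd p$ do all the real work.
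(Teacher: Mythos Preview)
Your inclusion $\mk{p}^-\subset\mk{l}\cap\mk{p}$ and the derivation of $\mk{g}_I=\mk{l}\oplus\mk{p}^+$ from the first equality are fine and match the paper.  The problem is the reverse inclusion $\mk{l}\cap\mk{p}\subset\mk{p}^-$.  You correctly diagnose that your dimension count needs $\dim_\CC\mk{p}=n$, i.e.\ the directness $\mk{p}_u\cap\ri\mk{p}_u=\{0\}$, which in the paper is Corollary~\ref{mkm6} and is \emph{deduced from} the present lemma.  Your proposed workaround (a), however, does not close the circle: from $X+\ri Y=0$ in $\mk{g}_I$ you only get, via $\rd p$ and \eqref{rd p(i X) = I_o (rd p(X) )}, the single equation $\rd\pi(X)+I_M(o)\,\rd\pi(Y)=0$ in the real vector space $T_o(M)$ (the ``$\ri$'' here is $I_M(o)$, not a complexification).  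Injectivity of $\rd\pi|_{\mk{p}_u}$ does \emph{not} separate this into $\rd\pi(X)=0$ and $\rd\pi(Y)=0$; it only yields $X=-I(Y)$.  So workaround (a) fails as written, and workaround (b) is not made precise enough to decide.

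The paper avoids the circularity entirely by proving $\mk{l}\cap\mk{p}\subset\mk{p}^-$ by direct element-chasing, with no dimension count.  Take $Z\in\mk{l}\cap\mk{p}$ and write $Z=X+\ri Y$ with $X,Y\in\mk{p}_u$ (not yet known to be unique, but Lemma~\ref{extension of I} guarantees $I$ is well-defined on $\mk{p}$ regardless).  From $\rd p(Z)=0$ one gets $\rd\pi(X+I(Y))=0$, hence $X+I(Y)=0$ in $\mk{p}_u$, so $Y=I(X)$ and $Z=X+\ri I(X)$; then $I(Z)=I(X)-\ri X=-\ri Z$, i.e.\ $Z\in\mk{p}^-$.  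This is the same computation you nearly carried out in workaround~(a), but applied to the right target: rather than trying to prove directness, it proves the inclusion itself.  Once you have $\mk{l}\cap\mk{p}=\mk{p}^-$ this way, the directness of $\mk{p}_u+\ri\mk{p}_u$ follows as a corollary via the dimension identity $\dim_\CC\mk{p}^+=\dim_\CC\mk{g}_I-\dim_\CC\mk{l}=\dim_\CC M$, which is exactly how the paper orders things.
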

\begin{proof} Let $Z = X + \ri Y \in \mk{l}\cap \mk{p}$, where $X, Y \in \mk{p}_u$, then we have
$$
 0=\rd p(X + \ri Y) = \rd p  (X) + \ri \rd p (Y) = \rd \pi (X) + \rd \pi (I(Y)) = \rd \pi (X + I(Y)).
$$
This equality, and $X + I(Y) \in \mk{p}_u$, imply $X + I(Y) = 0$. Therefore $Y = I(X)$ and
$Z =X + \ri Y  =X + \ri I(X)$, hence $I(Z)=I(X + \ri I(X))=-\ri ( X + \ri I(X))=-\ri Z$. Thus we proved that
$ \mk{l} \cap \mk{p} \subset \mk{p}^-.$
Conversely, let $Z \in \mk{p}^-$, then $I(Z) = -\ri Z$ and $Z = \ri I(Z)$, therefore
\ben
\rd p (Z) = \rd p (\ri I(Z)) = \ri \ri \rd p (Z) = - \rd p(Z) \ \ \Rightarrow \rd p (Z) = 0,
\een
hence $Z \in \mk{l}$ and we see that $\mk{p}^- \subset \mk{l} \cap \mk{p}$.
So we proved the first equality.  Now we have $\mk{l} \cap \mk{p}^+ \subset (\mk{l} \cap \mk{p}) \cap \mk{p}^+ = \mk{p}^- \cap \mk{p}^+ = \{0\}$ and we see that $\mk{l} \cap \mk{p}^+ = \{0\}$. Since $\mk{u}=\mk{k}_u + \mk{p}_u$, then obviously $\mk{g}_I=\mk{k} + \mk{p}$, hence $\mk{g}_I=\mk{k} + \mk{p}^+ + \mk{p}^- \subset \mk{l} + \mk{p}^+$. The Lemma follows.

\end{proof}

\begin{coro} \label{mkm6} The complex subspace $\mk{p}\subset \mk{g}_I$ is a complexification of $\mk{p}_u \subset \mk{u}$, i. e. we have
$$
 \mk{p} = \mk{p}_u \oplus \ri \mk{p}_u \qquad \mbox{direct vector space sum}.
 $$
In particular the complex structure $J$ on $\mk{p}_u$  may be extended by complex linearity to the complex subspace $\mk{p}$, {\bf we shall denote this extension by the same letter} $J$.
\end{coro}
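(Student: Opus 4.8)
The plan is to reduce the assertion to a dimension count and to extract most of it from Lemma~\ref{mkm3}. Since $\mk{p}=\mk{p}_u+\ri\mk{p}_u$ holds by the definition \eqref{mk{k} and mk{m}} of $\mk{p}$, the claim $\mk{p}=\mk{p}_u\oplus\ri\mk{p}_u$ is equivalent to $\dim_\RR\mk{p}=2\dim_\RR\mk{p}_u$, i.e.\ to $\mk{p}_u\cap\ri\mk{p}_u=\{0\}$. First I would record that $\rd\pi$ restricts to an $\RR$-linear isomorphism $\mk{p}_u\xrightarrow{\,\sim\,}T_o(M)$ (from $\mk{u}=\mk{k}_u\oplus\mk{p}_u$ and $\ker\rd\pi=\mk{k}_u$, see \eqref{properties of frak{m}_0}, \eqref{ker rd pi}), while $\rd p\colon\mk{g}_I\to T_o(M)$ is surjective with kernel $\mk{l}$. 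Extending $I$ to $\mk{p}$ by Lemma~\ref{extension of I} and splitting $\mk{p}=\mk{p}^+\oplus\mk{p}^-$ into its $(\pm\ri)$-eigenspaces, Lemma~\ref{mkm3} gives $\mk{g}_I=\mk{l}\oplus\mk{p}^+$, so $\rd p$ restricts to an isomorphism $\mk{p}^+\xrightarrow{\,\sim\,}T_o(M)$ and hence $\dim_\RR\mk{p}^+=\dim_\RR\mk{p}_u$. Since the $\RR$-linear map $\mk{p}_u\to\mk{p}^-$, $Y\mapsto\tfrac12\bigl(Y+\ri I(Y)\bigr)$, is surjective with kernel $\mk{p}_u\cap\ri\mk{p}_u$, what remains is precisely to prove $\mk{p}_u\cap\ri\mk{p}_u=\{0\}$.

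So suppose $X=\ri Y\in\mk{p}_u\cap\ri\mk{p}_u$ with $Y\in\mk{p}_u$. Applying $\rd p$ to $X=\ri Y$ and using \eqref{rd p(i X) = I_o (rd p(X) )}, $\rd p|_{\mk{u}}=\rd\pi$ (from \eqref{rd p(X) = rd pi (X)}) and the definition \eqref{wcb5} of $I$, one gets $\rd\pi(X)=\rd\pi(I(Y))$, whence $X=I(Y)$ (as $X,I(Y)\in\mk{p}_u$ and $\rd\pi$ is injective there); thus $I(Y)=\ri Y$ with both members in $\mk{p}_u$. Decomposing $\mk{u}=\mk{c}_u\oplus\mk{u}_s$ and $\mk{g}_I=\mk{c}_I\oplus\mk{g}_s$ as in Remark~\ref{the complexification of mk{u}} and \eqref{mk{g} = mk{c} oplus mk{g}_s}, and comparing the $\mk{g}_s$-components in $I(Y)=\ri Y$, Lemma~\ref{lemma about complexification of bf u_s} (i.e.\ $\mk{u}_s\cap\ri\mk{u}_s=\{0\}$) forces $Y\in\mk{c}_u$. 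Then $Y$ and $\ri Y=I(Y)$ span an $\ri$-invariant complex line contained in $\mk{c}_u\cap\mk{p}_u$ on which the transported structure $I$ coincides with the ambient $\ri$, and this line is central in $\mk{g}_I$ (because $[\mk{c}_u,\mk{u}]=0$ and the bracket of $\mk{g}_I$ is $\CC$-bilinear). So everything comes down to excluding such a central line.

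I expect this last step to be the main obstacle: it is exactly the point at which one must use that $M$ carries the \emph{anti-commuting pair} $I_M,J_M$ (and that ${\bf U}$ acts effectively), because for a general complex homogeneous space the conclusion can genuinely fail along central (toral) directions. The route I would try is to bring in the second structure: observe that the Killing fields of $\mk{u}$ preserve $I_M$, $J_M$ and hence $K_M:=I_MJ_M$; run the construction of Section~\ref{coset space of a complex Lie group} also for $J_M$ (and for $K_M$); and combine the resulting structural statements with the relation $IJ=-JI$ on $\mk{p}_u$ to contradict the existence of a nonzero central line of the kind just found --- or, alternatively, argue directly that the central circle in ${\bf U}$ generated by such a line cannot simultaneously act effectively on $M$ and holomorphically for both $I_M$ and $J_M$. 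Once $\mk{p}_u\cap\ri\mk{p}_u=\{0\}$ is established, $\mk{p}=\mk{p}_u\oplus\ri\mk{p}_u$ follows at once, and $J$ extends to $\mk{p}$ by $\CC$-linearity with no further work.
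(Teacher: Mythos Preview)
Your dimension count via Lemma~\ref{mkm3} is exactly the paper's route: the paper writes $\dim_\CC\mk{p}^+=\dim_\CC M$ and then simply asserts $\dim_\CC\mk{p}=2\dim_\CC\mk{p}^+$, from which the conclusion drops out. You have been more scrupulous and seen that this assertion is equivalent to $\dim_\CC\mk{p}^-=\dim_\CC\mk{p}^+$, hence (by your surjection $\mk{p}_u\twoheadrightarrow\mk{p}^-$ with kernel $\mk{p}_u\cap\ri\mk{p}_u$) to $\mk{p}_u\cap\ri\mk{p}_u=\{0\}$ itself. So the paper's short argument is circular at precisely the point you isolate, and your reduction of the obstruction to a complex line in $\mk{c}_u$ (using Lemma~\ref{lemma about complexification of bf u_s} to kill the semisimple contribution) is correct.

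Your plan to remove that central line by invoking $J_M$ or effectiveness, however, cannot succeed, because the corollary as stated is \emph{false} once $\mk{u}$ has a nontrivial centre. Take $M=\RR^4/\ZZ^4$ with any left-invariant hypercomplex structure and let ${\bf U}=T^4$ act by translations: the action is effective, ${\bf K}_u=\{e\}$, $\mk{p}_u=\mk{u}\cong\RR^4$; the identity component of the biholomorphism group of $(M,I_M)$ has Lie algebra $\mk{g}_{I0}\cong\CC^2$, and the inclusion $\mk{u}\hookrightarrow\mk{g}_{I0}$ is an $\RR$-linear isomorphism, so inside $\mk{g}_I=\mk{g}_{I0}$ one has $\ri\mk{p}_u=\mk{p}_u$. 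Here $\mk{p}^-=\mk{l}\cap\mk{p}=0$ and $\mk{p}^+=\mk{p}$, so $\dim_\CC\mk{p}=2\ne 4=2\dim_\CC\mk{p}^+$ fails and the direct sum $\mk{p}_u\oplus\ri\mk{p}_u$ does not hold in $\mk{g}_I$. Thus neither the paper's bare dimension count nor any argument using $J_M$ can prove the statement; what the paper genuinely needs (and what your computation does give) is that the canonical surjection $\mk{g}\to\mk{g}_I$ restricts to an isomorphism on $\mk{p}^+$, which is enough to transfer the regularity information from $\mk{g}_I$ back to the abstract complexification $\mk{g}$, where $\mk{p}=\mk{p}_u\oplus\ri\mk{p}_u$ and the extension of $J$ are automatic.
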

\bpr
By the previous lemma we have $\dim_\CC(\mk{p}^+) = \dim_\CC(\mk{g}_I) - \dim_\CC(\mk{l}) = \dim_\CC(M)$, hence $\dim_\CC(\mk{p}) = 2 \dim_\CC(\mk{p}^+) = \dim_\RR(M) = \dim_\RR(\mk{p}_u)$.  So, we see that
$$
  \dim_\CC(\mk{p}) = \dim_\RR(\mk{p}_u) \ \  \Rightarrow \ \ \dim_\RR(\mk{p}) = 2 \dim_\RR(\mk{p}_u).
$$
On the other hand
\ben \dim_{\RR}(\mk{p}) = \dim_\RR(\mk{p}_u) + \dim_\RR(\ri \mk{p}_u) - \dim_\RR(\mk{p}_u \cap \ri \mk{p}_u) = 2 \dim_\RR(\mk{p}_u) - \dim_\RR(\mk{p}_u \cap \ri \mk{p}_u)\een
and we infer (using $\dim_\RR(\mk{p}) = 2 \dim_\RR(\mk{p}_u)$) that $\dim_\RR(\mk{p}_u \cap \ri \mk{p}_u)=0$, i. e.
$\mk{p}_u \cap \ri \mk{p}_u =\{0\}$ and the corollary follows.
\epr

\begin{remark}\label{coro for wt{mk{m}} as a subspace of mk{g}} From Corollary \ref{mkm4} it follows that the subspace of the complexification $\mk{g}$, generated by $\mk{p}_u$, is isomorhic to the subspace $\mk{p}$ of $\mk{g}_I$. We shall identify these two subspaces and  shall denote them by a common notation $\mk{p}$ (the isomorphism, that identifies them maps a sum $X + \ri Y $ in $\mk{g}_I$, where $X, Y \in \mk{p}_u$, to the sum $X + \ri Y $ in $\mk{g}$). Of course the complex structures $I$, $J$ and the subspaces $\mk{p}^+$,  $\mk{p}^-$ can be regarded as objects related to the subspace of $\mk{g}$ generated by $\mk{p}_u$ so, they satisfy the relations
\begin{gather}    I \circ \tau = \tau \circ I \quad J \circ \tau = \tau \circ J \quad I \circ J = -J \circ I \nonumber \\[-2mm]
\label{coro for wt{mk{m}} as a subspace of mk{g}eq} \\[-2mm]
 \quad \mk{p}^- = \tau(\mk{p}^+) \quad J(\mk{p}^+) = \mk{p}^- . \nonumber \end{gather}

  Since $\mk{g}$ is the complexification of $\mk{u}$, the relations $\mk{u} = \mk{k}_u + \mk{p}_u$, $\mk{k}_u \cap \mk{p}_u = \{0\} $, $[\mk{k}_u, \mk{p}_u] \subset \mk{p}_u$ imply
\be  \label{properties of frak{m}} \mk{g}  = \mk{k} \oplus \mk{p},\quad [\mk{k}, \mk{p}] \subset \mk{p}. \ee
\end{remark}

\subsection{Fixing the Cartan subalgebra}
\label{fixing the Cartan}

The coset space ${\bf G}_I/{\bf L}$ is compact. From the
normalizer theorem (A. Borel - R. Remmert) (see e.g.
\cite{Akhiezer} (page 80)), it follows that the normalizer
$\mk{n}(\mk{l})$ of $\mk{l}$ in $\mk{g}_I$ is a parabolic
subalgebra of $\mk{g}_I$, i. e. \be \label{mk{n}{mk{l}}=mk{c}
oplus mk{p}} \mk{n}(\mk{l}) = \mk{c}_I \oplus \mk{s}\ee where
$\mk{s}$ is a parabolic subalgebra of $\mk{g}_s$ and $\mk{c}_I$ is
the center of $\mk{g}_I$ (see \eqref{mk{g} = mk{c} oplus
mk{g}_s}).  Let us recall that $\tau_s$ is the conjugation of
$\mk{g}_s$, which corresponds to the compact real form $\mk{u}_s
\subset \mk{g}_s$.  Since $\mk{s}$ is a parabolic subalgebra of
$\mk{g}_s$ then (see \cite{Wolf69}, Theorem 2.6 ) there exists a
$\tau_s$-invariant Cartan subalgebra $\mk{h}_s$ of $\mk{g}_s$,
such that $\mk{h}_s \subset \mk{s}$. Let us denote (recall that we
identify the semisimple part of $\mk{g}$ with the semisimple part
of $\mk{g}_I$ and denote them by the common notation $\mk{g}_s$):
\be \label{mk{h}} \mk{h}_I= \mk{c}_I \oplus \mk{h}_s \subset
\mk{g}_I, \qquad \mk{h}= \mk{c} \oplus \mk{h}_s \subset \mk{g} \ee
then $\mk{h}_I$ and $\mk{h}$ are Cartan subalgebras of the
reductive algebras $\mk{g}_I$ and $\mk{g}$, respectively, and we
obtain  that $\mk{h}_I \subset \mk{n}(\mk{l})$, i. e. \be
\label{[mk{h}, mk{l}] subset mk{l}}
 [\mk{h}_I, \mk{l}] \subset \mk{l}.\ee
We racall that  $\tau_{\vert \mk{g}_s} = \tau_s $, $\tau(\mk{c})=\mk{c}$ (see \eqref{tau new}), hence
\be \label{tau(mk{h}) = mk{h}}   \tau(\mk{h}) = \mk{h}.\ee

\begin{remark} Further in this text, we treat the  algebra $\mk{g}$ and its Cartan subalgebra  $\mk{h}$, the algebras $\mk{g}_I$ and $\mk{h}_I \subset \mk{g}_I$ are used only for the proof of the regularity of $\mk{k}$ and $\mk{p}$.

So everywhere $\Delta$ is the root system w.r. to the Cartan subalgebra $\mk{h} \subset \mk{g}$ and we fix a Weyl-Chevalley basis of $\mk{g}$ as in \eqref{wcb}.

The choice of the positive roots  $\Delta^+ \subset \Delta$ will be explained in subsection \ref{the system Gamma}.

\end{remark}

\subsection{Regularity of \texorpdfstring{$\mk{k}$}{\space} and \texorpdfstring{$\mk{p}$}{\space}} \label{regularity of mk{k} and mk{m}}

We have shown that $\mk{l}$ is a  regular subalgebra of $\mk{g}_I$. In this subsection  we shall prove that $\mk{k}$  and $\mk{p}$ are regular in $\mk{g}$ (for the notations $\mk{k}$ and $\mk{p}$ see \eqref{mk{k}} and  Remark \ref{coro for wt{mk{m}} as a subspace of mk{g}}).

Let us  recall first that $\mk{h}_s$ is $\tau_s$-invariant and therefore we have
\be \label{mk{h}_s = (mk{h}_s cap mk{u}_s ) oplus...} \mk{h}_s = (\mk{h}_s \cap \mk{u}_s ) \oplus \ri (\mk{h}_s \cap \mk{u}_s ), \ee
for the definition of $\mk{u}_s$ see \eqref{mk{u} = mk{z}(mk{u}) oplus mk{u}_s} and Lemma \ref{lemma about complexification of bf u_s}.

\begin{lemma} \label{lemma for mk{k}} The subalgebra $\mk{k} \subset \mk{g}$ is regular w. r. to $\mk{h}$, i. e.
\be \label{frak{k} is regular} \mk{k} = (\mk{k} \cap \mk{h} ) \oplus
\bigoplus_{  \alpha \in \Delta_{\mk{k}} } \mk{g}(\alpha),  \quad \mbox{where} \quad \Delta_{\mk{k}}=\{ \alpha \in \Delta\vert
\mk{g}(\alpha) \subset \mk{k} \}.
\ee
The set $\Delta_{\mk{k}}$ is symmetric with respect to $0$, i. e.  $-\Delta_{\mk{k}}=\Delta_{\mk{k}}$.

Consequently $\mk{p}$ is a regular subspace of $\mk{g}$.
\end{lemma}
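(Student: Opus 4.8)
The plan is to transfer the regularity of $\mk{l}$ in $\mk{g}_I$ down to $\mk{k}$ in $\mk{g}$ through the common real form $\mk{u}$, and then to read off the corresponding statement for $\mk{p}$ from the $ad$-invariance of $\scal{,}$. The first step is to combine \eqref{[mk{h}, mk{l}] subset mk{l}} and \eqref{mk{h}} to get $[\mk{h}_s,\mk{l}]\subset\mk{l}$ inside $\mk{g}_{I0}$. Since $\mk{h}_s$ is $\tau_s$-invariant, \eqref{mk{h}_s = (mk{h}_s cap mk{u}_s ) oplus...}, it contains the real subspace $\mk{h}_s\cap\mk{u}_s\subset\mk{u}_s\subset\mk{u}$. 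For $H\in\mk{h}_s\cap\mk{u}_s$ and $X\in\mk{k}_u$ the bracket $[H,X]$ (formed inside $\mk{g}_{I0}$) lies in $\mk{u}$, because $\mk{u}$ is a subalgebra containing $H$ and $X$, and it also lies in $\mk{l}$, because $\mk{k}_u\subset\mk{l}$ by \eqref{ker rd pi} and $[\mk{h}_s,\mk{l}]\subset\mk{l}$; hence $[H,X]\in\mk{u}\cap\mk{l}=\mk{k}_u$, again by \eqref{ker rd pi}. Together with $[\mk{c}_u,\mk{k}_u]=\{0\}$ and the identity $\mk{h}_u:=\mk{h}\cap\mk{u}=\mk{c}_u\oplus(\mk{h}_s\cap\mk{u}_s)$ (from \eqref{tau new} and \eqref{mk{h}_s = (mk{h}_s cap mk{u}_s ) oplus...}) this gives $[\mk{h}_u,\mk{k}_u]\subset\mk{k}_u$.

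Next I would complexify. Since $\tau(\mk{h})=\mk{h}$ by \eqref{tau(mk{h}) = mk{h}}, the subalgebra $\mk{h}$ is the complexification of its $\tau$-fixed real form $\mk{h}_u$, while $\mk{k}$ is the complexification of $\mk{k}_u$ by \eqref{mk{k}}; hence $[\mk{h}_u,\mk{k}_u]\subset\mk{k}_u$ extends $\CC$-bilinearly to $[\mk{h},\mk{k}]\subset\mk{k}$. Thus $\mk{k}$ is $ad(\mk{h})$-stable, so it is a sum of $\mk{h}$-weight subspaces of $\mk{g}=\mk{h}\oplus\bigoplus_{\alpha\in\Delta}\mk{g}(\alpha)$; since the zero weight subspace is $\mk{h}$ itself and each $\mk{g}(\alpha)$ is one-dimensional, this is exactly \eqref{frak{k} is regular} with $\Delta_{\mk{k}}=\{\alpha\in\Delta:\mk{g}(\alpha)\subset\mk{k}\}$. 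For the symmetry of $\Delta_{\mk{k}}$, I would use $\tau(\mk{k})=\mk{k}$ (it is a complexification) together with $\tau(\mk{g}(\alpha))=\mk{g}(-\alpha)$, which holds because $\tau(E_\alpha)=-E_{-\alpha}$ for the Weyl--Chevalley basis \eqref{wcb}; then $\mk{g}(\alpha)\subset\mk{k}$ forces $\mk{g}(-\alpha)=\tau(\mk{g}(\alpha))\subset\mk{k}$, i.e. $-\Delta_{\mk{k}}=\Delta_{\mk{k}}$.

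Finally, for $\mk{p}$: by \eqref{frak{m}_0 = X in frak{u};forall Y in frak{k}_0}, $\mk{p}_u$ is the $\scal{,}$-orthogonal complement of $\mk{k}_u$ in $\mk{u}$, and $\scal{,}$ is $ad$-invariant, so for $H\in\mk{h}_u$, $Y\in\mk{p}_u$ and any $X\in\mk{k}_u$ one has $\scal{[H,Y],X}=-\scal{Y,[H,X]}=0$, since $[H,X]\in\mk{k}_u$ by the first step; hence $[H,Y]\in\mk{p}_u$, i.e. $[\mk{h}_u,\mk{p}_u]\subset\mk{p}_u$. Complexifying, using $\mk{p}=\mk{p}_u\oplus\ri\mk{p}_u$ from Corollary \ref{mkm6}, gives $[\mk{h},\mk{p}]\subset\mk{p}$. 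Since $\mk{g}=\mk{k}\oplus\mk{p}$ by \eqref{properties of frak{m}} is a direct sum of two $ad(\mk{h})$-stable subspaces, the root space decomposition of $\mk{g}$ splits each summand: $\mk{h}=(\mk{h}\cap\mk{k})\oplus(\mk{h}\cap\mk{p})$, and each $\mk{g}(\alpha)$ is contained in $\mk{k}$ or in $\mk{p}$; therefore $\mk{p}=(\mk{p}\cap\mk{h})\oplus\bigoplus_{\alpha\in\Delta\setminus\Delta_{\mk{k}}}\mk{g}(\alpha)$, which is the regularity of $\mk{p}$.

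I expect the only genuinely delicate point to be the bookkeeping among $\mk{g}_{I0}\supset\mk{g}_I$ and the abstract complexification $\mk{g}$: the identity $[\mk{h}_s\cap\mk{u}_s,\mk{k}_u]\subset\mk{l}$ is forced in $\mk{g}_{I0}$, where $\mk{l}$ lives, while the desired conclusion $[\mk{h},\mk{k}]\subset\mk{k}$ concerns $\mk{g}$. This is harmless because the two brackets agree on the real subalgebra $\mk{u}$, which contains both $\mk{h}_s\cap\mk{u}_s$ and $\mk{k}_u$, but it should be spelled out explicitly. Everything else is routine linear algebra once the parabolic normalizer $\mk{n}(\mk{l})=\mk{c}_I\oplus\mk{s}$ of \eqref{mk{n}{mk{l}}=mk{c} oplus mk{p}} and the $\tau_s$-invariant Cartan $\mk{h}_s\subset\mk{s}$ are in place.
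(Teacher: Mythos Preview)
Your proof is correct and follows essentially the same route as the paper: both arguments descend from $[\mk{h}_I,\mk{l}]\subset\mk{l}$ to $[\mk{h}_s\cap\mk{u}_s,\mk{k}_u]\subset\mk{k}_u$ via $\mk{k}_u=\mk{u}\cap\mk{l}$, then complexify, then use $\tau$-invariance for the symmetry of $\Delta_{\mk{k}}$, and finally deduce regularity of $\mk{p}$ from orthogonality. Your treatment of the last step (spelling out the $ad$-invariance of $\scal{,}$ and invoking Corollary~\ref{mkm6}) is more explicit than the paper's one-line appeal to the orthogonal complement, but the substance is identical.
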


\bpr
Since  $[\mk{h}_I,\mk{l}] \subset \mk{l}$ then $[\mk{h}_s \cap \mk{u}_s,\mk{l}] \subset \mk{l}$, besides since $\mk{u}$ is a subalgebra then $[\mk{h}_s \cap \mk{u}_s, \mk{u}] \subset \mk{u}$,   hence  $[\mk{h}_s \cap \mk{u}_s , \mk{u} \cap \mk{l}] \subset \mk{u}\cap \mk{l}$. On the other hand  $\mk{k}_u = \mk{l}\cap \mk{u}$ (see \eqref{ker rd pi}), which implies
\be [\mk{h}_s\cap \mk{u}_s, \mk{k}_u] \subset \mk{k}_u,\ee
which, by \eqref{mk{h}_s = (mk{h}_s cap mk{u}_s ) oplus...} and $\mk{k}=\mk{k}_u + \ri \mk{k}_u \subset \mk{g}$, implies $[\mk{h}_s,\mk{k}] \subset \mk{k}$. Now \eqref{mk{h}}  gives $[\mk{h},\mk{k}] \subset \mk{k}$, i. e. $\mk{k}$ is regular and we obtain \eqref{frak{k} is regular}.

The set $\Delta_{\mk{k}}$ is symmetric with respect to $0$, i. e.  $-\Delta_{\mk{k}}=\Delta_{\mk{k}}$, since $\tau(\mk{k})=\mk{k}$ and $\tau(\mk{g}(\alpha)) = \mk{g}(-\alpha)$ for $\alpha \in \Delta$.

Since $ \mk{p}_u$ is the orthogonal complement of $\mk{k}_u$ in $\mk{u}$ with respect to the Killing form, hence the regularity of $ \mk{p} $ follows from the regularity of $\mk{k}$.
 \epr
Now we shall prove the regularity of $\mk{p}^+$ and $\mk{p}^-$

\begin{lemma} \label{lemma wcb8}   The subspaces $\mk{p}^+$ and $\mk{p}^-$ are regular in $\mk{g}$  w. r. to $\mk{h}$, i. e.
\be
 \label{wcb8}  \mk{p}^+ = (\mk{p}^+ \cap \mk{h}) \oplus \bigoplus_{\alpha \in \Delta_{\mk{p}}^+ } \mk{g}(\alpha), \ \   \mbox{where} \quad \Delta_{\mk{p}}^+=\{ \alpha\vert  \mk{g}(\alpha) \subset \mk{p}^+ \} \\
 \label{wcb9} \mk{p}^- =(\mk{h} \cap \mk{p}^-) \oplus \bigoplus_{\alpha \in \Delta_{\mk{p}}^+} \mk{g}(-\alpha) \qquad \mk{h} \cap \mk{p}^- = \tau(\mk{h} \cap \mk{p}^+). \ee
Furthermore,
\be
\mk{h}\cap \mk{p} = (\mk{h} \cap \mk{p}^+) \oplus (\mk{h} \cap \mk{p}^-) \qquad  \mk{h} = (\mk{h} \cap \mk{p}) \oplus (\mk{h} \cap \mk{k}) \\ \label{wcb7} \Delta \setminus \Delta_{\mk{k}}= \Delta_{\mk{p}}^+ \cup (-\Delta_{\mk{p}}^+) \qquad \Delta_{\mk{p}}^+ \cap  (-\Delta_{\mk{p}}^+)=\emptyset.
\ee
  \end{lemma}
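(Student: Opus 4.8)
The plan is to establish the regularity of $\mk{p}^-$ first, obtain that of $\mk{p}^+$ by applying $\tau$, and then read off the remaining assertions by comparing regular decompositions. The key point for $\mk{p}^-$ is to use the description $\mk{p}^- = \mk{l}\cap\mk{p}$ from Lemma~\ref{mkm3}, where the intersection is taken inside $\mk{g}_I$, and to observe that \emph{both} $\mk{l}$ and $\mk{p}$ are $\mathrm{ad}(\mk{h}_I)$-invariant subspaces of $\mk{g}_I$. For $\mk{l}$ this is exactly \eqref{[mk{h}, mk{l}] subset mk{l}}. For $\mk{p} = \mk{p}_u + \ri\mk{p}_u \subset \mk{g}_I$, since $\mk{c}_I$ is central we have $[\mk{h}_I,\mk{p}] = [\mk{h}_s,\mk{p}]$, and by \eqref{mk{h}_s = (mk{h}_s cap mk{u}_s ) oplus...} together with $\mk{p} = \mk{p}_u + \ri \mk{p}_u$ it suffices to check $[\mk{h}_s\cap\mk{u}_s,\mk{p}_u]\subset\mk{p}_u$. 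This follows from $[\mk{h}_s\cap\mk{u}_s,\mk{k}_u]\subset\mk{k}_u$, which was established in the proof of Lemma~\ref{lemma for mk{k}}, combined with the fact that $\mathrm{ad}(H)$ is skew-symmetric with respect to $\scal{,}$ for $H\in\mk{u}$, hence preserves the orthogonal complement $\mk{p}_u$ of $\mk{k}_u$. Consequently $\mk{p}^- = \mk{l}\cap\mk{p}$ is $\mathrm{ad}(\mk{h}_I)$-invariant in $\mk{g}_I$, i.e. a sum of $\mk{h}_I$-weight spaces: $\mk{p}^- = (\mk{p}^-\cap\mk{h}_I)\oplus\bigoplus_\alpha(\mk{p}^-\cap\mk{g}_I(\alpha))$, each $\mk{p}^-\cap\mk{g}_I(\alpha)$ being $\{0\}$ or the whole line $\mk{g}_I(\alpha)$.

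Next I would transfer this to $\mk{g}$. Under the identifications fixed in Remark~\ref{the complexification of mk{u}} and Remark~\ref{coro for wt{mk{m}} as a subspace of mk{g}}, the reductive algebras $\mk{g}_I = \mk{c}_I\oplus\mk{g}_s$ and $\mk{g} = \mk{c}\oplus\mk{g}_s$ share the semisimple factor $\mk{g}_s$ and its Cartan $\mk{h}_s$; the nonzero $\mk{h}_I$-weight spaces $\mk{g}_I(\alpha)$ are precisely the root spaces $\mk{g}_s(\alpha) = \mk{g}(\alpha)$, the Cartan subalgebras $\mk{h}_I$ and $\mk{h}$ are identified, and $\mk{p}\subset\mk{g}_I$ is identified with $\mk{p}\subset\mk{g}$. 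Hence the above decomposition of $\mk{p}^-$ becomes $\mk{p}^- = (\mk{p}^-\cap\mk{h})\oplus\bigoplus_{\alpha}(\mk{p}^-\cap\mk{g}(\alpha))$ in $\mk{g}$, which is the regularity of $\mk{p}^-$ with respect to $\mk{h}$. Applying $\tau$ and using $\tau(\mk{h}) = \mk{h}$, $\tau(\mk{g}(\alpha)) = \mk{g}(-\alpha)$ and $\mk{p}^- = \tau(\mk{p}^+)$ (Remark~\ref{coro for wt{mk{m}} as a subspace of mk{g}}) yields \eqref{wcb8} with $\Delta_{\mk{p}}^+ = \{\alpha : \mk{g}(\alpha)\subset\mk{p}^+\}$, and then \eqref{wcb9}, including $\mk{h}\cap\mk{p}^- = \tau(\mk{h}\cap\mk{p}^+)$.

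The remaining identities are then formal. Since $\dim\mk{g}(\alpha) = 1$, membership of $\alpha$ in $\Delta_{\mk{p}}^+\cap(-\Delta_{\mk{p}}^+)$ would force $\mk{g}(\alpha)\subset\mk{p}^+\cap\mk{p}^- = \{0\}$, so $\Delta_{\mk{p}}^+\cap(-\Delta_{\mk{p}}^+) = \emptyset$. Writing out $\mk{p} = \mk{p}^+\oplus\mk{p}^-$ via \eqref{wcb8}, \eqref{wcb9} and comparing with the regular decomposition of $\mk{p}$ already obtained in Lemma~\ref{lemma for mk{k}} (where $\Delta_{\mk{p}} = \Delta\setminus\Delta_{\mk{k}}$), the disjointness of the summands gives $\mk{h}\cap\mk{p} = (\mk{h}\cap\mk{p}^+)\oplus(\mk{h}\cap\mk{p}^-)$ and $\Delta\setminus\Delta_{\mk{k}} = \Delta_{\mk{p}}^+\cup(-\Delta_{\mk{p}}^+)$, i.e. \eqref{wcb7}. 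Similarly $\mk{g} = \mk{k}\oplus\mk{p}$ with $\mk{k}$, $\mk{p}$ regular and $\Delta_{\mk{k}}\cap\Delta_{\mk{p}} = \emptyset$ gives $\mk{h} = (\mk{h}\cap\mk{p})\oplus(\mk{h}\cap\mk{k})$.

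The step I expect to be the main obstacle is making the passage between $\mk{g}_I$ and $\mk{g}$ fully rigorous: one must check that the chosen identification of the semisimple factors (and hence of the Cartan subalgebras, their weight spaces, and the subspaces $\mk{k}$, $\mk{p}$, $\mk{p}^\pm$) genuinely intertwines $\mathrm{ad}(\mk{h}_I)$ on $\mk{g}_I$ with $\mathrm{ad}(\mk{h})$ on $\mk{g}$, so that a weight-space decomposition on one side transports verbatim to the other. Everything else is already recorded in the excerpt or is a one-line consequence of one-dimensionality of root spaces together with the direct sums $\mk{g} = \mk{k}\oplus\mk{p}$ and $\mk{p} = \mk{p}^+\oplus\mk{p}^-$.
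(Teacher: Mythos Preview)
Your proposal is correct and follows essentially the same approach as the paper: establish $\mathrm{ad}(\mk{h}_I)$-invariance of $\mk{p}^- = \mk{l}\cap\mk{p}$ inside $\mk{g}_I$ from that of $\mk{l}$ and $\mk{p}$, transfer the regularity to $\mk{g}$, apply $\tau$ to obtain $\mk{p}^+$, and read off the remaining decompositions. The paper's proof handles the passage from $\mk{g}_I$ to $\mk{g}$ by first restricting to $[\mk{h}_u,\mk{p}^-]\subset\mk{p}^-$ (which makes sense in both ambient algebras via Remark~\ref{coro for wt{mk{m}} as a subspace of mk{g}}) and then complexifying; your flagging of this step as the delicate point is apt, and your argument for $[\mk{h}_I,\mk{p}]\subset\mk{p}$ in $\mk{g}_I$ via skew-symmetry of $\mathrm{ad}(H)$ is in fact a slightly more explicit justification than the paper gives.
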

\bpr
We have proved that $\mk{l}$ and $\mk{p}$ are regular subalgebras of $\mk{g}_I$ with respect to the Cartan subalgebra $\mk{h}_I$, i. e. $[\mk{h}_I,\mk{l}] \subset \mk{l}$ and $[\mk{h}_I,\mk{p}] \subset \mk{p}$. Therefore $[\mk{h}_I, \mk{l}\cap \mk{p}] \subset \mk{l} \cap \mk{p}$. On the other hand $\mk{l}\cap \mk{p} = \mk{p}^-$ (see  Lemma \ref{mkm3}). Therefore $[\mk{h}_I,\mk{p}^-] \subset \mk{p}^-$, i. e. $\mk{p}^-$ is regular in $\mk{g}_I$ with respect to $\mk{h}_I$. Now obviously $[\mk{h}_u,\mk{p}^-] \subset \mk{p}^-$ in $\mk{g}_I$, hence one can verify, taking into account $[\mk{h}_u, \mk{p}_u] \subset \mk{p}_u$ and Remark \ref{coro for wt{mk{m}} as a subspace of mk{g}}, that  $[\mk{h}_u,\mk{p}^-] \subset \mk{p}^-$ in $\mk{g}$ as well. Therefore $[\mk{h},\mk{p}^-] \subset \mk{p}^-$ in $\mk{g}$ and  $\mk{p}^-$ is regular. From \eqref{coro for wt{mk{m}} as a subspace of mk{g}eq} in  Remark \ref{coro for wt{mk{m}} as a subspace of mk{g}} we know that $\mk{p}^+ = \tau(\mk{p}^-)$, besides $\mk{h}$ is $\tau$-invariant, therefore $\mk{p}^+$ is regular as well, so we obtain \eqref{wcb8}.

From  \eqref{wcb8} and $\tau(\mk{g}(\alpha))=\mk{g}(-\alpha)$, it follows  that
$$
\mk{p}^- = \tau(\mk{p}^+\cap \mk{h}) + \sum_{\alpha \in \Delta_{\mk{p}}^+} \mk{g}(-\alpha).
$$
We see that $\mk{p}^-$ is regular in $\mk{g}$ and $\mk{h}\cap \mk{p}^- = \tau(\mk{h}\cap \mk{p}^+) $, therefore we have proved \eqref{wcb9}.

Since $ \mk{p} = \mk{p}^+ + \mk{p}^-$, $\mk{p}^+ \cap \mk{p}^- = \{0\}$, then
\ben
\Delta_{\mk{p}}^+ \cap (- \Delta_{\mk{p}}^+) = \{0\} \qquad \qquad  \mk{h} \cap \mk{p} = (\mk{h}\cap \mk{p}^+) \oplus (\mk{h}\cap \mk{p}^-).
\een
 Since we have a direct vector sum $\mk{g} = \mk{k} + \mk{p}^+ + \mk{p}^-$  and Lemma \eqref{lemma for mk{k}} then obviously $ \Delta \setminus \Delta_{\mk{k}}= \Delta_{\mk{p}}^+ \cup (-\Delta_{\mk{p}}^+)$.
\epr

\subsection{The set \texorpdfstring{$\Delta_{\mk{p}}^+ $}{\space}} \label{the system Gamma}

We recall that the definitions of $I$ and $J$ are in \eqref{wcb5}.
 Integrability of the
complex structure $I_M$ on $M$ implies (see \cite{Kobayashi69}, ch. X) that for $X, Y \in \mk{p}_u$ we have
$$
  [I X , I Y]_{\mk{p}_u} - [X,Y]_{\mk{p}_u} - I([X, I Y]_{\mk{p}_u}) - I([I X,
Y]_{\mk{p}_u})=0.
$$
 After complexification  we can write
$$
 \forall X, Y \in
\mk{p}  \ \ [I X , I Y]_{\mk{p}} - [X,Y]_{\mk{p}} - I([X, I
Y]_{\mk{p}}) - I([I X, Y]_{\mk{p}})=0.
$$
The integrability of I implies
\be \label{wcb6}
 \forall X,Y \in \mk{p}^\pm \qquad [X,Y]_{\mk{p}} \in \mk{p}^\pm.
 \ee

 The elements in ${\bf U}$
act as hypercomplex diffeomorphisms of $M$, whence for $X \in \mk{k}$ we have
\be \label{afi1}
 I \circ ad(X) = ad(X) \circ I, \quad J \circ ad(X) = ad(X) \circ J.
\ee
We recall that \eqref{properties of frak{m}} is valid, therefore the equations above make sense.
Relation \eqref{afi1} implies
\be \label{afi2}
[\mk{k},\mk{p}^+] \subset \mk{p}^+, \qquad [\mk{k}, \mk{p}^-] \subset \mk{p}^-.
\ee
Indeed, let $X \in \mk{k}$, $Y \in
\mk{p}^+$ then
$$
[X, Y]= -\ri ad(X)(I(Y)) = -\ri I(ad(X)(Y)) = -\ri I([X,Y]) \Rightarrow I([X,Y]) =  \ri [X,Y]
$$
 hence $[X, Y] \in \mk{p}^+$. The proof for $\mk{p}^-$ is the same.

From \eqref{afi2}, \eqref{wcb6} and since $\mk{k}$ is a subalgebra of $\mk{g}$ it follows that $\mk{k}+\mk{p}^+$ is a subalgebra of $\mk{g}$. Furthermore, both $\mk{k}$ and $\mk{p}^+$ are regular, therefore  $\mk{k}+\mk{p}^+$ is regular and obviously by \eqref{wcb8} and \eqref{frak{k} is regular} we have
\begin{gather}\label{sred}
\mk{k}+\mk{p}^+ = (\mk{k}+\mk{p}^+)\cap \mk{h} + \sum_{\alpha \in \Delta_{\mk{k}} \cup \Delta_{\mk{p}}^+} \mk{g}(\alpha).
\end{gather}

From   \eqref{wcb7} we see,  that $\Delta =\Delta_{\mk{k}} \cup \Delta_{\mk{p}}^+ \cup(-\Delta_{\mk{p}}^+)=(\Delta_{\mk{k}} \cup \Delta_{\mk{p}}^+) \cup \left ( -(\Delta_{\mk{k}} \cup \Delta_{\mk{p}}^+) \right )$.  From  (\cite{Bourbaki75}, ChVII, \S 3, n.4) we see that \eqref{sred}, $\Delta =(\Delta_{\mk{k}} \cup \Delta_{\mk{p}}^+) \cup \left ( -(\Delta_{\mk{k}} \cup \Delta_{\mk{p}}^+) \right )$ and the fact that $\mk{k}+\mk{p}^+$ is a subalgebra imply that  there exists a basis $\Pi$ of the root system $\Delta$, such that  the set of positive roots $\Delta^+$, for this basis $\Pi$, satisfies $\Delta^+ \subset \Delta_{\mk{k}} \cup \Delta_{\mk{p}}^+$. Since $\Delta^+ \cup (-\Delta^+) = \Delta $, then for any $\alpha \in \Delta_{\mk{p}}^+$ we have $\alpha \in \Delta^+$ or $-\alpha \in \Delta^+$, but the relation $-\alpha \in \Delta^+$   contradicts  $\Delta^+ \subset \Delta_{\mk{k}} \cup \Delta_{\mk{p}}^+$ and \eqref{wcb7}. Thus we proved the existence of such a basis $\Pi$, that the corresponding set of positive roots satisfies
\be \label{Delta_{mk{m}}^+=}
\Delta_{\mk{p}}^+ \subset \Delta^+.
\ee

\begin{remark} From now on this subset of positive roots  $\Delta^+$ will be fixed throughout section \ref{nc0}.
It is useful to denote also \be \Delta_{\mk{k}}^+ = \Delta^+ \cap \Delta_{\mk{k}} .\ee
Then from \eqref{wcb7} it follows
\be \Delta^+ = \Delta_{\mk{k}}^+ \cup \Delta_{\mk{p}}^+ \ \ \mbox{disjoint union}.\ee

In the sequel we shall use also the following notations, related to the stem $(\Gamma,\prec)$ of $\Delta^+$:
\begin{gather}
\label{prop of Delta+ def of B_{gamma}^{frak{m}}} \Phi_{\zeta}^{\mk{p}} = \{ \beta \in \Delta_{\mk{p}}^+ \vert \  \zeta- \beta \in \Delta_{\mk{p}}^+ \} \qquad \zeta \in \Delta_{\mk{p}}^+\\
\label{def of Gamma_{frak{m}},...} \Gamma_{\mk{p}}= \Gamma \cap \Delta_{\mk{p}}^+, \qquad
 \Gamma_{\mk{k}}=\Gamma \cap \Delta_{\mk{k}}^+ =\Gamma \setminus \Gamma_{\mk{p}}, \qquad \mk{w}_{\mk{p}} = span_\RR\{W_\gamma\vert \gamma \in \Gamma_{\mk{p}}\}.
   \end{gather}
\end{remark}

\subsection{Decomposition of the Cartan subalgebra: \texorpdfstring{$\mk{h}_u$, $\mk{h}_{\mk{k}_u}$, $\mk{h}_{\mk{p}_u}$, $\mk{n}_{\mk{p}_u}$}{\space}}  We study the structure of $\Delta_{\mk{k}}^+$, $\Delta_{\mk{p}}^+$ to show that $\mk{k}$ is a $\Gamma_{\mk{k}}$-stemmed subalgebra. We also show that $J$ is of the type \eqref{afi3}.
First we introduce some more notations.
Let us denote
\begin{gather}\label{csj2}
\mk{h}_u = \mk{u} \cap \mk{h} =  \mk{c}_u \oplus (\mk{h}_s \cap \mk{u}_s),\qquad
  \mk{h}_{\mk{k}_u} = \mk{k}_u \cap \mk{h},\quad
\mk{h}_{\mk{p}_u} =  \mk{p}_u \cap \mk{h}.
\end{gather}
Since $\mk{u} = \mk{k}_u \oplus \mk{p}_u$, then obviously
\be \label{hhh1} \mk{h}_u = \mk{h}_{\mk{k}_u} \oplus \mk{h}_{\mk{p}_u}.
 \ee

Furthermore, let us denote
\begin{gather} \label{frak{n}_{frak{m}}}
\mk{n}_{\mk{p}_u}
=   span_\RR\{(E_{\alpha} -
E_{-\alpha}), \ri (E_{\alpha} + E_{-\alpha}); \alpha \in \Delta_{\mk{p}}
\} \\
\mk{n}_{\mk{p}}^+ = \sum_{\alpha \in \Delta_{\mk{p}}^+} \mk{g}(\alpha) \qquad \mk{n}_{\mk{p}}^- = \sum_{\alpha \in \Delta_{\mk{p}}^+} \mk{g}(-\alpha) \qquad \mk{n}_{\mk{p}} = \mk{n}_{\mk{p}}^+ +\mk{n}_{\mk{p}}^-\end{gather} then  obviously

\begin{gather} \label{hhh2} \mk{p}_u = \mk{h}_{\mk{p}_u} \oplus \mk{n}_{\mk{p}_u},\quad
\mk{k}_u = \mk{h}_{\mk{k}_u} \oplus span_{\RR}\{(E_{\alpha}-
E_{-\alpha}), \ri (E_{\alpha} + E_{-\alpha} ); \alpha \in  \Delta_{\mk{k}}\}.
 \end{gather}

 With these notations the following lemmas are valid
\begin{lemma} \label{mkm9} If $\alpha \in \Delta$ and $\alpha\sco{\mk{h}_{\mk{k}_u}} = 0$, then $W_{\alpha} \in
\mk{p}_u$ (recall that $W_{\alpha}=\frac{i}{2} H_{\alpha}$).
\end{lemma}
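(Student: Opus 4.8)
The statement says: if $\alpha \in \Delta$ and $\alpha|_{\mk{h}_{\mk{k}_u}} = 0$, then $W_\alpha = \tfrac{\ri}{2} H_\alpha \in \mk{p}_u$. Since $W_\alpha \in \mk{h}_u$ always, and $\mk{h}_u = \mk{h}_{\mk{k}_u} \oplus \mk{h}_{\mk{p}_u}$ is an \emph{orthogonal} decomposition (see \eqref{hhh1} and the fact that $\mk{p}_u$ is the $\scal{,}$-orthogonal complement of $\mk{k}_u$), it suffices to prove that $W_\alpha$ is orthogonal to $\mk{h}_{\mk{k}_u}$, i.e.\ that $\scal{H_\alpha, H} = 0$ for every $H \in \mk{h}_{\mk{k}_u}$. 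So the plan is to convert the vanishing of the \emph{functional} $\alpha$ on $\mk{h}_{\mk{k}_u}$ into the vanishing of the \emph{inner product} of $H_\alpha$ against $\mk{h}_{\mk{k}_u}$.

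The key is the compatibility between $\scal{,}$ and the roots on the Cartan subalgebra. Recall $H_\alpha = (2/\alpha(h_\alpha)) h_\alpha$ where $h_\alpha \in \mk{h}_s$ is defined by $\mathrm{Kill}(H, h_\alpha) = \alpha(H)$ for $H \in \mk{h}_s$, and $\scal{,}$ restricts to the Killing form on $\mk{g}_s$; moreover $\mk{c}$ and $\mk{g}_s$ are orthogonal. Hence for any $H \in \mk{h}$, writing $H = H_{\mk{c}} + H_s$ with $H_{\mk{c}} \in \mk{c}$, $H_s \in \mk{h}_s$, we get $\scal{H_\alpha, H} = \scal{H_\alpha, H_s} = (2/\alpha(h_\alpha)) \mathrm{Kill}(h_\alpha, H_s) = (2/\alpha(h_\alpha)) \alpha(H_s) = (2/\alpha(h_\alpha)) \alpha(H)$, using that $\alpha$ vanishes on $\mk{c}$. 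In short, $\scal{H_\alpha, H}$ is a nonzero scalar multiple of $\alpha(H)$ for all $H \in \mk{h}$. Therefore $\alpha|_{\mk{h}_{\mk{k}_u}} = 0$ gives $\scal{H_\alpha, \mk{h}_{\mk{k}_u}} = 0$, and since $H_\alpha \in \mk{h}$ with $\mk{h} = \mk{h}_{\mk{k}} \oplus \mk{h}_{\mk{p}}$ (Lemma \ref{lemma wcb8}) and $\mk{h}_{\mk{k}_u}$ spans $\mk{h}_{\mk{k}}$ over $\CC$, we conclude $H_\alpha \in \mk{h}_{\mk{p}}$. Finally $W_\alpha = \tfrac{\ri}{2}H_\alpha$ is real (lies in $\mk{h}_u$) and orthogonal to $\mk{k}_u$, hence $W_\alpha \in \mk{h}_u \cap \mk{h}_{\mk{p}} = \mk{h}_{\mk{p}_u} \subset \mk{p}_u$.

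There is essentially no obstacle here; the only point requiring a little care is that $\mk{h}_{\mk{k}_u}$ is a \emph{real} subspace while the orthogonality statement I want is phrased over $\CC$ on $\mk{h}$. This is handled by observing that $\scal{,}$ on $\mk{h}_u \times \mk{h}_u$ is (negative) definite real-valued, that $\mk{h}_{\mk{k}} = \mk{h}_{\mk{k}_u} \oplus \ri \mk{h}_{\mk{k}_u}$ (from \eqref{mk{k}}), and that $\alpha$ being real on $\ri \mk{h}_u$ (it takes purely imaginary values on $\mk{h}_u$, real values on $\ri\mk{h}_u$) means $\alpha|_{\mk{h}_{\mk{k}_u}} = 0 \iff \alpha|_{\mk{h}_{\mk{k}}} = 0$. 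Thus the scalar identity $\scal{H_\alpha, H} = (2/\alpha(h_\alpha))\,\alpha(H)$, valid for all $H \in \mk{h}$, immediately yields $H_\alpha \perp \mk{h}_{\mk{k}}$ and hence $W_\alpha \in \mk{p}_u$. Alternatively, and even more directly: $H_\alpha \in \mk{h}_s$, so $\ri H_\alpha \in \mk{h}_u \cap \mk{g}_s$; pairing $W_\alpha$ against a basis of $\mk{h}_{\mk{k}_u}$ and using $\scal{W_\alpha, H} = \mathrm{const}\cdot \alpha(H) = 0$ finishes it.
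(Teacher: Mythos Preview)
Your proof is correct and follows essentially the same approach as the paper: both convert the hypothesis $\alpha|_{\mk{h}_{\mk{k}_u}}=0$ into $\scal{W_\alpha,\mk{h}_{\mk{k}_u}}=0$ via the defining relation $\mathrm{Kill}(h_\alpha,H)=\alpha(H)$ on $\mk{h}_s$ together with the orthogonality of $\mk{c}$ and $\mk{g}_s$, and then conclude $W_\alpha\in\mk{p}_u$. The only cosmetic difference is that the paper finishes by checking $\scal{W_\alpha,\mk{k}_u}=0$ directly using \eqref{hhh2}, whereas you use the orthogonal decomposition $\mk{h}_u=\mk{h}_{\mk{k}_u}\oplus\mk{h}_{\mk{p}_u}$; your appeal to Lemma~\ref{lemma wcb8} is unnecessary (and that lemma comes later), but your own ``alternative'' at the end already avoids it.
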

\begin{proof} Let $X \in \mk{h}_{\mk{k}_u}$. Let us decompose $X = X_{\mk{c}} + X_s$, $X_{\mk{c}} \in \mk{c}_u$, $X_s \in \mk{u}_s \cap \mk{h}_s$ in accordance with  the decomposition $\mk{h}_u
= \mk{c}_u \oplus (\mk{h}_s \cap \mk{u}_s)$. Since $\alpha(X)=0$ and $\alpha$ vanishes on $\mk{c}_u$, then $\alpha(X_s)=0$. Hence  $Kill(h_{\alpha},X_s)= \alpha(X_s) = 0$ and
 $\scal{W_{\alpha},X}=\scal{W_{\alpha},X_{\mk{c}} + X_s} = \scal{W_{\alpha}, X_s}=Kill(W_{\alpha},X_s)=0$.

Using \eqref{hhh2} and the obvious fact that $\scal{W_\alpha,\mk{n}^\pm} = 0$ we see that
$\scal{\mk{k}_\mk{u}, W_{\alpha}}=0$, hence  $W_{\alpha} \in
\mk{p}_u$.\end{proof}

\begin{lemma} \label{lemma if... then alpha(X)=0} Let $X \in \mk{h}_{\mk{p}_u}$ and $\alpha \in
\Delta_{\mk{k}}$ then $\alpha(X)=0$.
\end{lemma}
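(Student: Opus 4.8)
The plan is to deduce the vanishing of $\alpha(X)$ from the orthogonality of $\mk{h}_{\mk{p}_u}$ and $\mk{h}_{\mk{k}_u}$, once we exhibit inside $\mk{h}_{\mk{k}_u}$ an element pairing nontrivially with $\alpha$.

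First I would check that for every $\alpha \in \Delta_{\mk{k}}$ the vector $W_\alpha = \frac{\ri}{2} H_\alpha$ belongs to $\mk{h}_{\mk{k}_u}$. By Lemma \ref{lemma for mk{k}} the subalgebra $\mk{k}$ is regular with $\Delta_{\mk{k}} = -\Delta_{\mk{k}}$, so $\mk{g}(\alpha)$ and $\mk{g}(-\alpha)$ both lie in $\mk{k}$, whence $H_\alpha = [E_\alpha, E_{-\alpha}] \in \mk{k} \cap \mk{h}$. From the Weyl--Chevalley normalization $\tau(E_\alpha) = -E_{-\alpha}$ in \eqref{wcb} one gets $\tau(H_\alpha) = -H_\alpha$, hence $\tau(W_\alpha) = W_\alpha$, i.e. $W_\alpha \in \mk{u}$; combining, $W_\alpha \in \mk{u} \cap \mk{k} \cap \mk{h} = \mk{h}_{\mk{k}_u}$.

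Since $\mk{p}_u$ is the orthogonal complement of $\mk{k}_u$ in $\mk{u}$ (see \eqref{properties of frak{m}_0}), the hypothesis $X \in \mk{h}_{\mk{p}_u} \subset \mk{p}_u$ gives $\scal{X, W_\alpha} = 0$ for each $\alpha \in \Delta_{\mk{k}}$. Then I would evaluate this pairing explicitly: writing $X = X_{\mk{c}} + X_s$ with $X_{\mk{c}} \in \mk{c}_u$, $X_s \in \mk{h}_s \cap \mk{u}_s$ as in the proof of Lemma \ref{mkm9}, and using that $W_\alpha \in \mk{g}_s$ while $\mk{c}$ is orthogonal to $\mk{g}_s$, we get $\scal{X, W_\alpha} = \scal{X_s, W_\alpha}$. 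Because $\scal{,}$ agrees with the Killing form on $\mk{g}_s$ and $H_\alpha = (2/\alpha(h_\alpha)) h_\alpha$ with $Kill(H, h_\alpha) = \alpha(H)$ for $H \in \mk{h}_s$, this equals $\frac{\ri}{\alpha(h_\alpha)} \alpha(X_s)$.

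To finish, note that $\alpha$ vanishes on the center $\mk{c}_u$, so $\alpha(X) = \alpha(X_s)$, while $\alpha(h_\alpha) = Kill(h_\alpha, h_\alpha) \neq 0$; thus $0 = \scal{X, W_\alpha} = \frac{\ri}{\alpha(h_\alpha)} \alpha(X)$ forces $\alpha(X) = 0$. The only slightly delicate points are the first step --- confirming that regularity of $\mk{k}$ together with $\tau$-invariance really places $W_\alpha$ in $\mk{h}_{\mk{k}_u}$ --- and keeping track of the central part $X_{\mk{c}}$ of $X$; everything else is a routine computation with the chosen bases.
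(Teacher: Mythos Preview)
Your proof is correct and follows essentially the same approach as the paper's own proof: both show that $W_\alpha$ (equivalently $\ri H_\alpha$) lies in $\mk{h}_{\mk{k}_u}$ via regularity of $\mk{k}$ and $\tau$-invariance, then use the orthogonality $\scal{X, W_\alpha} = 0$ together with the decomposition $X = X_{\mk{c}} + X_s$ and the vanishing of $\alpha$ on the center to conclude. Your version is slightly more explicit about the $\tau$-invariance step and the computation of the pairing, but the argument is the same.
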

\begin{proof}

Since $E_{\alpha} \in \mk{k}$ and $-\Delta_{\mk{k}} = \Delta_{\mk{k}}$ then  $H_{\alpha} = [E_{\alpha}, E_{-\alpha}] \in \mk{k}\cap \mk{h}$. Therefore $\ri H_{\alpha} \in \mk{h}_{\mk{k}_u}$.
 From the definition of $\mk{p}_u$ we have $\scal{\ri
H_{\alpha}, X} = 0$.  Let us decompose $X = X_{\mk{c}} + X_s$, $X_{\mk{c}} \in \mk{c}_u$, $X_s \in \mk{u}_s \cap \mk{h}_s$. Since  $\alpha$ vanishes on $\mk{c}_u \subset \mk{c}$, then $\alpha(X_s)=\alpha(X)$.  Since $\scal{\ri
H_{\alpha}, X} = 0$ then $\scal{\ri H_{\alpha}, X_s} = \alpha(X_s)= 0$, therefore $\alpha(X)=0$.
\end{proof}

\subsection{Preliminary lemmas about \texorpdfstring{$\mk{k}$, $\mk{p}^+$, $\mk{p}^-$}{\space}}

Here we prove that $\Gamma_{\mk{k}}$ is a substem of $(\Gamma, \prec)$.

\begin{lemma} \label{pdu2} If $\gamma \in \Gamma_{\mk{k}}$ and $\widetilde{\gamma} \succeq \gamma$, then $
\Phi_{\widetilde{\gamma}} \cup \{\widetilde{\gamma}, - \widetilde{\gamma} \} \subset  \Delta_{\mk{k}}$.
In particular $\Phi_{\widetilde{\gamma}}^+ \cup \{\widetilde{\gamma}\} \subset \Delta_{\mk{k}}^+$.
\end{lemma}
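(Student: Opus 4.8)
I would prove this in two stages: first the diagonal case $\widetilde\gamma=\gamma$ (which is the heart of the matter), and then bootstrap to an arbitrary $\widetilde\gamma\succeq\gamma$ by invoking property \eqref{pdep 7} of the stem.

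For the base case, take $\gamma\in\Gamma_{\mk{k}}$, so that $\mk{g}(\gamma)\subset\mk{k}$, and an arbitrary $\alpha\in\Phi_\gamma^+$. By definition $\gamma-\alpha\in\Delta^+$, and since $\alpha+(\gamma-\alpha)=\gamma\in\Delta$ we have $\mk{g}(\gamma)=[\mk{g}(\alpha),\mk{g}(\gamma-\alpha)]\neq\{0\}$. Both $\alpha$ and $\gamma-\alpha$ lie in $\Delta^+$, which by \eqref{wcb7} (together with $\Delta_{\mk{p}}^+\subset\Delta^+$) splits as the disjoint union $\Delta_{\mk{k}}^+\cup\Delta_{\mk{p}}^+$. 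If $\alpha\notin\Delta_{\mk{k}}^+$, then $\mk{g}(\alpha)\subset\mk{p}^+$; since $\mk{g}(\gamma-\alpha)\subset\mk{k}\oplus\mk{p}^+$ and, by \eqref{afi2} and \eqref{wcb6}, $[\mk{p}^+,\mk{k}]\subset\mk{p}^+$ and $[\mk{p}^+,\mk{p}^+]\subset\mk{p}^+$, I would get $\mk{g}(\gamma)\subset\mk{p}^+$, contradicting $\mk{g}(\gamma)\subset\mk{k}$ and $\mk{k}\cap\mk{p}^+=\{0\}$ (which follows from $\mk{g}=\mk{k}\oplus\mk{p}^+\oplus\mk{p}^-$, see \eqref{properties of frak{m}}). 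Hence $\alpha\in\Delta_{\mk{k}}^+$. Combined with $\gamma\in\Delta_{\mk{k}}^+$ and the symmetry $-\Delta_{\mk{k}}=\Delta_{\mk{k}}$ from Lemma \ref{lemma for mk{k}}, this already gives $\Phi_\gamma\cup\{\gamma,-\gamma\}\subset\Delta_{\mk{k}}$, i.e. the lemma for $\widetilde\gamma=\gamma$.

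For the general case I may assume $\widetilde\gamma\succ\gamma$ (note $\widetilde\gamma\in\Gamma$, since $\prec$ lives on $\Gamma$). Fix $\alpha\in\Phi_{\widetilde\gamma}^+\cup\{\widetilde\gamma\}$ and apply \eqref{pdep 7} with the pair $\delta:=\gamma\prec\widetilde\gamma$: this produces $\beta_1,\beta_2\in\Phi_\gamma^+$ with $\alpha=\beta_1-\beta_2$. By the base case $\beta_1,\beta_2\in\Delta_{\mk{k}}^+$, so $\mk{g}(\beta_1)\subset\mk{k}$ and, using $-\Delta_{\mk{k}}=\Delta_{\mk{k}}$ again, $\mk{g}(-\beta_2)\subset\mk{k}$; since $\mk{k}$ is a subalgebra and $\beta_1-\beta_2=\alpha\in\Delta$, we get $\mk{g}(\alpha)=[\mk{g}(\beta_1),\mk{g}(-\beta_2)]\subset\mk{k}$, i.e. $\alpha\in\Delta_{\mk{k}}$. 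As $\alpha$ was an arbitrary element of $\Phi_{\widetilde\gamma}^+\cup\{\widetilde\gamma\}\subset\Delta^+$, this yields $\Phi_{\widetilde\gamma}^+\cup\{\widetilde\gamma\}\subset\Delta_{\mk{k}}^+$, and one more application of $-\Delta_{\mk{k}}=\Delta_{\mk{k}}$ gives the full statement $\Phi_{\widetilde\gamma}\cup\{\widetilde\gamma,-\widetilde\gamma\}\subset\Delta_{\mk{k}}$.

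I do not expect a genuine obstacle here: once the regularity facts (Lemma \ref{lemma for mk{k}}, \eqref{wcb8}) and the bracket relations \eqref{afi2}, \eqref{wcb6} are in hand, everything is bookkeeping with $\mk{g}=\mk{k}\oplus\mk{p}^+\oplus\mk{p}^-$. The one substantive input is \eqref{pdep 7}, expressing that a root attached to a $\prec$-larger stem element is a difference of two roots attached to a smaller one; the only subtlety to be careful about is the orientation of $\prec$, so that \eqref{pdep 7} is invoked with $\gamma$ as the \emph{smaller} element.
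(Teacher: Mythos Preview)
Your reduction to the base case and the bootstrap via \eqref{pdep 7} are both fine, and in fact the second stage is exactly the paper's argument. The gap is in the base case: you invoke \eqref{wcb6} as if it said $[\mk{p}^+,\mk{p}^+]\subset\mk{p}^+$, but \eqref{wcb6} only controls the $\mk{p}$-\emph{projection}, i.e.\ $[X,Y]_{\mk{p}}\in\mk{p}^+$ for $X,Y\in\mk{p}^+$. The full bracket may (and here does) land in $\mk{k}$. Concretely, in the subcase where both $\alpha$ and $\gamma-\alpha$ lie in $\Delta_{\mk{p}}^+$, you have $[E_\alpha,E_{\gamma-\alpha}]=N_{\alpha,\gamma-\alpha}E_\gamma\in\mk{k}$, whose $\mk{p}$-component is zero---perfectly consistent with \eqref{wcb6}, and no contradiction arises. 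So your argument only rules out the subcase $\gamma-\alpha\in\Delta_{\mk{k}}^+$, which is not enough.

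The paper avoids this by bracketing in the other direction: since $-\Delta_{\mk{k}}=\Delta_{\mk{k}}$ one has $E_{-\gamma}\in\mk{k}$, and then \eqref{afi2} alone gives $[E_{-\gamma},E_\alpha]=N_{-\gamma,\alpha}E_{\alpha-\gamma}\in\mk{p}^+$; but $\alpha-\gamma\in\Delta^-$ by \eqref{pdep 2}, contradicting $\Delta_{\mk{p}}^+\subset\Delta^+$. This uses only $[\mk{k},\mk{p}^+]\subset\mk{p}^+$ and never needs to control $[\mk{p}^+,\mk{p}^+]$. Replacing your $[E_\alpha,E_{\gamma-\alpha}]$ computation with this one fixes the proof, and the rest of your write-up goes through unchanged.
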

\begin{proof} Since $-\Delta_{\mk{k}} = \Delta_{\mk{k}}$ then $E_{-\gamma} \in \mk{k}$. Let $\alpha \in \Phi_{\gamma}^+$. Let us assume that $\alpha \not \in
\Delta_{\mk{k}}$. Then $\alpha \in
\Delta^+ \setminus \Delta_{\mk{k}}$, therefore  $\alpha \in
\Delta_{\mk{p}}^+$ and $E_{\alpha} \in \mk{p}^+$. By \eqref{afi2}
$[E_{-\gamma}, E_{\alpha}] \in \mk{p}^+$, but $[E_{-\gamma},
E_{\alpha}] = N_{-\gamma, \alpha} E_{\alpha - \gamma}$ and $\alpha
- \gamma \not \in \Delta^+$ (see \eqref{pdep 2}) , which contradicts $\Delta_{\mk{p}}^+ \subset
\Delta^+$ (see  \ref{Delta_{mk{m}}^+=}).

Thus, we see that if $\alpha \in \Phi_{\gamma} \cup  \{-\gamma, \gamma\}$, then $E_{\alpha} \in \mk{k}.$ Now
the lemma follows from \eqref{pdep 7} and the
fact that $\mk{k}$ is a subalgebra.
\end{proof}

\begin{coro}\label{coro for the property of Gamma_{frak{k}}} The subset $\Gamma_{\mk{k}} \subset \Gamma$ is a substem of $(\Gamma,\prec)$.
\end{coro}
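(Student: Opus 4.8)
The plan is to deduce the statement directly from Lemma \ref{pdu2}, so the argument is essentially a matter of unwinding definitions. Recall that by \eqref{def of Gamma_{frak{m}},...} we have $\Gamma_{\mk{k}} = \Gamma \cap \Delta_{\mk{k}}^+$, and that to verify $\Gamma_{\mk{k}}$ is a substem of $(\Gamma,\prec)$ we only have to check the upward-closure property \eqref{property of Gamma_mk{k}}: whenever $\gamma \in \Gamma_{\mk{k}}$ and $\delta \in \Gamma$ satisfy $\delta \succ \gamma$, we must conclude $\delta \in \Gamma_{\mk{k}}$.

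So I would fix such a pair $\gamma, \delta$; in particular $\delta \succeq \gamma$ and $\delta \in \Gamma$, so Lemma \ref{pdu2} applies with $\widetilde{\gamma} := \delta$. It gives $\Phi_{\delta}^+ \cup \{\delta\} \subset \Delta_{\mk{k}}^+$, hence in particular $\delta \in \Delta_{\mk{k}}^+$. Combining this with $\delta \in \Gamma$ yields $\delta \in \Gamma \cap \Delta_{\mk{k}}^+ = \Gamma_{\mk{k}}$, which is precisely \eqref{property of Gamma_mk{k}}. This establishes the corollary.

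There is essentially no obstacle left at this stage: the real content has already been packed into Lemma \ref{pdu2}, whose proof in turn rests on the integrability consequence \eqref{afi2}, the regularity of $\mk{k}$ with respect to $\mk{h}$, the inclusion $\Delta_{\mk{p}}^+ \subset \Delta^+$, and the stem properties \eqref{pdep 2} and \eqref{pdep 7}. The point of recording the corollary separately is only to phrase this root-theoretic fact in the language of substems, so that it can be fed into Definition \ref{shp} when one shows that $\mk{k}$ is a $\Gamma_{\mk{k}}$-stemmed subalgebra of $\mk{g}$ and that $(\mk{u},\mk{k}_u)$ is a hypercomplex pair.
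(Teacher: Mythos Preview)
Your argument is correct and is precisely the one the paper intends: the corollary is stated immediately after Lemma~\ref{pdu2} with no separate proof because the implication ``$\gamma\in\Gamma_{\mk{k}}$ and $\widetilde\gamma\succeq\gamma$ imply $\widetilde\gamma\in\Delta_{\mk{k}}^+$'' is exactly what Lemma~\ref{pdu2} provides, and you have merely made this explicit.
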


\begin{lemma} \label{help lemma for H_gamma not in frak{k}} Let $\gamma \in \Gamma_{\mk{p}}$   and  $H_{\gamma} \in \mk{k}$. Let
\be J(E_{\gamma}) = H + \sum_{\alpha \in \Delta_{\mk{p}}^+ } c_{\alpha} E_{-\alpha} \qquad H \in \mk{h} \cap \mk{p}^-, \ \ c_{\alpha} \in \CC.\ee Then  $H=0$ and
\be \label{help lemma for H_gamma not in frak{k}1} \forall \alpha \in \Delta_{\mk{p}}^+  \qquad   \alpha(H_{\gamma}) \geq 0 \Rightarrow c_{\alpha} = 0. \ee
\end{lemma}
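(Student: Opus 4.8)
The plan is to exploit the hypothesis $H_\gamma \in \mk{k}$ through the equivariance relation $J\circ \mathrm{ad}(X) = \mathrm{ad}(X)\circ J$, valid for $X\in\mk{k}$ by \eqref{afi1}, applied to the single vector $E_\gamma$. First I would record why the decomposition in the statement is legitimate: since $\gamma\in\Gamma_{\mk{p}}\subset\Delta_{\mk{p}}^+$, we have $E_\gamma\in\mk{g}(\gamma)\subset\mk{p}^+$ by \eqref{wcb8}, hence $J(E_\gamma)\in J(\mk{p}^+)=\mk{p}^-$ by Remark \ref{coro for wt{mk{m}} as a subspace of mk{g}}. By \eqref{wcb9} we have $\mk{p}^- = (\mk{h}\cap\mk{p}^-)\oplus\bigoplus_{\alpha\in\Delta_{\mk{p}}^+}\mk{g}(-\alpha)$, so $J(E_\gamma)$ has exactly the asserted form, with $H\in\mk{h}\cap\mk{p}^-$ and uniquely determined scalars $c_\alpha$.

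Next I would apply $\mathrm{ad}(H_\gamma)$ to $J(E_\gamma)$ in two ways. On one side, $\mathrm{ad}(H_\gamma)(E_\gamma) = \gamma(H_\gamma)E_\gamma = 2E_\gamma$, so, using that $J$ is $\CC$-linear on $\mk{p}$ (Corollary \ref{mkm6}), $J(\mathrm{ad}(H_\gamma)(E_\gamma)) = 2J(E_\gamma) = 2H + 2\sum_{\alpha}c_\alpha E_{-\alpha}$. On the other side, $H$ and $H_\gamma$ both lie in the abelian subalgebra $\mk{h}$, so $[H_\gamma,H]=0$, while $[H_\gamma,E_{-\alpha}] = -\alpha(H_\gamma)E_{-\alpha}$; thus $\mathrm{ad}(H_\gamma)(J(E_\gamma)) = -\sum_{\alpha}c_\alpha\,\alpha(H_\gamma)\,E_{-\alpha}$. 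Relation \eqref{afi1}, which is available precisely because $H_\gamma\in\mk{k}$, equates these two expressions.

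Finally I would compare components in the direct sum $\mk{h}\oplus\bigoplus_{\alpha\in\Delta_{\mk{p}}^+}\mk{g}(-\alpha)$. The $\mk{h}$-component yields $2H=0$, i.e. $H=0$; and for each $\alpha\in\Delta_{\mk{p}}^+$ the $\mk{g}(-\alpha)$-component yields $2c_\alpha = -\alpha(H_\gamma)c_\alpha$, that is $c_\alpha(\alpha(H_\gamma)+2)=0$. Since $\alpha(H_\gamma)$ is a (non-negative) integer under the hypothesis, $\alpha(H_\gamma)+2\ge 2>0$, forcing $c_\alpha=0$, which is \eqref{help lemma for H_gamma not in frak{k}1}. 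There is no genuine obstacle here: the whole argument collapses to the eigenvalue identity $c_\alpha(\alpha(H_\gamma)+2)=0$, and the only point demanding a little care is checking that the expansion of $J(E_\gamma)$ appearing in the statement is exactly the one provided by \eqref{wcb9}, so that comparison of components is valid.
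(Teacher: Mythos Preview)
Your proof is correct and follows essentially the same approach as the paper: both use the equivariance relation \eqref{afi1} with $X=H_\gamma$ to obtain $\gamma(H_\gamma)J(E_\gamma)=[H_\gamma,J(E_\gamma)]$, then compare components to get $2H=0$ and $c_\alpha(\alpha(H_\gamma)+2)=0$. Your write-up is slightly more detailed in justifying the decomposition of $J(E_\gamma)$ via \eqref{wcb8}--\eqref{wcb9}, but the argument is the same.
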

\begin{proof}

Since $H_{\gamma} \in \mk{k}$ then by \eqref{afi1} it follows
\be J([H_{\gamma}, E_{\gamma}]) = [H_{\gamma}, J(E_{\gamma})] \qquad \Rightarrow \qquad \gamma(H_{\gamma}) J(E_{\gamma}) =\sum_{\alpha \in \Delta_{\mk{p}}^+} c_{\alpha} [H_{\gamma},E_{-\alpha}] \nonumber  \ee
hence \be  \gamma(H_{\gamma}) \left ( H + \sum_{\alpha \in \Delta_{\mk{p}}^+ } c_{\alpha} E_{-\alpha} \right ) = -\sum_{\alpha \in \Delta_{\mk{p}}^+} c_{\alpha} \alpha(H_{\gamma})E_{-\alpha}  \nonumber \ee
and  we obtain
\be \gamma(H_{\gamma}) H + \sum_{\alpha \in \Delta_{\mk{p}}^+ } c_{\alpha}\ (\gamma + \alpha)(H_{\gamma})  \ E_{-\alpha}  = 0.\ee
Now we have $\gamma(H_{\gamma})=2$ and therefore  $(\gamma + \alpha)(H_{\gamma}) \neq 0$, if $\alpha(H_\gamma) \geq 0$, therefore the last equation above implies $H=0$ and \eqref{help lemma for H_gamma not in frak{k}1}.
\end{proof}

\begin{lemma} \label{pdu3} Let $\gamma \in \Gamma_{\mk{p}}$ be a maximal root.
Then $H_{\gamma} \not \in \mk{k}$.
\end{lemma}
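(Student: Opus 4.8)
The plan is a short argument by contradiction. Assume $H_\gamma \in \mk{k}$; I will deduce $J(E_\gamma)=0$, which is absurd because $J$ is a complex structure ($J^2=-1$), and conclude $H_\gamma \notin \mk{k}$.

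First I would locate $E_\gamma$ and $J(E_\gamma)$ inside the regular decompositions already established. Since $\gamma\in\Gamma_{\mk{p}}=\Gamma\cap\Delta_{\mk{p}}^+$ (see \eqref{def of Gamma_{frak{m}},...}), the regularity statement \eqref{wcb8} for $\mk{p}^+$ gives $E_\gamma\in\mk{g}(\gamma)\subset\mk{p}^+$. By Remark \ref{coro for wt{mk{m}} as a subspace of mk{g}} we have $J(\mk{p}^+)=\mk{p}^-$, so $J(E_\gamma)\in\mk{p}^-$, and the regular decomposition \eqref{wcb9} of $\mk{p}^-$ lets me write $J(E_\gamma)=H+\sum_{\alpha\in\Delta_{\mk{p}}^+}c_\alpha E_{-\alpha}$ with $H\in\mk{h}\cap\mk{p}^-$ and $c_\alpha\in\CC$. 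This is exactly the setup of Lemma \ref{help lemma for H_gamma not in frak{k}}, which, using $H_\gamma\in\mk{k}$, forces $H=0$ and $c_\alpha=0$ for every $\alpha\in\Delta_{\mk{p}}^+$ with $\alpha(H_\gamma)\geq 0$.

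The second ingredient is a classical property of a maximal root: in the sense fixed in Corollary \ref{gss1}, $\gamma$ is the highest root of its irreducible component, hence $\gamma+\alpha\notin\Delta$ for every $\alpha\in\Delta^+$. Feeding this into the $\gamma$-string through $\alpha$ — recalled at the beginning of the proof of Proposition \ref{prop of Delta+}, where $\alpha(H_\gamma)=-p-q$ and $\{\alpha+n\gamma:p\leq n\leq q\}$ is the string — forces $q=0$, so $\alpha(H_\gamma)=-p\geq 0$ for all $\alpha\in\Delta^+$. Since $\Delta_{\mk{p}}^+\subset\Delta^+$ by \eqref{Delta_{mk{m}}^+=}, this holds for all $\alpha\in\Delta_{\mk{p}}^+$.

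Combining the two steps: in the expansion of $J(E_\gamma)$ every coefficient $c_\alpha$ vanishes and $H=0$, so $J(E_\gamma)=0$; applying $J$ once more gives $E_\gamma=0$, a contradiction, so $H_\gamma\notin\mk{k}$. I do not expect a genuine obstacle here — the proof is bookkeeping on top of Lemma \ref{help lemma for H_gamma not in frak{k}}; the single point to state carefully is that ``maximal root'' means highest in its irreducible component, which is precisely what guarantees $\gamma+\alpha\notin\Delta$ for all positive $\alpha$, and hence $\alpha(H_\gamma)\geq 0$ throughout $\Delta_{\mk{p}}^+$.
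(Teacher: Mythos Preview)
Your proof is correct and follows essentially the same argument as the paper's own proof: assume $H_\gamma\in\mk{k}$, invoke Lemma \ref{help lemma for H_gamma not in frak{k}}, use maximality of $\gamma$ to get $\alpha(H_\gamma)\geq 0$ for all $\alpha\in\Delta_{\mk{p}}^+$, and conclude $J(E_\gamma)=0$, contradicting $J^2=-\mathrm{Id}$. The only cosmetic difference is that you spell out the $\gamma$-string argument for $\alpha(H_\gamma)\geq 0$, whereas the paper just asserts it from maximality.
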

\begin{proof} Let us recall that $J(\mk{p}^+) =  \mk{p}^-$, besides by $\gamma \in \Gamma_{\mk{p}}$  we have  $E_{\gamma} \in \mk{p}^+$ (see \eqref{def of Gamma_{frak{m}},...} and \eqref{wcb8}), therefore
\ben J(E_{\gamma}) = H + \sum_{\alpha \in \Delta_{\mk{p}}^+ } c_{\alpha} E_{-\alpha} \qquad H \in \mk{h}\cap \mk{p}^-, c_{\alpha} \in \CC.\een
Let us assume that $H_{\gamma} \in \mk{k}$.
Now, since $\gamma$ is a maximal root, we have  $\forall \alpha \in \Delta_{\mk{p}}^+$, $ \alpha(H_{\gamma}) \geq 0$, therefore Lemma \ref{help lemma for H_gamma not in frak{k}} implies
\be \left ( H=0, \quad \forall \alpha \in \Delta_{\mk{p}}^+ \ \ \ c_{\alpha} =0 \right ) \ \ \Rightarrow \ \ J(E_{\gamma})=0\ee
which contradicts $J^2 = - Id$ .
\end{proof}
\begin{lemma} \label{lemma about J(E_{gamma}) in  frak{h} then H_gamma}Let $\gamma \in \Delta_{\mk{p}}^+$ be such that $J(E_{\gamma}) \in  \mk{h}$, then $W_{\gamma} \in \mk{p}_u$, i. e. $H_{\gamma} \in \mk{p}$.
\end{lemma}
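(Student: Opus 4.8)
The plan is to reduce the statement to Lemma~\ref{mkm9}. Concretely, I would first show that the root $\gamma$ vanishes identically on $\mk{h}_{\mk{k}_u}=\mk{k}_u\cap\mk{h}$; once this is done, Lemma~\ref{mkm9} applied with $\alpha=\gamma$ gives $W_\gamma\in\mk{p}_u$, and since $H_\gamma=-2\ri W_\gamma$ and $\mk{p}$ is the complexification of $\mk{p}_u$ by Corollary~\ref{mkm6}, this is exactly the assertion $H_\gamma\in\mk{p}$.

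To establish $\gamma|_{\mk{h}_{\mk{k}_u}}=0$, I would fix an arbitrary $X\in\mk{h}_{\mk{k}_u}$. Since $\mk{h}_{\mk{k}_u}\subset\mk{k}_u\subset\mk{k}$, the $ad(\mk{k})$-equivariance \eqref{afi1} of $J$ applies to $X$ (it is meaningful because $[\mk{k},\mk{p}]\subset\mk{p}$ by \eqref{properties of frak{m}}). Because $\gamma\in\Delta_{\mk{p}}^+$, we have $E_\gamma\in\mk{g}(\gamma)\subset\mk{p}^+\subset\mk{p}$ by \eqref{wcb8}, so I may evaluate \eqref{afi1} on $E_\gamma$:
$$
\gamma(X)\,J(E_\gamma)=J\bigl(ad(X)(E_\gamma)\bigr)=ad(X)\bigl(J(E_\gamma)\bigr)=[X,J(E_\gamma)].
$$
Here the right-hand side vanishes: $X\in\mk{h}$ and, by hypothesis, $J(E_\gamma)\in\mk{h}$, so the bracket is taken inside the abelian subalgebra $\mk{h}$. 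Hence $\gamma(X)\,J(E_\gamma)=0$; since $J$ is a complex structure, $J(E_\gamma)\neq 0$, whence $\gamma(X)=0$. As $X\in\mk{h}_{\mk{k}_u}$ was arbitrary, $\gamma$ vanishes on $\mk{h}_{\mk{k}_u}$, and the reduction above finishes the argument.

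I do not expect a genuine obstacle here: the proof is a short consequence of the equivariance \eqref{afi1} and of Lemma~\ref{mkm9}. The only points needing a little care are (i) that \eqref{afi1} may legitimately be evaluated on the vector $E_\gamma$, which is guaranteed by $\gamma\in\Delta_{\mk{p}}^+$ together with the regularity \eqref{wcb8} of $\mk{p}^+$, and (ii) the passage between ``$W_\gamma\in\mk{p}_u$'' and ``$H_\gamma\in\mk{p}$'', which is immediate from $H_\gamma=-2\ri W_\gamma$ and Corollary~\ref{mkm6}.
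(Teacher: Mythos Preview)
Your proof is correct and follows essentially the same route as the paper: use the $ad(\mk{k})$-equivariance \eqref{afi1} with an element of $\mk{h}\cap\mk{k}$ (you take it in the real form $\mk{h}_{\mk{k}_u}$, the paper in the complexification) to deduce $\gamma(\mk{h}_{\mk{k}_u})=0$, then invoke Lemma~\ref{mkm9}. The only difference is cosmetic---you spell out the justification for applying \eqref{afi1} to $E_\gamma$ and the passage $W_\gamma\in\mk{p}_u\Leftrightarrow H_\gamma\in\mk{p}$ a bit more explicitly than the paper does.
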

\begin{proof} By \eqref{afi1} and $J(E_{\gamma}) \in \mk{h}$ it follows  that
\be \forall H \in \mk{h} \cap \mk{k} \ \ \ \  J([H,E_{\gamma}])= [H, J(E_{\gamma})]=0. \nonumber \ee
Therefore $ \forall H \in \mk{h} \cap \mk{k} \ \ \  \gamma(H) J(E_{\gamma}) = 0 $, hence
$  \forall H \in \mk{h} \cap \mk{k} \ \ \  \gamma(H) = 0.  $
The lemma follows from Lemma \ref{mkm9}.
\end{proof}
\begin{lemma} \label{lemma about J(E_{gamma}) in  frak{h} then B_{gamma}} Let $\gamma \in \Gamma_{\mk{p}}$ be such that $J(E_{\gamma}) \in  \mk{h}$, then $\Phi_{\gamma}^+ \subset \Delta_{\mk{p}}^+$.
\end{lemma}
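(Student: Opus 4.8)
The plan is to argue by contradiction at the level of a single root. Fix $\gamma \in \Gamma_{\mk{p}}$ with $J(E_{\gamma}) \in \mk{h}$, pick $\beta \in \Phi_{\gamma}^+$, and suppose $\beta \notin \Delta_{\mk{p}}^+$. Since $\Phi_{\gamma}^+ \subset \Delta^+$ and $\Delta^+ = \Delta_{\mk{k}}^+ \cup \Delta_{\mk{p}}^+$ is a disjoint union, this forces $\beta \in \Delta_{\mk{k}}^+$, so $E_{\beta} \in \mk{k}$ and, because $-\Delta_{\mk{k}} = \Delta_{\mk{k}}$ by Lemma \ref{lemma for mk{k}}, also $E_{-\beta} \in \mk{k}$. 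The relation to be exploited is that $J$ commutes with $ad(E_{-\beta})$, which is \eqref{afi1} applied to $X = E_{-\beta}$.

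Before applying it I record the companion root $\gamma - \beta$. From $\beta \in \Phi_{\gamma}^+$ we have $\gamma - \beta \in \Delta^+$, and $\gamma - (\gamma - \beta) = \beta \in \Delta^+$, so $\gamma - \beta \in \Phi_{\gamma}^+$ as well. I claim $\gamma - \beta \in \Delta_{\mk{p}}^+$. Indeed, if instead $\gamma - \beta \in \Delta_{\mk{k}}^+$, then $E_{\gamma}$ would be a nonzero multiple of $[E_{\beta}, E_{\gamma - \beta}] \in \mk{k}$ (the structure constant $N_{\beta, \gamma - \beta}$ is nonzero since $\beta + (\gamma - \beta) = \gamma \in \Delta$), contradicting $\gamma \in \Gamma_{\mk{p}} \subset \Delta_{\mk{p}}^+$ together with $\Delta_{\mk{k}} \cap \Delta_{\mk{p}}^+ = \emptyset$. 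Hence $E_{\gamma - \beta} \in \mk{p}^+$ and $J(E_{\gamma - \beta}) \in J(\mk{p}^+) = \mk{p}^-$.

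Now apply $J \circ ad(E_{-\beta}) = ad(E_{-\beta}) \circ J$ to $E_{\gamma}$. On the left, $ad(E_{-\beta})(E_{\gamma}) = N_{-\beta, \gamma} E_{\gamma - \beta}$ with $N_{-\beta, \gamma} \neq 0$ (because $\gamma - \beta \in \Delta$), so the left-hand side equals $N_{-\beta, \gamma} J(E_{\gamma - \beta})$, a \emph{nonzero} element of $\mk{p}^-$ (as $J$ is invertible). On the right, using $J(E_{\gamma}) \in \mk{h}$, we get $ad(E_{-\beta})(J(E_{\gamma})) = \beta(J(E_{\gamma}))\, E_{-\beta} \in \mk{k}$. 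Equating the two sides produces a nonzero vector in $\mk{k} \cap \mk{p} = \{0\}$, a contradiction. Therefore every $\beta \in \Phi_{\gamma}^+$ lies in $\Delta_{\mk{p}}^+$, i.e.\ $\Phi_{\gamma}^+ \subset \Delta_{\mk{p}}^+$.

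The bracket evaluations and the bookkeeping of which root vectors sit in $\mk{k}$ versus $\mk{p}^{\pm}$ are routine; the two substantive points are ruling out $\gamma - \beta \in \Delta_{\mk{k}}^+$ via the fact that $\gamma$ belongs to the ``$\mk{p}$-part'' $\Gamma_{\mk{p}}$ of the stem, and the final incompatibility between $\mk{p}^-$ and $\mk{k}$, which is precisely where the hypothesis $J(E_{\gamma}) \in \mk{h}$ enters. Note that the conclusion $W_{\gamma} \in \mk{p}_u$ of the preceding lemma is not actually needed in this argument, although it is consistent with it.
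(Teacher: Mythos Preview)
Your proof is correct and follows the same overall strategy as the paper: argue by contradiction, fix a hypothetical $\beta \in \Phi_\gamma^+ \cap \Delta_{\mk{k}}^+$, observe that the companion root $\gamma-\beta$ must then lie in $\Delta_{\mk{p}}^+$, and extract a contradiction from the commutation relation \eqref{afi1}.

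The execution differs slightly. The paper applies \eqref{afi1} with $X=E_{\alpha}$ (your $E_{\beta}$) to the vector $E_{\gamma-\alpha}$, obtaining $[E_{\alpha},J(E_{\gamma-\alpha})]=N_{\alpha,\gamma-\alpha}J(E_{\gamma})\in\mk{h}\setminus\{0\}$; it then expands $J(E_{\gamma-\alpha})\in\mk{p}^-$ in the root-space basis and checks term by term that the left-hand side can have no nonzero $\mk{h}$-component (using that $\alpha\neq\beta$ for every $\beta\in\Delta_{\mk{p}}^+$). You instead apply \eqref{afi1} with $X=E_{-\beta}$ directly to $E_{\gamma}$, and the hypothesis $J(E_{\gamma})\in\mk{h}$ immediately forces the right-hand side into $\CC E_{-\beta}\subset\mk{k}$, while the left-hand side is a nonzero element of $\mk{p}^-$; the contradiction then comes from $\mk{k}\cap\mk{p}=\{0\}$. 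Your route avoids the basis expansion and is a bit cleaner, but both arguments rest on exactly the same ingredients.
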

\begin{proof} Let $\alpha \in \Phi_{\gamma}^+$. Let us assume that $\alpha \not \in \Delta_{\mk{p}}^+$. Then $\alpha \in \Delta_{\mk{k}}^+$ and $E_{\alpha}, E_{- \alpha} \in \mk{k}$, besides $E_{\gamma} \in \mk{p}^+ $ (since $\gamma \in \Gamma_{\mk{p}} = \Gamma \cap \Delta_{\mk{p}}^+$). Since $[\mk{k}, \mk{p}^+] \subset \mk{p}^+$, then (we recall that $\gamma - \alpha \in \Delta_{+}$)
\be [E_{\gamma}, E_{-\alpha}]= N_{\gamma, -\alpha} E_{\gamma - \alpha} \in \mk{p}^+  \ \ \Rightarrow \ \ E_{\gamma - \alpha} \in \mk{p}^+.\ee
There exist $H \in \mk{h}\cap \mk{p}^-$ and constants $c_{\beta} \in \CC$ for $\beta \in \Delta_{\mk{p}}^+$, such that
\be J(E_{\gamma - \alpha}) = H + \sum_{\beta \in \Delta_{\mk{p}}^+} c_{\beta} E_{-\beta}.\ee
From \eqref{afi1} it follows
\be [E_{\alpha}, J(E_{\gamma - \alpha})] = J([E_{\alpha}, E_{\gamma-\alpha}]) \ \ \Rightarrow \ \ [E_{\alpha}, J(E_{\gamma - \alpha})] = N_{\alpha, \gamma - \alpha} J(E_{\gamma}) \in \mk{h}\setminus \{0\} \nonumber\ee
therefore
\be [E_{\alpha}, H + \sum_{\beta \in \Delta_{\mk{p}}^+} c_{\beta} E_{-\beta}] = -\alpha(H) E_{\alpha} + \sum_{\beta \in \Delta_{\mk{p}}^+} c_{\beta} [E_{\alpha}, E_{-\beta}] = N_{\alpha, \gamma - \alpha} J(E_{\gamma})\in \mk{h}\setminus \{0\}. \nonumber \ee
However, for $\beta \in \Delta_{\mk{p}}^+$ we have $\alpha \pm  \beta \neq 0$ (since $\alpha \in \Delta_{\mk{k}}$), therefore
\ben -\alpha(H) E_{\alpha} + \sum_{\beta \in \Delta_{\mk{p}}^+} c_{\beta} [E_{\alpha}, E_{-\beta}] \not \in \mk{h}. \nonumber  \een
 This contradicts our assumption $\alpha \not \in \Delta_{\mk{p}}^+$ whence the lemma follows.
\end{proof}

\subsection{The integrability condition}

 \label{The integrability condition}
Throughout section \ref{nc0} the notations $\mk{p}_u$, $I$, $J$, $\mk{p}^+$, $\mk{p}^-$  have the same meaning (see    Section  \ref{some notations})  as well as $\mk{h}_{\mk{p}_u}$ (see \eqref{csj2}).
We recall that $\mk{k}_u \oplus \mk{p}_u = \mk{u}$  and that the reductivity condition $[\mk{k}_u, \mk{p}_u] \subset \mk{p}_u$ is satisfied.  In this section we shall vary $\mk{h}_{\mk{p}_u}$ to another $\mk{q}$, with corresponding $\wt{\mk{p}}_u=\mk{q}+\mk{n}_{\mk{p}_u}$, observing  the condition $\mk{k}_u \oplus \wt{\mk{p}}_u = \mk{u}$, but we discard the reductivity condition.

\subsubsection{Construction of $(\wt{\mk{p}}, \wt{\mk{p}}^+, \wt{J}, \wt{\xi}, \wt{\eta}, \wt{b})$ from $\mk{q}$}

 Let    $\mk{q}$ be any subspce of $\mk{h}_u$, such that
 \be \label{wtj2}
  \mk{h}_u = \mk{q}
\oplus \mk{h}_{\mk{k}_u} \ee
then we denote $\wt{\mk{p}}_u = \mk{q} + \mk{n}_{\mk{p}_u}$.

We denote by $\wt{\mk{p}}$ the complexification of  $\wt{\mk{p}}_u$. Obviously
\be \label{widetilde{frak{m}}}
 \wt{\mk{p}} = (\mk{h} \cap \wt{\mk{p}}) + \mk{n}_{\mk{p}}.
\ee

 From \eqref{hhh2} we see that  $\wt{\mk{p}}_u$ satisfies $\mk{u} = \wt{\mk{p}}_u \oplus \mk{k}_u$, therefore we can  define  complex structures on $\wt{\mk{p}}_u$, using the complex
structures $I_M(o) : T_o(M)  \rightarrow T_o(M)$,  $J_M(o) :
T_o(M)  \rightarrow T_o(M)$ on
$T_o(M)$, as follows \begin{gather}\wt{I}: \wt{\mk{p}}_u \rightarrow \wt{\mk{p}}_u \qquad \wt{J}: \wt{\mk{p}}_u \rightarrow \wt{\mk{p}}_u \nonumber\\ \wt{I} = ((\rd
\pi )_{\vert \wt{\mk{p}}_u} )^{-1}\circ I_M(o) \circ (\rd \pi
)_{\vert \wt{\mk{p}}_u}, \ \ \wt{J} = ((\rd \pi )_{\vert \wt{\mk{p}}_u}
)^{-1}\circ J_M(o) \circ (\rd \pi )_{\vert \wt{\mk{p}}_u}. \nonumber
\end{gather}

 We shall denote the complexifications of $\wt{I}$  and  $\wt{J}$  by the same letters $\wt{I} : \wt{\mk{p}} \rightarrow \wt{\mk{p}}$, $\wt{J} : \wt{\mk{p}} \rightarrow \wt{\mk{p}}$. Let
$\wt{\mk{p}}^+$ and $\wt{\mk{p}}^-$ be the $\ri$ and
$(-\ri)$-eigenspaces of $\wt{I}$, respectively.

The following lemma relates $\wt{J}$ to $J$.
\begin{lemma}\label{wtj1}
  Let $X \in \mk{n}_{\mk{p}_u}$ and let $J(X) =
H+Y$, $\widetilde{J}(X) = \widetilde{H}+\widetilde{Y}$, where $H
\in \mk{h}_{\mk{p}_u}, \widetilde{H} \in
\mk{q} $ and $Y,\widetilde{Y} \in
\mk{n}_{\mk{p}_u}$ then $ Y =  \widetilde{Y}$. Furthermore, $
H = 0 \iff \widetilde{H} = 0 $.

The same statement is valid if we replace $J$ with $I$
 and   $\widetilde{J}$  with $\widetilde{I}$.
\end{lemma}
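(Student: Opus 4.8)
## Proof proposal for Lemma \ref{wtj1}

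The plan is to compare $J$ and $\widetilde J$ by exploiting that both are defined through the \emph{same} complex structure $J_M(o)$ on $T_o(M)$, only transported via different linear isomorphisms $(\rd\pi)_{\vert\mk{p}_u}$ and $(\rd\pi)_{\vert\widetilde{\mk{p}}_u}$. Concretely, for $X\in\mk{n}_{\mk{p}_u}$ both $X$ and $J(X)$ lie in $\mk{p}_u$, both $X$ and $\widetilde J(X)$ lie in $\widetilde{\mk{p}}_u$, and by definition $\rd\pi(J(X)) = J_M(o)(\rd\pi(X)) = \rd\pi(\widetilde J(X))$. Hence $J(X) - \widetilde J(X) \in \ker(\rd\pi) = \mk{k}_u$ (using \eqref{ker rd pi}). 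Now $J(X) = H + Y$ with $H\in\mk{h}_{\mk{p}_u}\subset\mk{h}_u$ and $Y\in\mk{n}_{\mk{p}_u}$, while $\widetilde J(X) = \widetilde H + \widetilde Y$ with $\widetilde H\in\mk{q}\subset\mk{h}_u$ and $\widetilde Y\in\mk{n}_{\mk{p}_u}$. So
\[
(H - \widetilde H) + (Y - \widetilde Y) \in \mk{k}_u .
\]
First I would project to the Cartan part: $\mk{h}_u = \mk{h}_{\mk{k}_u}\oplus\mk{h}_{\mk{p}_u}$ by \eqref{hhh1}, and under this decomposition $\mk{n}_{\mk{p}_u}$ has zero component in $\mk{h}_u$ (it is a sum of root spaces, real form). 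So the $\mk{h}_u$-component of the displayed element is $H - \widetilde H$, and since it lies in $\mk{k}_u$ it lies in $\mk{h}_{\mk{k}_u}$; but $H\in\mk{h}_{\mk{p}_u}$ and $\widetilde H\in\mk{q}$, so both are "transverse'' to $\mk{h}_{\mk{k}_u}$ in a suitable sense. More carefully: $Y - \widetilde Y\in\mk{n}_{\mk{p}_u}$, and decomposing $\mk{k}_u$ as in \eqref{hhh2}, the only way an element of $\mk{k}_u$ can have $\mk{n}_{\mk{p}_u}$-part equal to $Y-\widetilde Y$ is if that part comes from $span_\RR\{E_\alpha - E_{-\alpha},\ \ri(E_\alpha+E_{-\alpha})\mid\alpha\in\Delta_{\mk{k}}\}$; but this space lies inside $\mk{n}_{\mk{k}}$, which intersects $\mk{n}_{\mk{p}_u}$ trivially because $\Delta_{\mk{k}}\cap\Delta_{\mk{p}} = \emptyset$ (see \eqref{wcb7}). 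Hence $Y - \widetilde Y = 0$, i.e. $Y = \widetilde Y$, and consequently $H - \widetilde H \in \mk{h}_{\mk{k}_u}$.

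It remains to prove the equivalence $H = 0 \iff \widetilde H = 0$. From $H - \widetilde H \in \mk{h}_{\mk{k}_u}$ together with $H\in\mk{h}_{\mk{p}_u}$ and $\mk{h}_u = \mk{h}_{\mk{k}_u}\oplus\mk{h}_{\mk{p}_u}$, I get that $\widetilde H = H - (H-\widetilde H)$ is the decomposition of $\widetilde H\in\mk{h}_u$ into its $\mk{h}_{\mk{p}_u}$-component $H$ and its $\mk{h}_{\mk{k}_u}$-component $-(H-\widetilde H)$. Therefore $H$ is precisely the $\mk{h}_{\mk{p}_u}$-component of $\widetilde H$ relative to \eqref{hhh1}. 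Now if $\widetilde H = 0$ then clearly $H = 0$. For the converse, suppose $H = 0$; then $\widetilde H = -(H - \widetilde H)\in\mk{h}_{\mk{k}_u}$, so $\widetilde H\in \mk{q}\cap\mk{h}_{\mk{k}_u}$, which is $\{0\}$ by \eqref{wtj2}. Hence $\widetilde H = 0$, completing the equivalence. The argument for $I$, $\widetilde I$ in place of $J$, $\widetilde J$ is verbatim the same, since it used nothing about $J_M$ beyond its being a fixed linear operator on $T_o(M)$.

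The main obstacle is the bookkeeping in the middle step: correctly identifying that an element of $\mk{k}_u$ whose "toral part'' lives in $\mk{h}_{\mk{p}_u}\oplus\mk{q}$ and whose "nilpotent part'' lives in $\mk{n}_{\mk{p}_u}$ must in fact vanish in both parts. This rests on the two transversality facts $\mk{h}_u = \mk{h}_{\mk{k}_u}\oplus\mk{h}_{\mk{p}_u}$ (so the $\mk{h}_{\mk{p}_u}$ and $\mk{q}$ components are irrelevant modulo $\mk{k}_u$ only up to their $\mk{h}_{\mk{p}_u}$ parts) and $\Delta_{\mk{k}}\cap\Delta_{\mk{p}} = \emptyset$ (so $\mk{n}_{\mk{k}}\cap\mk{n}_{\mk{p}_u} = \{0\}$). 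Everything else is a direct sum chase. I would present it by first establishing $J(X) - \widetilde J(X)\in\mk{k}_u$, then splitting along $\mk{u} = \mk{h}_{\mk{k}_u}\oplus\mk{h}_{\mk{p}_u}\oplus\mk{n}_{\mk{p}_u}\oplus(\text{root part of }\mk{k}_u)$, and reading off the two conclusions.
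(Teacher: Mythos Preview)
Your proof is correct and follows essentially the same approach as the paper: both argue that $J(X)-\widetilde J(X)\in\ker(\rd\pi)=\mk{k}_u$, then split into Cartan and root parts to conclude $Y=\widetilde Y$ and $H-\widetilde H\in\mk{h}_{\mk{k}_u}$, and finally use the transversality $\mk{q}\cap\mk{h}_{\mk{k}_u}=\{0\}$ and $\mk{h}_{\mk{p}_u}\cap\mk{h}_{\mk{k}_u}=\{0\}$ for the equivalence. The only cosmetic difference is that the paper compresses your root-space argument into the single observation $(\mk{k}_u+\mk{h}_u)\cap\mk{n}_{\mk{p}_u}=\{0\}$, which is exactly what your $\Delta_{\mk{k}}\cap\Delta_{\mk{p}}=\emptyset$ reasoning justifies.
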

\begin{proof} By the definitions of $J$ and $\widetilde{J}$, we
have \be  ((\rd \pi )_{\vert \wt{\mk{p}}_u} )^{-1}\circ
J_M(o) \circ (\rd \pi )_{\vert \wt{\mk{p}}_u} (X) =
\widetilde{H} + \widetilde{Y}  \nonumber \\
((\rd \pi )_{\vert \mk{p}_u} )^{-1}\circ J_M(o) \circ (\rd \pi
)_{\vert \mk{p}_u} (X) = H + Y\nonumber \ee hence \ben \rd \pi
(\widetilde{H} + \widetilde{Y} ) = \rd \pi (H + Y) = J_M(o) ( \rd
\pi  (X) ).\een On the other hand $\ker (\rd \pi) = \mk{k}_u$,
therefore  $ (H-\widetilde{H}) + (Y-\widetilde{Y}) \in
\mk{k}_u$,  hence $ Y-\widetilde{Y} \in (\mk{k}_u +
\mk{h}_u) \cap \mk{n}_{\mk{p}_u}$. Since $(\mk{k}_u +
\mk{h}_u) \cap \mk{n}_{\mk{p}_u} = \{0\}$, we see that
$Y-\widetilde{Y} = 0 $ and \be (H-\widetilde{H}) \in \mk{k}_u.
\nonumber  \ee Now if $H=0$ then $\widetilde{H} \in \mk{k}_u
\cap \mk{q}= \{0\}$, hence
$\widetilde{H} = 0$.
 On the other hand if $\widetilde{H}=0$ then $H \in
\mk{k}_u \cap \mk{h}_{\mk{p}_u}= \{0\}$, hence $H = 0$.
\end{proof}

\begin{lemma} \label{wid0} The complex structure $\widetilde{I}$ satisfies the following conditions
\begin{gather}
\label{wid1}  \wt{I}(\mk{n}_{\mk{p}_u}) =\mk{n}_{\mk{p}_u}
 ,\quad
\widetilde{I}(\mk{q})
=\mk{q} \\
\label{wid03}
\wt{\mk{p}}^+ = (\wt{\mk{p}}^+ \cap \mk{h}) +
\mk{n}^+_{\mk{p}} \qquad  \quad \wt{\mk{p}}^- = (\wt{\mk{p}}^- \cap \mk{h}) +
\mk{n}^-_{\mk{p}}=\tau(\wt{\mk{p}}^+ \cap \mk{h}) +
\mk{n}^-_{\mk{p}}.
\end{gather}
\end{lemma}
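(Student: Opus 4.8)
The plan is to prove the two claims of Lemma \ref{wid0} — the invariance of $\mk{n}_{\mk{p}_u}$ and $\mk{q}$ under $\wt{I}$, and the regular shape of the eigenspaces $\wt{\mk{p}}^\pm$ — as consequences of the corresponding facts already established for $I$, $\mk{p}_u$, $\mk{p}^\pm$, using Lemma \ref{wtj1} (applied with $I$ in place of $J$) as the bridge.

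First I would prove \eqref{wid1}. Take $X\in\mk{n}_{\mk{p}_u}$ and write $I(X)=H+Y$, $\wt{I}(X)=\wt{H}+\wt{Y}$ with $H\in\mk{h}_{\mk{p}_u}$, $\wt{H}\in\mk{q}$, $Y,\wt{Y}\in\mk{n}_{\mk{p}_u}$. By Lemma \ref{wtj1} (for $I$) we have $Y=\wt{Y}$ and $H=0\iff\wt{H}=0$. Now I want to argue that in fact $H=0$. Here I recall that $\mk{p}_u=\mk{h}_{\mk{p}_u}\oplus\mk{n}_{\mk{p}_u}$, that $I$ preserves $\mk{p}_u$ and is its complex structure, and that by Lemma \ref{lemma wcb8} the eigenspaces $\mk{p}^\pm$ are \emph{regular}: $\mk{p}^+=(\mk{p}^+\cap\mk{h})\oplus\bigoplus_{\alpha\in\Delta_{\mk{p}}^+}\mk{g}(\alpha)$, and similarly for $\mk{p}^-$. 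Regularity of $\mk{p}^\pm$ means precisely that $I$ maps $\mk{h}\cap\mk{p}$ into itself and maps $\mk{n}_{\mk{p}}$ into itself; intersecting with $\mk{u}$, $I(\mk{h}_{\mk{p}_u})=\mk{h}_{\mk{p}_u}$ and $I(\mk{n}_{\mk{p}_u})=\mk{n}_{\mk{p}_u}$. Hence for $X\in\mk{n}_{\mk{p}_u}$ we get $H=0$, so $\wt{H}=0$ as well, giving $\wt{I}(\mk{n}_{\mk{p}_u})\subseteq\mk{n}_{\mk{p}_u}$; since $\wt{I}^2=-\mathrm{Id}$ this is an equality. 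For the second half of \eqref{wid1}: $\wt{\mk{p}}_u=\mk{q}\oplus\mk{n}_{\mk{p}_u}$ and $\wt{I}$ preserves $\wt{\mk{p}}_u$ and $\mk{n}_{\mk{p}_u}$, so for $X\in\mk{q}$ the component of $\wt{I}(X)$ along $\mk{n}_{\mk{p}_u}$ is, by Lemma \ref{wtj1} again (the statement $Y=\wt Y$ with $Y$ the $\mk{n}_{\mk{p}_u}$-component of $I(X)$), equal to the $\mk{n}_{\mk{p}_u}$-component of $I(X)$ — but $X\in\mk{q}\subseteq\mk{h}_u$ need not lie in $\mk{p}_u$, so I must instead argue directly: write $\wt I(X)=\wt H+\wt Y$, apply $\wt I$ once more to get $-X=\wt I(\wt H)+\wt I(\wt Y)$; since $\wt I(\wt Y)\in\mk{n}_{\mk{p}_u}$ and $X\in\mk{q}\subseteq\mk{h}_u$ and $\mk{q}\cap\mk{n}_{\mk{p}_u}=\{0\}$, comparing components in the decomposition $\mk{h}_u\oplus\mk{n}_{\mk{p}_u}$ forces $\wt I(\wt Y)$ to have zero $\mk{n}_{\mk{p}_u}$-part, hence $\wt Y=0$. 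Thus $\wt I(\mk{q})\subseteq\mk{q}$, and equality follows from $\wt I^2=-\mathrm{Id}$.

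Next I would deduce \eqref{wid03}. From \eqref{wid1}, $\wt I$ stabilizes both $\mk{q}^{\CC}=\mk{h}\cap\wt{\mk{p}}$ and $\mk{n}_{\mk{p}}=\mk{n}_{\mk{p}}^+\oplus\mk{n}_{\mk{p}}^-$, so $\wt{\mk{p}}^\pm$ splits as $(\wt{\mk{p}}^\pm\cap\mk{h})\oplus(\wt{\mk{p}}^\pm\cap\mk{n}_{\mk{p}})$. It remains to identify $\wt{\mk{p}}^+\cap\mk{n}_{\mk{p}}$ with $\mk{n}_{\mk{p}}^+=\sum_{\alpha\in\Delta_{\mk{p}}^+}\mk{g}(\alpha)$. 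For this I observe that on $\mk{n}_{\mk{p}_u}$ the maps $\wt I$ and $I$ agree up to a summand in $\mk{q}\oplus\mk{h}_{\mk{p}_u}$: more precisely, by Lemma \ref{wtj1} (with $I$ in place of $J$), for $X\in\mk{n}_{\mk{p}_u}$ the $\mk{n}_{\mk{p}_u}$-components of $I(X)$ and $\wt I(X)$ coincide, and we already showed the $\mk{h}$-component of $I(X)$ vanishes; hence $\wt I(X)-I(X)\in\mk{q}$, and in particular $\wt I$ and $I$ induce the \emph{same} endomorphism of the quotient $\wt{\mk{p}}_u/(\mk{h}\cap\wt{\mk{p}}_u)\cong\mk{n}_{\mk{p}_u}\cong\mk{p}_u/\mk{h}_{\mk{p}_u}$. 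Actually, since we proved $I(\mk{n}_{\mk{p}_u})=\mk{n}_{\mk{p}_u}$ with no $\mk{h}$-component, $I$ restricts to a complex structure on $\mk{n}_{\mk{p}_u}$ and $\wt I|_{\mk{n}_{\mk{p}_u}}=I|_{\mk{n}_{\mk{p}_u}}$. Complexifying, $\wt I=I$ on $\mk{n}_{\mk{p}}$, so $\wt{\mk{p}}^+\cap\mk{n}_{\mk{p}}=\mk{p}^+\cap\mk{n}_{\mk{p}}=\mk{n}_{\mk{p}}^+$ by the regular form of $\mk{p}^+$ in Lemma \ref{lemma wcb8}; likewise $\wt{\mk{p}}^-\cap\mk{n}_{\mk{p}}=\mk{n}_{\mk{p}}^-$. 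The identity $\mk{n}_{\mk{p}}^-=\tau(\wt{\mk{p}}^+\cap\mk{h})+\mk{n}_{\mk{p}}^-$ in the displayed line is a typo for the asserted $\wt{\mk{p}}^-=\tau(\wt{\mk{p}}^+\cap\mk{h})\oplus\mk{n}_{\mk{p}}^-$, which follows because $\tau\circ\wt I=\wt I\circ\tau$ (inherited from $\tau\circ I=I\circ\tau$ and the fact that $\wt I$ is built from $I_M(o)$ exactly as $I$ is), so $\tau$ interchanges $\wt{\mk{p}}^+$ and $\wt{\mk{p}}^-$, hence $\wt{\mk{p}}^-\cap\mk{h}=\tau(\wt{\mk{p}}^+\cap\mk{h})$ and $\tau(\mk{n}_{\mk{p}}^+)=\mk{n}_{\mk{p}}^-$.

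The main obstacle is the very first reduction: establishing that $I$ carries $\mk{h}_{\mk{p}_u}$ into itself and $\mk{n}_{\mk{p}_u}$ into itself. This is where all the earlier structural work is used — it is exactly the regularity of $\mk{p}^+$ and $\mk{p}^-$ proved in Lemma \ref{lemma wcb8}, read off at the level of the compact real form $\mk{u}$ — and once it is in hand, the transfer from $I$ to $\wt I$ via Lemma \ref{wtj1} is bookkeeping with the direct sum decompositions $\mk{h}_u=\mk{q}\oplus\mk{h}_{\mk{k}_u}$, $\wt{\mk{p}}_u=\mk{q}\oplus\mk{n}_{\mk{p}_u}$ and $(\mk{k}_u+\mk{h}_u)\cap\mk{n}_{\mk{p}_u}=\{0\}$. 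A secondary subtlety is making sure the $\tau$-equivariance of $\wt I$ (needed for the last equality of \eqref{wid03}) is legitimate; it is, because $\wt I$ is defined by conjugating the honest complex structure $I_M(o)$ on $T_o(M)$ by $(\rd\pi)|_{\wt{\mk{p}}_u}$, and $\tau$-equivariance of such operators was already recorded for $I$ in Remark \ref{coro for wt{mk{m}} as a subspace of mk{g}} and uses nothing about the particular choice $\mk{h}_{\mk{p}_u}$ versus $\mk{q}$.
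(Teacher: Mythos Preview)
Your treatment of the first half of \eqref{wid1} and of \eqref{wid03} is correct and matches the paper's approach: you use the regularity of $\mk{p}^\pm$ (Lemma \ref{lemma wcb8}) to see that $I$ preserves both $\mk{h}\cap\mk{p}$ and $\mk{n}_{\mk{p}}$, then transfer via Lemma \ref{wtj1}. Once $\wt{I}$ and $I$ agree on $\mk{n}_{\mk{p}_u}$, your identification of $\wt{\mk{p}}^\pm\cap\mk{n}_{\mk{p}}$ with $\mk{n}_{\mk{p}}^\pm$ is fine, and the $\tau$-equivariance of $\wt I$ is indeed immediate from its definition.

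There is, however, a genuine gap in your ``direct'' argument for $\wt{I}(\mk{q})=\mk{q}$. From $-X=\wt{I}(\wt H)+\wt{I}(\wt Y)$ with $X\in\mk{q}$ and $\wt{I}(\wt Y)\in\mk{n}_{\mk{p}_u}$, comparing $\mk{n}_{\mk{p}_u}$-components in $\wt{\mk{p}}_u=\mk{q}\oplus\mk{n}_{\mk{p}_u}$ gives only
\[
0=\bigl(\text{$\mk{n}_{\mk{p}_u}$-component of }\wt{I}(\wt H)\bigr)+\wt{I}(\wt Y),
\]
which does \emph{not} force $\wt{I}(\wt Y)=0$: the term $\wt{I}(\wt H)$ could a priori have a nonzero $\mk{n}_{\mk{p}_u}$-part, and that is precisely what you are trying to exclude. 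In block form relative to $\mk{q}\oplus\mk{n}_{\mk{p}_u}$, the invariance $\wt{I}(\mk{n}_{\mk{p}_u})=\mk{n}_{\mk{p}_u}$ yields $\wt{I}=\left(\begin{smallmatrix}A&0\\ C&D\end{smallmatrix}\right)$, and $\wt{I}^2=-\mathrm{Id}$ gives $A^2=D^2=-\mathrm{Id}$ and $CA+DC=0$; this last relation is compatible with $C\neq 0$, so $\wt{I}^2=-\mathrm{Id}$ alone cannot close the argument.

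The paper fills this gap by going back through $\rd\pi$ rather than relying on $\wt{I}^2=-\mathrm{Id}$. For $H\in\mk{q}$ write $H=H_1+H_2$ with $H_1\in\mk{h}_{\mk{k}_u}$, $H_2\in\mk{h}_{\mk{p}_u}$; since $\ker(\rd\pi)=\mk{k}_u\supset\mk{h}_{\mk{k}_u}$ one has $\rd\pi(H)=\rd\pi(H_2)$, and unwinding the definitions of $\wt I$ and $I$ gives that the $\mk{n}_{\mk{p}_u}$-component $B$ of $\wt I(H)$ equals the $\mk{n}_{\mk{p}_u}$-component of $I(H_2)$. Now the fact you already have, $I(\mk{h}_{\mk{p}_u})=\mk{h}_{\mk{p}_u}$, forces $B=0$. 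This is the missing step: you must compare $\wt I$ on $\mk{q}$ with $I$ on $\mk{h}_{\mk{p}_u}$ through $I_M(o)$, exactly as Lemma \ref{wtj1} does for inputs in $\mk{n}_{\mk{p}_u}$.
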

\begin{proof} The equalities   $I(E_{\alpha} - E_{-\alpha}) = \ri (E_{\alpha} +
E_{-\alpha})$, $I(\ri(E_{\alpha}+ E_{-\alpha})) = -(E_{\alpha} -
E_{-\alpha})$  are   valid for $\alpha \in \Delta_{\mk{p}}$  and from
\eqref{frak{n}_{frak{m}}} we see that
$I(\mk{n}_{\mk{p}_u}) = \mk{n}_{\mk{p}_u}$.

Hence, by the previous lemma
$ \widetilde{I}(\mk{n}_{\mk{p}_u}) =I(X) = \mk{n}_{\mk{p}_u}. $

 Let us recall  the fact that $I$ restricted to $\mk{h} \cap \mk{p}^+$ is $ \ri$ and restricted to
$\mk{h} \cap \mk{p}^-$ is $-\ri$, therefore
$$
I(\mk{h}\cap \mk{p}) = \mk{h} \cap \mk{p} \Rightarrow
I(\mk{h}_{\mk{p}_u}) = \mk{h}_{\mk{p}_u}.
$$
 Now we prove the second statement in \eqref{wid1}. Let us take $H \in
\mk{q}$, $H \neq 0$. Let us decompose
$\widetilde{I}(H)=H' + B$, where $H'\in
\mk{q}$, $B \in \mk{n}_{\mk{p}_u}$.
We shall show that $B=0$. The definition of $\widetilde{I}$
implies $((\rd \pi )_{\vert \wt{\mk{p}}_u} )^{-1}\circ
I_M(o) \circ (\rd \pi )_{\vert \wt{\mk{p}}_u}(H) = H' +
B$, therefore
\be \label{help equality} (\rd \pi )(H' + B)=I_M(o)(
\rd \pi (H)).
\ee
Now since $H, H' \in \mk{h}_u$ and since
$\mk{h}_u = \mk{h}_{\mk{p}_u} \oplus \mk{h}_{\mk{k}_u}$,
then we may decompose $H$ and $H'$ as follows
\begin{gather*}
H = H_1 + H_2, \
\ H' = H_1'+ H_2' \qquad H_1, H_1' \in \mk{h}_{\mk{k}_u}, \ \
H_2, H_2' \in \mk{h}_{\mk{p}_u}.
\end{gather*}
The equality
\eqref{help equality} can be rewritten as follows
\be \label{help
equality1} (\rd \pi )(H_2' + B)=I_M(o)( \rd \pi (H_2))
\ee
where we
used $\ker (\rd \pi) = \mk{k}_u$ and $\mk{h}_{\mk{k}_u}
\subset \mk{k}_u$. Since $H_2' + B \in \mk{h}_{\mk{p}_u}+
\mk{n}_{\mk{p}_u} = \mk{p}_u$ and $H_2 \in \mk{p}_u$, then
\be \label{help equality2} (H_2' + B)=((\rd \pi )_{\vert
\mk{p}_u} )^{-1}\circ I_M(o) \circ (\rd \pi )_{\vert \mk{p}_u}(H_2)
= I(H_2).
 \ee
 We have already shown that $I(\mk{h}_{\mk{p}_u})
= \mk{h}_{\mk{p}_u}$, which implies that $B=0$ and
\eqref{wid1} is proved.

From \eqref{wid1} and Lemma \ref{wtj1} it
follows that for each $X \in \mk{n}_{\mk{p}}$ we have $\widetilde{I}(X) = I(X) =  \ri X $,
therefore $$\mk{n}_{\mk{p}}^+ \subset \wt{\mk{p}}^+
\qquad \mk{n}_{\mk{p}}^- \subset \wt{\mk{p}}^-.
$$ Let us take any element $X \in
 \wt{\mk{p}}^+ $. By \eqref{widetilde{frak{m}}} we may decompose it as follows  $X=A + H + B$, $A \in \mk{n}_{\mk{p}}^-$,
  $H \in \mk{h}\cap \wt{\mk{p}}$, $B \in
  \mk{n}_{\mk{p}}^+$. Since $A \in \mk{n}_{\mk{p}}^-\subset \wt{\mk{p}}^-
  $  and $ B \in \mk{n}_{\mk{p}}^+\subset \wt{\mk{p}}^+$, then
\begin{gather*}
\widetilde{I}( A + H + B) =  -\ri A + \widetilde{I}( H )+ \ri B.
\end{gather*}
On the other hand since $A + H + B \in
\wt{\mk{p}}^+$, then $\widetilde{I}( A + H + B) = \ri
( A +  H + B ) $ hence
\begin{gather*}   -\ri A + \widetilde{I}( H )+ \ri B =
\ri  A + \ri  H + \ri B
\end{gather*}
this equality together with
$\wt{I}(\mk{h}\cap \wt{\mk{p}})
=\mk{h}\cap \wt{\mk{p}} $ implies that $A=0$, therefore
$X = H + B$. On the other hand $B \in \mk{n}_{\mk{p}^+}
\subset \wt{\mk{p}}^+ $ therefore  $X-B = H \in
\wt{\mk{p}}^+ \cap \mk{h}$. And we proved $\wt{\mk{p}}^+ = (\wt{\mk{p}}^+ \cap \mk{h}) +
\mk{n}^+_{\mk{p}} $. The other equality in \eqref{wid03} follows from $\wt{I} \circ \tau = \tau \circ \wt{I}$.
\end{proof}
Let us  choose  a basis $\{ U_1, U_2, \dots, U_m \}$ of
$\wt{\mk{p}}^+ \cap \mk{h}$   , then   $\{ V_1=\tau(U_1), V_2=\tau(U_2), \dots, V_m=\tau(U_m)\}$  is a basis of $\wt{\mk{p}}^- \cap \mk{h}$. Then by \eqref{wid03} the subspaces $\wt{\mk{p}}^+$, $\wt{\mk{p}}^-$ decompose as follows
\ben  \nonumber
\wt{\mk{p}}^+ = span_\CC \{U_i\}_{i=1}^m + \sum_{\alpha \in \Delta_{\mk{p}}^+} \mk{g}(\alpha) \qquad  \wt{\mk{p}}^- = span_\CC \{V_i\}_{i=1}^m + \sum_{\alpha \in \Delta_{\mk{p}}^+} \mk{g}(-\alpha).
\een
In the same way as in \cite{DimTsan10}( Definition 1.12, Proposition 1.13) one can prove, using  the conditions $\widetilde{J}(\wt{\mk{p}}^+) = \wt{\mk{p}}^-$, $\widetilde{J}^2 = - Id$,  $\wt{J} \circ \tau = \tau \circ \wt{J}$,   the following lemma

\begin{lemma} \label{lemma about a,b,xi,eta} There  exist functions \be \wt{\xi} : \{1,2,\dots,m\}
\times \Delta_{\mk{p}}^+ \rightarrow \CC \qquad \wt{{\bf a}} :
\Delta_{\mk{p}}^+ \times \Delta_{\mk{p}}^+ \rightarrow \CC
\nonumber \\ \wt{{\bf b}}: \{1,2,\dots, m \} \times \{1,2,\dots,m\}
\rightarrow \CC \qquad \wt{\eta}:\Delta_{\mk{p}}^+ \times
\{1,2,\dots,m\} \rightarrow \CC \nonumber \ee  such that
\be
\widetilde{J}(E_{\alpha}) & = & \sum_{t=1}^m \wt{\xi}_{t \alpha} V_t +
\sum_{\beta \in \Delta_{\mk{p}}^+} \wt{a}_{\beta \alpha} E_{-\beta}   \qquad \alpha \in \Delta_{\mk{p}}^+ \nonumber\\[-2mm] \label{def of a,b,xi,eta} \\[-2mm]
  \widetilde{J}(U_q) & =& \sum_{t=1}^m
\wt{b}_{t q} V_t + \sum_{\beta \in \Delta_{\mk{p}}^+}
\wt{\eta}_{\beta q} E_{-\beta} \qquad q\in \{ 1, \dots ,m \}. \nonumber
 \ee

The functions $\wt{{\bf a}}, \wt{{\bf b}}, \wt{\xi}, \wt{\eta}$ satisfy
\begin{gather}
  \ol{\wt{{\bf b}}}\wt{\xi} - \overline{\wt{\xi}}  \wt{{\bf a}}  = 0,  \quad  \ol{\wt{\eta}} \xi  - \ol{ \wt{{\bf a}}} { \wt{{\bf a}}}  = \rI_{\Delta_{\mk{p}}^+},
\quad  \ol{ \wt{{\bf a}}} \eta -  \ol{\wt{\eta} }{ \wt{{\bf b}}}  = 0, \quad
\ol{\wt{\xi}}  \wt{\eta} -  \ol{ \wt{{\bf b}}} { \wt{{\bf b}}}= \rI_m. \nonumber
 \end{gather}\end{lemma}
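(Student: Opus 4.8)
The plan is to read the four functions straight off the basis expansion of $\wt{J}$ on $\wt{\mk{p}}^+$, and then extract the quadratic relations from $\wt{J}^2=-Id$, exactly along the lines of Definition 1.12 and Proposition 1.13 of \cite{DimTsan10}. By Lemma \ref{wid0} we have $\wt{\mk{p}}^+ = \mathrm{span}_\CC\{U_1,\dots,U_m\}\oplus\bigoplus_{\alpha\in\Delta_{\mk{p}}^+}\mk{g}(\alpha)$ and $\wt{\mk{p}}^- = \mathrm{span}_\CC\{V_1,\dots,V_m\}\oplus\bigoplus_{\alpha\in\Delta_{\mk{p}}^+}\mk{g}(-\alpha)$, so $\{V_t\}\cup\{E_{-\beta}\mid\beta\in\Delta_{\mk{p}}^+\}$ is a basis of $\wt{\mk{p}}^-$. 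Since $\wt{J}(\wt{\mk{p}}^+)=\wt{\mk{p}}^-$, each of $\wt{J}(E_\alpha)$, $\alpha\in\Delta_{\mk{p}}^+$, and $\wt{J}(U_q)$, $q\in\{1,\dots,m\}$, has a unique expansion in this basis; the coefficients are, by definition, the entries $\wt{\xi}_{t\alpha},\wt{a}_{\beta\alpha}$ and $\wt{b}_{tq},\wt{\eta}_{\beta q}$, which is exactly \eqref{def of a,b,xi,eta}.

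The second step is to compute $\wt{J}$ on the complementary basis $\{V_q\}\cup\{E_{-\alpha}\}$ of $\wt{\mk{p}}^-$ using $\wt{J}\circ\tau=\tau\circ\wt{J}$. Recall $V_q=\tau(U_q)$, hence $\tau(V_q)=U_q$ because $\tau^2=Id$, and $\tau(E_\alpha)=-E_{-\alpha}$, while $\tau$ is conjugate-linear. Applying $\tau$ to \eqref{def of a,b,xi,eta} therefore gives $\wt{J}(V_q)=\sum_t\ol{\wt{b}_{tq}}\,U_t-\sum_\beta\ol{\wt{\eta}_{\beta q}}\,E_\beta$ and $\wt{J}(E_{-\alpha})=-\tau(\wt{J}(E_\alpha))=-\sum_t\ol{\wt{\xi}_{t\alpha}}\,U_t+\sum_\beta\ol{\wt{a}_{\beta\alpha}}\,E_\beta$.

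The third step is to apply $\wt{J}$ once more to \eqref{def of a,b,xi,eta} and substitute the formulas just obtained, so that $\wt{J}^2(E_\alpha)$ and $\wt{J}^2(U_q)$ become explicit linear combinations of the basis $\{U_s\}\cup\{E_\gamma\}$ of $\wt{\mk{p}}^+$. Since $\wt{J}^2=-Id$, in $\wt{J}^2(E_\alpha)$ the $U_s$-coefficient must vanish and the $E_\gamma$-coefficient must be $-1$ for $\gamma=\alpha$ and $0$ otherwise, while in $\wt{J}^2(U_q)$ the $E_\gamma$-coefficient must vanish and the $U_s$-coefficient must be $-1$ for $s=q$ and $0$ otherwise. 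Reassembling these four families of scalar identities into matrices yields precisely $\ol{\wt{{\bf b}}}\wt{\xi}-\ol{\wt{\xi}}\wt{{\bf a}}=0$, $\ol{\wt{\eta}}\wt{\xi}-\ol{\wt{{\bf a}}}\wt{{\bf a}}=\rI_{\Delta_{\mk{p}}^+}$, $\ol{\wt{{\bf a}}}\wt{\eta}-\ol{\wt{\eta}}\wt{{\bf b}}=0$, $\ol{\wt{\xi}}\wt{\eta}-\ol{\wt{{\bf b}}}\wt{{\bf b}}=\rI_m$.

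The only point requiring care — and the sole (purely bookkeeping) obstacle — is to fix once and for all the index and transpose conventions, so that the four scalar families assemble into the four displayed matrix equations with the factors in the right order and the complex conjugations in the right place; this is routine and is done exactly as in \cite{DimTsan10}. No further geometric input enters: the identities $\wt{J}(\wt{\mk{p}}^+)=\wt{\mk{p}}^-$, $\wt{J}\circ\tau=\tau\circ\wt{J}$ and $\wt{J}^2=-Id$ are all that is used.
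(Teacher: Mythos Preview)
Your proposal is correct and follows exactly the approach indicated in the paper: the authors state that the lemma is proved ``in the same way as in \cite{DimTsan10} (Definition 1.12, Proposition 1.13)'' using precisely the three conditions $\wt{J}(\wt{\mk{p}}^+)=\wt{\mk{p}}^-$, $\wt{J}\circ\tau=\tau\circ\wt{J}$, and $\wt{J}^2=-Id$, which is verbatim what you do. Your write-up is in fact more explicit than the paper's, which simply defers to the cited reference.
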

\begin{remark} \label{remark for a,b,xi,eta and a,wt{b},wt{xi},wt{eta}}
The  functions  $\wt{{\bf a}}, \wt{{\bf b}}, \wt{\xi}, \wt{\eta}$  depend on the choice of the subspace $\mk{q}$, which in turn determines $\widetilde{I}, \widetilde{J}$. For the case $\mk{q}=\mk{h}_{\mk{p}_u}$, which gives the complex structures $I$, $J$, fixed in subsection \ref{some notations}, we shall denote these function  by  ${\bf a}, {\bf b}, \xi,\eta $ throughout section \ref{nc0}.
\end{remark}
 From Lemma \ref{wtj1} it follows
\begin{lemma} \label{the matrix a is fixed} The matrix $\wt{{\bf a}}$, assigned to $\widetilde{J}$ in accordance with Lemma \ref{lemma about a,b,xi,eta}, does not change as $\mk{q}$ varies constrained to \eqref{wtj2}, i.e. $\wt{{\bf a}} = {\bf a}$ for any choice of $\mk{q}$.
\end{lemma}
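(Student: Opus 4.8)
The plan is to reduce everything to Lemma~\ref{wtj1}, which already records the invariance of the ``$\mk{n}_{\mk{p}_u}$-part'' of $J$ under a change of $\mk{q}$. Fix $\alpha\in\Delta_{\mk{p}}^+$. Since $E_\alpha=\tfrac12(E_\alpha-E_{-\alpha})-\tfrac{\ri}{2}\bigl(\ri(E_\alpha+E_{-\alpha})\bigr)$ is a complex-linear combination of the generators listed in \eqref{frak{n}_{frak{m}}}, the vector $E_\alpha$ lies in the complexification $\mk{n}_{\mk{p}}$ of $\mk{n}_{\mk{p}_u}$. The maps $J,\wt J$ are $\CC$-linear, and so are the projections of $\mk{p}=(\mk{h}\cap\mk{p})\oplus\mk{n}_{\mk{p}}$ onto $\mk{n}_{\mk{p}}$ and of $\wt{\mk{p}}=\mk{q}^{\CC}\oplus\mk{n}_{\mk{p}}$ onto $\mk{n}_{\mk{p}}$ (here I use that $\mk{h}\cap\mk{p}=\mk{h}_{\mk{p}}$ and $\mk{h}\cap\wt{\mk{p}}=\mk{q}^{\CC}$, the latter because $\mk{q}\subset\mk{h}_u$ and $\mk{n}_{\mk{p}_u}$ meets $\mk{h}$ trivially, cf.\ \eqref{widetilde{frak{m}}}). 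Hence Lemma~\ref{wtj1}, applied to the real and imaginary generators above and extended by $\CC$-linearity, gives: if $J(E_\alpha)=H_\alpha+Y_\alpha$ and $\wt J(E_\alpha)=\wt H_\alpha+\wt Y_\alpha$ with $H_\alpha\in\mk{h}\cap\mk{p}$, $\wt H_\alpha\in\mk{q}^{\CC}$ and $Y_\alpha,\wt Y_\alpha\in\mk{n}_{\mk{p}}$, then $Y_\alpha=\wt Y_\alpha$.

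Next I would identify $Y_\alpha$ and $\wt Y_\alpha$ with the $\alpha$-columns of ${\bf a}$ and $\wt{\bf a}$. By Lemma~\ref{lemma about a,b,xi,eta}, $\wt J(E_\alpha)=\sum_{t=1}^{m}\wt\xi_{t\alpha}V_t+\sum_{\beta\in\Delta_{\mk{p}}^+}\wt a_{\beta\alpha}E_{-\beta}$, where the $V_t$ form a basis of $\wt{\mk{p}}^-\cap\mk{h}$; since $\wt{\mk{p}}^-\cap\mk{h}\subset\mk{h}\cap\wt{\mk{p}}=\mk{q}^{\CC}$ and $\sum_\beta\wt a_{\beta\alpha}E_{-\beta}\in\mk{n}_{\mk{p}}$, uniqueness of the decomposition $\wt{\mk{p}}=\mk{q}^{\CC}\oplus\mk{n}_{\mk{p}}$ forces $\wt Y_\alpha=\sum_{\beta}\wt a_{\beta\alpha}E_{-\beta}$. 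Specializing to $\mk{q}=\mk{h}_{\mk{p}_u}$, in which case $\wt{\mk{p}}=\mk{p}$, $\wt J=J$ and (Remark~\ref{remark for a,b,xi,eta and a,wt{b},wt{xi},wt{eta}}) the coefficients become ${\bf a},{\bf b},\xi,\eta$, the identical argument together with $\mk{p}^-\cap\mk{h}\subset\mk{h}\cap\mk{p}$ (Lemma~\ref{lemma wcb8}) yields $Y_\alpha=\sum_{\beta}a_{\beta\alpha}E_{-\beta}$.

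Combining the two identities with $Y_\alpha=\wt Y_\alpha$ gives $\sum_{\beta\in\Delta_{\mk{p}}^+}a_{\beta\alpha}E_{-\beta}=\sum_{\beta\in\Delta_{\mk{p}}^+}\wt a_{\beta\alpha}E_{-\beta}$, and since the root vectors $\{E_{-\beta}\}_{\beta\in\Delta_{\mk{p}}^+}$ lie in pairwise distinct root spaces, hence are linearly independent, we get $a_{\beta\alpha}=\wt a_{\beta\alpha}$ for all $\alpha,\beta\in\Delta_{\mk{p}}^+$, i.e.\ $\wt{\bf a}={\bf a}$. I do not expect a genuine obstacle here: the only delicate point is the bookkeeping that separates, inside $\wt J(E_\alpha)$, the ``Cartan component'' (in $\mk{q}^{\CC}$) from the ``root component'' (in $\mk{n}_{\mk{p}}$) --- this is exactly the content of Lemma~\ref{wtj1}, and it is where the transversality hypothesis \eqref{wtj2} on $\mk{q}$ is used.
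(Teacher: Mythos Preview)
Your proof is correct and is precisely the approach the paper has in mind: the paper's own proof is the single phrase ``From Lemma~\ref{wtj1} it follows,'' and you have carefully unpacked how that lemma, complexified and read off against the expansion in Lemma~\ref{lemma about a,b,xi,eta}, pins down the $\mk{n}_{\mk{p}}$-component and hence the column $(\wt a_{\beta\alpha})_\beta$. One cosmetic remark: your use of the symbol $H_\alpha$ for the Cartan component of $J(E_\alpha)$ clashes with the paper's coroot notation $H_\alpha$; a neutral letter would avoid confusion.
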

This lemma allows us to write ${\bf a}$ instead of $\wt{{\bf a}}$, without paying attention to the choice of  $\mk{q}$ and this matrix ${\bf a}$ is  fixed throughout section \ref{nc0}.

\begin{coro} \label{J(E_{alpha}) in frak{h}}  Let $\alpha \in \Delta_{\mk{p}}^+$ then
\be \widetilde{J}(E_{\alpha}) \in \mk{h} \qquad  \iff \qquad \forall \zeta \in \Delta_{\mk{p}}^+ \ \ a_{\zeta \alpha} = 0. \nonumber\ee
\end{coro}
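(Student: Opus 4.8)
The plan is to read the equivalence off directly from the expansion of $\widetilde{J}$ furnished by Lemma \ref{lemma about a,b,xi,eta}, using the directness of the root space decomposition $\mk{g}=\mk{h}\oplus\bigoplus_{\mu\in\Delta}\mk{g}(\mu)$. Recall that $\{V_1,\dots,V_m\}$ is a basis of $\wt{\mk{p}}^-\cap\mk{h}\subset\mk{h}$, so for $\alpha\in\Delta_{\mk{p}}^+$ Lemma \ref{lemma about a,b,xi,eta} gives the exact identity
\[
\widetilde{J}(E_{\alpha})=\Big(\sum_{t=1}^m \wt{\xi}_{t\alpha}V_t\Big)+\sum_{\beta\in\Delta_{\mk{p}}^+}a_{\beta\alpha}E_{-\beta},
\]
where the parenthesised term lies in $\mk{h}=\mk{g}(0)$, each $E_{-\beta}$ lies in the root space $\mk{g}(-\beta)$ with $-\beta\neq 0$, and where I have already replaced $\wt{a}$ by $a$ on the strength of Lemma \ref{the matrix a is fixed}.

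Next I would invoke the direct sum decomposition of $\mk{g}$: since the roots $-\beta$ with $\beta\in\Delta_{\mk{p}}^+$ are pairwise distinct and nonzero, the $\mk{g}(-\beta)$-isotypic components in the displayed formula are independent of one another and of the $\mk{h}$-component. Hence $\widetilde{J}(E_{\alpha})\in\mk{h}$ forces $\sum_{\beta\in\Delta_{\mk{p}}^+}a_{\beta\alpha}E_{-\beta}=0$, and therefore $a_{\beta\alpha}=0$ for every $\beta\in\Delta_{\mk{p}}^+$. The converse is immediate from the same formula: if all $a_{\beta\alpha}$ vanish, then $\widetilde{J}(E_{\alpha})=\sum_{t}\wt{\xi}_{t\alpha}V_t\in\mk{h}$.

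There is no real obstacle here; the corollary is a bookkeeping consequence of Lemma \ref{lemma about a,b,xi,eta} together with Lemma \ref{the matrix a is fixed}. The only point that deserves explicit mention is that the matrix entries $a_{\zeta\alpha}$ occurring in the statement are those attached to the fixed choice $\mk{q}=\mk{h}_{\mk{p}_u}$ (equivalently, by Lemma \ref{the matrix a is fixed}, to any admissible $\mk{q}$), so that the criterion for $\widetilde{J}(E_{\alpha})\in\mk{h}$ is insensitive to which $\widetilde{J}$ one works with.
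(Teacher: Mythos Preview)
Your proposal is correct and follows exactly the approach intended by the paper: the corollary is stated without proof immediately after Lemma~\ref{lemma about a,b,xi,eta}, and your argument spells out the obvious reason, namely that the root-space decomposition $\mk{g}=\mk{h}\oplus\bigoplus_{\mu\in\Delta}\mk{g}(\mu)$ makes the $\mk{h}$-component and the $E_{-\beta}$-components of $\widetilde{J}(E_\alpha)$ independent. Your remark about Lemma~\ref{the matrix a is fixed} is also well placed, since it explains why the criterion is phrased in terms of $a$ rather than $\wt{a}$.
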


\begin{remark} \label{remark about extention of a}

It is convenient to extend the definitions of the functions ${\bf a}$,  $\wt{\eta}$.
Let $\alpha, \beta \in \mk{h}^*,\quad k\in \NN$. If $\alpha \not \in \Delta_{\mk{p}}^+$ or $\beta \not \in \Delta_{\mk{p}}^+$ , then we set
$
a_{\alpha,\beta} =\wt{\eta}_{\alpha,k} = 0.
$
\end{remark}

\begin{df}\label{def Cartan variation of mk{m}_0} In this subsection we started from any subspace $\mk{q} \subset \mk{h}_u$, satisfying \eqref{wtj2} and constructed  a 6-uple $(\wt{\mk{p}}, \wt{\mk{p}}^+, \wt{J}, \wt{\xi}, \wt{\eta}, \wt{{\bf b}})$, depending on $\mk{q}$ (we recall that the matrix ${\bf a}$ does not depend on $\mk{q}$). For the sake of brevity we shall call such a 6-uple $(\wt{\mk{p}}, \wt{\mk{p}}^+, \wt{J}, \wt{\xi}, \wt{\eta}, \wt{{\bf b}})$ Cartan variation of $\mk{p}_u$.  Of course Lemma \ref{wid0} and Corollary \ref{J(E_{alpha}) in frak{h}} are valid for any Cartan variation of $\mk{q}$.
Furthermore,  $(\mk{p}, \mk{p}^+, J, \xi, \eta, {\bf b})$ is a Cartan variation of $\mk{p}_u$ (see \eqref{hhh1}).
\end{df}

\subsubsection{Equations for \texorpdfstring{${\bf a}, \wt{{\bf b}} , \wt{\xi}, \wt{\eta}$}{\space}.}
The following theorem is well known (see e.g. \cite{Kobayashi69}, Ch. X)
\begin{theorem} \label{theorem for Nij} Let ${\bf U}$ be a connected Lie group with Lie algebra $\mk{u}$. Let ${\bf K}_u$ be a closed subgroup of ${\bf U}$ with Lie algebra $\mk{k}_u$. Let $J_M$ be a left-invariant integrable  complex structure  on the coset space $M = {\bf U}/{\bf K}_u$. Let $\wt{\mk{p}}_u$ be  any subspace of $\mk{u}$ such that
\ben \mk{u} = \mk{k}_u \oplus \wt{\mk{p}}_u.\een
Then the complex structure on $\wt{\mk{p}}_u $, defined  as follows
\ben \widetilde{J} = ((\rd
\pi )_{\vert \wt{\mk{p}}_u} )^{-1}\circ J_M(o) \circ (\rd
\pi )_{\vert \wt{\mk{p}}_u} \een  satisfies the following equation
\begin{gather} \label{Nijenhuis 1} \forall X, Y \in \wt{\mk{p}}_u \qquad  [\widetilde{J}(X),\widetilde{J}(Y)]_{\vert \wt{\mk{p}}_u } - [X,Y]_{\vert \wt{\mk{p}}_u} -\widetilde{J}([X, \widetilde{J}(Y)]_{\vert \wt{\mk{p}}_u}) - \widetilde{J}([\widetilde{J}(X),Y]_{\vert \wt{\mk{p}}_u})=0. \end{gather}
\end{theorem}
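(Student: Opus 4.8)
The plan is to compute the Nijenhuis tensor $N_{J_{M}}$ of $J_{M}$ at the base point $o$ directly in terms of Lie brackets in $\mk{u}$, and then to invoke the Newlander--Nirenberg theorem: $(M,J_{M})$ is a complex manifold, so $N_{J_{M}}\equiv 0$. For $X\in\mk{u}$ let $X^{\sharp}$ denote the fundamental vector field of the $\mathbf{U}$-action on $M$, $X^{\sharp}_{m}=\frac{\rd}{\rd t}\big|_{t=0}\exp(tX)\cdot m$. Then $X\mapsto X^{\sharp}$ is a Lie algebra anti-homomorphism, $[X^{\sharp},Y^{\sharp}]=-[X,Y]^{\sharp}$; one has $X^{\sharp}_{o}=\rd\pi(X)$, so $X^{\sharp}_{o}=0$ exactly when $X\in\mk{k}_{u}$, and consequently $\rd\pi([X,Y])=\rd\pi([X,Y]_{\wt{\mk{p}}_{u}})$ for all $X,Y\in\mk{u}$ (since $\ker\rd\pi=\mk{k}_{u}$); and $\rd\pi\circ\wt{J}=J_{M}(o)\circ\rd\pi$ on $\wt{\mk{p}}_{u}$, by definition of $\wt{J}$. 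The one structural input is that $\mathbf{U}$ acts on $(M,J_{M})$ by biholomorphisms (left-invariance of $J_{M}$), whence $\mc{L}_{X^{\sharp}}J_{M}=0$ for every $X\in\mk{u}$.

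First I would dispose of all brackets carrying at most one factor of $J_{M}$. Fix $X,Y\in\wt{\mk{p}}_{u}$. The anti-homomorphism property immediately gives $[X^{\sharp},Y^{\sharp}]_{o}=-\rd\pi([X,Y]_{\wt{\mk{p}}_{u}})$, while combining it with $\mc{L}_{Y^{\sharp}}J_{M}=0$ (so that $[J_{M}X^{\sharp},Y^{\sharp}]=-\mc{L}_{Y^{\sharp}}(J_{M}X^{\sharp})=-J_{M}[Y^{\sharp},X^{\sharp}]$, and symmetrically for the other order) expresses $[J_{M}X^{\sharp},Y^{\sharp}]_{o}$ and $[X^{\sharp},J_{M}Y^{\sharp}]_{o}$ as $\pm J_{M}\rd\pi([X,Y]_{\wt{\mk{p}}_{u}})$. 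Running the same computation with $\wt{J}X,\wt{J}Y\in\wt{\mk{p}}_{u}$ in place of $X,Y$ likewise expresses $[(\wt{J}X)^{\sharp},(\wt{J}Y)^{\sharp}]_{o}$, $[(\wt{J}X)^{\sharp},Y^{\sharp}]_{o}$, $[X^{\sharp},(\wt{J}Y)^{\sharp}]_{o}$ and $[(\wt{J}X)^{\sharp},J_{M}Y^{\sharp}]_{o}$ in terms of $\rd\pi$ applied to $[\wt{J}X,\wt{J}Y]_{\wt{\mk{p}}_{u}}$, $[\wt{J}X,Y]_{\wt{\mk{p}}_{u}}$, $[X,\wt{J}Y]_{\wt{\mk{p}}_{u}}$.

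I expect the only genuinely delicate term to be $[J_{M}X^{\sharp},J_{M}Y^{\sharp}]_{o}$, where the Lie-derivative trick is unavailable because $J_{M}X^{\sharp}$ is not a fundamental vector field. The device I would use is to write $J_{M}X^{\sharp}=(\wt{J}X)^{\sharp}+V$ and $J_{M}Y^{\sharp}=(\wt{J}Y)^{\sharp}+W$, where $V$ and $W$ are smooth vector fields vanishing at $o$ (their values at $o$ coincide with those of the fundamental fields, by $\rd\pi\circ\wt{J}=J_{M}(o)\circ\rd\pi$). Expanding the bracket bilinearly: $[V,W]_{o}=0$ because both fields vanish at $o$; $[(\wt{J}X)^{\sharp},(\wt{J}Y)^{\sharp}]_{o}$ is already known; and the two mixed terms are re-expanded once more, $[(\wt{J}X)^{\sharp},W]_{o}=[(\wt{J}X)^{\sharp},J_{M}Y^{\sharp}]_{o}-[(\wt{J}X)^{\sharp},(\wt{J}Y)^{\sharp}]_{o}$ (and symmetrically for $[V,(\wt{J}Y)^{\sharp}]_{o}$), each summand now being one of the brackets evaluated in the previous step. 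Collecting everything expresses $[J_{M}X^{\sharp},J_{M}Y^{\sharp}]_{o}$ through $\rd\pi$ of $[\wt{J}X,\wt{J}Y]_{\wt{\mk{p}}_{u}}$, $[\wt{J}X,Y]_{\wt{\mk{p}}_{u}}$ and $[X,\wt{J}Y]_{\wt{\mk{p}}_{u}}$.

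Finally I would assemble
\[
N_{J_{M}}(X^{\sharp},Y^{\sharp})_{o}=[J_{M}X^{\sharp},J_{M}Y^{\sharp}]_{o}-[X^{\sharp},Y^{\sharp}]_{o}-J_{M}[J_{M}X^{\sharp},Y^{\sharp}]_{o}-J_{M}[X^{\sharp},J_{M}Y^{\sharp}]_{o}
\]
from the pieces above, using $J_{M}(o)^{2}=-\mathrm{Id}$ together with $\rd\pi\circ\wt{J}=J_{M}(o)\circ\rd\pi$ to pull $J_{M}$ through $\rd\pi$; the several $\rd\pi([X,Y]_{\wt{\mk{p}}_{u}})$-contributions collapse, and one finds that $N_{J_{M}}(X^{\sharp},Y^{\sharp})_{o}$ equals, up to an overall sign,
\[
\rd\pi\Big([\wt{J}X,\wt{J}Y]_{\wt{\mk{p}}_{u}}-[X,Y]_{\wt{\mk{p}}_{u}}-\wt{J}[\wt{J}X,Y]_{\wt{\mk{p}}_{u}}-\wt{J}[X,\wt{J}Y]_{\wt{\mk{p}}_{u}}\Big).
\]
Since $N_{J_{M}}\equiv 0$ and $\rd\pi|_{\wt{\mk{p}}_{u}}$ is a linear isomorphism onto $T_{o}(M)$, the parenthesized combination vanishes, which is precisely \eqref{Nijenhuis 1}. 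The point worth stressing --- and the reason this is not literally the statement of \cite{Kobayashi69}, Ch.~X --- is that $\wt{\mk{p}}_{u}$ is an \emph{arbitrary} vector-space complement of $\mk{k}_{u}$, not an $\mathrm{Ad}(\mathbf{K}_{u})$-invariant one; the computation never uses reductivity, only $\mc{L}_{X^{\sharp}}J_{M}=0$, so it goes through unchanged, which is exactly what the Cartan-variation argument of the following subsection requires.
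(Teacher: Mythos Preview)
Your argument is correct. The paper itself does not give a proof of this theorem: it simply cites \cite{Kobayashi69}, Ch.~X, and then remarks that although Kobayashi--Nomizu assume the reductivity condition $[\mk{k}_u,\wt{\mk{p}}_u]\subset\wt{\mk{p}}_u$, a reading of their proof shows it is not used for this particular statement. What you have written is precisely that verification carried out in full.

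Your treatment of the one nontrivial term $[J_M X^{\sharp},J_M Y^{\sharp}]_o$ via the decomposition $J_M X^{\sharp}=(\wt{J}X)^{\sharp}+V$ with $V_o=0$ is exactly the device that lets the computation go through without reductivity: in the reductive case one can instead use the canonical connection to identify $J_M X^{\sharp}$ globally with an invariant vector field, which is what Kobayashi--Nomizu implicitly do, but your local argument at $o$ (using that $[V,W]_o=0$ when both fields vanish at $o$) sidesteps this entirely. This is the content of the paper's remark, made explicit. Your closing observation that this non-reductive version is precisely what the subsequent Cartan-variation argument requires is also on point.
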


Let us remark that in \cite{Kobayashi69} the reductivity condition $[\mk{k}_u, \wt{\mk{p}}_u ] \subset \wt{\mk{p}}_u$ is presumed, but it is used there for other statements. One can verify that for the proof of the theorem above, this condition is not necessary.

We apply this theorem to the  complex structure $\widetilde{J}$ in any Cartan variation $(\wt{\mk{p}}, \wt{\mk{p}}^+, \wt{J}, \wt{\xi}, \wt{\eta}, \wt{{\bf b}})$ of $\mk{p}_u$  and from \eqref{Nijenhuis 1}  we infer
 \begin{gather} \label{Nijenhuis 2} \forall X, Y \in \widetilde{\mk{p} } \qquad [\widetilde{J}(X),\widetilde{J}(Y)]_{\vert \widetilde{\mk{p}} } - [X,Y]_{\vert \widetilde{\mk{p}}} -\widetilde{J}([X, \widetilde{J}(Y)]_{\vert \widetilde{\mk{p}}}) - \widetilde{J}([\widetilde{J}(X),Y]_{\vert \widetilde{\mk{p}}})=0. \end{gather}

Using this equation, we prove:

\begin{prop} \label{theorem union of the res from Nij}

Let $\alpha,\beta \in  \Delta_{\mk{p}}^+,\quad q = 1,\dots,m$. Then
\begin{gather}\label{NijEq2}
a_{\beta,\alpha}(\alpha + \beta)(U_q) = pr_{\mk{g}(-\beta)}\sco{\sum_{\nu\in\Delta_{\mk{p}}^+} \widetilde{\eta}_{\nu,q}[E_{-\nu},E_\alpha]}.
\end{gather}

\end{prop}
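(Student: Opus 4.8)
The plan is to feed the pair $X=E_\alpha$, $Y=U_q$ into the integrability relation \eqref{Nijenhuis 2} for the chosen Cartan variation $(\wt{\mk{p}},\wt{\mk{p}}^+,\wt J,\wt\xi,\wt\eta,\wt{{\bf b}})$, but only after composing the whole identity with $\wt J$. Since $\wt J^2=-\mathrm{Id}$ on $\wt{\mk{p}}$ and every summand of \eqref{Nijenhuis 2} already lies in $\wt{\mk{p}}$, applying $\wt J$ to \eqref{Nijenhuis 2} yields
\[ \wt J\big([\wt J X,\wt J Y]_{|\wt{\mk{p}}}\big)-\wt J\big([X,Y]_{|\wt{\mk{p}}}\big)+[X,\wt J Y]_{|\wt{\mk{p}}}+[\wt J X,Y]_{|\wt{\mk{p}}}=0 . \]
With $X=E_\alpha\in\mk{n}_{\mk{p}}^+$ and $Y=U_q\in\wt{\mk{p}}^+\cap\mk{h}$ this gives
\[ [E_\alpha,\wt J U_q]_{|\wt{\mk{p}}}=-\wt J\big([\wt J E_\alpha,\wt J U_q]_{|\wt{\mk{p}}}\big)+\wt J\big([E_\alpha,U_q]_{|\wt{\mk{p}}}\big)-[\wt J E_\alpha,U_q]_{|\wt{\mk{p}}} , \]
and the proposition will come out by projecting this equality onto the root space $\mk{g}(-\beta)$.

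For that projection I would use three facts. First, since $\beta\in\Delta_{\mk{p}}^+$ we have $\mk{g}(-\beta)\subset\mk{n}_{\mk{p}}^-\subset\wt{\mk{p}}$ and $-\beta\notin\Delta_{\mk{k}}$, so $pr_{\mk{g}(-\beta)}$ of a vector equals $pr_{\mk{g}(-\beta)}$ of its $\wt{\mk{p}}$-component, and it kills every element of $\wt{\mk{p}}^+=(\wt{\mk{p}}^+\cap\mk{h})\oplus\mk{n}_{\mk{p}}^+$ (Lemma \ref{wid0}). Second, $E_\alpha,U_q$ lie in the $\ri$-eigenspace $\wt{\mk{p}}^+$ of $\wt I$, hence $\wt J E_\alpha,\wt J U_q\in\wt{\mk{p}}^-$, and $[\wt J E_\alpha,\wt J U_q]_{|\wt{\mk{p}}}\in\wt{\mk{p}}^-$ by integrability of $\wt I$ (Theorem \ref{theorem for Nij} applied to $I_M$, or directly from Lemma \ref{wid0}); therefore $\wt J\big([\wt J E_\alpha,\wt J U_q]_{|\wt{\mk{p}}}\big)\in\wt J(\wt{\mk{p}}^-)=\wt{\mk{p}}^+$ has zero $\mk{g}(-\beta)$-component. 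Third, because $U_q,V_t\in\mk{h}$ and by the expansions \eqref{def of a,b,xi,eta} (with $\wt{{\bf a}}={\bf a}$ by Lemma \ref{the matrix a is fixed}): $[E_\alpha,U_q]_{|\wt{\mk{p}}}=-\alpha(U_q)E_\alpha$, so $\wt J\big([E_\alpha,U_q]_{|\wt{\mk{p}}}\big)=-\alpha(U_q)\wt J(E_\alpha)$ has $\mk{g}(-\beta)$-component $-\alpha(U_q)a_{\beta,\alpha}E_{-\beta}$; and $[\wt J E_\alpha,U_q]_{|\wt{\mk{p}}}=\sum_{\mu\in\Delta_{\mk{p}}^+}a_{\mu,\alpha}\mu(U_q)E_{-\mu}$, with $\mk{g}(-\beta)$-component $a_{\beta,\alpha}\beta(U_q)E_{-\beta}$. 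Combining the three, $pr_{\mk{g}(-\beta)}\big([E_\alpha,\wt J U_q]_{|\wt{\mk{p}}}\big)=-a_{\beta,\alpha}(\alpha+\beta)(U_q)E_{-\beta}$.

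It remains to recognize the right-hand side of \eqref{NijEq2}. By \eqref{def of a,b,xi,eta} one has $\sum_{\nu\in\Delta_{\mk{p}}^+}\wt\eta_{\nu,q}E_{-\nu}=\wt J(U_q)-\sum_t\wt{b}_{t,q}V_t$, hence
\[ \sum_{\nu\in\Delta_{\mk{p}}^+}\wt\eta_{\nu,q}[E_{-\nu},E_\alpha]=[\wt J(U_q),E_\alpha]-\Big(\sum_t\wt{b}_{t,q}\alpha(V_t)\Big)E_\alpha , \]
and since $\alpha\neq-\beta$ the last summand is invisible to $pr_{\mk{g}(-\beta)}$; therefore $pr_{\mk{g}(-\beta)}\big(\sum_\nu\wt\eta_{\nu,q}[E_{-\nu},E_\alpha]\big)=-pr_{\mk{g}(-\beta)}\big([E_\alpha,\wt J(U_q)]_{|\wt{\mk{p}}}\big)=a_{\beta,\alpha}(\alpha+\beta)(U_q)\,E_{-\beta}$, which is precisely \eqref{NijEq2} once $\mk{g}(-\beta)$ is identified with $\CC E_{-\beta}$. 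I expect the only genuinely delicate point to be the opening move: one must transform \eqref{Nijenhuis 2} by $\wt J$ before projecting, so that the term $\wt J(\,\cdot\,)$ which would otherwise carry $\wt J$ of a bracket with an uncontrolled Cartan part instead becomes $\wt J$ of a bracket of two elements of $\wt{\mk{p}}^-$ — and this lands in $\wt{\mk{p}}^+$, where $\mk{g}(-\beta)$ is not seen.
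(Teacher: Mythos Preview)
Your proof is correct and follows essentially the same approach as the paper's: substitute $U_q$ and $E_\alpha$ into \eqref{Nijenhuis 2}, apply $\wt J$ to the whole identity, and read off the $\mk{g}(-\beta)$-component (the paper writes this in one sentence, referring to the analogous computation in \cite{DimTsan10}, Proposition~4.1). Your explicit verification that the term $\wt J\big([\wt J E_\alpha,\wt J U_q]_{|\wt{\mk{p}}}\big)$ lands in $\wt{\mk{p}}^+$ and hence has no $\mk{g}(-\beta)$-component is exactly the point the paper leaves implicit.
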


\bpr
We  put in \eqref{Nijenhuis 2} $X=U_q$, $Y=E_\alpha$, apply  $\widetilde{J}$ on the equality  and  expand, using \eqref{def of a,b,xi,eta} and keeping explicit only  the coefficient before $E_{-\beta}$.  The computation is the same  as in \cite{DimTsan10}, Proposition 4.1.
 \epr

\begin{prop} \label{theorem union of the res from Nij new}  Let $\alpha,\beta,\gamma \in \Delta_{\mk{p}}^+,\ \widetilde{J}E_\gamma\in\mk{h}$. Then
\begin{gather} \label{NijEq3}
a_{\beta,\alpha}(\beta + \alpha)(\widetilde{J}(E_{-\gamma})) = pr_{\mk{g}(-\beta)}([E_\alpha,E_{-\gamma}]).
 \end{gather}
\end{prop}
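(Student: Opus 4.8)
The plan is to mimic the proof of Proposition \ref{theorem union of the res from Nij} but with a different substitution into the Nijenhuis equation \eqref{Nijenhuis 2}. First I would set $X = E_{-\gamma}$ and $Y = E_\alpha$ in \eqref{Nijenhuis 2}, where $\gamma \in \Delta_{\mk{p}}^+$ is such that $\widetilde J(E_\gamma) \in \mk{h}$. Here we must be careful: $E_{-\gamma}$ lies in $\mk{n}_{\mk{p}}^-$, hence in $\wt{\mk{p}}$, so the substitution is legitimate, and $\widetilde J(E_{-\gamma})$ is obtained by applying $\tau$ to $\widetilde J(E_\gamma)\in\mk h$ (using $\widetilde J\circ\tau=\tau\circ\widetilde J$), so $\widetilde J(E_{-\gamma})\in\mk h$ as well. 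Thus two of the four brackets simplify: the term $[\widetilde J(E_{-\gamma}),\widetilde J(E_\alpha)]$ and the term $\widetilde J([E_{-\gamma},\widetilde J(E_\alpha)])$ involve a bracket with an element of $\mk h$, which just rescales root vectors.

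Next I would expand each of the four terms using \eqref{def of a,b,xi,eta} and keep only the coefficient of $E_{-\beta}$, exactly as in the referenced computation from \cite{DimTsan10}, Proposition 4.1. Writing $H_\gamma' := \widetilde J(E_{-\gamma}) \in \mk h$, the term $[H_\gamma', \widetilde J(E_\alpha)]$ produces, from the $E_{-\beta}$-component $a_{\beta,\alpha}E_{-\beta}$ of $\widetilde J(E_\alpha)$, the contribution $-a_{\beta,\alpha}\,\beta(H_\gamma')\,E_{-\beta}$; the term $\widetilde J([E_{-\gamma},E_\alpha])$ when $[E_{-\gamma},E_\alpha]\in\mk g(\alpha-\gamma)$ contributes via $\widetilde J$ of that root vector, but one checks — using $\widetilde J E_\gamma \in \mk h$ together with the structural relations, much as in Lemmas \ref{help lemma for H_gamma not in frak{k}} and \ref{lemma about J(E_{gamma}) in  frak{h} then B_{gamma}} — that its $E_{-\beta}$-component is governed by $a$ and the constant $N_{-\gamma,\alpha}$; and the remaining terms $[X,Y]$ and $\widetilde J([\widetilde J(E_{-\gamma}),E_\alpha])$ contribute the $pr_{\mk g(-\beta)}([E_\alpha,E_{-\gamma}])$ piece and an $a_{\beta,\alpha}\,\alpha(H_\gamma')$ piece respectively. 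Collecting the coefficient of $E_{-\beta}$ and rearranging yields precisely
\begin{gather*}
a_{\beta,\alpha}(\beta+\alpha)(\widetilde J(E_{-\gamma})) = pr_{\mk g(-\beta)}([E_\alpha,E_{-\gamma}]).
\end{gather*}

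The bookkeeping is routine once the simplifications from $\widetilde J(E_{\pm\gamma}) \in \mk h$ are in place, so I would not reproduce it in full, referring instead to the parallel computation in \cite{DimTsan10}; I would only point out the two places where the hypothesis $\widetilde J E_\gamma \in \mk h$ is used, namely to kill the $V_t$-type terms coming from $\widetilde J(E_{-\gamma})$ and to identify $\widetilde J([E_{-\gamma},E_\alpha])$ with a scalar multiple of $\widetilde J(E_{\alpha-\gamma})$ whose $\mk h$-part drops out. The main obstacle I anticipate is handling the degenerate sub-cases cleanly: when $\alpha - \gamma \notin \Delta$ (so $[E_{-\gamma},E_\alpha]=0$) the right-hand side and one of the left-hand contributions both vanish and the identity is trivial; when $\alpha = \gamma$ one has $[E_\alpha,E_{-\gamma}] = H_\gamma \in \mk h$ and $pr_{\mk g(-\beta)}$ of it is $0$, forcing $a_{\beta,\gamma}(\beta+\gamma)(\widetilde J(E_{-\gamma})) = 0$, which is consistent; and when $\beta = \alpha$ or $\beta = \gamma$ the coefficient extraction interacts with the diagonal terms, so one must verify these do not spoil the identification of the $E_{-\beta}$-coefficient. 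Ensuring that the extended conventions of Remark \ref{remark about extention of a} (setting $a_{\cdot,\cdot}=0$ outside $\Delta_{\mk p}^+$) make all these cases fall under the single displayed formula is the part that requires genuine care.
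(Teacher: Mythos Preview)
Your approach is essentially the same as the paper's: substitute $X=E_{-\gamma}$, $Y=E_\alpha$ into \eqref{Nijenhuis 2}, expand via \eqref{def of a,b,xi,eta}, and extract the $\mk{n}^-$ (i.e.\ $E_{-\beta}$) component, deferring the routine bookkeeping to the parallel computation in \cite{DimTsan10}. Two minor slips to fix: the relevant reference in \cite{DimTsan10} is Proposition 4.9, not 4.1, and in your term-by-term description you list ``$\widetilde J([E_{-\gamma},E_\alpha])$'', which is not one of the four Nijenhuis terms --- the actual terms are $[\widetilde J X,\widetilde J Y]_{\wt{\mk p}}$, $[X,Y]_{\wt{\mk p}}$, $\widetilde J([X,\widetilde J Y]_{\wt{\mk p}})$, and $\widetilde J([\widetilde J X,Y]_{\wt{\mk p}})$.
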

\bpr We  put in \eqref{Nijenhuis 2} $X=E_{-\gamma}$, $Y=E_\alpha$  and  expand, using \eqref{def of a,b,xi,eta} and keeping explicit only   terms with component in $\mk{n}^-$.  The computation is the same  as in \cite{DimTsan10}, Proposition 4.9.
\epr

\begin{prop} \label{theorem union of the res from Nij 2} Let $\alpha,\beta,\gamma \in \Delta_{\mk{p}}^+,\ \widetilde{J}E_\gamma\in\mk{h}$.  Let $\forall \zeta \in \Delta_{\mk{p}}^+ $  $a_{\gamma \zeta} =a_{\zeta \gamma}= 0$  and $ \beta + \alpha  \neq \gamma $, then
\be
\label{NijEq4} \sum_{\zeta \in \Phi_{\gamma}^{\mk{p}} } a_{\zeta \beta} N_{\gamma ,-\zeta }  a_{\alpha, \gamma- \zeta}=0.
\ee

\end{prop}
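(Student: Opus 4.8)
The plan is to evaluate the Nijenhuis identity \eqref{Nijenhuis 2} for the given Cartan variation $(\wt{\mk p},\wt{\mk p}^+,\wt J,\wt\xi,\wt\eta,\wt{\mathbf b})$ on the pair $X=E_\gamma$, $Y=E_\beta$ (both lie in $\wt{\mk p}$, since $\gamma,\beta\in\Delta_{\mk p}^+$) and to read off the component along $E_{-\alpha}$. Two of the four terms collapse at once because $\wt J(E_\gamma)\in\mk h$: writing $\wt J(E_\beta)=\sum_t\wt\xi_{t\beta}V_t+\sum_{\zeta\in\Delta_{\mk p}^+}a_{\zeta\beta}E_{-\zeta}$ as in \eqref{def of a,b,xi,eta}, one gets $[\wt J(E_\gamma),\wt J(E_\beta)]=-\sum_{\zeta}\zeta(\wt J(E_\gamma))\,a_{\zeta\beta}E_{-\zeta}$, contributing $-\alpha(\wt J(E_\gamma))\,a_{\alpha\beta}$ to the $E_{-\alpha}$-coefficient, while $\wt J\bigl([\wt J(E_\gamma),E_\beta]_{\vert\wt{\mk p}}\bigr)=\beta(\wt J(E_\gamma))\,\wt J(E_\beta)$ contributes $\beta(\wt J(E_\gamma))\,a_{\alpha\beta}$. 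The term $[E_\gamma,E_\beta]_{\vert\wt{\mk p}}$ has zero $E_{-\alpha}$-component because $\gamma+\beta$ is a sum of positive roots. So the entire content lies in the remaining term $\wt J\bigl([E_\gamma,\wt J(E_\beta)]_{\vert\wt{\mk p}}\bigr)$.

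For that term I would expand $[E_\gamma,\wt J(E_\beta)]=c\,E_\gamma+a_{\gamma\beta}H_\gamma+\sum_{\zeta\ne\gamma,\ \gamma-\zeta\in\Delta}a_{\zeta\beta}N_{\gamma,-\zeta}E_{\gamma-\zeta}$ for some scalar $c$. The hypothesis $a_{\gamma\beta}=0$ deletes the Cartan summand, and the $E_\gamma$-summand is killed by applying $\wt J$ since $\wt J(E_\gamma)\in\mk h$. Among the root summands, those with $\gamma-\zeta\in\Delta_{\mk k}$ are annihilated by the projection $(\cdot)_{\vert\wt{\mk p}}$ (recall $\mk g=\mk k\oplus\wt{\mk p}$); those with $\gamma-\zeta\in-\Delta_{\mk p}^+$, i.e. $E_{\gamma-\zeta}=E_{-\mu}$ with $\mu\in\Delta_{\mk p}^+$, survive the projection, but by $\tau$-equivariance $\wt J(E_{-\mu})=-\tau(\wt J(E_\mu))$ is a combination of the $U_t$ and of $E_\nu$ with $\nu\in\Delta_{\mk p}^+$, hence has no $E_{-\alpha}$-component; and those with $\gamma-\zeta\in\Delta_{\mk p}^+$ are exactly the $\zeta\in\Phi_{\gamma}^{\mk p}$, for which $\wt J(E_{\gamma-\zeta})$ contributes $a_{\alpha,\gamma-\zeta}$ along $E_{-\alpha}$. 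Thus the $E_{-\alpha}$-coefficient of $\wt J\bigl([E_\gamma,\wt J(E_\beta)]_{\vert\wt{\mk p}}\bigr)$ is $\sum_{\zeta\in\Phi_{\gamma}^{\mk p}}a_{\zeta\beta}N_{\gamma,-\zeta}a_{\alpha,\gamma-\zeta}$, and collecting the four contributions \eqref{Nijenhuis 2} becomes
\[
-(\alpha+\beta)(\wt J(E_\gamma))\,a_{\alpha\beta}=\sum_{\zeta\in\Phi_{\gamma}^{\mk p}}a_{\zeta\beta}N_{\gamma,-\zeta}a_{\alpha,\gamma-\zeta}.
\]

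It remains to see that the left-hand side vanishes, and here I would invoke Proposition \ref{theorem union of the res from Nij new} with the roles of $\alpha$ and $\beta$ interchanged (legitimate since $\wt J(E_\gamma)\in\mk h$): this yields $a_{\alpha\beta}(\alpha+\beta)(\wt J(E_{-\gamma}))=pr_{\mk{g}(-\alpha)}([E_\beta,E_{-\gamma}])$, whose right-hand side is $0$ because $\beta-\gamma\ne-\alpha$, i.e. $\alpha+\beta\ne\gamma$. Since $\wt J(E_\gamma)\in\mk h$ we have $\wt J(E_{-\gamma})=-\tau(\wt J(E_\gamma))$, hence $(\alpha+\beta)(\wt J(E_{-\gamma}))=\overline{(\alpha+\beta)(\wt J(E_\gamma))}$, so $a_{\alpha\beta}\,\overline{(\alpha+\beta)(\wt J(E_\gamma))}=0$; this forces $a_{\alpha\beta}(\alpha+\beta)(\wt J(E_\gamma))=0$, and \eqref{NijEq4} follows.

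I expect the main obstacle to be the bookkeeping in the middle step: one must track precisely which root vectors $E_{\gamma-\zeta}$ occurring in $[E_\gamma,\wt J(E_\beta)]$ survive the projection onto $\wt{\mk p}$ and, among those, which can possibly feed an $E_{-\alpha}$-term after $\wt J$ is applied. This rests on the regularity of $\mk k$ and $\mk p^\pm$ (Lemmas \ref{lemma for mk{k}}, \ref{lemma wcb8}), the explicit form \eqref{def of a,b,xi,eta} of $\wt J$, and the $\tau$-equivariance of $\wt J$; the computation is otherwise entirely parallel to \cite{DimTsan10}, Propositions 4.1 and 4.9.
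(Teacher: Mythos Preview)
Your proof is correct and follows exactly the paper's approach: substitute $X=E_\gamma$, $Y=E_\beta$ into \eqref{Nijenhuis 2}, extract the $E_{-\alpha}$-coefficient to obtain $a_{\alpha\beta}(\alpha+\beta)(\wt J E_\gamma)+\sum_{\zeta\in\Phi_\gamma^{\mk p}}N_{\gamma,-\zeta}a_{\zeta\beta}a_{\alpha,\gamma-\zeta}=0$, and then kill the first summand via Proposition~\ref{theorem union of the res from Nij new} together with $(\alpha+\beta)(\wt J E_\gamma)=\ol{(\alpha+\beta)(\wt J E_{-\gamma})}$. The only difference is that the paper outsources the bookkeeping to \cite{DimTsan10}, Lemma~4.11, whereas you carry it out explicitly.
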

\bpr We just put in \eqref{Nijenhuis 2} $X=E_\gamma$, $Y=E_\beta$ and use \eqref{def of a,b,xi,eta}.  Keeping explicit only terms with component in $\mk{n}^-$,  by  the same computation as in  \cite{DimTsan10}, Lemma 4.11  we obtain\footnote{ now in the conditions of the proposition  we have  $\forall \zeta \in \Delta_{\mk{p}}^+ $  $a_{\gamma, \zeta} =a_{\zeta, \gamma}= 0$, because in the computation we need $a_{\gamma, \beta}=0$ and we have not proved yet that the vanishing of $a_{\beta, \gamma }$ implies  $a_{\gamma, \beta}=0$}:
\begin{gather*}
a_{\alpha,\beta}(\alpha+\beta)(\widetilde J E_\gamma) + \sum_{\nu \in \Phi_{\gamma}^{\mk{p}} } N_{\gamma,-\nu} a_{\nu,\beta}a_{\alpha,\gamma - \nu}=0.
\end{gather*}
We recall that the notation $\Phi_{\gamma}^{\mk{p}}$  is in \eqref{prop of Delta+ def of B_{gamma}^{frak{m}}}.

When $\beta +\alpha \neq \gamma$, Proposition \ref{theorem union of the res from Nij new} gives\footnote{Because $(\alpha+\beta)(\widetilde J E_\gamma) = \ol{(\alpha+\beta)(\widetilde J E_{-\gamma}) }$.} $a_{\alpha,\beta}(\alpha+\beta)(\widetilde J E_\gamma) = 0$, whence the lemma.

 \epr

From these propositions we derive some useful corollaries.

\begin{coro} \label{lemma about J(E_{gamma}) in frak{h} and a_{alpha beta}} Let $(\wt{\mk{p}}, \wt{\mk{p}}^+, \wt{J}, \wt{\xi}, \wt{\eta}, \wt{{\bf b}})$  be any Cartan variation of $\mk{p}_u$.   Let $\gamma \in \Delta_{\mk{p}}^+$ be such that $\widetilde{J} (E_{\gamma}) \in \mk{h}$. Let $\alpha, \beta \in \Delta_{\mk{p}}^+$ be such that $\alpha + \beta = \gamma$. Then  $\gamma\left (\widetilde{J}(E_{\gamma})\right ) \neq 0$, $a_{\alpha \beta} \neq 0$, $a_{ \beta \alpha} \neq 0$ and
\be \label{lemma about J(E_{gamma}) in frak{h} and a_{alpha beta} a_{alpha beta} = ...} a_{\alpha \beta} = \frac{N_{\gamma, - \beta}}{\overline{\gamma\left (\widetilde{J}(E_{\gamma})\right )} } \qquad   a_{ \beta \alpha} = \frac{N_{\gamma, - \alpha}}{\overline{\gamma \left (\widetilde{J}(E_{\gamma})\right )} }.\ee \end{coro}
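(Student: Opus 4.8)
The plan is to read off everything from the two key equations \eqref{NijEq3} and \eqref{NijEq4}, applied with the hypothesis $\widetilde J(E_\gamma)\in\mk h$ and $\alpha+\beta=\gamma$. First I would invoke Proposition \ref{theorem union of the res from Nij new} in the special case $\alpha+\beta=\gamma$: taking the pair $(\alpha,\beta)$ in \eqref{NijEq3} (with the $\gamma$ there equal to our $\gamma$), the right-hand side is $pr_{\mk g(-\beta)}([E_\alpha,E_{-\gamma}]) = pr_{\mk g(-\beta)}(N_{\alpha,-\gamma}E_{\alpha-\gamma})$, and since $\alpha-\gamma = -\beta$ this is exactly $N_{\alpha,-\gamma}E_{-\beta}$, whose coefficient is $N_{\alpha,-\gamma}$; note $N_{\alpha,-\gamma}=-N_{\gamma,-\alpha}$ up to the usual sign conventions of the Weyl--Chevalley basis, so I should track signs carefully but the essential content is that $pr_{\mk g(-\beta)}([E_\alpha,E_{-\gamma}])$ is a nonzero multiple of $E_{-\beta}$ precisely proportional to $N_{\gamma,-\alpha}$. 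Similarly, swapping $\alpha\leftrightarrow\beta$ gives the analogous identity with $N_{\gamma,-\beta}$. The left-hand side of \eqref{NijEq3} in both cases is $a_{\beta,\alpha}(\beta+\alpha)(\widetilde J(E_{-\gamma})) = a_{\beta,\alpha}\,\gamma(\widetilde J(E_{-\gamma}))$ and $a_{\alpha,\beta}\,\gamma(\widetilde J(E_{-\gamma}))$ respectively.

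Next I would argue that $\gamma(\widetilde J(E_\gamma))\neq 0$. Suppose it vanishes; then by the conjugation relation $\widetilde J\circ\tau = \tau\circ\widetilde J$ and $\tau(\mk h)=\mk h$ we also get $\gamma(\widetilde J(E_{-\gamma}))=0$ (indeed $\overline{\gamma(\widetilde J E_\gamma)} = \gamma(\widetilde J E_{-\gamma})$ up to the standard sign, as already noted in the footnote to Proposition \ref{theorem union of the res from Nij 2}). Then the two instances of \eqref{NijEq3} force $N_{\gamma,-\alpha}=N_{\gamma,-\beta}=0$; but $\alpha,\beta,\gamma=\alpha+\beta\in\Delta_{\mk p}^+\subset\Delta^+$ means $\gamma-\alpha=\beta\in\Delta$ and $\gamma-\beta=\alpha\in\Delta$, so these structure constants are nonzero — contradiction. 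Hence $\gamma(\widetilde J(E_\gamma))\neq 0$, and therefore also $\gamma(\widetilde J(E_{-\gamma}))\neq 0$. Dividing the two instances of \eqref{NijEq3} by this nonzero scalar then yields $a_{\beta,\alpha}\neq 0$, $a_{\alpha,\beta}\neq 0$, together with the explicit formulas $a_{\alpha\beta} = N_{\gamma,-\beta}/\overline{\gamma(\widetilde J(E_\gamma))}$ and $a_{\beta\alpha} = N_{\gamma,-\alpha}/\overline{\gamma(\widetilde J(E_\gamma))}$, after rewriting $\gamma(\widetilde J(E_{-\gamma})) = \overline{\gamma(\widetilde J(E_\gamma))}$.

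I expect the main obstacle to be purely bookkeeping: getting the signs and the complex conjugates exactly right, i.e. confirming that $pr_{\mk g(-\beta)}([E_\alpha,E_{-\gamma}])$ equals $N_{\gamma,-\beta}E_{-\beta}$ with the correct sign under the normalization \eqref{wcb} (using $N_{\alpha,-\gamma} = N_{-\gamma,\alpha}$ antisymmetry and the Chevalley relations $N_{\alpha,\beta}=-N_{-\alpha,-\beta}$, plus $\gamma=\alpha+\beta$), and that the passage $\gamma(\widetilde J(E_{-\gamma})) = \overline{\gamma(\widetilde J(E_\gamma))}$ is justified by $\tau$-equivariance of $\widetilde J$ and the fact that $\gamma$ takes purely imaginary values on $\mk h_u$. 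None of this is deep; the conceptual input — the Nijenhuis identity repackaged as \eqref{NijEq3} — is already available as Proposition \ref{theorem union of the res from Nij new}, so the corollary is essentially a two-line specialization once the sign conventions are pinned down. I would also remark that the statement is phrased for an arbitrary Cartan variation, but since the matrix ${\bf a}$ is independent of $\mk q$ by Lemma \ref{the matrix a is fixed}, it suffices to run the argument once; only $\widetilde J(E_\gamma)$ and the scalar $\gamma(\widetilde J(E_\gamma))$ depend on the variation, and both enter the formulas as written.
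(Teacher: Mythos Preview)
Your proposal is correct and follows essentially the same route as the paper: both specialize equation \eqref{NijEq3} to the case $\alpha+\beta=\gamma$, use that $[E_\alpha,E_{-\gamma}]=N_{\alpha,-\gamma}E_{-\beta}$ with $N_{\alpha,-\gamma}\neq 0$, and then read off the formulas after identifying $\gamma(\widetilde J(E_{-\gamma}))=\overline{\gamma(\widetilde J(E_\gamma))}$. One small correction to your sign bookkeeping: under the normalization \eqref{wcb} one has $N_{\alpha,-\gamma}=N_{\gamma,-\alpha}$ (not $-N_{\gamma,-\alpha}$), via $N_{\mu,\nu}=-N_{-\mu,-\nu}$ and antisymmetry; with this the formulas come out exactly as stated without further adjustment.
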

\begin{proof} Since  $\alpha,\beta,\gamma \in \Delta_{\mk{p}}^+,\ \widetilde{J}E_\gamma\in\mk{h}$ and $\alpha+\beta = \gamma $ then by \eqref{NijEq3} in Proposition  \ref{theorem union of the res from Nij new} $$ a_{\beta \alpha} \gamma(\widetilde{J} E_{-\gamma}) = N_{\alpha,-\gamma }=N_{\gamma,-\alpha} \qquad a_{ \alpha \beta} \gamma(\widetilde{J} E_{-\gamma}) = N_{\beta,-\gamma }=N_{\gamma,-\beta}$$
and the corollary follows.
\end{proof}

\begin{coro} \label{lemma a_{alpha beta}=0 if alpha + beta not in delta}  Let $(\wt{\mk{p}}, \wt{\mk{p}}^+, \wt{J}, \wt{\xi}, \wt{\eta}, \wt{{\bf b}})$  be any Cartan variation of $\mk{p}_u$.   Let $\alpha, \beta \in  \Delta_{\mk{p}}^+$ and  $\alpha + \beta $ does not vanish on $\widetilde{\mk{p}} \cap \mk{h}$. Then
\begin{itemize}
    \item[(i)] $a_{\alpha \beta}=0 \iff a_{\beta \alpha} = 0$,
    \item[(ii)] If $\alpha + \beta \not \in \Delta_{\mk{p}}^+$, then $a_{\alpha \beta} = a_{\beta \alpha} = 0$.
\end{itemize}
 \end{coro}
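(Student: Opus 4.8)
The plan is to deduce both parts from the Nijenhuis-derived identities already proved, together with the defining relations among the matrices $\bf a, \wt{\bf b}, \wt\xi, \wt\eta$ in Lemma \ref{lemma about a,b,xi,eta}. First I would treat part (ii). Suppose $\alpha,\beta \in \Delta_{\mk p}^+$ with $\alpha+\beta \notin \Delta_{\mk p}^+$ and $\alpha+\beta$ not vanishing on $\wt{\mk p}\cap \mk h$. Fix a Cartan variation and apply Proposition \ref{theorem union of the res from Nij} with the roles of $\alpha,\beta$ in place (i.e. $Y = E_\alpha$, $X = U_q$, ranging $q = 1,\dots,m$). The right-hand side of \eqref{NijEq2} is the $\mk g(-\beta)$-component of $\sum_\nu \wt\eta_{\nu,q}[E_{-\nu},E_\alpha]$; the bracket $[E_{-\nu},E_\alpha]$ lands in $\mk g(\alpha-\nu)$, so it contributes to $\mk g(-\beta)$ only when $\alpha - \nu = -\beta$, i.e. $\nu = \alpha+\beta$. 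Since $\alpha+\beta \notin \Delta_{\mk p}^+$, Remark \ref{remark about extention of a} forces $\wt\eta_{\alpha+\beta,q} = 0$, so the right-hand side of \eqref{NijEq2} vanishes for every $q$. Hence $a_{\beta,\alpha}(\alpha+\beta)(U_q) = 0$ for all $q = 1,\dots,m$. Because $\{U_1,\dots,U_m\}$ is a basis of $\wt{\mk p}^+\cap \mk h$ and $\alpha+\beta$ does not vanish on $\wt{\mk p}\cap\mk h$ — and since $(\alpha+\beta)$ is real-valued on $\mk h_u$ while $\wt{\mk p}\cap\mk h = (\wt{\mk p}^+\cap\mk h)\oplus\tau(\wt{\mk p}^+\cap\mk h)$, so non-vanishing on $\wt{\mk p}\cap\mk h$ is equivalent to non-vanishing on the span of the $U_q$ — there is at least one index $q$ with $(\alpha+\beta)(U_q)\neq 0$. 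Therefore $a_{\beta,\alpha} = 0$. By symmetry (swap $\alpha\leftrightarrow\beta$, which is legitimate since the hypothesis is symmetric) we also get $a_{\alpha,\beta} = 0$.

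For part (i), I would argue that the implication "$a_{\alpha\beta}=0 \Rightarrow a_{\beta\alpha}=0$" must be handled in two cases. If $\alpha+\beta \notin \Delta_{\mk p}^+$, part (ii) already gives both simultaneously, so there is nothing to prove. If $\alpha+\beta \in \Delta_{\mk p}^+$, set $\gamma := \alpha+\beta \in \Delta_{\mk p}^+$. The key observation is to apply Corollary \ref{J(E_{alpha}) in frak{h}}: the condition "$\forall\zeta\in\Delta_{\mk p}^+\ a_{\zeta\gamma} = 0$" is equivalent to $\wt J(E_\gamma)\in\mk h$. I would like to show that if $a_{\alpha\beta} = 0$ then in fact $\wt J(E_\gamma) \notin \mk h$ is impossible — or more directly, I would use Proposition \ref{theorem union of the res from Nij} in the "reverse" direction. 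Concretely, suppose $a_{\alpha\beta}=0$ but $a_{\beta\alpha}\neq 0$. Then $\gamma(U_q)a_{\beta\alpha} = \mathrm{pr}_{\mk g(-\beta)}(\sum_\nu \wt\eta_{\nu q}[E_{-\nu},E_\alpha]) = \wt\eta_{\gamma q}N_{-\gamma,\alpha}$ by the same computation as above with $\nu=\gamma$ the only surviving term. Running Proposition \ref{theorem union of the res from Nij} instead with $Y = E_\beta$ gives $\gamma(U_q)a_{\alpha\beta} = \wt\eta_{\gamma q}N_{-\gamma,\beta}$, and since $a_{\alpha\beta}=0$ and $\gamma(U_q)\neq 0$ for some $q$ while $N_{-\gamma,\beta}\neq 0$ (as $\gamma-\beta=\alpha\in\Delta$), we get $\wt\eta_{\gamma q} = 0$ for that $q$. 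I would then show $\wt\eta_{\gamma q} = 0$ for \emph{all} $q$: for indices $q'$ with $\gamma(U_{q'}) = 0$ one needs a separate argument — here I would invoke that $\wt\eta_{\gamma\,\cdot}$ as a row is tied to $\wt J(E_\gamma)$'s $\mk h$-component being zero via Corollary \ref{J(E_{alpha}) in frak{h}} applied after noticing $a_{\gamma\zeta}$ can be related to $a_{\zeta\gamma}$ — and conclude via the orthogonality relations $\ol{\wt\eta}\xi - \ol{\bf a}{\bf a} = \rI$ that $a_{\beta\alpha}=0$, a contradiction. By symmetry the converse implication follows.

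The main obstacle I anticipate is the second case of part (i) — showing $a_{\alpha\beta}=0 \Rightarrow a_{\beta\alpha}=0$ when $\alpha+\beta = \gamma \in \Delta_{\mk p}^+$ — because there the vanishing is genuinely constrained by the full system of quadratic relations on $({\bf a},\wt{\bf b},\wt\xi,\wt\eta)$ rather than by a single linear identity, and one must rule out "asymmetric" solutions. The cleanest route is probably: first establish (using Proposition \ref{theorem union of the res from Nij new} and Corollary \ref{lemma about J(E_{gamma}) in frak{h} and a_{alpha beta}}) that $a_{\alpha\beta}=0$ and $\gamma\in\Delta_{\mk p}^+$ together force $\wt J(E_\gamma)\notin\mk h$ to fail in a way that, by Corollary \ref{J(E_{alpha}) in frak{h}}, makes $a_{\zeta\gamma}=0$ for all $\zeta$, hence $a_{\beta\alpha}=0$ as a special case; if instead $\wt J(E_\gamma)\in\mk h$ from the start, then $a_{\beta\alpha}=a_{\gamma\to\text{via }\beta}$ is governed by \eqref{NijEq3}, and one checks directly that $a_{\alpha\beta}=0$ and $\alpha+\beta=\gamma$ is inconsistent with $\wt J(E_\gamma)\in\mk h$ unless $a_{\beta\alpha}=0$ as well, using that $N_{\gamma,-\alpha}$ and $N_{\gamma,-\beta}$ are both nonzero and appear symmetrically. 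I would organize the writeup so that part (ii) is dispatched first and cleanly, then part (i) is reduced to the $\gamma\in\Delta_{\mk p}^+$ case and closed by the displayed formulas of Corollary \ref{lemma about J(E_{gamma}) in frak{h} and a_{alpha beta}}.
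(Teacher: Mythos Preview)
Your argument for part (ii) is correct and matches the paper's: compute the right-hand side of \eqref{NijEq2}, observe that only $\nu=\alpha+\beta$ can contribute, so the projection vanishes when $\alpha+\beta\notin\Delta_{\mk p}^+$, and conclude $a_{\beta\alpha}=0$ (then swap).

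For part (i), however, you overcomplicate. Your own computation already finishes the job: for the single index $q$ with $(\alpha+\beta)(U_q)\neq 0$, equation \eqref{NijEq2} applied with $Y=E_\alpha$ and with $Y=E_\beta$ gives
\[
a_{\beta\alpha}(\alpha+\beta)(U_q)=\wt\eta_{\gamma q}\,N_{-\gamma,\alpha},\qquad
a_{\alpha\beta}(\alpha+\beta)(U_q)=\wt\eta_{\gamma q}\,N_{-\gamma,\beta},
\]
where $\gamma=\alpha+\beta$. Both structure constants are nonzero, so $a_{\alpha\beta}=0$ forces $\wt\eta_{\gamma q}=0$ \emph{for that same $q$}, and then the first equation immediately yields $a_{\beta\alpha}=0$. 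You do not need $\wt\eta_{\gamma q}=0$ for all $q$, nor the quadratic relations from Lemma \ref{lemma about a,b,xi,eta}, nor Corollaries \ref{J(E_{alpha}) in frak{h}} or \ref{lemma about J(E_{gamma}) in frak{h} and a_{alpha beta}}. The paper's proof is precisely this one-line observation: the right-hand side of \eqref{NijEq2} is $\wt\eta_{\alpha+\beta,q}$ times a nonzero structure constant, hence the projection to $\mk g(-\beta)$ vanishes iff the projection to $\mk g(-\alpha)$ (with $\alpha,\beta$ swapped) does. Your detour through ``$\wt J(E_\gamma)\in\mk h$'' and the orthogonality relations is unnecessary and, as written, incomplete --- you never actually close the argument for indices $q'$ with $\gamma(U_{q'})=0$, which is precisely the step you flagged as an obstacle but which is in fact irrelevant.
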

\begin{proof} Now  $\{U_1,\dots, U_m, V_1, \dots, V_m\}$ is a basis of $\wt{\mk{p}}\cap \mk{h}$ and we have $\delta(V_i)=-\ol{\delta(U_i)}$ for any $\delta \in \Delta$. Since $\alpha + \beta$ does not vanish on $\widetilde{\mk{p}} \cap \mk{h}$ then $(\alpha + \beta) (U_q) \neq 0$ for some $q \in \{1,\dots,m\}$. Then the lemma follows from \eqref{NijEq2} and the formula
\ben pr_{\mk{g}(-\beta)}\sco{\sum_{\nu\in\Delta_{\mk{p}}^+} \widetilde{\eta}_{\nu,q}[E_{-\nu},E_\alpha]} = \left \{
\begin{array}{c c c} 0 & \mbox{if} & \alpha + \beta \not \in \Delta_{\mk{p}}^+ \\
\widetilde{\eta}_{\alpha+\beta,q} N_{-\alpha - \beta, \alpha} & \mbox{if}  & \alpha + \beta  \in \Delta_{\mk{p}}^+.
    \end{array}
  \right. \een
  In particular we see that $pr_{\mk{g}(-\beta)}\sco{\sum_{\nu\in\Delta_{\mk{p}}^+} \widetilde{\eta}_{\nu,q}[E_{-\nu},E_\alpha]} = 0 $ iff $pr_{\mk{g}(-\alpha)}\sco{\sum_{\nu\in\Delta_{\mk{p}}^+} \widetilde{\eta}_{\nu,q}[E_{-\nu},E_\beta]} = 0 $.
\end{proof}

\subsection{Results for J. Determining of the subsets \texorpdfstring{$\Delta_{\mk{k}}^+$}{\space}, \texorpdfstring{$\Delta_{\mk{p}}^+$}{\space}}

In  subsection \ref{The integrability condition} we obtained some
results for $\wt{J}$, related to any Cartan variation
$(\wt{\mk{p}}, \wt{\mk{p}}^+, \wt{J}, \wt{\xi}, \wt{\eta},
\wt{{\bf b}})$ of $\mk{p}_u$. In this subsection we resume to
analyse   the complex structures $I$ and $J$ defined in subsection
\ref{some notations}. In particular  we calculate the coefficients
$\{a_{\alpha \beta}\}_{\{\alpha, \beta \in \Delta_{\mk{p}}^+\}}$,
defined in subsection \ref{The integrability condition}, Remark
\ref{remark for a,b,xi,eta and a,wt{b},wt{xi},wt{eta}}.
Furthermore, we describe the structure of  $\Delta_{\mk{k}}^+$
 and $\Delta_{\mk{p}}^+$, using the stem $\Gamma$.

We start with a Lemma
\begin{lemma}\label{pdu4} Let $\gamma \in \Gamma_{\mk{p}}$ be a maximal root.
Then \begin{gather} \label{pdu4 1} \forall \zeta \in \Delta_{\mk{p}}^+  \qquad a_{\zeta \gamma} = a_{\gamma \zeta} = 0  \\
W_{\gamma} \in \mk{p}_u, \qquad \Phi_{\gamma}^+ \subset \Delta_{\mk{p}}^+ \\
\label{pdu4 forall} \forall \alpha \in \Phi_{\gamma}^+ \ \ \forall \beta \in \Delta_{\mk{p}}^+ \qquad (a_{\alpha \beta } \neq 0 \ \mbox{or} \ a_{\beta \alpha } \neq 0)\iff \alpha + \beta = \gamma. \end{gather}
\end{lemma}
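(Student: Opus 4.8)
The plan is to exploit the maximality of $\gamma$ together with the results of Subsection \ref{The integrability condition}. First I would recall that since $\gamma$ is a maximal root, for every $\alpha \in \Delta_{\mk{p}}^+$ we have $\alpha + \gamma \notin \Delta$, and moreover $\alpha(H_\gamma)\geq 0$ for all $\alpha \in \Delta_{\mk{p}}^+$. Lemma \ref{pdu3} gives $H_\gamma \notin \mk{k}$, so there is an $H \in \mk{h}\cap\mk{k}$ with $\gamma(H)\neq 0$; applying $J$ to $[H,E_\gamma]$ via \eqref{afi1} shows $\gamma(J(E_\gamma))\neq 0$, hence $a_{\zeta\gamma}=0$ for all $\zeta$ would need a different route. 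Actually the cleaner path: from Proposition \ref{theorem union of the res from Nij} (equation \eqref{NijEq2}) applied with $\beta=\gamma$ and the fact that $\gamma+\zeta\notin\Delta$ for $\zeta\in\Delta_{\mk{p}}^+$ (maximality), the right side $pr_{\mk{g}(-\gamma)}\bigl(\sum_\nu\widetilde\eta_{\nu,q}[E_{-\nu},E_\zeta]\bigr)$ vanishes unless $\zeta+(-\gamma)$ hits $-\gamma$ correctly — I would instead argue that $\widetilde J(E_\gamma)$ cannot be made to lie in $\mk{h}$ for the relevant variation, and combine \eqref{NijEq2} with the vanishing of all brackets $[E_{-\nu},E_\gamma]$ that could contribute to $\mk{g}(-\zeta)$. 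The key input is that a maximal root $\gamma$ cannot be written as $\gamma = \mu - \nu$ with $\mu,\nu\in\Delta^+$ unless $\nu\in\Phi_\gamma^+$ and $\mu=\gamma$… — here I would use property \eqref{pdep 2} of the stem: $\alpha-\gamma\in\Delta^-$ and $\alpha+\gamma\notin\Delta$ for $\alpha\in\Phi_\gamma^+\cup\{\gamma\}$.

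For \eqref{pdu4 1}: I would show $\widetilde J(E_\gamma)\in\mk{h}$ is impossible by Lemma \ref{help lemma for H_gamma not in frak{k}} combined with Lemma \ref{pdu3} — wait, Lemma \ref{pdu3} only says $H_\gamma\notin\mk{k}$. Let me instead argue directly on $a_{\zeta\gamma}$: suppose $a_{\zeta\gamma}\neq 0$ for some $\zeta\in\Delta_{\mk{p}}^+$. If $\zeta+\gamma$ vanishes on $\mk{h}\cap\mk{k}$, then by Lemma \ref{mkm9} applied to… — the honest approach is: pick a Cartan variation with $\mk{q}$ chosen so that $\gamma$ does not vanish on $\wt{\mk{p}}\cap\mk{h}$ (possible since $H_\gamma\notin\mk{h}_{\mk{k}}$, using Lemma \ref{pdu3}, so $\gamma|_{\mk{h}_{\mk{p}_u}}\neq 0$ and we can keep that in $\mk{q}$). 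Then by Corollary \ref{lemma a_{alpha beta}=0 if alpha + beta not in delta}(ii), since $\zeta+\gamma\notin\Delta$ (maximality of $\gamma$), we get $a_{\zeta\gamma}=a_{\gamma\zeta}=0$ — but one must check the hypothesis that $\zeta+\gamma$ does not vanish on $\wt{\mk{p}}\cap\mk{h}$; for this I would observe $(\zeta+\gamma)(H_\gamma) = \zeta(H_\gamma)+2 > 0$ since $\zeta(H_\gamma)\geq 0$ by maximality, and $\ri H_\gamma \in \wt{\mk{p}}\cap\mk{h}$ once $\gamma|_{\mk{h}_{\mk{k}}}=0$, which holds by Lemma \ref{pdu3} and Lemma \ref{lemma if... then alpha(X)=0}. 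That settles \eqref{pdu4 1}.

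Given $a_{\zeta\gamma}=a_{\gamma\zeta}=0$ for all $\zeta\in\Delta_{\mk{p}}^+$, Corollary \ref{J(E_{alpha}) in frak{h}} gives $J(E_\gamma)\in\mk{h}$ (for the standard variation). Then Lemma \ref{lemma about J(E_{gamma}) in  frak{h} then H_gamma} gives $W_\gamma\in\mk{p}_u$, i.e. $H_\gamma\in\mk{p}$, and Lemma \ref{lemma about J(E_{gamma}) in  frak{h} then B_{gamma}} gives $\Phi_\gamma^+\subset\Delta_{\mk{p}}^+$, yielding the middle line. For the final equivalence \eqref{pdu4 forall}: fix $\alpha\in\Phi_\gamma^+$ and $\beta\in\Delta_{\mk{p}}^+$. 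The implication $\Leftarrow$ is Corollary \ref{lemma about J(E_{gamma}) in frak{h} and a_{alpha beta}} (which applies since $\alpha+\beta=\gamma$, $\alpha,\beta\in\Delta_{\mk{p}}^+$, and $J(E_\gamma)\in\mk{h}$ with $\gamma(J(E_\gamma))\neq 0$ — the nonvanishing following from $J^2=-\mathrm{Id}$ as in Lemma \ref{pdu3}'s proof). For $\Rightarrow$: assume $\alpha+\beta\neq\gamma$. If $\alpha+\beta\notin\Delta_{\mk{p}}^+$ then Corollary \ref{lemma a_{alpha beta}=0 if alpha + beta not in delta}(ii) gives the vanishing, once we check $\alpha+\beta$ does not vanish on $\mk{h}\cap\mk{p}$ — I'd verify $(\alpha+\beta)(H_\gamma)>0$ using $\alpha(H_\gamma)>0$ (property \eqref{pdep 2}, since $\alpha\in\Phi_\gamma^+$) and $\beta(H_\gamma)\geq 0$ (maximality). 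If instead $\alpha+\beta\in\Delta_{\mk{p}}^+$ but $\neq\gamma$, I apply Proposition \ref{theorem union of the res from Nij 2} with its $\gamma$ being our $\gamma$: the hypotheses $a_{\gamma\zeta}=a_{\zeta\gamma}=0$ hold by \eqref{pdu4 1}, $J(E_\gamma)\in\mk{h}$, and $\beta+\alpha\neq\gamma$, giving $\sum_{\zeta\in\Phi_\gamma^{\mk{p}}} a_{\zeta\beta}N_{\gamma,-\zeta}a_{\alpha,\gamma-\zeta}=0$. But $\gamma-\alpha\in\Delta^-$ is not in $\Delta_{\mk{p}}^+$, so by the extension convention $a_{\alpha,\gamma-\zeta}=0$ unless $\gamma-\zeta\in\Delta_{\mk{p}}^+$; and for $\alpha\in\Phi_\gamma^+$, the only $\zeta\in\Phi_\gamma^+$ with $\gamma-\zeta\in\Phi_\gamma^+\cup\{\gamma\}$ and $a_{\alpha,\gamma-\zeta}\neq 0$ forces (by $\Leftarrow$ applied to the pair $\alpha,\gamma-\zeta$) $\alpha+(\gamma-\zeta)=\gamma$, i.e. $\zeta=\alpha$; so the sum reduces to the single term $a_{\alpha\beta}N_{\gamma,-\alpha}a_{\alpha,\gamma-\alpha}$, and since $N_{\gamma,-\alpha}\neq 0$ and $a_{\alpha,\gamma-\alpha}\neq 0$ by Corollary \ref{lemma about J(E_{gamma}) in frak{h} and a_{alpha beta}}, we conclude $a_{\alpha\beta}=0$, hence also $a_{\beta\alpha}=0$ by Corollary \ref{lemma a_{alpha beta}=0 if alpha + beta not in delta}(i).

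The main obstacle I anticipate is the careful bookkeeping needed to ensure, at each invocation of Corollary \ref{lemma a_{alpha beta}=0 if alpha + beta not in delta} or Proposition \ref{theorem union of the res from Nij 2}, that the relevant sum of roots does not vanish on the appropriate $\mk{h}$-type subspace — this is exactly where maximality of $\gamma$ (giving $\zeta(H_\gamma)\geq 0$) and property \eqref{pdep 2} (giving $\alpha(H_\gamma)>0$ for $\alpha\in\Phi_\gamma^+$) must be combined with Lemma \ref{pdu3} to place $\ri H_\gamma$ inside $\mk{h}\cap\mk{p}$ (equivalently $\gamma|_{\mk{h}_{\mk{k}}}=0$). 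Getting the reduction of the sum in Proposition \ref{theorem union of the res from Nij 2} to a single nonzero term, via the already-proven direction of \eqref{pdu4 forall}, is the other delicate point and should be done last.
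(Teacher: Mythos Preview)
Your approach is essentially the paper's, but two points need correction.

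First, your justification for placing $\ri H_\gamma$ in $\wt{\mk{p}}\cap\mk{h}$ is muddled: you invoke ``$\gamma|_{\mk{h}_{\mk{k}}}=0$, which holds by Lemma~\ref{pdu3} and Lemma~\ref{lemma if... then alpha(X)=0}'', but Lemma~\ref{lemma if... then alpha(X)=0} concerns $\alpha\in\Delta_{\mk{k}}$, not $\gamma\in\Delta_{\mk{p}}^+$, and at this stage $\gamma|_{\mk{h}_{\mk{k}}}=0$ is neither known nor needed. The paper's argument is simpler and direct: since $H_\gamma\notin\mk{k}$ (Lemma~\ref{pdu3}) we have $W_\gamma\notin\mk{h}_{\mk{k}_u}$, so one may choose the complement $\mk{q}$ to contain $W_\gamma$; then $H_\gamma\in\wt{\mk{p}}\cap\mk{h}$ automatically, and $(\zeta+\gamma)(H_\gamma)>0$ gives the nonvanishing hypothesis of Corollary~\ref{lemma a_{alpha beta}=0 if alpha + beta not in delta}.

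Second, in reducing the sum from Proposition~\ref{theorem union of the res from Nij 2} to a single term you need $a_{\alpha,\gamma-\zeta}=0$ for $\zeta\neq\alpha$, and you cite ``$\Leftarrow$'' for this. That is the wrong direction: $\Leftarrow$ says $\alpha+(\gamma-\zeta)=\gamma\Rightarrow a_{\alpha,\gamma-\zeta}\neq 0$, whereas you need the converse. The fix is to observe that both $\alpha$ and $\gamma-\zeta$ lie in $\Phi_\gamma^+$, so by \eqref{pdep 0} their sum is in $\Delta$ only if it equals $\gamma$; hence for $\zeta\neq\alpha$ one has $\alpha+(\gamma-\zeta)\notin\Delta_{\mk{p}}^+$, and Corollary~\ref{lemma a_{alpha beta}=0 if alpha + beta not in delta}(ii) (applicable since $(\alpha+(\gamma-\zeta))(H_\gamma)>0$) gives the vanishing. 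This is why the paper first establishes the full equivalence \eqref{pdu4 forall} restricted to pairs in $\Phi_\gamma^+\cup\{\gamma\}$ (formula~\eqref{a_{zeta delta } neq 0 iff zeta + delta = gamma 1}) before invoking \eqref{NijEq4} for general $\beta$; your case split by whether $\alpha+\beta\in\Delta_{\mk{p}}^+$ obscures this logical dependence.
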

\begin{proof} We have $\alpha(H_{\gamma}) >0$ for $\alpha \in \Phi_{\gamma}^+$ (see Proposition \ref{prop of Delta+}, \eqref{pdep 2}) and  $\gamma(H_{\gamma})=2$.  Since $\gamma$ is a maximal root then  $\alpha(H_{\gamma})=0$ for $\alpha \in \Delta^+ \setminus ( \Phi_{\gamma}^+ \cup \{\gamma\})$ (Proposition \ref{prop of Delta+}, \eqref{pdep 1} and \eqref{pdep 6}). Hence we see that
 \be  \label{pdu4 eq1}  \forall \alpha \in \Delta_{\mk{p}}^+ \qquad (\gamma + \alpha)(H_{\gamma}) \neq 0. \ee

 Now, since $\gamma \in \Gamma_{\mk{p}}$ is a maximal root,  Lemma  \ref{pdu3} implies that $H_{\gamma} \not \in \mk{k}$.
 We recall that $\mk{h}_u = \mk{h}\cap \mk{u}$ and $\mk{h}_{\mk{k}_u} = \mk{h}_u \cap \mk{k}_u$  (see \eqref{csj2}).   Since $H_{\gamma} \not \in \mk{k}$ then $W_\gamma \not \in \mk{h}_{\mk{k}_u}$, therefore we can choose  $\mk{q}$
such that    \be \label{ri H_{gamma} in  mk{h}_{widetilde{mk{m}_0}}} \mk{h}_u = \mk{q}
\oplus \mk{h}_{\mk{k}_u} \qquad W_{\gamma} \in  \mk{q}. \ee

From this $\mk{q}$ we obtain  a Cartan variation of $\mk{p}_u$ $(\wt{\mk{p}}, \wt{\mk{p}}^+, \wt{J}, \wt{\xi}, \wt{\eta}, \wt{{\bf b}})$ as in Definition \ref{def Cartan variation of mk{m}_0}.
Now \eqref{pdu4 eq1} and \eqref{ri H_{gamma} in  mk{h}_{widetilde{mk{m}_0}}}  show that for any $\zeta \in \Delta_{\mk{p}}^+$ the functional $(\zeta + \gamma)$ does not vanish on $\mk{h} \cap \widetilde{\mk{p}}$, hence Lemma \ref{lemma a_{alpha beta}=0 if alpha + beta not in delta}, (ii) is applicable. Therefore, $\gamma$ being  a maximal root, we proved \eqref{pdu4 1}. Therefore by Corollary \ref{J(E_{alpha}) in frak{h}} $J(E_{\gamma}) \in \mk{h}$ and Lemma \ref{lemma about J(E_{gamma}) in  frak{h} then H_gamma} implies that $W_{\gamma} \in \mk{p}_u$. On the other hand Lemma \ref{lemma about J(E_{gamma}) in  frak{h} then B_{gamma}} implies that $\Phi_{\gamma}^+ \subset \Delta_{\mk{p}}^+$.

Since  $\Phi_{\gamma}^+ \subset \Delta_{\mk{p}}^+$, then\footnote{for the notation $\Phi_{\gamma}^{\mk{p}}$ see \eqref{prop of Delta+ def of B_{gamma}^{frak{m}}}} $\quad \Phi_{\gamma}^{\mk{p}} = \Phi_{\gamma}^+$.
To prove \eqref{pdu4 forall}, we   first show that
\be \label{a_{zeta delta } neq 0 iff zeta + delta = gamma 1} \forall \zeta, \delta \in \Phi_{\gamma}^+ \cup \{\gamma \}  \ \ \ \ \ a_{\zeta \delta } \neq 0 \iff \zeta + \delta = \gamma.\ee
Indeed, for any $\zeta , \delta \in \Phi_{\gamma}^+ \cup \{\gamma \}$ we have $\zeta(H_{\gamma}) >0,  \delta(H_{\gamma})>0$ and then Corollary \ref{lemma a_{alpha beta}=0 if alpha + beta not in delta}, (ii), Corollary \ref{lemma about J(E_{gamma}) in frak{h} and a_{alpha beta}} in conjunction with the well known property (see Proposition \ref{prop of Delta+}, \eqref{pdep 0}):
\ben \label{a_{zeta delta } neq 0 iff zeta + delta = gamma 0} \forall \zeta, \delta \in \Phi_{\gamma}^+ \cup \{\gamma\}  \ \ \ \ \ \zeta + \delta \in \Delta^+ \iff \zeta + \delta = \gamma  \een imply formula \eqref{a_{zeta delta } neq 0 iff zeta + delta = gamma 1}.

Now we  take any $\alpha \in \Phi_{\gamma}^+$ and $\beta \in \Delta_{\mk{p}}^+$. Since we have proved \eqref{a_{zeta delta } neq 0 iff zeta + delta = gamma 1}, we may think that $\beta \not \in \Phi_{\gamma}^+ \cup \{ \gamma \}$.
 Now we apply Theorem \ref{theorem union of the res from Nij 2}, \eqref{NijEq4}, as we take into account \eqref{pdu4 1} and $\alpha + \beta \neq \gamma$, since $\beta \not \in \Phi_{\gamma}^+$, and obtain
\ben
 \sum_{\zeta \in \Phi_{\gamma}^+ } a_{\zeta \beta} N_{\gamma ,-\zeta }  a_{\alpha, \gamma- \zeta}=0.
\een
From \eqref{a_{zeta delta } neq 0 iff zeta + delta = gamma 1} it follows that for $\zeta \in \Phi_{\gamma}^+$ $a_{\alpha, \gamma- \zeta} \neq 0$ if and only if $\zeta = \alpha$. Therefore $a_{\alpha \beta} = 0$.

To prove $a_{\beta \alpha } = 0$ we  apply  Corollary \ref{lemma a_{alpha beta}=0 if alpha + beta not in delta} (i) (we may apply it since $\alpha(H_{\gamma}) > 0$, $\beta(H_{\gamma})\geq 0$).  The lemma is completely proved.
\end{proof}

In order that we prove the main theorem of this subsection: Theorem \eqref{th for J(E_gamma) in frak{h}}, we need of some preliminary discussion on $\Delta_{\mk{p}}^+$ and $\Gamma_{\mk{p}}$.
\begin{lemma} \label{if Gamma_{mk{m}} is nonempty} The subset $\Delta_{\mk{p}}^+$ is nonempty iff $\Gamma_{\mk{p}}$ (for the notation $\Gamma_{\mk{p}}$ see  \eqref{def of Gamma_{frak{m}},...}) contains a maximal root. Besides, for any $\gamma \in \Gamma$ and $\alpha \in \Phi_{\gamma}^+$ the inclusion $\alpha \in \Delta_{\mk{p}}^+$ implies $\gamma \in \Gamma_{\mk{p}}$, i. e.
\be \label{dec of Gamma_{mk{m}}}  \Delta_{\mk{p}}^+ =  \bigcup_{\gamma \in \Gamma_{\mk{p}}} (\Phi_{\gamma}^+ \cup \{ \gamma \} ) \cap \Delta_{\mk{p}}^+.  \ee
\end{lemma}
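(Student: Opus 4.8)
The statement has two halves. First I want to show $\Delta_{\mk{p}}^+\neq\emptyset$ iff $\Gamma_{\mk{p}}$ contains a maximal root; second, the decomposition \eqref{dec of Gamma_{mk{m}}}, which is really the claim that $\alpha\in\Phi_\gamma^+$ and $\alpha\in\Delta_{\mk{p}}^+$ force $\gamma\in\Gamma_{\mk{p}}$ — equivalently, $\gamma\in\Gamma_{\mk{k}}$ would force $\Phi_\gamma^+\subset\Delta_{\mk{k}}^+$. The plan is to derive the second (harder-sounding but actually already available) half first and then use it to pin down the first.

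For the implication ``$\gamma\in\Gamma_{\mk{k}}\Rightarrow\Phi_\gamma^+\cup\{\gamma\}\subset\Delta_{\mk{k}}^+$'': this is essentially Lemma \ref{pdu2} applied with $\widetilde\gamma=\gamma$ (take $\widetilde\gamma\succeq\gamma$ to be $\gamma$ itself). Since $\Gamma_{\mk{k}}$ is a substem (Corollary \ref{coro for the property of Gamma_{frak{k}}}), and since $\Delta^+=\Delta_{\mk{k}}^+\cup\Delta_{\mk{p}}^+$ is a disjoint union, $\alpha\in\Phi_\gamma^+$ with $\gamma\in\Gamma_{\mk{k}}$ gives $\alpha\in\Delta_{\mk{k}}^+$, hence $\alpha\notin\Delta_{\mk{p}}^+$. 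Contrapositively, $\alpha\in\Phi_\gamma^+\cap\Delta_{\mk{p}}^+$ forces $\gamma\notin\Gamma_{\mk{k}}$, i.e. $\gamma\in\Gamma_{\mk{p}}$. Combined with the partition $\Delta_{\mk{p}}^+\subset\Delta^+=\bigcup_{\gamma\in\Gamma}(\Phi_\gamma^+\cup\{\gamma\})$ from \eqref{pdu1}, every element of $\Delta_{\mk{p}}^+$ lies in some $\Phi_\gamma^+\cup\{\gamma\}$ with $\gamma\in\Gamma_{\mk{p}}$ (if the element is $\gamma$ itself then $\gamma\in\Gamma\cap\Delta_{\mk{p}}^+=\Gamma_{\mk{p}}$ directly; if it lies in $\Phi_\gamma^+$ we just argued $\gamma\in\Gamma_{\mk{p}}$). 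This yields \eqref{dec of Gamma_{mk{m}}}.

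For the first half: if $\Gamma_{\mk{p}}$ contains a maximal root $\gamma$, then by Lemma \ref{pdu4} we have $\Phi_\gamma^+\subset\Delta_{\mk{p}}^+$ and $W_\gamma\in\mk{p}_u$; in particular $\gamma\in\Delta_{\mk{p}}^+$, so $\Delta_{\mk{p}}^+\neq\emptyset$. Conversely, suppose $\Delta_{\mk{p}}^+\neq\emptyset$; pick $\alpha\in\Delta_{\mk{p}}^+$. By \eqref{dec of Gamma_{mk{m}}} (just proved), $\alpha\in\Phi_\gamma^+\cup\{\gamma\}$ for some $\gamma\in\Gamma_{\mk{p}}$. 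By property \eqref{pdep 3} of the stem there is a maximal root $\widetilde\gamma\in\Gamma$ with $\gamma\succeq\widetilde\gamma$. Now apply \eqref{property of Gamma_mk{k}} in its contrapositive form to the substem $\Gamma_{\mk{k}}$: if $\widetilde\gamma\in\Gamma_{\mk{k}}$ then, since $\gamma\succeq\widetilde\gamma$, we would get $\gamma\in\Gamma_{\mk{k}}$, contradicting $\gamma\in\Gamma_{\mk{p}}$. Hence $\widetilde\gamma\in\Gamma\setminus\Gamma_{\mk{k}}=\Gamma_{\mk{p}}$, and $\widetilde\gamma$ is a maximal root in $\Gamma_{\mk{p}}$.

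**Where the work sits.** Both halves are bookkeeping on the stem once Lemma \ref{pdu2}, Lemma \ref{pdu4}, and the substem property \eqref{property of Gamma_mk{k}} are in hand; there is no new analytic input. The only mildly delicate point is making sure the ``disjoint union'' structure of $\Delta^+=\bigcup_{\gamma\in\Gamma}(\Phi_\gamma^+\cup\{\gamma\})$ is used correctly so that an $\alpha\in\Delta_{\mk{p}}^+$ is attributed to a unique $\gamma\in\Gamma$, and then that this $\gamma$ lands in $\Gamma_{\mk{p}}$ rather than $\Gamma_{\mk{k}}$ — which is exactly the contrapositive of Lemma \ref{pdu2}. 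I expect no genuine obstacle; the main care is in quoting \eqref{pdep 3} and \eqref{property of Gamma_mk{k}} in the right directions.
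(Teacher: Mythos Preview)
Your proof is correct and follows essentially the same route as the paper's: both use the contrapositive of Lemma \ref{pdu2} together with the partition \eqref{pdu1} to obtain \eqref{dec of Gamma_{mk{m}}}, and both use \eqref{pdep 3} plus the substem property of $\Gamma_{\mk{k}}$ for the ``iff'' (the paper argues the contrapositive, you argue directly, but the content is identical). One small remark: your appeal to Lemma \ref{pdu4} for the forward implication is unnecessary --- the inclusion $\Gamma_{\mk{p}}=\Gamma\cap\Delta_{\mk{p}}^+\subset\Delta_{\mk{p}}^+$ already gives $\Delta_{\mk{p}}^+\neq\emptyset$ whenever $\Gamma_{\mk{p}}\neq\emptyset$.
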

\bpr Let us recall that we have disjoint union $\Delta^+ = \Delta_{\mk{p}}^+ \cup \Delta_{\mk{k}}^+$ (see \eqref{wcb7}).

Let $\gamma \in \Gamma$, $\alpha \in \Phi_{\gamma}^+$ and $\alpha \in \Delta_{\mk{p}}^+$. By   Lemma \ref{pdu2} $\gamma \in \Delta^+\setminus \Delta_{\mk{k}}^+ =\Delta_{\mk{p}}^+$, hence $\gamma \in \Gamma_{\mk{p}}$ (recall that $\Gamma_{\mk{p}}= \Gamma \cap \Delta_{\mk{p}}^+$).

Thus we see that for any $\gamma \in \Gamma$ and $\alpha \in \Phi_{\gamma}^+$ the inclusion $\alpha \in \Delta_{\mk{p}}^+$ implies $\gamma \in \Gamma_{\mk{p}}$.

Now \eqref{dec of Gamma_{mk{m}}} follows from the partition \eqref{pdu1}.

If  $\Gamma_{\mk{p}}$ does not contain a maximal root, then all the maximal roots are in $\Gamma_{\mk{k}} = \Gamma \setminus \Gamma_{\mk{p}}$. On the other hand from \eqref{pdep 3} in Proposition \ref{prop of Delta+}  we know that  for any root $\gamma \in \Gamma$ there exists a maximal root $\widetilde{\gamma}\in \Gamma $, such that $\gamma \succeq \widetilde{\gamma} $.  All the maximal roots being in $\Gamma_{\mk{k}}$, we see that  for any root $\gamma \in \Gamma$ there exists a maximal root $\widetilde{\gamma}\in \Gamma_{\mk{k}} $, such that $\gamma \succeq \widetilde{\gamma} $. Hence
 Lemma \ref{pdu2} implies that $ \Delta^+ = \Delta_{\mk{k}}^+ $, therefore $\Delta_{\mk{p}}^+ = \emptyset $.
\epr
This lemma shows that if $\Gamma_{\mk{p}}$ is non empty, then the elements of $\Gamma_{\mk{p}}$ are a subsequence of  $\gamma_1, \dots, \gamma_d$, whose first element is a maximal root (we recall that the maximal roots are in the beginning of $\gamma_1, \dots, \gamma_d$ - see Corollary \ref{gss1}). Let this subsequence be $\gamma_{i_1}, \dots, \gamma_{i_{d_{\mk{p}}}}$  with length $d_{\mk{p}}$    (here $1 \leq i_1 < i_2 < \dots < i_{d_{\mk{p}}} \leq d$). For the sake of brevity, we shall denote
$$
\mu_{1}= \gamma_{i_1}, \ \  \mu_2= \gamma_{i_2}, \ \ \dots, \ \ \mu_{d_{\mk{p}}}= \gamma_{i_{d_{\mk{p}}}}.
$$
With these notations we represent $\Gamma_{\mk{p}}$, if it is nonempty, as a sequence $\Gamma_{\mk{p}}=$ $\{\mu_1, \mu_2, \dots, \mu_{d_{\mk{p}}}\}$, such that $\mu_1$ is a maximal root and if $\mu_i, \mu_j \in \Gamma_\mk{p}$, then
\be \label{phf1}  \mu_i \prec \mu_j \Rightarrow i<j.\ee

\begin{theorem} \label{th for J(E_gamma) in frak{h}}   Let $\gamma \in \Gamma_{\mk{p}}$, then $J(E_{\gamma}) \in \mk{h}$ and $W_{\gamma} \in \mk{p}_u$. Furthermore
\begin{gather} \Delta_{\mk{p}}^+ = \bigcup_{\gamma \in \Gamma_{\mk{p}}} \Phi_{\gamma}^+ \cup \{\gamma\} \\
 \label{th for J(E_gamma) in frak{h} matrix a}  \forall \alpha \in \Delta_{\mk{p}}^+ \forall \beta \in \Delta_{\mk{p}}^+ \qquad a_{\alpha \beta } \neq 0 \iff \alpha + \beta \in  \Gamma_{\mk{p}}.  \end{gather}
\end{theorem}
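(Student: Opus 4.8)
The plan is to prove all three assertions simultaneously by induction along the ordered sequence $\Gamma_{\mk{p}}=\{\mu_1,\dots,\mu_{d_{\mk{p}}}\}$ described just before the theorem, where $\mu_1$ is a maximal root and $\mu_i\prec\mu_j$ forces $i<j$. For each $k$ let $P(k)$ be the conjunction: $a_{\zeta\mu_k}=a_{\mu_k\zeta}=0$ for every $\zeta\in\Delta_{\mk{p}}^+$ (equivalently, by Corollary \ref{J(E_{alpha}) in frak{h}}, $J(E_{\mu_k})\in\mk{h}$); $W_{\mu_k}\in\mk{p}_u$; $\Phi_{\mu_k}^+\subset\Delta_{\mk{p}}^+$; and for all $\alpha\in\Phi_{\mu_k}^+\cup\{\mu_k\}$ and all $\beta\in\Delta_{\mk{p}}^+$, $(a_{\alpha\beta}\neq0\ \mbox{or}\ a_{\beta\alpha}\neq0)\iff\alpha+\beta=\mu_k$. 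The base case $P(1)$ is exactly Lemma \ref{pdu4} together with Corollary \ref{J(E_{alpha}) in frak{h}}. Once every $P(k)$ holds, the equality $\Delta_{\mk{p}}^+=\bigcup_{\gamma\in\Gamma_{\mk{p}}}\Phi_\gamma^+\cup\{\gamma\}$ follows from $\Phi_{\mu_k}^+\subset\Delta_{\mk{p}}^+$ and the partition \eqref{dec of Gamma_{mk{m}}} of Lemma \ref{if Gamma_{mk{m}} is nonempty}, and \eqref{th for J(E_gamma) in frak{h} matrix a} follows by collecting the last clauses of the $P(k)$'s: each $\alpha\in\Delta_{\mk{p}}^+$ lies in a unique $\Phi_{\mu_j}^+\cup\{\mu_j\}$ by \eqref{dec of Gamma_{mk{m}}} and the disjointness in \eqref{pdu1}, the direction $a_{\alpha\beta}\neq0\Rightarrow\alpha+\beta\in\Gamma_{\mk{p}}$ is the last clause of $P(j)$ for the $j$ with $\alpha\in\Phi_{\mu_j}^+\cup\{\mu_j\}$, and the converse comes from Corollary \ref{lemma about J(E_{gamma}) in frak{h} and a_{alpha beta}} applied with $\gamma=\alpha+\beta$ (which lies in $\Gamma_{\mk{p}}$, so $J(E_\gamma)\in\mk{h}$, while $\alpha,\beta\in\Phi_\gamma^+$).

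For the inductive step I assume $P(1),\dots,P(k-1)$ with $k\ge2$ (so $\mu_k$ need not be a maximal root) and mimic the proof of Lemma \ref{pdu4}, supplying the missing case distinctions from the inductive hypothesis. Since $\Gamma_{\mk{k}}$ is a substem, hence upward closed for $\prec$ (Corollary \ref{coro for the property of Gamma_{frak{k}}}), $\Gamma_{\mk{p}}$ is downward closed, so any $\delta\in\Gamma$ with $\delta\prec\mu_k$ is some $\mu_j$ with $j<k$. Each $\zeta\in\Delta_{\mk{p}}^+$ lies in a unique $\Phi_\delta^+\cup\{\delta\}$ with $\delta\in\Gamma_{\mk{p}}$, and by Proposition \ref{prop of Delta+} I split into three cases: (i) $\delta=\mu_j$, $j<k$ — then $\zeta+\mu_k\neq\mu_j$ (as $\mu_k\notin\Phi_{\mu_j}^+$ by \eqref{pdu1}), so the last clause of $P(j)$ immediately gives $a_{\zeta\mu_k}=a_{\mu_k\zeta}=0$; (ii) $\delta=\mu_k$ — then $\zeta(H_{\mu_k})>0$ and $\zeta+\mu_k\notin\Delta$ by \eqref{pdep 2}; (iii) $\delta=\mu_l$, $l\ge k$, $\mu_l\neq\mu_k$ — then $\mu_l$ is incomparable with $\mu_k$ (whence $\zeta\pm\mu_k\notin\Delta$ by \eqref{pdep 6}) or $\mu_l\succ\mu_k$ (whence $\zeta+\mu_k\notin\Delta$ by \eqref{pdep 00} and $\zeta(H_{\mu_k})=0$ by \eqref{pdep 1}). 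In particular every $\zeta$ with $\zeta(H_{\mu_k})<0$ falls under (i).

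With this in hand I would first reprove $H_{\mu_k}\notin\mk{k}$ as in Lemma \ref{pdu3}: if $H_{\mu_k}\in\mk{k}$, write $J(E_{\mu_k})=H+\sum_\zeta a_{\zeta\mu_k}E_{-\zeta}$ with $H\in\mk{h}\cap\mk{p}^-$; Lemma \ref{help lemma for H_gamma not in frak{k}} kills $H$ and the terms with $\zeta(H_{\mu_k})\ge0$, case (i) kills the rest, so $J(E_{\mu_k})=0$, contradicting $J^2=-Id$. Then I choose $\mk{q}$ with $\mk{h}_u=\mk{q}\oplus\mk{h}_{\mk{k}_u}$ and $W_{\mu_k}\in\mk{q}$, forming a Cartan variation; for $\zeta$ in cases (ii)–(iii) one has $(\zeta+\mu_k)(W_{\mu_k})\neq0$ (since $\zeta(H_{\mu_k})\ge0$) and $\zeta+\mu_k\notin\Delta$, so Corollary \ref{lemma a_{alpha beta}=0 if alpha + beta not in delta}(ii) gives $a_{\zeta\mu_k}=a_{\mu_k\zeta}=0$, while case (i) is already settled. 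Hence $J(E_{\mu_k})\in\mk{h}$, and then $W_{\mu_k}\in\mk{p}_u$ and $\Phi_{\mu_k}^+\subset\Delta_{\mk{p}}^+$ follow from Lemmas \ref{lemma about J(E_{gamma}) in frak{h} then H_gamma} and \ref{lemma about J(E_{gamma}) in frak{h} then B_{gamma}}. The last clause of $P(k)$ is then obtained exactly like \eqref{pdu4 forall}: for $\zeta,\eta\in\Phi_{\mu_k}^+\cup\{\mu_k\}$ from \eqref{pdep 0} combined with Corollaries \ref{lemma a_{alpha beta}=0 if alpha + beta not in delta}(ii) and \ref{lemma about J(E_{gamma}) in frak{h} and a_{alpha beta}}; and for $\alpha\in\Phi_{\mu_k}^+$ with $\beta\in\Delta_{\mk{p}}^+$ outside $\Phi_{\mu_k}^+\cup\{\mu_k\}$, either $\beta$ sits in an earlier chunk $\Phi_{\mu_j}^+\cup\{\mu_j\}$ ($j<k$, in which case $P(j)$ gives $a_{\alpha\beta}=a_{\beta\alpha}=0$), or one feeds $\gamma=\mu_k$ into \eqref{NijEq4} of Proposition \ref{theorem union of the res from Nij 2} — legitimate since $\widetilde{J}E_{\mu_k}\in\mk{h}$, $a_{\mu_k\zeta}=a_{\zeta\mu_k}=0$ for all $\zeta$, and $\alpha+\beta\neq\mu_k$ — to get $a_{\alpha\beta}=0$, then $a_{\beta\alpha}=0$ from Corollary \ref{lemma a_{alpha beta}=0 if alpha + beta not in delta}(i). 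The genuinely delicate part, I expect, is case (iii): one must make sure that Proposition \ref{prop of Delta+} — specifically \eqref{pdep 00} and \eqref{pdep 6} — really forbids $\zeta+\mu_k\in\Delta$ for every $\zeta\in\Delta_{\mk{p}}^+$ not lying strictly below $\mu_k$, since this is exactly what lets the induction close with no further input.
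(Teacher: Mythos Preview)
Your proposal is correct and follows essentially the same inductive scheme as the paper: the induction hypothesis, the reduction of $H_{\mu_k}\notin\mk{k}$ via Lemma~\ref{help lemma for H_gamma not in frak{k}}, the Cartan variation containing $W_{\mu_k}$, and the use of \eqref{NijEq4} to close the last clause are all exactly the paper's steps. One small citation slip: in your case~(iii) when $\mu_l\succ\mu_k$, it is \eqref{pdep 1} (with $\gamma=\mu_l$, $\delta=\mu_k$) that gives both $\zeta\pm\mu_k\notin\Delta$ and $\zeta(H_{\mu_k})=0$; \eqref{pdep 00} only says that \emph{if} $\zeta+\mu_k\in\Delta$ then it lands in $\Phi_{\mu_k}^+$, which is not what you need. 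This does not affect the argument, and your closing worry about case~(iii) is unfounded --- Proposition~\ref{prop of Delta+} really does cover it.
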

\begin{proof}
We represented $\Gamma_{\mk{p}}$ as a sequence $\mu_1, \mu_2, \dots, \mu_{d_{\mk{p}}}$, where $\mu_1$ is a maximal root.

We shall prove the theorem by induction with respect to the integer $1 \leq i \leq d_{\mk{p}}$. Let us assume that for some $1 \leq i \leq d_{\mk{p}}-1$ the following  holds
\begin{gather}  \forall j\in \{1,\dots,i\} \qquad \qquad \left ( H_{\mu_j} \in \mk{p} \ \ \mbox{and} \ \ \Phi_{\mu_j}^{\mk{p}}=\Phi_{\mu_j}^+ \subset \Delta_{\mk{p}}^+ \ \ \mbox{and} \right. \nonumber \\[-2mm] \label{pdu5} \\[-2mm]
\left . \left ( \forall \alpha \in \Phi_{\mu_j}^+\cup \{ \mu_j \} \ \forall \beta \in \Delta_{\mk{p}}^+ \qquad (a_{\alpha \beta } \neq 0 \ \mbox{or} \ a_{\beta \alpha } \neq 0)\iff \alpha + \beta = \mu_j \right ) \right ). \nonumber
\end{gather}
If $i=1$, then \eqref{pdu5} holds by Lemma \ref{pdu4} (since  $\mu_1$ is a maximal root).

Now we consider the case  ${i+1}$.

First, we shall prove that $H_{\mu_{i+1}} \not \in \mk{k}$. Indeed, let us assume that $H_{\mu_{i+1}} \in \mk{k}$. Then,  since (see \eqref{phf1} and Proposition \ref{prop of Delta+})
 \be \label{phf2} \forall \alpha \in \bigcup_{j=i+1}^{d_{\mk{p}}} (\Phi_{\mu_j}^+ \cup \{\mu_j\}) \cap \Delta_{\mk{p}}^+  \quad \qquad \alpha(H_{\mu_{i+1}}) \geq 0,
  \ee
 by Lemma \ref{help lemma for H_gamma not in frak{k}}  we should obtain (recall the equality \eqref{dec of Gamma_{mk{m}}} in  Lemma \ref{if Gamma_{mk{m}} is nonempty} also) $$ J(E_{\mu_{i+1}}) = \sum_{\left \{ \alpha \in  \Phi_{\mu_j}^+ \cup \{\mu_j\}; 1 \leq j \leq i \right  \}  } a_{\alpha, \mu_{i+1}} E_{-\alpha}.$$
On the other hand the induction assumption \eqref{pdu5} shows that for $\alpha \in  \Phi_{\mu_j}^+ \cup \{\mu_j\}$,  $0\leq j \leq i$  we have $ a_{\alpha, \mu_{i+1}}=0$ and we see that  $H_{\mu_{i+1}} \in \mk{k}$ implies $J(E_{\mu_{i+1}})=0$, which contradicts $J^2 = - Id$.

 Therefore $H_{\mu_{i+1}} \not \in \mk{k}$ and we can choose  $\mk{q}$
such that
$$
\mk{h}_u = \mk{q}
\oplus \mk{h}_{\mk{k}_u} \qquad W_{\mu_{i+1}} \in  \mk{q}.
$$
From this $\mk{h}_{\wt{\mk{p}}_\mk{u} }$ we obtain  a Cartan variation of $\mk{p}_u$ $(\wt{\mk{p}}, \wt{\mk{p}}^+, \wt{J}, \wt{\xi}, \wt{\eta}, \wt{{\bf b}})$ as in Definition \ref{def Cartan variation of mk{m}_0}.

By \eqref{phf2} we see that for  $\alpha \in  (\Phi_{\mu_j}^+ \cup \{\mu_j\})\cap \Delta_{\mk{p}}^+$,  $i+1\leq j \leq d_{\mk{p}}$ the functional    $(\alpha + \mu_{i+1})$
 does not vanish on $\mk{h} \cap \widetilde{\mk{p}}$, besides for such type of $\alpha$ we have   $\alpha+\mu_{i+1} \not \in \Delta$, hence Corollary \ref{lemma a_{alpha beta}=0 if alpha + beta not in delta}, (ii) is applicable to $\alpha$ and $\mu_{i+1}$.  Therefore
 \be  \label{th for J(E_gamma) in frak{h} a_{alpha gamma}= a_{gamma alpha}=0}  \forall \alpha \in \bigcup_{j=i+1}^{d_{\mk{p}}} (\Phi_{\mu_j}^+ \cup \{\mu_j\}) \cap \Delta_{\mk{p}}^+ \qquad \quad a_{\alpha, \mu_{i+1}} = a_{\mu_{i+1}, \alpha}=0. \ee    The last relation together with the induction assumption \eqref{pdu5} and  equality \eqref{dec of Gamma_{mk{m}}} in  Lemma \ref{if Gamma_{mk{m}} is nonempty}   imply
 \be \label{th for J(E_gamma) in frak{h} a_{alpha gamma} = a_{gamma alpha}=0} \forall \alpha \in \Delta_{\mk{p}}^+ \qquad  a_{\alpha ,\mu_{i+1}} = a_{\mu_{i+1}, \alpha} = 0. \ee
  Therefore $J(E_{\mu_{i+1}}) \in \mk{h}$ and Lemmas \ref{lemma about J(E_{gamma}) in  frak{h} then H_gamma} and \ref{lemma about J(E_{gamma}) in  frak{h} then B_{gamma}} imply
  \be \label{th for J(E_gamma) in frak{h} H_{gamma} in frak{m}}  W_{\mu_{i+1}} \in \mk{p}_u \qquad \Phi_{\mu_{i+1}}^{\mk{p}} =\Phi_{\mu_{i+1}}^+ \subset \Delta_{\mk{p}}^+. \ee
Now since $H_{\mu_{i+1}} \in \mk{p}$ then  for  any pair $\zeta , \delta \in \Phi_{\mu_{i+1}}^+ \cup \{ \mu_{i+1} \}$ the functional $ \zeta + \delta$ does not vanish on $\mk{p} \cap \mk{h}$, besides we know that for such a pair the relation $\zeta + \delta \in \Delta$ holds iff  $\zeta + \delta = \mu_{i+1}$.   Therefore Corollary \ref{lemma a_{alpha beta}=0 if alpha + beta not in delta}, (ii) and  Corollary \ref{lemma about J(E_{gamma}) in frak{h} and a_{alpha beta}}  imply \be \label{a_{zeta delta } neq 0 iff zeta + delta = gamma in theorem} \forall \zeta, \delta \in \Phi_{\mu_{i+1}}^+ \cup \{\mu_{i+1}\}  \ \ \ \ \ a_{\zeta \delta } \neq 0 \iff \zeta + \delta = \mu_{i+1}.\ee
In order that we complete the induction, it remains to prove that for any $\alpha \in \Phi_{\mu_{i+1}}^+$ and $\beta \in \Delta_{\mk{p}}^+ \setminus (\Phi_{\mu_{i+1}}^+ \cup \{ \mu_{i+1} \} )$ the equalities $a_{\alpha \beta} = a_{\beta \alpha} = 0$ are valid. To that end we employ  Theorem \ref{theorem union of the res from Nij 2}, \eqref{NijEq4}, as we take into account \eqref{th for J(E_gamma) in frak{h} a_{alpha gamma} = a_{gamma alpha}=0} and $\alpha + \beta \neq \mu_{i+1}$, since $\beta \not \in \Phi_{\mu_{i+1}}^+$, and obtain
\be
 \sum_{\zeta \in \Phi_{\mu_{i+1}}^+ } a_{\zeta \beta} N_{\mu_{i+1} ,-\zeta }  a_{\alpha, \mu_{i+1}- \zeta}=0.
\ee
By \eqref{a_{zeta delta } neq 0 iff zeta + delta = gamma in theorem} it follows that, as $\zeta$ varies through $\Phi_{\mu_{i+1}}^+$,  $a_{\alpha, \mu_{i+1} - \zeta } \neq 0$ iff $\zeta = \alpha$ and therefore the equation above is reduced to $a_{\alpha \beta} = 0$. To prove that $a_{\beta \alpha} = 0$ we can use     Corollary \ref{lemma a_{alpha beta}=0 if alpha + beta not in delta}, (i), provided that $\alpha + \beta $ does not vanish on $\mk{p} \cap \mk{h}$. The nonvanishing of $\alpha + \beta $ for $\beta \in \Phi_{\mu_j}^+ \cup \{\mu_j\}$,  $j\geq i+1$ follows from $\alpha(H_{\mu_{i+1}}) >0$ and $\beta(H_{\mu_{i+1}}) \geq 0$ (here we use \eqref{th for J(E_gamma) in frak{h} H_{gamma} in frak{m}}).  The nonvanishing of $\alpha + \beta $ for $\beta \in \Phi_{\mu_j}^+ \cup \{\mu_j\}$,  $j\leq i$ follows from $\alpha(H_{\mu_j}) \geq 0$ and $\beta(H_{\mu_j}) > 0$ (here we use that by the induction assumption \eqref{pdu5} $H_{\mu_j} \in \mk{p}$).

Thus far, we proved by induction that for any $\gamma \in \Gamma_{\mk{p}}$, we have  $J(E_{\gamma}) \in \mk{h}$, $H_{\gamma} \in \mk{p}$ and
\begin{gather*} \Delta_{\mk{p}}^+ = \bigcup_{\gamma \in \Gamma_{\mk{p}}} \Phi_{\gamma}^+ \cup \{\gamma\} \\
\forall \alpha \in \Delta_{\mk{p}}^+ \forall \beta \in
\Delta_{\mk{p}}^+ \qquad (a_{\alpha \beta } \neq 0 \ \mbox{or} \
a_{\beta \alpha } \neq 0)\iff \alpha + \beta \in  \Gamma_{\mk{p}}.
\end{gather*} Now Corollary \ref{lemma about J(E_{gamma}) in
frak{h} and a_{alpha beta}} shows that the theorem follows.
\end{proof}

\begin{coro}  \label{coro abot mk{g} and mk{k}}      $\mk{k}$ is a $\Gamma_{\mk{k}}$-stemmed subalgebra of $\mk{g}$.

 \end{coro}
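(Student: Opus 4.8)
The plan is to check that $\mk{k}$ literally has the shape \eqref{pdp1}, since then, combined with Corollary \ref{coro for the property of Gamma_{frak{k}}} (which already asserts that $\Gamma_{\mk{k}}$ is a substem of $(\Gamma,\prec)$), this is precisely the definition of a $\Gamma_{\mk{k}}$-stemmed subalgebra. The two structural inputs at hand are: $\mk{k}$ is regular, $\mk{k} = (\mk{k}\cap\mk{h})\oplus\bigoplus_{\alpha\in\Delta_{\mk{k}}}\mk{g}(\alpha)$ with $-\Delta_{\mk{k}}=\Delta_{\mk{k}}$ (Lemma \ref{lemma for mk{k}}); and the description of $\Delta_{\mk{p}}^+$, namely $\Delta_{\mk{p}}^+=\bigcup_{\gamma\in\Gamma_{\mk{p}}}(\Phi_\gamma^+\cup\{\gamma\})$ together with $W_\gamma\in\mk{p}_u$ for every $\gamma\in\Gamma_{\mk{p}}$ (Theorem \ref{th for J(E_gamma) in frak{h}}).

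First I would pin down the roots appearing in $\mk{k}$. Since $\Delta^+=\Delta_{\mk{k}}^+\sqcup\Delta_{\mk{p}}^+$ and $\Gamma_{\mk{k}}=\Gamma\setminus\Gamma_{\mk{p}}$, subtracting the description of $\Delta_{\mk{p}}^+$ from the partition \eqref{pdu1} of $\Delta^+$ gives the disjoint union $\Delta_{\mk{k}}^+=\bigcup_{\gamma\in\Gamma_{\mk{k}}}(\Phi_\gamma^+\cup\{\gamma\})$, and hence $\bigoplus_{\alpha\in\Delta_{\mk{k}}}\mk{g}(\alpha)=\bigoplus_{\gamma\in\Gamma_{\mk{k}}}\bigl(\CC E_\gamma\oplus\CC E_{-\gamma}\oplus\mc{V}_\gamma\bigr)$.

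The remaining, and only non-bookkeeping, point is to show $\mk{k}\cap\mk{h}=(\mk{o}\cap\mk{k})\oplus span_\CC\{H_\gamma\vert\gamma\in\Gamma_{\mk{k}}\}$. The inclusion $\supseteq$ is clear since $H_\gamma=[E_\gamma,E_{-\gamma}]\in\mk{k}$ for $\gamma\in\Gamma_{\mk{k}}$. For $\subseteq$, take $H\in\mk{k}\cap\mk{h}$ and decompose $H=H_{\mk{o}}+H_{\mk{w}}$ along $\mk{h}=\mk{o}\oplus\mk{w}$. For $\delta\in\Gamma_{\mk{p}}$, Theorem \ref{th for J(E_gamma) in frak{h}} gives $W_\delta\in\mk{p}_u$, so $H_\delta\in\mk{p}$, which is $\scal{,}$-orthogonal to $\mk{k}$; also $\scal{H_{\mk{o}},H_\delta}=0$ because $H_\delta\in\mk{w}\perp\mk{o}$; therefore $\scal{H_{\mk{w}},H_\delta}=\scal{H,H_\delta}=0$ for all $\delta\in\Gamma_{\mk{p}}$. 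Now the substem property forbids $\delta\succ\gamma$ for $\delta\in\Gamma_{\mk{p}}$, $\gamma\in\Gamma_{\mk{k}}$, so $\gamma\pm\delta\notin\Delta$ by \eqref{pdep 1}, \eqref{pdep 6}, whence $\scal{H_\gamma,H_\delta}=0$; thus $span_\CC\{H_\gamma\}_{\gamma\in\Gamma_{\mk{k}}}$ and $span_\CC\{H_\delta\}_{\delta\in\Gamma_{\mk{p}}}$ are $\scal{,}$-orthogonal, their sum is all of $\mk{w}$, and $\scal{,}$ restricted to $span_\CC\{H_\delta\}_{\delta\in\Gamma_{\mk{p}}}$ is nondegenerate (it is the complexification of the negative-definite restriction of $\scal{,}$ to $span_\RR\{W_\delta\}_{\delta\in\Gamma_{\mk{p}}}\subset\mk{h}_u$). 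A one-line linear-algebra argument then identifies the orthogonal complement of $span_\CC\{H_\delta\}_{\delta\in\Gamma_{\mk{p}}}$ inside $\mk{w}$ with $span_\CC\{H_\gamma\}_{\gamma\in\Gamma_{\mk{k}}}$, so $H_{\mk{w}}\in span_\CC\{H_\gamma\}_{\gamma\in\Gamma_{\mk{k}}}\subseteq\mk{k}$; consequently $H_{\mk{o}}=H-H_{\mk{w}}\in\mk{k}$, i.e. $H_{\mk{o}}\in\mk{o}\cap\mk{k}$, which proves the claim.

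Assembling, and using $sl_\gamma(2)=\CC H_\gamma\oplus\CC E_\gamma\oplus\CC E_{-\gamma}$, I would conclude
\[ \mk{k}=(\mk{o}\cap\mk{k})\oplus span_\CC\{H_\gamma\}_{\gamma\in\Gamma_{\mk{k}}}\oplus\bigoplus_{\alpha\in\Delta_{\mk{k}}}\mk{g}(\alpha)=(\mk{o}\cap\mk{k})\oplus\bigoplus_{\gamma\in\Gamma_{\mk{k}}}\bigl(sl_\gamma(2)\oplus\mc{V}_\gamma\bigr), \]
which is exactly the form \eqref{pdp1}; together with $\Gamma_{\mk{k}}$ being a substem of $(\Gamma,\prec)$ this says that $\mk{k}$ is a $\Gamma_{\mk{k}}$-stemmed subalgebra of $\mk{g}$. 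The only genuinely new work is the splitting of $\mk{k}\cap\mk{h}$ above; everything else merely reorganizes Lemma \ref{lemma for mk{k}} and Theorem \ref{th for J(E_gamma) in frak{h}}, so I do not expect a real obstacle — the mild subtlety is making sure that the orthogonality/nondegeneracy bookkeeping for the toral part is carried out with the correct (complex-bilinear) form.
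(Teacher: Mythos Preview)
Your proposal is correct and follows the same route as the paper: combine Corollary \ref{coro for the property of Gamma_{frak{k}}}, the description $\Delta_{\mk{p}}^+=\bigcup_{\gamma\in\Gamma_{\mk{p}}}(\Phi_\gamma^+\cup\{\gamma\})$ from Theorem \ref{th for J(E_gamma) in frak{h}}, and the regular decomposition \eqref{frak{k} is regular}. The paper's proof is in fact terser than yours --- it simply records $\Delta_{\mk{k}}^+=\bigcup_{\gamma\in\Gamma_{\mk{k}}}(\Phi_\gamma^+\cup\{\gamma\})$ and then asserts that \eqref{frak{k} is regular} gives the form \eqref{pdp1} --- whereas you have taken the extra care to verify explicitly that $\mk{k}\cap\mk{h}=(\mk{o}\cap\mk{k})\oplus span_\CC\{H_\gamma\}_{\gamma\in\Gamma_{\mk{k}}}$ via the orthogonality of $\mk{k}$ and $\mk{p}$ together with $H_\delta\in\mk{p}$ for $\delta\in\Gamma_{\mk{p}}$; this is a genuine (if small) gap you have filled.
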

\bpr
By Theorem \ref{th for J(E_gamma) in frak{h}} and Corollary \ref{coro for the property of Gamma_{frak{k}}} it follows that the subsets $\Delta_{\mk{k}}$, $\Delta_{\mk{k}}^+$ are of the type
\be  \Delta_{\mk{k}}^+ = \bigcup_{\gamma \in \Gamma_{\mk{k}}} \Phi_{\gamma}^+ \cup \{\gamma\}, \nonumber  \\[-2mm] \label{pdp31} \\[-2mm]
\Delta_{\mk{k}} = \bigcup_{\gamma \in \Gamma_{\mk{k}}} \Phi_{\gamma} \cup \{\gamma\}, \nonumber \ee
where $\Gamma_{\mk{k}}$  is a substem of $(\Gamma, \prec)$. Now the decomposition \eqref{frak{k} is regular} shows that $\mk{k}$ is a $\Gamma_{\mk{k}}$-subalgebra.

\epr

Now, after having proved Theorem \ref{th for J(E_gamma) in frak{h}}, by the same ideas  as in \cite{DimTsan10} we shall describe the complex structures  $I$ and $J$ more explicitely. To that end now we  introduce  notaions (additional to \eqref{csj2}):
\begin{gather}
  \label{csj3}  \mk{o}_{\mk{p}} = \mk{p} \cap \mk{o} ,\quad \mk{o}_{\mk{k}}=\mk{k} \cap \mk{o},\quad \mk{w}_{\mk{p}} = span_\RR\{W_\gamma\vert \gamma \in \Gamma_{\mk{p}}\}\\
  \mk{h}_{\mk{p}}=\mk{h}\cap \mk{p}, \qquad  \mk{h}_{\mk{p}}^+=\mk{h}\cap \mk{p}^+, \qquad  \mk{h}_{\mk{p}}^-=\mk{h}\cap \mk{p}^-
   \\
   \mk{w}_{\mk{p}} = span_\RR\{W_\gamma\vert \gamma \in \Gamma_{\mk{p}}\}.
\end{gather}
By Theorem \ref{th for J(E_gamma) in frak{h}}, we have $\mk{w}_{\mk{p}}\subset \mk{h}_{\mk{p}}$.

From Lemma \ref{lemma if... then alpha(X)=0} and $\Gamma_{\mk{k}} \subset \Delta_{\mk{k}}$ it follows
 \be \label{formula for mk{0}_mk{p}}
 \mk{o}_{\mk{p}} = \mk{h}_{\mk{p}} \cap \bigcap_{\gamma \in \Gamma_{\mk{p}}} \ker(\gamma) = \{ X \in \mk{h}_{\mk{p}} \vert \ \forall \gamma \in \Gamma_{\mk{p}} \ \ \ \gamma(X) = 0 \}. \ee   Furthermore, by Theorem \ref{th for J(E_gamma) in frak{h}} for $\gamma \in \Gamma_{\mk{p}}$ we have that $J E_\gamma \in \mk{h}$. As in   \cite{DimTsan10}, we shall denote  for $\gamma \in \Gamma_{\mk{p}}$
\begin{gather}\label{ca7 vtora chast}
V_\gamma = JE_\gamma \in  \mk{h}_{\mk{p}}^-,\qquad U_\gamma = JE_{-\gamma}=-\tau (JE_\gamma) = - \tau (V_\gamma)\in \mk{h}_{\mk{p}}^+, \\
    P_\gamma = W_\gamma - \ri I W_\gamma,  \qquad   Q_\gamma = W_\gamma + \ri I W_\gamma.
\end{gather}

\begin{prop} \label{phf} The complex structure $J$ satisfies the following conditions:
\begin{gather} \label{the matrix xi in details 4}  \abs{\gamma(V_\gamma )} =\abs{\gamma(U_\gamma )}  =1\qquad \qquad\gamma \in \Gamma_{\mk{p}}
\\
  \label{the matrix xi in details 2}  JE_\gamma =- \ri\gamma(V_\gamma )Q_\gamma = \frac{1}{2} \gamma(V_\gamma )(H_\gamma + \ri I H_\gamma) \qquad \qquad\gamma \in \Gamma_{\mk{p}}
\\ \label{the matrix xi in details 1} JE_\alpha = N_{\gamma,- \alpha}\gamma(V_\gamma )E_{\alpha-\gamma}  \qquad \alpha \in \Phi_{\gamma}^+,\gamma \in \Gamma_{\mk{p}}. \end{gather}

 The complex structure $I$ satisfies    $I(\mk{w}_{\mk{p}}) \subset \mk{o}_{\mk{p}}$.
 \begin{remark} If we denote $\rho_\gamma = \ri \ol{\gamma(V_\gamma)}$, then the formulas in this proposition   are the same as the formulas in the second and the rhird row of \eqref{afi3}. We show further that the first formula in \eqref{afi3} is also necessarily valid.
\end{remark}

 \end{prop}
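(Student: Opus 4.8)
The plan is to deduce everything from the integrability identity \eqref{Nijenhuis 2}, applied to the complex structure $J$ itself (recall that $(\mk{p},\mk{p}^+,J,\xi,\eta,{\bf b})$ is a Cartan variation of $\mk{p}_u$, so \eqref{Nijenhuis 2} holds with $\wt{J}=J$, $\wt{\mk{p}}=\mk{p}$), combined with Theorem \ref{th for J(E_gamma) in frak{h}}, Corollary \ref{lemma about J(E_{gamma}) in frak{h} and a_{alpha beta}}, and the stem properties in Proposition \ref{prop of Delta+}. Throughout, for $\gamma\in\Gamma_{\mk{p}}$ I write $V_\gamma=JE_\gamma\in\mk{h}_{\mk{p}}^-$ and $U_\gamma=JE_{-\gamma}=-\tau(V_\gamma)\in\mk{h}_{\mk{p}}^+$ as in \eqref{ca7 vtora chast}; by Theorem \ref{th for J(E_gamma) in frak{h}} I may use that $JE_\gamma\in\mk{h}$ and $H_\gamma\in\mk{h}_{\mk{p}}$ for $\gamma\in\Gamma_{\mk{p}}$, that $\Delta_{\mk{p}}^+=\bigcup_{\gamma\in\Gamma_{\mk{p}}}\Phi_\gamma^+\cup\{\gamma\}$, and that among $\alpha,\beta\in\Delta_{\mk{p}}^+$ the only nonzero $a_{\alpha\beta}$ are those with $\alpha+\beta\in\Gamma_{\mk{p}}$, their values being given by Corollary \ref{lemma about J(E_{gamma}) in frak{h} and a_{alpha beta}}.

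First I would pin down $JE_\gamma$ for $\gamma\in\Gamma_{\mk{p}}$. Plugging $X=E_\gamma$, $Y=E_{-\gamma}$ into \eqref{Nijenhuis 2} and using $[E_\gamma,E_{-\gamma}]=H_\gamma\in\mk{p}$ and $[V_\gamma,U_\gamma]=0$ collapses the identity to $H_\gamma=\gamma(U_\gamma)V_\gamma+\gamma(V_\gamma)U_\gamma$. Applying $\tau$ (with $\tau H_\gamma=-H_\gamma$, $\tau V_\gamma=-U_\gamma$) gives $H_\gamma=\overline{\gamma(U_\gamma)}\,U_\gamma+\overline{\gamma(V_\gamma)}\,V_\gamma$; comparing the two expressions (using that $\mk{h}_{\mk{p}}^+$ and $\mk{h}_{\mk{p}}^-$ are complementary) yields $\gamma(U_\gamma)=\overline{\gamma(V_\gamma)}$, and evaluating $H_\gamma=\overline{\gamma(V_\gamma)}\,V_\gamma+\gamma(V_\gamma)\,U_\gamma$ on $H_\gamma$ (recall $\gamma(H_\gamma)=2$) gives $|\gamma(V_\gamma)|=|\gamma(U_\gamma)|=1$, i.e.\ \eqref{the matrix xi in details 4}. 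Next I apply $I$ to this identity; since $I$ acts as $\ri$ on $\mk{h}_{\mk{p}}^+$ and $-\ri$ on $\mk{h}_{\mk{p}}^-$, this gives $H_\gamma+\ri IH_\gamma=2\overline{\gamma(V_\gamma)}\,V_\gamma$, and since $Q_\gamma=\tfrac{\ri}{2}(H_\gamma+\ri IH_\gamma)$ and $\overline{\gamma(V_\gamma)}=\gamma(V_\gamma)^{-1}$, this rearranges to $JE_\gamma=V_\gamma=-\ri\gamma(V_\gamma)Q_\gamma=\tfrac12\gamma(V_\gamma)(H_\gamma+\ri IH_\gamma)$, which is \eqref{the matrix xi in details 2}.

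For the toral statement I would first observe that, for distinct $\gamma,\delta\in\Gamma_{\mk{p}}$, the stem properties \eqref{pdep 1} and \eqref{pdep 6} give $\gamma\pm\delta\notin\Delta$, hence also $\delta(H_\gamma)=0$; then \eqref{Nijenhuis 2} with $X=E_\delta$, $Y=E_{-\gamma}$ has all bracket terms vanishing except $J([E_\delta,U_\gamma])$ and $J([V_\delta,E_{-\gamma}])$, and the resulting identity $\delta(U_\gamma)V_\delta=\gamma(V_\delta)U_\gamma$ forces $\gamma(V_\delta)=\delta(U_\gamma)=0$ (and symmetrically $\delta(V_\gamma)=0$). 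Now $IW_\gamma\in I(\mk{h}_{\mk{p}_u})=\mk{h}_{\mk{p}_u}\subset\mk{h}_{\mk{p}}$ (see the proof of Lemma \ref{wid0}), so by \eqref{formula for mk{0}_mk{p}} it is enough to check $\delta(IW_\gamma)=0$ for all $\delta\in\Gamma_{\mk{p}}$; solving $Q_\gamma=W_\gamma+\ri IW_\gamma$ for $IW_\gamma$ with the help of \eqref{the matrix xi in details 2} gives $IW_\gamma=\overline{\gamma(V_\gamma)}\,V_\gamma+\ri W_\gamma$, hence $\delta(IW_\gamma)=\overline{\gamma(V_\gamma)}\,\delta(V_\gamma)-\tfrac12\delta(H_\gamma)$, which equals $|\gamma(V_\gamma)|^2-1=0$ for $\delta=\gamma$ and $0$ for $\delta\neq\gamma$ by the previous sentence. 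Thus $I(\mk{w}_{\mk{p}})\subset\mk{o}_{\mk{p}}$.

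Finally, for $\alpha\in\Phi_\gamma^+$ with $\gamma\in\Gamma_{\mk{p}}$: since $\alpha$ belongs to the unique member $\Phi_\zeta^+$ of the partition \eqref{pdu1} containing it, namely $\zeta=\gamma$, the requirement $\alpha+\beta\in\Gamma_{\mk{p}}$ forces $\beta=\gamma-\alpha$, so $JE_\alpha=H_0+a_{\gamma-\alpha,\alpha}E_{\alpha-\gamma}$ with $H_0\in\mk{h}_{\mk{p}}^-$, and Corollary \ref{lemma about J(E_{gamma}) in frak{h} and a_{alpha beta}} together with $|\gamma(V_\gamma)|=1$ gives $a_{\gamma-\alpha,\alpha}=N_{\gamma,-\alpha}\gamma(V_\gamma)$. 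It remains to show $H_0=0$: expanding \eqref{Nijenhuis 2} at $X=E_{-\gamma}$, $Y=E_\alpha$ and keeping the Cartan part (where the $\mk{h}_{\mk{p}}^+$- and $\mk{h}_{\mk{p}}^-$-components separate) yields $\alpha(U_\gamma)H_0=0$, and the same at $X=E_{-\alpha}$, $Y=E_\gamma$ yields $\alpha(V_\gamma)H_0=0$; if $H_0\neq0$ then $\alpha(U_\gamma)=\alpha(V_\gamma)=0$, hence (using $V_\gamma=-\ri\gamma(V_\gamma)Q_\gamma$, $U_\gamma=-\ri\overline{\gamma(V_\gamma)}P_\gamma$ and $P_\gamma+Q_\gamma=2W_\gamma$) $\alpha(W_\gamma)=0$, i.e.\ $\alpha(H_\gamma)=0$, contradicting \eqref{pdep 2}. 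This gives \eqref{the matrix xi in details 1}, completing the proof. The step I expect to be the main obstacle is precisely this last one, the vanishing of the Cartan component $H_0$ of $JE_\alpha$: one must produce two genuinely independent scalar constraints on $H_0$ from \eqref{Nijenhuis 2}, and the choice of the pairs $X,Y$ and the component bookkeeping is exactly where the argument runs parallel to \cite{DimTsan10} (Propositions 4.1, 4.9 and Lemma 4.11).
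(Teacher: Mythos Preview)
Your proof is correct and follows essentially the same strategy as the paper: both obtain \eqref{the matrix xi in details 4} and \eqref{the matrix xi in details 2} from the Nijenhuis identity at $(E_\gamma,E_{-\gamma})$, and both deduce $\delta(V_\gamma)=0$ for distinct $\gamma,\delta\in\Gamma_{\mk{p}}$ from a Nijenhuis evaluation at a pair of stem-root vectors (the paper uses $(E_\gamma,E_\delta)$, you use $(E_\delta,E_{-\gamma})$), leading to $I(\mk w_{\mk p})\subset\mk o_{\mk p}$.

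The one genuine difference is in the last step, the vanishing of the Cartan part $H_0$ of $JE_\alpha$. The paper evaluates Nijenhuis once, at $(E_\gamma,E_\alpha)$, and from the Cartan part obtains the single equation $\gamma(H_0)V_\gamma+\beta(V_\gamma)H_0=0$ with $\beta=\gamma-\alpha$; it then applies $\gamma$, checks that $(\gamma+\beta)(V_\gamma)\neq 0$ via \eqref{the matrix xi in details 2}, and concludes $\gamma(H_0)=0$, whence $H_0=0$ since $\beta(V_\gamma)\neq 0$. Your route --- two evaluations at $(E_{-\gamma},E_\alpha)$ and $(E_{-\alpha},E_\gamma)$, separating the $\mk h_{\mk p}^+$ and $\mk h_{\mk p}^-$ components to get $\alpha(U_\gamma)H_0=0$ and $\alpha(V_\gamma)H_0=0$, then invoking $\alpha(H_\gamma)>0$ --- is equally valid and arguably cleaner, since it avoids the auxiliary verification that $(\gamma+\beta)(V_\gamma)\neq 0$. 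Both arguments are parallel to the corresponding computations in \cite{DimTsan10}; your concern about this step being the main obstacle is well placed, but your handling of it is sound.
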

 \begin{proof} By Theorem \ref{th for J(E_gamma) in frak{h}} for $\gamma \in \Gamma_{\mk{p}}$ we have $J(E_\gamma)=V_\gamma \in \mk{h}, J(E_{-\gamma})=U_\gamma \in \mk{h}$, $W_{\gamma} \in \mk{p}_u$. Now\footnote{using the integrability condition in \eqref{Nijenhuis 2} for $J$, $\mk{p}$} by the same computations as in the proof of Proposition 4.7 \cite{DimTsan10} we obtain  \eqref{the matrix xi in details 4}, \eqref{the matrix xi in details 2} and \be \gamma (I H_\gamma)=0 \qquad \gamma \in \Gamma_{\mk{p}}.\ee
 If  $\gamma\neq \delta$, $\delta \in \Gamma_{\mk{p}}$, then we compute \eqref{Nijenhuis 2} with $\wt{J} = J$, $\wt{\mk{p}}=\mk{p}$, $ X=E_\gamma $,  $ Y=E_\delta $:
 \begin{gather*}
0 = [JE_\gamma,E_\delta]_{\mk{p}} + [E_\gamma, JE_\delta]_{\mk{p}}
= \delta(V_\gamma)E_\delta -\gamma(V_\delta)E_{\gamma}.
\end{gather*}
Hence $\delta(V_\gamma) = 0 $. By \eqref{the matrix xi in details 2}  $\delta(V_\gamma) = \frac{1}{2} \gamma(V_\gamma ) \delta(H_\gamma + \ri I H_\gamma) =\frac{\ri }{2} \gamma(V_\gamma ) \delta(I H_\gamma) $. Therefore $\delta (I H_\gamma)=0$ and we see that \be \forall \delta \in \Gamma_{\mk{p}} \qquad \delta(I H_\gamma)=0. \ee
If $\delta \in \Gamma_{\mk{k}}$ then $h_\delta \in \mk{k}$, $I H_\gamma \in \mk{p}$ and therefore $\delta (I H_\gamma ) = \scal{h_\delta, I H_\gamma}=0 $ (see \eqref{frak{m}_0 = X in
frak{u};forall Y in frak{k}_0} also). So far, we see that $I H_\gamma \in \mk{0}$, i. e. $I W_\gamma \in \mk{0}$ and we proved that $I(\mk{w}_{\mk{p}}) \subset \mk{o}_{\mk{p}_u }$.

We pass to the proof of \eqref{the matrix xi in details 1}\footnote{it its the same as the proof of proposition 4.18 in \cite{DimTsan10}}. Let us fix $\alpha \in \Phi_{\gamma}^+,\gamma \in \Gamma_{\mk{p}}  $ and denote $\beta = \gamma - \alpha$. From \eqref{th for J(E_gamma) in frak{h} matrix a} in Theorem \ref{th for J(E_gamma) in frak{h}}   and \eqref{def of a,b,xi,eta} we have
\begin{gather*}
\label{JE_alpha =a_{beta,alpha}E_{-beta} + H} JE_\alpha =a_{\beta,\alpha}E_{-\beta} + H, \qquad H \in \mk{h}_{\mk{p}}^-.
\end{gather*}
In the following computation we put in \eqref{Nijenhuis 2} $\wt{J} = J$, $\wt{\mk{p}}=\mk{p}$, $X= E_\gamma$, $Y=E_\alpha$ and keep   explicit only terms which nave nontrivial projection to $\mk{h}$.
 \begin{gather*} 0= [V_\gamma,a_{\beta,\alpha}E_{-\beta} + H]_{\vert \mk{p} }  - J[E_\gamma,a_{\beta,\alpha}E_{-\beta} + H]_{\vert \mk{p} }- J[V_\gamma,E_\alpha]_{\vert \mk{p} } \\
 = \gamma(H)V_\gamma  - N_{\gamma,-\beta}a_{\beta,\alpha}JE_\alpha - \alpha(V_\gamma)JE_\alpha +A\\
 = \gamma(H)V_\gamma - ( N_{\gamma,-\beta}a_{\beta,\alpha} + \alpha(V_\gamma))H +B,
 \end{gather*}
 where $A,B \in \mk{n}^-$. Now we use $a_{\beta,\alpha} = N_{\gamma,-\alpha}\gamma(V_\gamma)$
(see \eqref{lemma about J(E_{gamma}) in frak{h} and a_{alpha beta} a_{alpha beta} = ...} in Corollary \ref{lemma about J(E_{gamma}) in frak{h} and a_{alpha beta}}) and $\quad N_{\gamma, -\alpha}N_{\gamma, -\beta}= -1$ (Proposition 2.27 in \cite{DimTsan10}) to get
\begin{gather}\label{mxj4}
\gamma(H)V_\gamma + \beta(V_\gamma)H = 0
\end{gather}
We apply $\gamma$ to this equation and obtain  $\gamma(H)(\gamma + \beta)(V_\gamma) = 0 $. By   \eqref{the matrix xi in details 2} and $\beta\in\Phi^+_\gamma$ we have
   $$
\beta(V_\gamma) =  \gamma(V_\gamma)\sco{\frac{1}{2} + \mbox{ imaginary number }} \neq 0.$$
Thus $(\gamma + \beta)(V_\gamma) \neq 0 $, whence $\gamma(H) = 0$.
Now by \eqref{mxj4} and $\beta(V_\gamma)\neq 0$ we get $H = 0$. The formula \eqref{the matrix xi in details 1} follows from \eqref{JE_alpha =a_{beta,alpha}E_{-beta} + H} and $a_{\beta,\alpha} = N_{\gamma,-\alpha}\gamma(V_\gamma)$.
\end{proof}

\begin{df} As we show in the next two propositions, it is usefully to introduce some components of $ \mk{h}_{\mk{p}}$ as follows:
\begin{gather*}
 \mk{v}_{\mk{p}} = (\mk{w}_{\mk{p}}\oplus I \mk{w}_{\mk{p}})^\CC,\quad \mk{v}_{\mk{p}}^+ = \mk{v}_{\mk{p}}\cap\mk{h}_{\mk{p}}^+,\quad \mk{v}_{\mk{p}}^- = \mk{v}_{\mk{p}}\cap\mk{h}_{\mk{p}}^- ;\\
\mk{j}_{\mk{p}}^+  = \mk{o}_{\mk{p}}\cap \mk{h}_{\mk{p}}^+,\quad \mk{j}_{\mk{p}}^- =  \mk{o}_{\mk{p}}\cap \mk{h}_{\mk{p}}^+,\quad
\mk{j}_{\mk{p}} = \mk{j}_{\mk{p}}^+ \oplus \mk{j}_{\mk{p}}^-.
\end{gather*}
\end{df}

\begin{prop}\label{mj5 vtora chast}  We have \be \label{mj5 vtora chast form1} \mk{v}_{\mk{p}}^+ = span_\CC\{P_\gamma\vert\ \gamma\in \Gamma_{\mk{p}}\},\quad \mk{v}_{\mk{p}}^- = span_\CC\{Q_\gamma\vert\ \gamma\in \Gamma_{\mk{p}}\}, \quad \mk{v}_{\mk{p}}= \mk{v}_{\mk{p}}^+ \oplus \mk{v}_{\mk{p}}^-. \ee Besides
\begin{gather}\label{mj7}
\mk{h}_{\mk{p}}  = \mk{v}_{\mk{p}} \oplus \mk{j}_{\mk{p}},\quad \mk{h}_{\mk{p}}^+  = \mk{v}_{\mk{p}}^+\oplus \mk{j}_{\mk{p}}^+,\quad\mk{h}_{\mk{p}}^- = \mk{v}_{\mk{p}}^-\oplus \mk{j}_{\mk{p}}^-.
\end{gather}
\end{prop}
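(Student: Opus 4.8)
The plan is to prove this entirely by eigenspace bookkeeping, taking as input Theorem \ref{th for J(E_gamma) in frak{h}} and Proposition \ref{phf}, which already give, for every $\gamma\in\Gamma_{\mk{p}}$, that $W_\gamma\in\mk{p}_u$, $IW_\gamma\in\mk{o}_{\mk{p}}$ and $\mk{w}_{\mk{p}}\subset\mk{h}_{\mk{p}}$. First I would record the elementary identities for $P_\gamma=W_\gamma-\ri\,IW_\gamma$ and $Q_\gamma=W_\gamma+\ri\,IW_\gamma$: since $I^2=-1$ one gets $I(P_\gamma)=\ri P_\gamma$ and $I(Q_\gamma)=-\ri Q_\gamma$, so $P_\gamma\in\mk{p}^+$ and $Q_\gamma\in\mk{p}^-$; because $W_\gamma\in\mk{h}$ and $IW_\gamma\in\mk{o}\subset\mk{h}$, both lie in $\mk{h}_{\mk{p}}$; and by the very definition of $\mk{v}_{\mk{p}}=(\mk{w}_{\mk{p}}\oplus I\mk{w}_{\mk{p}})^\CC$ they lie in $\mk{v}_{\mk{p}}$. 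Hence $P_\gamma\in\mk{v}_{\mk{p}}^+$ and $Q_\gamma\in\mk{v}_{\mk{p}}^-$, so the inclusions $\supseteq$ in the first displayed line hold.

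Next I would settle the dimensions. Since $W_\gamma\perp\mk{o}$ (the decomposition $\mk{h}=\mk{w}\oplus\mk{o}$ is orthogonal) while $IW_\gamma\in\mk{o}$, we get $\mk{w}_{\mk{p}}^\CC\cap\mk{o}=\{0\}$, the sum $\mk{w}_{\mk{p}}\oplus I\mk{w}_{\mk{p}}$ is direct, the $W_\gamma$ ($\gamma\in\Gamma_{\mk{p}}$) are linearly independent, and therefore $\dim_\CC\mk{v}_{\mk{p}}=\dim_\RR(\mk{w}_{\mk{p}}\oplus I\mk{w}_{\mk{p}})=2\,\#(\Gamma_{\mk{p}})$; moreover $\mk{w}_{\mk{p}}\oplus I\mk{w}_{\mk{p}}$ is $I$-invariant and $I$ restricts on it to a complex structure, so $\mk{v}_{\mk{p}}=\mk{v}_{\mk{p}}^+\oplus\mk{v}_{\mk{p}}^-$ with $\dim_\CC\mk{v}_{\mk{p}}^{\pm}=\#(\Gamma_{\mk{p}})$. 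The same argument using $\mk{w}_{\mk{p}}^\CC\cap(I\mk{w}_{\mk{p}})^\CC=\{0\}$ shows $\{P_\gamma\}_{\gamma\in\Gamma_{\mk{p}}}$ is linearly independent, hence $span_\CC\{P_\gamma\}=\mk{v}_{\mk{p}}^+$ by the dimension equality. Applying $\tau$, which interchanges $\mk{p}^+$ and $\mk{p}^-$ (see \eqref{coro for wt{mk{m}} as a subspace of mk{g}eq}), fixes $\mk{h}$, $\mk{o}$ and $\mk{v}_{\mk{p}}$, and sends $P_\gamma$ to $Q_\gamma$, gives $span_\CC\{Q_\gamma\}=\mk{v}_{\mk{p}}^-$. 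This proves the first line of \eqref{mj5 vtora chast form1}–\eqref{mj7}, namely \eqref{mj5 vtora chast form1}.

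For \eqref{mj7} I would start from $\mk{h}_{\mk{p}}=\mk{h}_{\mk{p}}^+\oplus\mk{h}_{\mk{p}}^-$ (Lemma \ref{lemma wcb8}) and from $\mk{h}_{\mk{p}}=\mk{w}_{\mk{p}}^\CC\oplus\mk{o}_{\mk{p}}$; the latter is Corollary \ref{mk{o}=mk{o}_{mk{k}} oplus mk{o}_{mk{p}}}, whose proof applies verbatim here once one knows $\mk{k}$ is $\Gamma_{\mk{k}}$-stemmed (Corollary \ref{coro abot mk{g} and mk{k}}) and $W_\gamma\in\mk{p}_u$ for $\gamma\in\Gamma_{\mk{p}}$ (Theorem \ref{th for J(E_gamma) in frak{h}}). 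Given $X\in\mk{h}_{\mk{p}}^+$ write $X=\sum_{\gamma\in\Gamma_{\mk{p}}}c_\gamma W_\gamma+Y$ with $Y\in\mk{o}_{\mk{p}}$; substituting $W_\gamma=P_\gamma+\ri\,IW_\gamma$ and using $IW_\gamma\in\mk{o}_{\mk{p}}$ rewrite $X=\sum c_\gamma P_\gamma+Y'$ with $Y'=Y+\ri\sum c_\gamma IW_\gamma\in\mk{o}_{\mk{p}}$; since $\sum c_\gamma P_\gamma\in\mk{v}_{\mk{p}}^+\subset\mk{p}^+$ and $X\in\mk{p}^+$, the element $Y'$ lies in $\mk{o}_{\mk{p}}\cap\mk{h}_{\mk{p}}^+=\mk{j}_{\mk{p}}^+$, whence $\mk{h}_{\mk{p}}^+=\mk{v}_{\mk{p}}^++\mk{j}_{\mk{p}}^+$. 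The sum is direct: any $Z\in span_\CC\{P_\gamma\}\cap\mk{o}_{\mk{p}}$ satisfies $\sum c_\gamma W_\gamma=Z+\ri\sum c_\gamma IW_\gamma\in\mk{o}$, so $\sum c_\gamma W_\gamma=0$ by $\mk{w}_{\mk{p}}^\CC\cap\mk{o}=\{0\}$, hence all $c_\gamma=0$ and $Z=0$. Since $\mk{o}_{\mk{p}}$ and $\mk{h}_{\mk{p}}$ are $\tau$-stable and $\tau(\mk{h}_{\mk{p}}^+)=\mk{h}_{\mk{p}}^-$ (see \eqref{wcb9}), applying $\tau$ gives $\mk{h}_{\mk{p}}^-=\mk{v}_{\mk{p}}^-\oplus\mk{j}_{\mk{p}}^-$ (with the evidently intended reading $\mk{j}_{\mk{p}}^-=\mk{o}_{\mk{p}}\cap\mk{h}_{\mk{p}}^-$); adding the two splittings yields $\mk{h}_{\mk{p}}=(\mk{v}_{\mk{p}}^+\oplus\mk{v}_{\mk{p}}^-)\oplus(\mk{j}_{\mk{p}}^+\oplus\mk{j}_{\mk{p}}^-)=\mk{v}_{\mk{p}}\oplus\mk{j}_{\mk{p}}$.

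I do not expect a genuine obstacle: once Theorem \ref{th for J(E_gamma) in frak{h}} and Proposition \ref{phf} are in hand, the statement is pure linear algebra. The only points that need a little care are to invoke Corollary \ref{coro abot mk{g} and mk{k}} before citing Corollary \ref{mk{o}=mk{o}_{mk{k}} oplus mk{o}_{mk{p}}}, so that the decomposition $\mk{h}_{\mk{p}}=\mk{w}_{\mk{p}}^\CC\oplus\mk{o}_{\mk{p}}$ is legitimately available in the necessary-conditions context, and to keep track of real-versus-complex dimensions when passing between $\mk{w}_{\mk{p}}\oplus I\mk{w}_{\mk{p}}$ and its complexification $\mk{v}_{\mk{p}}$.
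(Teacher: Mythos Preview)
Your proof is correct and follows essentially the same line as the paper's: both rely on $I(\mk{w}_{\mk{p}})\subset\mk{o}_{\mk{p}}$ from Proposition \ref{phf}, verify \eqref{mj5 vtora chast form1} by dimension count, and then establish the splittings \eqref{mj7} by elementary linear algebra.

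There is one minor difference worth noting in how the decomposition $\mk{h}_{\mk{p}}^+=\mk{v}_{\mk{p}}^+\oplus\mk{j}_{\mk{p}}^+$ is obtained. You import the splitting $\mk{h}_{\mk{p}}=\mk{w}_{\mk{p}}^\CC\oplus\mk{o}_{\mk{p}}$ from Corollary \ref{mk{o}=mk{o}_{mk{k}} oplus mk{o}_{mk{p}}} (legitimized in Section \ref{nc0} via Corollary \ref{coro abot mk{g} and mk{k}}) and then rewrite $W_\gamma=P_\gamma+\ri\,IW_\gamma$. The paper instead observes directly that $\gamma(P_\delta)=\ri$ if $\gamma=\delta$ and $0$ otherwise (because $IW_\delta\in\mk{o}$ and $\gamma(H_\delta)=0$ for $\gamma\neq\delta$ in $\Gamma_{\mk{p}}$), so the $P_\gamma$ form a ``dual basis'' to the functionals $\gamma\vert_{\mk{h}_{\mk{p}}^+}$ and $\mk{j}_{\mk{p}}^+$ is by \eqref{formula for mk{0}_mk{p}} precisely their common kernel; this gives the splitting without invoking the Section \ref{hcp0} corollary. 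Your route is equally valid and a bit more constructive; the paper's route is slightly more self-contained. Your final step, obtaining $\mk{h}_{\mk{p}}=\mk{v}_{\mk{p}}\oplus\mk{j}_{\mk{p}}$ by simply adding the $\pm$ splittings, is actually cleaner than the paper's separate verification that $\mk{v}_{\mk{p}}\cap\mk{j}_{\mk{p}}=\{0\}$ via $I$-invariance of $\mk{j}_{\mk{p}}$.
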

\bpr  The important point here is  $I(\mk{w}_{\mk{p}}) \subset \mk{o}_{\mk{p}}$, proved in Proposition \ref{phf}.

  The formulas in  \eqref{mj5 vtora chast form1} are easily verified by dimension count (obviously $span_\CC\{P_\gamma\vert\ \gamma\in \Gamma_{\mk{p}}\} \subset \mk{v}_{\mk{p}}^+$, $span_\CC\{Q_\gamma\vert\ \gamma\in \Gamma_{\mk{p}}\} \subset \mk{v}_{\mk{p}}^-$).

The condition $I(\mk{w}_{\mk{p}}) \subset \mk{o}_{\mk{p}}$ implies  that for $\gamma, \delta \in \Gamma_{\mk{p}}$  $\gamma(P_\delta) = 0$, if $\delta \neq \gamma$ and $\gamma(P_\gamma) = \ri $. On the other hand by \eqref{formula for mk{0}_mk{p}} $\mk{j}_{\mk{p}}^+ = \{H\in \mk{h}_{\mk{p}}^+\vert\ \forall \gamma \in \Gamma_{\mk{p}} \ \ \gamma(H)  = 0\}$, thus $\mk{h}_{\mk{p}}^+ = \mk{v}_{\mk{p}}^+\oplus\mk{j}_{\mk{p}}^+$. In the same way
$\mk{h}_{\mk{p}}^- = \mk{v}_{\mk{p}}^-\oplus\mk{j}_{\mk{p}}^-$.

Now, in order to prove $\mk{h}_{\mk{p}}  = \mk{v}_{\mk{p}}\oplus \mk{j}_{\mk{p}}$, we have to  show only that $\mk{v}_{\mk{p}}\cap \mk{j}_{\mk{p}}=\{0\}$.
Let $ X \in \mk{v}_{\mk{p}} \cap \mk{j}_{\mk{p}} $. We may decompose $X = X^+ + X^-$, where $X^+ \in \mk{v}_{\mk{p}}^+, X^- \in \mk{v}_{\mk{p}}^-$. Obviously $I(\mk{j}_{\mk{p}})=\mk{j}_{\mk{p}}$, hence $I(X)= \ri X^+ - \ri X^- \in \mk{j}_{\mk{p}}$. The inclusions  $X^+ + X^- \in \mk{j}_{\mk{p}},  \ri X^+ - \ri X^- \in \mk{j}_{\mk{p}}$ imply $X^+ \in \mk{j}_{\mk{p}}$, $X^- \in \mk{j}_{\mk{p}}$, therefore  $X^+ \in \mk{j}_{\mk{p}}^+ \cap \mk{v}_{\mk{p}}^+$, $X^- \in \mk{j}_{\mk{p}}^- \cap \mk{v}_{\mk{p}}^-$. Now from $\mk{v}_{\mk{p}}^+\cap \mk{j}_{\mk{p}}^+ = \mk{v}_{\mk{p}}^-\cap\mk{j}_{\mk{p}}^-=\{0\}$ we obtain $X^+=X^-=0$ .

\epr

Using Propositions \ref{phf}, \ref{mj5 vtora chast} and repeating the proof of Proposition 4.31 in \cite{DimTsan10} we obtain:

\begin{prop}  The complex structure $J$ satisfies:
\begin{gather}
\label{the matrix xi in details 3}  J(\mk{j}_\mk{p}^+) = \mk{j}_\mk{p}^-. \end{gather}
In particular the subspace $\mk{j}_{\mk{p}}$ is $J$-invariant.
 \end{prop}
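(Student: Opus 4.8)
The plan is to run, in the present set-up, the argument of Proposition 4.31 of \cite{DimTsan10}: since $J$ is now known explicitly on $\mk{n}_{\mk{p}}$, on $\mk{v}_{\mk{p}}$ and on the $E_{\pm\gamma}$ (Proposition \ref{phf} and Proposition \ref{mj5 vtora chast}, together with Theorem \ref{th for J(E_gamma) in frak{h}}), the only thing left to do is to control the image of $\mk{j}_{\mk{p}}^+$. Fix $H\in\mk{j}_{\mk{p}}^+$. As $\mk{j}_{\mk{p}}^+\subseteq\mk{o}_{\mk{p}}\subseteq\mk{o}$ we have $\delta(H)=0$ for every $\delta\in\Gamma$; and since $JH\in\mk{p}^-=\mk{h}_{\mk{p}}^-\oplus\mk{n}_{\mk{p}}^-$ we may write $JH=H'+\sum_{\beta\in\Delta_{\mk{p}}^+}c_\beta E_{-\beta}$ with $H'\in\mk{h}_{\mk{p}}^-$. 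The first step is to show $c_\beta=0$ for all $\beta$, i.e. $JH\in\mk{h}$.

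For this I would feed the integrability identity \eqref{Nijenhuis 2} (valid with $\wt J=J$, $\wt{\mk{p}}=\mk{p}$ by Definition \ref{def Cartan variation of mk{m}_0}) the pair $X=H$, $Y=E_{-\delta}$, $\delta\in\Gamma_{\mk{p}}$. Then $[H,E_{-\delta}]=-\delta(H)E_{-\delta}=0$ and $JE_{-\delta}=U_\delta\in\mk{h}$ (Theorem \ref{th for J(E_gamma) in frak{h}} and $\tau(\mk{h})=\mk{h}$), so the two brackets containing $[H,\cdot\,]$ drop out and $[JH,U_\delta]=-\sum_\beta c_\beta\beta(U_\delta)E_{-\beta}$. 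One then checks that $J\bigl([JH,E_{-\delta}]\bigr)$ has no $\mk{n}_{\mk{p}}^-$-component: in $[JH,E_{-\delta}]=-\delta(H')E_{-\delta}+\sum_\beta c_\beta[E_{-\beta},E_{-\delta}]$ the stem properties \eqref{pdep 0}, \eqref{pdep 1}, \eqref{pdep 6} force $\beta+\delta\in\Delta_{\mk{p}}^+$ whenever it is a root (if $\beta+\delta$ lay in $\Delta_{\mk{k}}^+$ then $[E_{-(\beta+\delta)},E_\delta]\in[\mk{k},\mk{p}^+]\subseteq\mk{p}^+$ would put $E_{-\beta}$ into $\mk{p}^+$, contradicting $E_{-\beta}\in\mk{p}^-$), and then \eqref{the matrix xi in details 1} shows $J$ sends $E_{-\delta}$ to $U_\delta\in\mk{h}_{\mk{p}}$ and every $E_{-(\beta+\delta)}$ into $\mk{h}_{\mk{p}}\oplus\mk{n}_{\mk{p}}^+$. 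Hence the $\mk{n}_{\mk{p}}^-$-part of \eqref{Nijenhuis 2} reads $\sum_\beta c_\beta\beta(U_\delta)E_{-\beta}=0$, i.e. $c_\beta\beta(U_\delta)=0$ for all $\beta,\delta$. For each $\beta\in\Delta_{\mk{p}}^+=\bigcup_{\gamma\in\Gamma_{\mk{p}}}(\Phi_\gamma^+\cup\{\gamma\})$ choose $\gamma\in\Gamma_{\mk{p}}$ with $\beta\in\Phi_\gamma^+\cup\{\gamma\}$; since $U_\gamma$ is a non‑zero multiple of $P_\gamma$ (by \eqref{the matrix xi in details 2} and $\tau(Q_\gamma)=P_\gamma$), one has $\beta(U_\gamma)\ne0$ — for $\beta=\gamma$ by \eqref{the matrix xi in details 4}, and for $\beta\in\Phi_\gamma^+$ because $\beta(P_\gamma)=\beta(W_\gamma)-\ri\,\beta(IW_\gamma)$ has imaginary part $\tfrac12\beta(H_\gamma)>0$ (using \eqref{pdep 2} and that $IW_\gamma\in\mk{h}_u$, so $\beta(IW_\gamma)$ is purely imaginary). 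Thus $c_\beta=0$ and $JH=H'\in\mk{h}_{\mk{p}}^-$.

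It remains to locate $JH$ inside $\mk{h}_{\mk{p}}^-$. Applying \eqref{Nijenhuis 2} once more with $X=H$, $Y=E_\delta$ ($\delta\in\Gamma_{\mk{p}}$): now $JE_\delta=V_\delta\in\mk{h}$ and $JH\in\mk{h}$, so $[H,E_\delta]$, $[H,V_\delta]$ and $[JH,V_\delta]$ all vanish and the identity collapses to $J\bigl([JH,E_\delta]\bigr)=0$, whence $\delta(JH)E_\delta=0$ and $\delta(JH)=0$. Therefore $JH\in\mk{h}_{\mk{p}}^-\cap\bigcap_{\delta\in\Gamma_{\mk{p}}}\ker(\delta)=\mk{o}_{\mk{p}}\cap\mk{h}_{\mk{p}}^-=\mk{j}_{\mk{p}}^-$ by \eqref{formula for mk{0}_mk{p}}, so $J(\mk{j}_{\mk{p}}^+)\subseteq\mk{j}_{\mk{p}}^-$. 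Since $\tau$ preserves $\mk{o}$ and $\mk{p}$ and interchanges $\mk{h}_{\mk{p}}^+$ with $\mk{h}_{\mk{p}}^-$, it interchanges $\mk{j}_{\mk{p}}^+$ and $\mk{j}_{\mk{p}}^-$, so they have equal dimension; as $J$ is injective, $J(\mk{j}_{\mk{p}}^+)=\mk{j}_{\mk{p}}^-$, which is the assertion. Finally $J(\mk{j}_{\mk{p}}^-)=J^2(\mk{j}_{\mk{p}}^+)=-\mk{j}_{\mk{p}}^+=\mk{j}_{\mk{p}}^+$, so $\mk{j}_{\mk{p}}$ is $J$‑invariant. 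The one delicate point — exactly as in \cite{DimTsan10}, Proposition 4.31 — is the bookkeeping showing that the off‑diagonal brackets $[E_{-\beta},E_{-\delta}]$ contribute nothing to the $\mk{n}_{\mk{p}}^-$‑component of \eqref{Nijenhuis 2}; everything else is linear algebra.
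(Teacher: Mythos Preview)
Your proof is correct and is precisely the argument of Proposition 4.31 in \cite{DimTsan10} carried over to the present setting, which is exactly what the paper's one-line proof instructs. The only cosmetic remark is that your verification of $\beta(U_\gamma)\neq 0$ for $\beta\in\Phi_\gamma^+$ can be shortened by noting, as in the paper's proof of Proposition \ref{phf}, that $\beta(V_\gamma)=\gamma(V_\gamma)\bigl(\tfrac12+\text{imaginary number}\bigr)\neq 0$ and then applying $\tau$; but your direct computation via $P_\gamma$ is equally valid.
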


\begin{coro}  \label{cars}   The following inequality is valid:
\ben
rank(\mk{g}) +  srank(\mk{k}) \geq rank(\mk{k}) + srank(\mk{g})\geq rank(\mk{k}_s) + srank(\mk{g}).
 \een
 \end{coro}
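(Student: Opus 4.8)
The plan is to prove the two inequalities separately; the right-hand one is essentially trivial and the left-hand one carries the content. For the right inequality $rank(\mk{k}) + srank(\mk{g}) \geq rank(\mk{k}_s) + srank(\mk{g})$ it suffices to note that $rank(\mk{k}) \geq rank(\mk{k}_s)$: the algebra $\mk{k}$ is reductive, being the complexification of the compact Lie algebra $\mk{k}_u$, so $rank(\mk{k})$ equals the dimension of the center of $\mk{k}$ plus $rank(\mk{k}_s)$, which is at least $rank(\mk{k}_s)$. Adding $srank(\mk{g})$ to both sides settles this half.

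For the left inequality I would repeat the dimension count of Lemma \ref{edm1}, which is now legitimate in the setting of Section \ref{nc0} because the structural results have been established. First, by the decomposition $\mk{h} = (\mk{h}\cap\mk{p}) \oplus (\mk{h}\cap\mk{k})$ of Lemma \ref{lemma wcb8} one has $rank(\mk{g}) - rank(\mk{k}) = \dim(\mk{h}_{\mk{p}})$. Next, $srank(\mk{g}) = 2\#(\Gamma)$ by Definition \ref{srank}, and $srank(\mk{k}) = 2\#(\Gamma_{\mk{k}})$: by Corollary \ref{coro abot mk{g} and mk{k}} the subalgebra $\mk{k}$ is $\Gamma_{\mk{k}}$-stemmed, so by Lemma \ref{lemma for mk{k}} its root system is $\Delta_{\mk{k}}$ with positive system $\Delta_{\mk{k}}^+$, and by Lemma \ref{pdp3} the stem of $\Delta_{\mk{k}}^+$ is precisely $\Gamma_{\mk{k}}$; since the cardinality of the stem is an invariant of a reductive algebra, this gives $srank(\mk{k}) = 2\#(\Gamma_{\mk{k}})$. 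Combining these with $\#(\Gamma) = \#(\Gamma_{\mk{k}}) + \#(\Gamma_{\mk{p}})$ yields
\[
rank(\mk{g}) + srank(\mk{k}) - rank(\mk{k}) - srank(\mk{g}) = \dim(\mk{h}_{\mk{p}}) - 2\#(\Gamma_{\mk{p}}).
\]

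Finally I would read off the sign of the right-hand side from Proposition \ref{mj5 vtora chast}: there $\mk{h}_{\mk{p}} = \mk{v}_{\mk{p}} \oplus \mk{j}_{\mk{p}}$ with $\mk{v}_{\mk{p}} = \mk{v}_{\mk{p}}^+ \oplus \mk{v}_{\mk{p}}^-$ spanned by $\{P_\gamma : \gamma \in \Gamma_{\mk{p}}\} \cup \{Q_\gamma : \gamma \in \Gamma_{\mk{p}}\}$. These $2\#(\Gamma_{\mk{p}})$ vectors are linearly independent — this uses $I(\mk{w}_{\mk{p}}) \subset \mk{o}_{\mk{p}}$ from Proposition \ref{phf} together with the linear independence of the $W_\gamma$, $\gamma \in \Gamma_{\mk{p}}$ — so $\dim \mk{v}_{\mk{p}} = 2\#(\Gamma_{\mk{p}})$ and hence $\dim(\mk{h}_{\mk{p}}) - 2\#(\Gamma_{\mk{p}}) = \dim \mk{j}_{\mk{p}} \geq 0$, which is the first inequality. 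I do not expect a genuine obstacle: the substantive work was done in Theorem \ref{th for J(E_gamma) in frak{h}} and Propositions \ref{phf}, \ref{mj5 vtora chast}, and the only point needing care is the identification $srank(\mk{k}) = 2\#(\Gamma_{\mk{k}})$, which rests on the regularity of $\mk{k}$, its $\Gamma_{\mk{k}}$-stemmedness, and the invariance of the stem under the choice of Weyl chamber.
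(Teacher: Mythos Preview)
Your proof is correct and follows essentially the same approach as the paper: both reduce the left inequality to $\dim(\mk{h}_{\mk{p}}) \geq 2\#(\Gamma_{\mk{p}})$ via the identification $srank(\mk{k}) = 2\#(\Gamma_{\mk{k}})$ from Lemma \ref{pdp3}, and then exploit the linear independence of the $W_\gamma$ together with $I(\mk{w}_{\mk{p}}) \subset \mk{o}_{\mk{p}}$ from Proposition \ref{phf}. The only cosmetic difference is that you package the dimension count through the decomposition $\mk{h}_{\mk{p}} = \mk{v}_{\mk{p}} \oplus \mk{j}_{\mk{p}}$ of Proposition \ref{mj5 vtora chast}, whereas the paper argues directly that $\{W_\gamma, I(W_\gamma) : \gamma \in \Gamma_{\mk{p}}\}$ are $2\#(\Gamma_{\mk{p}})$ independent vectors in $\mk{h}_{\mk{p}_u}$.
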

\bpr
   From Theorem \ref{th for J(E_gamma) in frak{h}} and  Proposition \ref{phf}  we know that for all $\gamma \in \Gamma_{\mk{p}}$ \ \  $I(W_\gamma) \in  \mk{h}_{\mk{p}_u}\cap \mk{o}$ and $W_\gamma \in  \mk{h}_{\mk{p}_u}$. On the other hand $\mk{o} \cap span_\CC \{W_{\gamma}\vert \gamma \in \Gamma_{\mk{p}}\} = \{0\}$.        Therefore  $ dim_\RR(\mk{h}_{\mk{p}_u})  \geq 2 \#(\Gamma_{\mk{p}})$. Hence $ dim_\RR(\mk{h}_u) -  dim_\RR(\mk{h}_{\mk{k}_u}) \geq  2 (\#(\Gamma )- \# (\Gamma_{\mk{k}} ) ) $, i. e. $  dim_\RR(\mk{h}_u) + 2 \# ( \Gamma_{\mk{k}} )  \geq  dim_\RR(\mk{h}_{\mk{k}_u}) + 2 \#(\Gamma   )$. By Corollary \ref{pdp3} it follows that
$2 \# ( \Gamma_{\mk{k}} ) = srank(\mk{k})$, hence $ rank(\mk{g}) +  srank(\mk{k}) \geq rank(\mk{k}) + srank(\mk{g})$.
\epr

\subsection{The main theorem}

We may now state our main theorem.
\begin{theorem} \label{main theorem}
Let ${\bf U}$ be a compact, connected Lie group and let
${\bf K}_u\subset{\bf U}$ be a closed subgroup such that ${\bf U}$ acts effectively on $M ={\bf U}/{\bf K}_u$. Let $(I_M,J_M)$ be a LIHCS on $M$.
Let $\mk{u}$ and $\mk{k}_u$ be the Lie algebras of ${\bf U}$ and ${\bf K}_u$, respectively.

Then $(\mk{u},\mk{k}_u )$ is a hypercomplex pair and $(M,I_M,J_M)$ is associated with it (see Definition \ref{hypercomplex space associated with a hyp}).
\end{theorem}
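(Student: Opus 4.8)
The plan is to assemble the main theorem essentially as a corollary of the structural work done in Section \ref{nc0}, collecting the pieces that were proved along the way. Given a LIHCS $(I_M,J_M)$ on $M={\bf U}/{\bf K}_u$ with ${\bf U}$ acting effectively, I would first recall the setup of subsections \ref{coset space of a complex Lie group}--\ref{some notations}: realize $(M,I_M)$ as a complex coset space ${\bf G}_I/{\bf L}$, fix the $\tau$-invariant Cartan subalgebra $\mk{h}$ via the normalizer theorem, obtain the regular decompositions $\mk{g}=\mk{k}\oplus\mk{p}$, $\mk{g}=\mk{k}\oplus\mk{p}^+\oplus\mk{p}^-$ of Lemmas \ref{lemma for mk{k}}, \ref{lemma wcb8}, choose the positive system $\Delta^+$ with $\Delta_{\mk{p}}^+\subset\Delta^+$ as in \eqref{Delta_{mk{m}}^+=}, and form the stem $(\Gamma,\prec)$ of $\Delta^+$ together with $\Gamma_{\mk{k}}=\Gamma\cap\Delta_{\mk{k}}^+$, $\Gamma_{\mk{p}}=\Gamma\cap\Delta_{\mk{p}}^+$.

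Next I would verify the three bullet conditions of Definition \ref{shp}. Divisibility of $\dim(\mk{u})-\dim(\mk{k}_u)=\dim_\RR(\mk{p}_u)=\dim_\RR(M)$ by four holds because $M$ carries a hypercomplex structure, so $T_o(M)\cong\HH^k$. The stemmed-subalgebra condition is exactly Corollary \ref{coro abot mk{g} and mk{k}}, which says $\mk{k}$ is a $\Gamma_{\mk{k}}$-stemmed subalgebra of $\mk{g}$ for the substem $\Gamma_{\mk{k}}$ (a substem by Corollary \ref{coro for the property of Gamma_{frak{k}}}); this in turn rests on Theorem \ref{th for J(E_gamma) in frak{h}} giving $\Delta_{\mk{k}}^+=\bigcup_{\gamma\in\Gamma_{\mk{k}}}\Phi_\gamma^+\cup\{\gamma\}$. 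The rank inequality $rank(\mk{g})+srank(\mk{k})\geq rank(\mk{k})+srank(\mk{g})$ is Corollary \ref{cars}. Hence $(\mk{u},\mk{k}_u)$ is a hypercomplex pair, and by construction $({\bf U},{\bf K}_u)$ is a pair of groups associated with it.

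It then remains to identify the complex structures. For the hypercomplex pair $(\mk{u},\mk{k}_u)$ with the data $(\mk{h},\Delta^+,\Gamma,\Gamma_{\mk{k}})$ just fixed, Section \ref{hcp0} constructs complex structures on $\mk{p}_u$ via formulas \eqref{afi4} and \eqref{afi3}, depending on a family $\rho=\{\rho_\gamma\}$; I would set $\rho_\gamma=\ri\,\ol{\gamma(V_\gamma)}$ as in the Remark following Proposition \ref{phf}. Then Proposition \ref{phf} and the subsequent proposition show that the given $J$ coincides with the formula \eqref{afi3} on $E_\gamma$ and $E_\alpha$ ($\gamma\in\Gamma_{\mk{p}}$, $\alpha\in\Phi_\gamma^+$) and preserves $\mk{j}_{\mk{p}}$ anti-commuting with $I$ there, while $I(\mk{w}_{\mk{p}})\subset\mk{o}_{\mk{p}}$ matches the first line of \eqref{afi4}; the remark after \eqref{afi3} records that \eqref{csj1}--\eqref{eee3} and \eqref{afi3} give the same structure. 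I would also note that on $\mk{o}_{\mk{p}_u}$ we are free to choose the compatible pair $(I|_{\mk{j}_{\mk{p}_u}},J|_{\mk{j}_{\mk{p}_u}})$ to be the ones induced by $(I_M,J_M)$, since Section \ref{hcp0} only requires $\dim_\RR(\mk{j}_{\mk{p}_u})\in 4\ZZ$ and $J|_{\mk{j}_{\mk{p}_u}}\circ I|_{\mk{j}_{\mk{p}_u}}=-I|_{\mk{j}_{\mk{p}_u}}\circ J|_{\mk{j}_{\mk{p}_u}}$. Therefore $I=((\rd\pi)_{\vert\mk{p}_u})^{-1}\circ I_M(o)\circ(\rd\pi)_{\vert\mk{p}_u}$ and likewise for $J$ agree with the structures of Definition \ref{hypercomplex space associated with a hyp}, so $(M,I_M,J_M)$ is associated with $(\mk{u},\mk{k}_u)$.

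The main obstacle is not any single new computation but making sure the bookkeeping is airtight: that the positive system and Cartan subalgebra chosen in Section \ref{nc0} really are admissible data for Definition \ref{shp}, that the first normalization in \eqref{afi4} (the part $I(\mk{w}_{\mk{p}})=\mk{z}_{\mk{p}}$ with $\mk{z}=I\mk{w}\subset\mk{o}$) is forced — which is precisely the content of $I(\mk{w}_{\mk{p}})\subset\mk{o}_{\mk{p}}$ from Proposition \ref{phf}, the analogue of \eqref{nsc} — and that the residual freedom on the toral part $\mk{j}_{\mk{p}}$ is genuinely immaterial. Once these are checked the theorem follows by citing the results above.
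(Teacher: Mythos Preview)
Your proposal is correct and follows precisely the paper's approach: the main theorem is stated without a separate proof because it is the culmination of the results accumulated in Section~\ref{nc0}, namely Corollary~\ref{coro for the property of Gamma_{frak{k}}}, Theorem~\ref{th for J(E_gamma) in frak{h}}, Corollary~\ref{coro abot mk{g} and mk{k}}, Corollary~\ref{cars}, Proposition~\ref{phf}, and the $J$-invariance of $\mk{j}_{\mk{p}}$ from \eqref{the matrix xi in details 3}. Your explicit bookkeeping---verifying the three conditions of Definition~\ref{shp} and then matching $(I,J)$ on $\mk{p}_u$ with the construction of subsection~\ref{chcs} via the choice $\rho_\gamma=\ri\,\ol{\gamma(V_\gamma)}$ and the residual freedom on $\mk{j}_{\mk{p}_u}$---is exactly how the paper intends the reader to assemble the conclusion.
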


\section{Semisimple hypercomplex pairs} \label{shcp0}
In this section we determine explicitly  all hypercomplex pairs
$(\mk{u}, \mk{k}_u)$, such that $\mk{u}$ is semisimple.  We call
these special  pairs  { \it semisimple hypercomplex pairs } and
show that they  correspond to the simply-connected, compact,
hypercomplex spaces. Now we give a precise definition, excluding
some trivial situations. The auxiliary notion " \textit{weak
semisimple hypercomplex pair}" is used only in this section.
\begin{df}\label{semishp}  Let $(\mk{a}, \mk{b})$  be a hypercomplex pair.
If $\mk{a}$ is semisimple with decomposition into simple ideals $\mk{a} = \mk{a}_1 \oplus \mk{a}_2 \oplus \dots \oplus \mk{a}_p$, such that  for any $i=1, \dots , k$  $\mk{b} \cap \mk{a}_i $ is a {\bf proper } subalgebra of $\mk{a}_i$, then we say that $(\mk{a}, \mk{b})$ is a \textbf{semisimple hypercomplex pair}.

\textbf{Weak semisimple hypercomplex pair} we shall call a pair $(\mk{a}, \mk{b})$, which satisfies all the conditions of a semisimple hypercomplex pair, except possibly $\dim(\mk{a})-\dim(\mk{b})\in 4 \ZZ$, $rank(\mk{g}) +  srank( \mk{k}) \geq rank( \mk{k}) + srank(\mk{g})$ (see definition \ref{shp}).
\end{df}

The aim of this section is to  show that if $(\mk{a},\mk{b})$ is a semisimple hypercomplex pair, then the simple components of both $\mk{a}$ and $\mk{b}$ are only  $su$ algebras. We shall use the explicit description of the stems of the simple Lie algebras from Section 3.3 of \cite{DimTsan10}.

\subsection{If \texorpdfstring{$\mk{g}$}{\space} is simple and \texorpdfstring{$srank(\mk{g}) = 2 rank(\mk{g})$}{\space}.}  \label{sect for srank(mk{g}) = 2 rank(mk{g})}
We get in this case  if $\mk{g}$ is any of the simple  compact Lie algebras: $\mk{so}(4 q, \CC), q \geq 2$, $\mk{so}(2 d +1, \CC)$,  $\mk{sp}(d, \CC) $, $d \geq 1$,  $\mk{e}_8$, $\mk{e}_7$, $\mk{f}_4$, $\mk{G}_2$.
\begin{prop} \label{prop for srank(mk{g}) = 2 rank(mk{g})} Let $( \mk{g}, \mk{k})$ be a complexified weak semisimple  hypercomplex pair with simple $\mk{g}$, such that $srank(\mk{g}) = 2 rank(\mk{g})$. Then
\be rank(\mk{g})+ srank(\mk{k}) - rank(\mk{k}_s) - srank(\mk{g}) < 0, \nonumber  \ee
where $\mk{k}_s$ denotes the semisimple part of the reductive Lie algebra $\mk{k}$.
\end{prop}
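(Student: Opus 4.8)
The plan is to recast the displayed inequality as a purely combinatorial estimate comparing $\#(\Gamma)$ with $\#(\Gamma_{\mk{k}})$. First I would exploit the hypothesis $srank(\mk{g})=2\,rank(\mk{g})$: by Definition~\ref{srank} it says $\#(\Gamma)=rank(\mk{g})=\dim(\mk{h})$, and since $\dim(\mk{h})=\dim(\mk{o})+\#(\Gamma)$ (the structural Corollary immediately preceding Lemma~\ref{edm1}), this forces $\mk{o}=\{0\}$. Hence the toral term in \eqref{dkh1} disappears, $\mk{g}=\bigoplus_{\gamma\in\Gamma}(sl_\gamma(2)\oplus\mc{V}_\gamma)$, and — writing $\mk{k}$ as a $\Gamma_{\mk{k}}$-stemmed subalgebra as in \eqref{pdp1} for the substem $\Gamma_{\mk{k}}$ furnished by the hypercomplex-pair structure — also $\mk{o}\cap\mk{k}=\{0\}$, so $\mk{k}=\bigoplus_{\gamma\in\Gamma_{\mk{k}}}(sl_\gamma(2)\oplus\mc{V}_\gamma)$. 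Because $(\mk{g},\mk{k})$ is the complexification of a weak semisimple hypercomplex pair with simple $\mk{g}$, the subalgebra $\mk{k}$ is proper in $\mk{g}$ (the properness requirement in Definition~\ref{semishp}); combined with the last two displays this gives $\Gamma_{\mk{k}}\subsetneq\Gamma$, whence $\#(\Gamma_{\mk{k}})\leq\#(\Gamma)-1$.

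Next I would collect the two remaining ingredients. By Lemma~\ref{pdp3} the stem of $\Delta_{\mk{k}}^+$ — equivalently the stem of the semisimple part $\mk{k}_s$ — is exactly $\Gamma_{\mk{k}}$, so $srank(\mk{k})=srank(\mk{k}_s)=2\#(\Gamma_{\mk{k}})$. The analogue for $\mk{k}_s$ of the identity $\dim(\mk{h})=\dim(\mk{o})+\#(\Gamma)$ gives $rank(\mk{k}_s)\geq\#(\Gamma_{\mk{k}})$ (indeed the elements $H_\gamma$, $\gamma\in\Gamma_{\mk{k}}$, are linearly independent and lie in a Cartan subalgebra of $\mk{k}_s$). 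Substituting $srank(\mk{g})=2\#(\Gamma)=2\,rank(\mk{g})$, $srank(\mk{k})=2\#(\Gamma_{\mk{k}})$ and $rank(\mk{k}_s)\geq\#(\Gamma_{\mk{k}})$ into the quantity to be bounded yields
\[
rank(\mk{g})+srank(\mk{k})-rank(\mk{k}_s)-srank(\mk{g})=2\#(\Gamma_{\mk{k}})-\#(\Gamma)-rank(\mk{k}_s)\leq\#(\Gamma_{\mk{k}})-\#(\Gamma)\leq-1<0,
\]
which is precisely the asserted inequality.

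I do not expect a serious obstacle; the one place that needs a careful sentence is the step ``$\mk{o}=\{0\}$ and $\mk{k}$ proper $\Rightarrow$ $\Gamma_{\mk{k}}\subsetneq\Gamma$'', i.e. checking that once the toral summand is absent a $\Gamma_{\mk{k}}$-stemmed subalgebra is completely determined by its substem, so $\Gamma_{\mk{k}}=\Gamma$ would force $\mk{k}=\mk{g}$ (this uses that the sum in \eqref{dkh1} is direct). It is also worth noting that this argument is uniform and never touches the explicit list $\mk{so}(4q,\CC)$, $\mk{so}(2d+1,\CC)$, $\mk{sp}(d,\CC)$, $\mk{e}_8$, $\mk{e}_7$, $\mk{f}_4$, $\mk{G}_2$ of simple algebras with $srank=2\,rank$; if one prefers, the same bound can instead be verified table by table from Section 3.3 of \cite{DimTsan10}, but the stem-counting route above is shorter and handles all cases at once.
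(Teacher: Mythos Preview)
Your argument is correct and lands on the same key inequality as the paper, namely $srank(\mk{k})\leq 2\,rank(\mk{k}_s)$ (your $rank(\mk{k}_s)\geq\#(\Gamma_{\mk{k}})$ is exactly that, rewritten), which the paper cites as Corollary~2.44 of \cite{DimTsan10}. Where you diverge is only in the final strict step: the paper uses the basis to note that $\Pi\cap\Delta_{\mk{k}}\subsetneq\Pi$, hence $rank(\mk{k}_s)<rank(\mk{g})$, and concludes $-rank(\mk{g})+rank(\mk{k}_s)<0$; you instead extract the structural consequence $\mk{o}=\{0\}$ of $\#(\Gamma)=\dim\mk{h}$, so that a $\Gamma_{\mk{k}}$-stemmed subalgebra is determined by its substem, whence $\mk{k}\subsetneq\mk{g}$ forces $\#(\Gamma_{\mk{k}})<\#(\Gamma)$. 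Both are short and valid; the paper's route avoids any discussion of $\mk{o}$, while yours makes explicit the pleasant fact that the hypothesis $srank(\mk{g})=2\,rank(\mk{g})$ is equivalent to $\mk{o}=\{0\}$, which explains conceptually why no toral freedom is left for $\mk{k}$.
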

\bpr Now  $srank(\mk{g}) = 2 rank(\mk{g})$ and therefore \begin{gather*}  rank(\mk{g})+ srank(\mk{k}) - rank(\mk{k}_s) - srank(\mk{g})= -rank(\mk{g})+ srank(\mk{k}) - rank(\mk{k}_s)\\ \leq -rank(\mk{g}) + rank(\mk{k}_s)\end{gather*}
where we used the inequality $srank(\mk{k}) \leq 2 rank(\mk{k}_s)$, proved in Corollary 2.44 of \cite{DimTsan10}.

Let  ${\mc M}$ be as in Lemma \ref{pdp3} (now, by the definition, $\mk{k}$ is a $\Gamma_{\mk{k}}$-stemmed subalgebra of $\mk{g}$ for some substem $\Gamma_{\mk{k}}$). Then
\be \label{a common formula in semisimple hcp} \mk{k} = (\mk{h}\cap \mk{k})  + \sum_{\alpha \in \Delta_{\mk{k}}} \mk{g}(\alpha) \qquad  \Delta_{\mk{k}} = \bigcup_{\gamma \in {\mc M}} \Theta_\gamma \qquad  \Delta_{\mk{k}}^+ = \bigcup_{\gamma \in {\mc M}} \Theta_\gamma^+\ee and  a basis of $\Delta_{\mk{k}}$ is $\Pi \cap \Delta_{\mk{k}}$, where $\Pi$ is a basis of $\Delta$. Therefore
 $-rank(\mk{g}) + rank(\mk{k}_s)= - \# (\Pi) + \# (\Pi \cap \Delta_{\mk{k}})$. So we see that $rank(\mk{g})+ srank(\mk{k}) - rank(\mk{k}_s) - srank(\mk{g}) \geq 0 $ implies $- \# (\Pi) + \# (\Pi \cap \Delta_{\mk{k}}) \geq 0$, hence $\Pi = \Pi \cap \Delta_{\mk{k}} $. On the other hand $\Pi = \Pi \cap \Delta_{\mk{k}} $ implies $\Delta = \Delta_{\mk{k}}$ and then $\mk{g} = \mk{k}$, which contradicts the definition of weak semisimple  hypercomplex pair \ref{semishp}.
\epr
\subsection{If \texorpdfstring{$\mk{g}=\mk{so}(2 p,\CC)$, $p=2 q +1$, $q \geq 2$}{\space}}

Here the root system is $\mk{d}_p$. We take $p$ orthonormal vectors
$e_1,\dots,e_p$ in the standard Euclidean $p$-dimensional space
and put
\begin{gather}
\Delta  = \{\pm e_i \pm e_j\vert i, j = 1, \dots, p , \ i < j \}, \quad
\Delta^+  =  \{  e_i \pm e_j\vert i, j =1, \dots,p,   \   i< j \}.  \nonumber
\end{gather}
Let us recall that the stem is $\Gamma = \{\gamma_1, \dots , \gamma_d \},\ d = 2q $, where
\begin{gather*}
\gamma_1 = e_1+e_2,\ \gamma_2 = e_3+e_4,\dots, \gamma_k=e_{2k-1}+e_{2k}, \dots, \ \gamma_q = e_{2q-1} + e_{2q},\\
\gamma_{q+1} = e_1 - e_{2},    \dots , \gamma_{q+k}=e_{2k-1}-e_{2k}, \dots,  \gamma_{2q} = e_{2q -1} - e_{2 q}.
 \end{gather*}
In this case the  root systems $\Theta_\gamma$ for $\gamma \in \Gamma$  are as follows
\begin{gather}  \Theta_{\gamma_k} = \Theta_{e_{2k-1}+e_{2k}} = \{\pm e_i \pm e_j \vert 2 k -1 \leq i < j \leq p \} = \mk{d}_{p-2k+2}  \nonumber \\
      \label{mk{u}=mk{so}(2 m), m=2 q +1 Delta_gamma}       \Theta_{\gamma_{q+k}} =\Theta_{e_{2k-1}+e_{2k}}= \{e_{2k-1}-e_{2k}, e_{2k}-e_{2k-1} \} = \mk{a}_{1}   \\
          k=1,\dots, q.   \nonumber
\end{gather}
Therefore
\be rank(\mk{g}) =p \qquad srank(\mk{g})= 2(p-1).  \ee
Now we  may depict the partially ordered set ($\Gamma ,\prec $ ) in the following Hasse diagram
\be \begin{array}{ccccccccccccc} \gamma_1 &  \rightarrow & \gamma_2 & \rightarrow & \gamma_3 & \rightarrow & \dots & \rightarrow & \gamma_k & \rightarrow & \dots & \rightarrow & \gamma_q   \\
\downarrow &  \searrow &     \downarrow      & \searrow &  \downarrow    & \searrow & \dots & \searrow & \downarrow & \searrow & \dots & \searrow &  \downarrow  \\
  \gamma_{q+1} &        & \gamma_{q+2} &       &  \gamma_{q+3} &        & \dots &  & \gamma_{q+k} &  & \dots &  & \gamma_{2q} \end{array}  \ee

And we see that the subsets of $\Gamma$, which consist of pairwise incomparable elements and do not contain the highest root $\gamma_1$,  are of the type:
\begin{gather} \label{type of {mc M} so(2p)1}   {\mc M} = \emptyset \quad \mbox{or}  \\
\label{type of {mc M} so(2p)2} {\mc M} = A \cup \{ \gamma_k\}   \qquad 2\leq k \leq q, \ \ A \subset \{\gamma_{q+1}, \gamma_{q+2}, \dots ,\gamma_{q+k-1}  \}  \\
\ \ \mbox{or} \ \ \nonumber \\
\label{type of {mc M} so(2p)3} {\mc M} = A    \qquad  \ \ A \subset \{\gamma_{q+1}, \gamma_{q+2}, \dots ,\gamma_{2 q}  \}. \end{gather}
Now we shall prove
\begin{prop} \label{prop for so(2p) p odd} Let $( \mk{g}, \mk{k})$ be a  complexified  weak semisimple  hypercomplex pair. Let $\mk{g} = \mk{so}(2p,\CC)$, where $p=2 q +1$, $q \geq 2$.  Then
$ rank(\mk{g})+ srank(\mk{k}) - rank(\mk{k}_s) - srank(\mk{g}) < 0. \nonumber  $
 \end{prop}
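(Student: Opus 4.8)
The plan is to follow the same scheme as in Proposition \ref{prop for srank(mk{g}) = 2 rank(mk{g})}, but now working with the more refined inequality available for $\mk{d}_p$. First I would suppose, for contradiction, that $rank(\mk{g}) + srank(\mk{k}) - rank(\mk{k}_s) - srank(\mk{g}) \geq 0$. Plugging in $rank(\mk{g}) = p$, $srank(\mk{g}) = 2(p-1)$ and using the structural description \eqref{a common formula in semisimple hcp}, i.e. $\Delta_{\mk{k}} = \bigcup_{\gamma \in {\mc M}} \Theta_\gamma$ with ${\mc M}$ a set of pairwise incomparable elements of $\Gamma$ not containing the highest root $\gamma_1$, one has $rank(\mk{k}_s) = \#(\Pi \cap \Delta_{\mk{k}})$. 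So the assumed inequality becomes $p + srank(\mk{k}) - \#(\Pi \cap \Delta_{\mk{k}}) - 2(p-1) \geq 0$, i.e. $srank(\mk{k}) \geq p - 2 + \#(\Pi \cap \Delta_{\mk{k}})$.

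Next I would compute both $srank(\mk{k})$ and $\#(\Pi \cap \Delta_{\mk{k}})$ for each of the three possible shapes \eqref{type of {mc M} so(2p)1}, \eqref{type of {mc M} so(2p)2}, \eqref{type of {mc M} so(2p)3} of ${\mc M}$. The key inputs are that $srank(\mk{k}) = 2 \#(\Gamma_{\mk{k}}) = 2 \#(\Gamma \cap \Delta_{\mk{k}})$ (Definition \ref{srank} applied to $\Delta_{\mk{k}}$, whose stem is $\Gamma_{\mk{k}}$ by Lemma \ref{pdp3}), and the explicit root systems \eqref{mk{u}=mk{so}(2 m), m=2 q +1 Delta_gamma}: $\Theta_{\gamma_k} \cong \mk{d}_{p-2k+2}$ (so its stem has $2(p-2k+1)$ elements and its simple rank is $p-2k+2$) for $1 \le k \le q$, while $\Theta_{\gamma_{q+k}} \cong \mk{a}_1$ (stem of size $1$, rank $1$). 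For ${\mc M} = \emptyset$ we get $\mk{k} = \mk{h}$, $srank(\mk{k}) = 0 = \#(\Pi \cap \Delta_{\mk{k}})$, and the inequality reads $0 \geq p-2 \geq 1$, a contradiction. For ${\mc M} = A \cup \{\gamma_k\}$ with $A \subset \{\gamma_{q+1},\dots,\gamma_{q+k-1}\}$, writing $|A| = a \le k-1$, one has $\Delta_{\mk{k}} = \Theta_{\gamma_k} \sqcup \bigsqcup_{\gamma_{q+j}\in A}\Theta_{\gamma_{q+j}}$, hence $\#(\Pi\cap\Delta_{\mk{k}}) = (p-2k+2) + a$ and $srank(\mk{k}) = 2(p-2k+1) + a$ (since $\Theta_{\gamma_k}$ contributes its stem of size $2(p-2k+1)$ and each $\mk{a}_1$ a stem of size $1$). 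The needed inequality $2(p-2k+1)+a \geq p-2 + (p-2k+2)+a$ simplifies to $2(p-2k+1) \geq 2p-2k$, i.e. $-4k+2 \geq -2k$, i.e. $2 \geq 2k$, forcing $k=1$; but $\gamma_1$ is excluded from ${\mc M}$, a contradiction. For ${\mc M} = A \subset \{\gamma_{q+1},\dots,\gamma_{2q}\}$ with $|A|=a$, all components are $\mk{a}_1$'s, so $\#(\Pi\cap\Delta_{\mk{k}}) = a$ and $srank(\mk{k}) = a$; the inequality reads $a \geq p-2+a$, i.e. $0 \geq p-2$, again impossible since $p = 2q+1 \geq 5$.

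Having exhausted all three cases, the assumed inequality is impossible, which proves the strict inequality $rank(\mk{g}) + srank(\mk{k}) - rank(\mk{k}_s) - srank(\mk{g}) < 0$. I expect the main obstacle to be purely bookkeeping: correctly reading off the stem size (equivalently $srank$) versus the simple rank of each component $\Theta_\gamma$ from \eqref{mk{u}=mk{so}(2 m), m=2 q +1 Delta_gamma}, and being careful that the $\mk{d}_2 \cong \mk{a}_1 \times \mk{a}_1$ degeneracy (occurring when $k = q$, since $p - 2q + 2 = 3$ here so actually $\Theta_{\gamma_q} \cong \mk{d}_3 \cong \mk{a}_3$) does not upset the count — but since $p$ is odd the relevant $\Theta_{\gamma_k}$ are $\mk{d}_m$ with $m = p-2k+2$ odd and $\geq 3$, so no type-$\mk{d}$ low-rank coincidence interferes and the uniform formulas $srank(\mk{d}_m) = 2(m-1)$, $rank(\mk{d}_m) = m$ apply throughout. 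I would also double-check that ${\mc M}$ being pairwise incomparable and avoiding $\gamma_1$ is exactly what the Hasse diagram yields as \eqref{type of {mc M} so(2p)1}--\eqref{type of {mc M} so(2p)3}, which is immediate from the displayed diagram, and that $\#(\Gamma\cap\Delta_{\mk{k}}) = \sum_{\gamma\in{\mc M}}\#(\Gamma\cap\Theta_\gamma)$ by the disjointness in \eqref{pdp2}.
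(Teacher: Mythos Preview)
Your approach is exactly the paper's: split into the three shapes of ${\mc M}$ and compute $srank(\mk{k})$ and $rank(\mk{k}_s)$ in each. However, you have a computational slip that runs through Cases 2 and 3: by Definition~\ref{srank}, $srank(\mk{a}_1) = 2\cdot 1 = 2$, not $1$. (Your parenthetical also mislabels $2(p-2k+1)$ as the ``stem size'' of $\Theta_{\gamma_k}\cong\mk{d}_{p-2k+2}$; that number is its $srank$, the stem itself has $p-2k+1$ elements.) Thus in Case 2 the correct values are
\[
srank(\mk{k}) = 2(p-2k+1) + 2a,\qquad rank(\mk{k}_s) = (p-2k+2)+a,
\]
so the assumed inequality becomes $2(p-2k+1)+2a \ge (p-2)+(p-2k+2)+a$, i.e.\ $a\ge 2k-2$; combined with $a\le k-1$ this forces $k\le 1$, still a contradiction. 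In Case 3 one gets $srank(\mk{k})=2a$, $rank(\mk{k}_s)=a$, so the inequality reads $2a \ge p-2+a$, i.e.\ $a\ge p-2=2q-1$; but $a\le q$ gives $q\le 1$, contradicting $q\ge 2$.

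So the error is harmless for the final conclusion, but your displayed computations in Cases 2 and 3 need to be corrected to match the paper's values $srank(\mk{k}) = 2(p-2k+1)+2\#(A)$ and $srank(\mk{k})=2\#(A)$ respectively. (Also, in Case 1 you should not write $\mk{k}=\mk{h}$: from $\Gamma_{\mk{k}}=\emptyset$ one only gets $\mk{k}\subset\mk{h}$, but this is immaterial since all you use is $\mk{k}_s=0$ and $srank(\mk{k})=0$.)
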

\bpr
Let ${\mc M}$ be the subset of the stem, which corresponds to $\mk{k}$ as  in Lemma \ref{pdp3} (see \eqref{a common formula in semisimple hcp} also). Then about ${\mc M}$ we have only three possible cases \eqref{type of {mc M} so(2p)1}, \eqref{type of {mc M} so(2p)2}, \eqref{type of {mc M} so(2p)3}.

\ul{If \texorpdfstring{${\mc M} = \emptyset$}{\space}.}
Then $rank(\mk{k}_s) = 0$ and $srank(\mk{k}) = 0$, hence (we recall that $q \geq 2$)
\begin{gather*} rank(\mk{g})+ srank(\mk{k}) - rank(\mk{k}_s) - srank(\mk{g}) =p-2(p-1)=-2 q+1 <0 .\end{gather*}

\ul{If ${\mc M} = A \cup \{ \gamma_k\}$, where $2 \leq k \leq q$,  $A \subset \{\gamma_{q+1}, \gamma_{q+2}, \dots ,\gamma_{q+k-1}  \}$.}
In this case \\ $ \Delta_{\mk{k}} = \Theta_{\gamma_k} \cup \bigcup_{\gamma \in A } \Theta_\gamma$,
hence by \eqref{mk{u}=mk{so}(2 m), m=2 q +1 Delta_gamma} we have
\be srank(\mk{k}) &=& srank(\Theta_{\gamma_k}) +\sum_{\gamma \in A } srank(\Theta_\gamma)=  \nonumber \\
 &=& srank( \mk{d}_{p-2k+2} )+ \#(A) srank(\mk{a}_1) = 2(p-2k+1) + 2 \#(A) \nonumber  \\
 rank(\mk{k}_s) &=&  rank( \mk{so}(2(p-2k+2),\CC)  )+ \#(A) rank(sl(2,\CC)) \nonumber \\
 &=& p-2k+2 +  \#(A).  \nonumber \ee
 In this case we compute:
 \begin{gather*} rank(\mk{g})+ srank(\mk{k}) - rank(\mk{k}_s) - srank(\mk{g}) \\
 = p +  2(p-2k+1) + 2 \#(A) - (p-2k+2 +  \#(A))- 2(p-1)\\
 = 2 + \#(A) -2k \leq 2 + (k-1)-2 k=1-k <0 .\end{gather*}

\ul{If ${\mc M}= A $, where  $A \subset \{\gamma_{q+1}, \gamma_{q+2}, \dots ,\gamma_{2 q}  \}$.}
In this case  $\Delta_{\mk{k}} = \bigcup_{\gamma \in A } \Theta_\gamma$,
hence by \eqref{mk{u}=mk{so}(2 m), m=2 q +1 Delta_gamma} we have:
\be srank(\mk{k}) &=&  \sum_{\gamma \in A } srank(\Theta_\gamma)= \#(A)srank(\mk{a}_1) = 2 \#(A) \nonumber  \\
 rank(\mk{k}_s) &=&   \#(A) rank(sl(2,\CC)) = \#(A).  \nonumber \ee
 Hence (we recall that $q \geq 2$):
 \begin{gather*} rank(\mk{g})+ srank(\mk{k}) - rank(\mk{k}_s) - srank(\mk{g})  = p +  2 \#(A)\\ -  \#(A)- 2(p-1)
   \leq  q-p+2=1-q<0.
   \end{gather*}

\epr

\subsection{If \texorpdfstring{$\mk{g}=\mk{e}_6$}{\space}.}

Here $\mk{g} = \mk{e}_6$ (we denote by the same symbols the root system $\mk{e}_6$ and the corresponding complex simle Lie algebra).

Let us recall that the stem in this case contains four elements and it is linearly ordered, i. e. we may order the elements of $\Gamma$ in a sequence   $\Gamma = \{\gamma_1, \gamma_2 , \gamma_3, \gamma_4 \}$, so that the corresponding  Hasse diagram is
\ben \begin{array}{ccccccc} \gamma_1 &  \rightarrow & \gamma_2 & \rightarrow & \gamma_3 & \rightarrow & \gamma_4. \end{array}  \een
These statements follow from the results in Example 3.25, Case 3, \cite{DimTsan10}.
We see that now
\be rank(\mk{g}) =6, \qquad srank(\mk{g}) = 8.\ee
Furthermore, we have shown in Example 3.25, Case 3, \cite{DimTsan10} that
 the  root systems $\Theta_\gamma$ for $\gamma \in \Gamma$  are as follows
\begin{gather} \label{mk{u}=the comp form of e_6 Delta_{gamma}} \Theta_{\gamma_1} =  \mk{e}_{6}, \ \  \Theta_{\gamma_2} =  \mk{a}_{5}, \ \ \Theta_{\gamma_3} =  \mk{a}_{3}, \ \ \Theta_{\gamma_4} =  \mk{a}_{1}.  \end{gather}

And we see that the subsets of $\Gamma$, which consist of pairwise incomparable elements  are of the type
\begin{gather}\label{type of {mc M} e_6} {\mc M} = \{\gamma_i\} \quad i =1,2,3,4  \quad \mbox{or}  \quad   {\mc M} = \emptyset .\end{gather}
\begin{prop} \label{prop for e_6} Let $( \mk{g}, \mk{k})$ be a complexified weak semisimple  hypercomplex pair and $\mk{g} = \mk{e}_6$.   Then
$ rank(\mk{g})+ srank(\mk{k}) - rank(\mk{k}_s) - srank(\mk{g}) < 0. \nonumber  $
 \end{prop}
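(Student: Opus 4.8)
The plan is to argue exactly as in Propositions \ref{prop for srank(mk{g}) = 2 rank(mk{g})} and \ref{prop for so(2p) p odd}: reduce the statement to a finite check over the possible shapes of $\mk{k}$. Since $(\mk{g},\mk{k})$ is the complexification of a weak semisimple hypercomplex pair, $\mk{k}$ is a $\Gamma_{\mk{k}}$-stemmed subalgebra of $\mk{g}=\mk{e}_6$ for some substem $\Gamma_{\mk{k}}\subset\Gamma$, so Lemma \ref{pdp3} supplies a subset $\mc{M}\subset\Gamma$ of pairwise incomparable elements with $\Delta_{\mk{k}}=\bigcup_{\gamma\in\mc{M}}\Theta_\gamma$ and with stem $\Gamma_{\mk{k}}=\Gamma\cap\Delta_{\mk{k}}=\bigcup_{\gamma\in\mc{M}}(\Gamma\cap\Theta_\gamma)$; in particular $srank(\mk{k})=2\#(\Gamma_{\mk{k}})$ and both $srank(\mk{k})$ and $rank(\mk{k}_s)$ are additive over the components $\Theta_\gamma$, $\gamma\in\mc{M}$. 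By \eqref{type of {mc M} e_6} the only possibilities are $\mc{M}=\emptyset$ or $\mc{M}=\{\gamma_i\}$ with $i\in\{1,2,3,4\}$. The case $\mc{M}=\{\gamma_1\}$ would give $\Delta_{\mk{k}}=\Theta_{\gamma_1}=\mk{e}_6$, hence $\mk{k}=\mk{g}$, which is impossible since in a weak semisimple hypercomplex pair $\mk{b}\cap\mk{a}_i$ must be a \emph{proper} subalgebra of every simple ideal $\mk{a}_i$ (here $\mk{a}=\mk{e}_6$ is simple). So it remains to treat $\mc{M}\in\{\emptyset,\{\gamma_2\},\{\gamma_3\},\{\gamma_4\}\}$.

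For the computation I would use the data already recorded: $rank(\mk{e}_6)=6$, $srank(\mk{e}_6)=8$, and, from \eqref{mk{u}=the comp form of e_6 Delta_{gamma}} together with the description of the stems of the $\mk{a}_n$ in Section 3.3 of \cite{DimTsan10} (the stem of $\mk{a}_n$ has $\lfloor(n+1)/2\rfloor$ elements), the values $srank(\mk{a}_5)=6$, $srank(\mk{a}_3)=4$, $srank(\mk{a}_1)=2$ and $rank(\mk{a}_5)=5$, $rank(\mk{a}_3)=3$, $rank(\mk{a}_1)=1$. Then $rank(\mk{g})+srank(\mk{k})-rank(\mk{k}_s)-srank(\mk{g})$ equals $6+0-0-8=-2$ for $\mc{M}=\emptyset$, equals $6+6-5-8=-1$ for $\mc{M}=\{\gamma_2\}$, equals $6+4-3-8=-1$ for $\mc{M}=\{\gamma_3\}$, and equals $6+2-1-8=-1$ for $\mc{M}=\{\gamma_4\}$. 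In every case the quantity is strictly negative, which is the assertion.

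I do not expect a genuine obstacle: all the structural input is already in place and the argument is a few lines of bookkeeping. The only two points needing some care are (i) checking that $\mc{M}=\{\gamma_1\}$ is precisely the configuration ruled out by the properness clause in the definition of a weak semisimple hypercomplex pair, and (ii) reading off the correct $srank$ and $rank$ of the sub-root-systems $\mk{a}_5$, $\mk{a}_3$, $\mk{a}_1$ occurring in \eqref{mk{u}=the comp form of e_6 Delta_{gamma}}, i.e. that the stem of $\mk{a}_n$ has $\lfloor(n+1)/2\rfloor$ elements, as listed in Section 3.3 of \cite{DimTsan10}.
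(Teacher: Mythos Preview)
Your proposal is correct and follows essentially the same approach as the paper: enumerate the possible $\mc{M}$ via \eqref{type of {mc M} e_6}, exclude $\mc{M}=\{\gamma_1\}$ by the properness condition, and compute the quantity case by case using the data in \eqref{mk{u}=the comp form of e_6 Delta_{gamma}}. The only cosmetic difference is that the paper handles $\mc{M}=\{\gamma_k\}$, $k=2,3,4$, with a single parametric formula $6+2(5-k)-(9-2k)-8=-1$ rather than three separate lines.
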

\bpr Let ${\mc M}$ be the subset of the stem, which corresponds to $\mk{k}$ as  in Lemma \ref{pdp3}(see \eqref{a common formula in semisimple hcp} also). Then about ${\mc M}$ we have the possibilities \eqref{type of {mc M} e_6}.

\ul{If ${\mc M} = \emptyset$.}
Then $rank(\mk{k}_s) = 0$ and $srank(\mk{k}_s) = 0$, hence
\begin{gather*} rank(\mk{g})+ srank(\mk{k}) - rank(\mk{k}_s) - srank(\mk{g}) =6-8=-2 <0 .\end{gather*}

\ul{If $ {\mc M}= \{ \gamma_k\})$, where $k=2,3,4$.}  In this case  $ \Delta_{\mk{k}} = \Theta_{\gamma_k} $, where $k=2,3,4$,
hence by \eqref{mk{u}=the comp form of e_6 Delta_{gamma}} we have (we do not consider $ {\mc M}= \{ \gamma_1\}$, since $\mk{k}$ is a proper subalgebra of $\mk{g}$):
\be srank(\mk{k}) &=&  srank( \mk{a}_{9-2 k} ) =srank( \mk{a}_{2(5-k)-1} ) = 2(5-k)  \nonumber\\
 rank(\mk{k}_s) &=&  rank( sl(9-2 k+1,\CC)  )= 9-2 k.  \nonumber \ee
Using these formlas  we compute:
 \begin{gather*} rank(\mk{g})+ srank(\mk{k}) - rank(\mk{k}_s) - srank(\mk{g}) =6+2(5-k) -(9-2 k)-8=-1 <0.\end{gather*}
 \epr

\subsection{If \texorpdfstring{$\mk{g}=sl(n+1, \CC)$, $n \geq 1$}{\space}.}

Here $\mk{g} = sl(n+1, \CC)$, i. e. the traceless $(n+1) \times
(n+1)$ complex matrices. We shall denote by $E^i_j$ the $(n+1)
\times (n+1)$ matrix with $1$ at the intersection of the $i$-th
row and the $j$-th column and zeros at the others positions, where
$i$ and $j$ vary from $1$ to $n+1$. Let $\{ e_i \}_{i=1}^{n+1} $
be the dual basis of the basis $\{ E^i_i \}_{i=1}^{n+1}$ of the
subspace in $gl(n+1, \CC)$ of all diagonal matrices.  We shall
work  with the following  Cartan subalgebra  of $sl(n+1, \CC)$
\ben \mk{h} = span\{E^i_i - E^{i+1}_{i+1} \vert i = 1, \dots, n \}
\een and the following root system and subset of positive roots
\ben  \Delta = \{e_i - e_j \vert 1 \leq i \neq j \leq n+1\} \qquad
\quad \Delta^{+} = \{e_i - e_j \vert 1 \leq i < j \leq
n+1\}.\nonumber \een Let us recall that the stem $\Gamma$ contains
$d= \ksco{\frac{n+1}{2}}$ elements and in this case it is linearly
ordered, i. e. we may order the elements of $\Gamma$ in a sequence
$\Gamma = \{\gamma_1, \gamma_2 , \dots, \gamma_d \}$, so that the
corresponding  Hasse diagram is \ben
\begin{array}{cccccccc} \gamma_1 &  \rightarrow & \gamma_2 &
\rightarrow & \cdots & \rightarrow &  \gamma_d. \end{array}  \een
We see that now \be rank(\mk{g}) =n, \qquad srank(\mk{g}) = 2 d =
2 \ksco{\frac{n+1}{2}}.\ee Furthermore, we have shown that
 the  root systems $\Theta_\gamma$ for $\gamma \in \Gamma$ are as follows
\begin{gather} \label{mk{u}=mk{su}(n+1) Delta_{gamma}} \Theta_{\gamma_1} =  \mk{a}_{n}, \ \  \Theta_{\gamma_2} =  \mk{a}_{n-2}, \dots, \Theta_{\gamma_k} =  \mk{a}_{n-2k+2},  \dots , \Theta_{\gamma_d} =  \mk{a}_{n -2d+2}.  \end{gather}

And now the possibilities for ${\mc M}$ are
\begin{gather*} {\mc M} = \{\gamma_i\} \quad i =1,\dots,d  \quad \mbox{or}  \quad   {\mc M} = \emptyset .\end{gather*}

\begin{prop}\label{prop mk{g}=sl(n+1,CC)} Let $( \mk{g}, \mk{k})$ be a complexified weak semisimple  hypercomplex pair and  $\mk{g} = sl(n+1, \CC)$, $n\geq 1$.  Then if $n$ is odd and $srank(\mk{k}) = 0$  we have $ rank(\mk{g})+ srank(\mk{k}) - rank(\mk{k}_s) - srank(\mk{g}) =-1$, otherwise
$ rank(\mk{g})+ srank(\mk{k}) - rank(\mk{k}_s) - srank(\mk{g}) =0. \nonumber  $
 \end{prop}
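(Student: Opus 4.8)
The plan is to reduce everything to the classification of the set $\mc{M}\subset\Gamma$ attached to $\mk{k}$ by Lemma \ref{pdp3}. Since $(\mk{g},\mk{k})$ is a weak semisimple hypercomplex pair with $\mk{g}=sl(n+1,\CC)$ simple, $\mk{k}$ is a $\Gamma_{\mk{k}}$-stemmed subalgebra and, by Lemma \ref{pdp3}, $\Delta_{\mk{k}}=\bigcup_{\gamma\in\mc{M}}\Theta_\gamma$ with the elements of $\mc{M}$ pairwise $\prec$-incomparable; moreover $\Gamma_{\mk{k}}$ is the stem of $\Delta_{\mk{k}}^+$, so $srank(\mk{k})=2\#(\Gamma_{\mk{k}})$ and hence $srank(\mk{k})=0$ if and only if $\mc{M}=\emptyset$. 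Because the stem of $sl(n+1,\CC)$ is linearly ordered, $\mc{M}$ contains at most one element; and since $\mk{k}$ is a proper subalgebra of $\mk{g}$, the value $\mc{M}=\{\gamma_1\}$ is impossible ($\Theta_{\gamma_1}=\mk{a}_n=\Delta$ would force $\mk{k}=\mk{g}$). So only two situations arise: $\mc{M}=\emptyset$, or $\mc{M}=\{\gamma_k\}$ with $2\le k\le d$, where $d=\ksco{\frac{n+1}{2}}$.

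First I would record the numerical data: $rank(\mk{g})=n$, $srank(\mk{g})=2d=2\ksco{\frac{n+1}{2}}$, and from Definition \ref{srank} applied to $\mk{a}_m=sl(m+1,\CC)$ (whose stem has $\ksco{\frac{m+1}{2}}$ elements) that $srank(\mk{a}_m)=2\ksco{\frac{m+1}{2}}$. If $\mc{M}=\emptyset$ then $\Delta_{\mk{k}}=\emptyset$, so $\mk{k}\subset\mk{h}$ is abelian, giving $rank(\mk{k}_s)=0$ and $srank(\mk{k})=0$; the quantity in question is $n-2\ksco{\frac{n+1}{2}}$, which is $0$ for $n$ even and $-1$ for $n$ odd. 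If $\mc{M}=\{\gamma_k\}$ with $2\le k\le d$, then \eqref{mk{u}=mk{su}(n+1) Delta_{gamma}} gives $\Delta_{\mk{k}}=\Theta_{\gamma_k}=\mk{a}_{n-2k+2}$ (and $n-2k+2\ge 1$ since $k\le d$), so $rank(\mk{k}_s)=n-2k+2$ and $srank(\mk{k})=srank(\mk{a}_{n-2k+2})=2\ksco{\frac{n-2k+3}{2}}$. Splitting on the parity of $n$ resolves the floors: when $n$ is even the argument $n-2k+3$ is odd, so $srank(\mk{k})=n-2k+2=rank(\mk{k}_s)$ and $srank(\mk{g})=n=rank(\mk{g})$, whence the quantity is $0$; when $n$ is odd, $n-2k+3$ is even, so $srank(\mk{k})=n-2k+3$ and $srank(\mk{g})=n+1$, and $n+(n-2k+3)-(n-2k+2)-(n+1)=0$ again.

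Combining the two cases with the equivalence $srank(\mk{k})=0\iff\mc{M}=\emptyset$ yields precisely the stated dichotomy: the value is $-1$ exactly when $n$ is odd and $srank(\mk{k})=0$, and $0$ in all remaining cases. I do not expect any structural difficulty here beyond what Lemma \ref{pdp3} and \eqref{mk{u}=mk{su}(n+1) Delta_{gamma}} already supply; the only real care needed is in the two points of bookkeeping — verifying that the linearity of the $sl$-stem plus properness really does leave only the two shapes of $\mc{M}$, and then resolving the integer parts $\ksco{\cdot}$ correctly according to the parity of $n$ in the singleton case.
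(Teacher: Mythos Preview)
Your proposal is correct and follows essentially the same route as the paper: reduce to the two shapes $\mc{M}=\emptyset$ or $\mc{M}=\{\gamma_k\}$ ($2\le k\le d$) via Lemma~\ref{pdp3} and the linear order on the $sl$-stem, then compute $rank(\mk{k}_s)$ and $srank(\mk{k})$ from \eqref{mk{u}=mk{su}(n+1) Delta_{gamma}}. The only cosmetic difference is that the paper handles the floor in the singleton case by the identity $2\ksco{\frac{n-2k+3}{2}}=2\ksco{\frac{n+1}{2}}+2-2k$ (valid since $n-2k+3$ and $n+1$ have the same parity), whereas you split on the parity of $n$; the resulting computations are identical.
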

 \bpr
\ul{If ${\mc M} = \emptyset$.}
Then  $rank(\mk{k}_s) = 0$ and $srank(\mk{k}_s) = 0$, hence
\begin{gather*} rank(\mk{g})+ srank(\mk{k}) - rank(\mk{k}_s) - srank(\mk{g}) =n-2\ksco{\frac{n+1}{2}} \\
=\left \{ \begin{array}{cc} 0 & \mbox{if $n$ is even} \\ -1 & \mbox{if $n$ is odd} \end{array} \right. .\end{gather*}

\ul{If ${\mc M} = \{ \gamma_k\}$, where $k=2,\dots, d$.}
In this case  $ \Delta_{\mk{k}} = \Theta_{\gamma_k} $, where $k=2,\dots, d$,
hence by \eqref{mk{u}=mk{su}(n+1) Delta_{gamma}} we have (we do not consider $ {\mc M}= \{ \gamma_1\}$, since $\mk{k}$ is a proper subalgebra of $\mk{g}$):
\be srank(\mk{k}) &=&  srank( \mk{a}_{n-2k+2} ) = 2\ksco{\frac{n-2k+3}{2}}=2\ksco{\frac{n+1}{2}}+2-2k \nonumber\\
 rank(\mk{k}_s) &=&  rank( sl(n-2k+3,\CC)  )= n-2k+2.  \nonumber \ee
Using these formulas  we compute:
 \begin{gather*} rank(\mk{g})+ srank(\mk{k}) - rank(\mk{k}_s) - srank(\mk{g}) \\ =n+2\ksco{\frac{n+1}{2}}+2-2k-(n-2k+2)-2\ksco{\frac{n+1}{2}}=0.\end{gather*}
 \epr

\begin{remark} \label{remark for embedding in mk{su}} Let $( \mk{g}, \mk{k})$ be a complexified weak semisimple  hypercomplex pair,  s. t. $\mk{g} = sl(n+1, \CC)$, $n\geq 1$, and $\mk{k}$ is a semisimple subalgebra of $\mk{g}$.  Then  $\mk{k}$ is isomorphic to $sl(n+3-2i,\CC)$ for some $i \in \{2,\dots,d\}$, which  is embedded in $sl(n+1, \CC)$ in terms of matrices as follows
\be  \label{remark for embedding in mk{su} matrices}  A  \mapsto \begin{bmatrix}0      & 0      & \cdots  & \cdots  & \cdots  & 0      & 0 \\
                                               0      & 0      & \cdots  & \cdots  & \cdots  & 0      & 0 \\
                                               \vdots & \vdots & \ddots &  \cdots &  \begin{rotate}{65} $\ddots $ \end{rotate}     & \vdots & \vdots \\
                                               \vdots & \vdots & \vdots & A      & \vdots & \vdots & \vdots \\
                                               \vdots & \vdots &  \begin{rotate}{65} $\ddots $ \end{rotate}      &  \cdots & \ddots & \vdots & \vdots  \\
                                               0      & 0      & \cdots  & \cdots  & \cdots  & 0      & 0 \\
                                               0      & 0      & \cdots  & \cdots  & \cdots  & 0      & 0
                                               \end{bmatrix}, \ee
where $A$ is $(n+3-2i) \times (n+3-2i)$ matrix and it is nested in a $(n+1) \times (n+1)$ matrix. In this situation the real subalgebra $\mk{k}_u = \mk{u} \cap \mk{k}$ is isomorphic to $su(n+3-2i)$ and it is embedded in $\mk{u} = su(n+1)$ in the same way.

\end{remark}

\subsection{Semisimple hypercomplex pairs}

\begin{df} \label{shp1} A weak semisimple hypercomplex pair  $(\mk{a}, \mk{b})$  will be called  $su$-hypercomplex pair, if $\mk{a} \cong su(n+1)$, $n \geq 2$, $\mk{b}$ is of the type   $\mk{b} \cong su(n+3-2 k)$, $k\geq 2$,  $n+3-2 k > 0$, and $\mk{b}$ is embedded in $\mk{a} \cong su(n+1)$ as shown in  Remark \ref{remark for embedding in mk{su}}, \eqref{remark for embedding in mk{su} matrices}.

If $n$ is even  we  include the pairs with vanishing $\mk{b}$, i. e. we allow in this definition $\mk{b}= \{0\}$, if $\mk{a} \cong{su}(n+1)$ with even $n$.

The complexification of an $su$-hypercomplex pair  will be called \ul{$sl$-hypercomplex pair}.

From Proposition \ref{prop mk{g}=sl(n+1,CC)} we see that the  pairs described here  are indeed hypercomplex pairs (not just weak hypercomplex pairs).

\end{df}

  Taking into account this definition and Remark \ref{remark for embedding in mk{su}}   we see that we may unite Propositions \ref{prop for srank(mk{g}) = 2 rank(mk{g})}, \ref{prop for so(2p) p odd}, \ref{prop for e_6}, \ref{prop mk{g}=sl(n+1,CC)}  in the following proposition \footnote{ In  subsection \ref{sect for srank(mk{g}) = 2 rank(mk{g})}, where the algebras $\mk{so}(4q)$ are considered, we start from $q=2$. The reason is that   $\mk{so}(4)\cong \mk{su}(2)\oplus \mk{su}(2)$.}  \footnote{For dealing with the cases $q=1$ in Proposition \ref{prop for so(2p) p odd} we recall that  $\mk{so}(6)\cong\mk{su}(4)$. } \footnote{From Proposition \ref{prop mk{g}=sl(n+1,CC)} we see that if $( \mk{g}, \mk{k})$ is a complexified weak semisimple  hypercomplex pair  with $\mk{g}=sl(1+1)$ (hence by definition $\mk{k}$ is a proper subalgebra of $\mk{g}$, which impose in this case $srank(\mk{k})=0$), then $ rank(\mk{g})+ srank(\mk{k}) - rank(\mk{k}_s) - srank(\mk{g}) < 0$. That's why in the definition of $\mk{su}$, resp. $\mk{sl}$, hypercomplex pairs \ref{shp1} we start from $n=2$. } :

\begin{prop} \label{theorem for simple mk{g}} Let $\mk{g}$ be a complex simple Lie algebra and $( \mk{g}, \mk{k})$ be a complexified weak semisimple  hypercomplex pair. Then  $ rank(\mk{g})+ srank(\mk{k}) - rank(\mk{k}_s) - srank(\mk{g}) \leq 0 \nonumber  $.

Equality is attained iff  $(\mk{g}, \mk{k}_s)$ is a $sl$-hypercomplex pair. (We recall that if $\mk{k}$ is abelian, we write $\mk{k}_s = \{0\}$).
\end{prop}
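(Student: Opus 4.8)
The plan is to reduce the statement, via the classification of complex simple Lie algebras, to the four case-by-case computations already carried out in this section. Every complex simple $\mk{g}$ is isomorphic to $sl(n+1,\CC)$ $(n\geq 1)$, to $\mk{so}(2d+1,\CC)$ or $\mk{sp}(d,\CC)$ $(d\geq 1)$, to $\mk{so}(2p,\CC)$ $(p\geq 4)$, or to one of $\mk{e}_6,\mk{e}_7,\mk{e}_8,\mk{f}_4,\mk{G}_2$. First I would dispose of all types other than $A_n$. For $\mk{so}(2d+1,\CC)$, $\mk{sp}(d,\CC)$, $\mk{e}_7$, $\mk{e}_8$, $\mk{f}_4$, $\mk{G}_2$, and $\mk{so}(4q,\CC)$ with $q\geq 2$, Proposition \ref{prop for srank(mk{g}) = 2 rank(mk{g})} shows $rank(\mk{g})+srank(\mk{k})-rank(\mk{k}_s)-srank(\mk{g})<0$; for $\mk{so}(2p,\CC)$ with $p=2q+1$, $q\geq 2$, the same strict inequality comes from Proposition \ref{prop for so(2p) p odd}, and for $\mk{e}_6$ from Proposition \ref{prop for e_6}. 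The low-rank Dynkin coincidences must be accounted for once: $\mk{so}(4,\CC)\cong sl(2,\CC)\oplus sl(2,\CC)$ is not simple and so irrelevant, while $\mk{so}(6,\CC)\cong sl(4,\CC)$, $\mk{so}(3,\CC)\cong\mk{sp}(1,\CC)\cong sl(2,\CC)$ and $\mk{sp}(2,\CC)\cong\mk{so}(5,\CC)$ are treated below under type $A$; thus $D_2,D_3,B_1,C_1,C_2$ contribute nothing new.

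It remains to treat $\mk{g}=sl(n+1,\CC)$, $n\geq 1$. Proposition \ref{prop mk{g}=sl(n+1,CC)} gives that $rank(\mk{g})+srank(\mk{k})-rank(\mk{k}_s)-srank(\mk{g})$ equals $-1$ if $n$ is odd and $srank(\mk{k})=0$, and equals $0$ otherwise. Together with the strict negativity in every non-$A$ case, this yields $rank(\mk{g})+srank(\mk{k})-rank(\mk{k}_s)-srank(\mk{g})\leq 0$ for all complex simple $\mk{g}$, which is the inequality. It also shows that the value $0$ is attained if and only if $\mk{g}\cong sl(n+1,\CC)$ and it is \emph{not} the case that $n$ is odd with $\mk{k}_s=\{0\}$; equivalently, precisely when $\mk{g}\cong sl(n+1,\CC)$ and either $n$ is even or $\mk{k}_s\neq\{0\}$. (Here $n=1$ is automatically excluded, since then $\mk{k}$ proper forces $\mk{k}_s=\{0\}$ and the value $-1$.)

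The remaining task is to identify this equality locus with the $sl$-hypercomplex pairs of Definition \ref{shp1}, applied to $(\mk{g},\mk{k}_s)$. I would first note that $\mk{k}_s$ is the semisimple part of the $\Gamma_{\mk{k}}$-stemmed subalgebra $\mk{k}$, is itself $\Gamma_{\mk{k}}$-stemmed with the same root system $\Delta_{\mk{k}}=\bigcup_{\gamma\in\Gamma_{\mk{k}}}\Phi_\gamma\cup\{\gamma,-\gamma\}$ as in \eqref{a common formula in semisimple hcp}, and satisfies $\mk{k}_s\neq\mk{g}$ (otherwise $\mk{k}=\mk{g}$); hence $(\mk{g},\mk{k}_s)$ is again a complexified weak semisimple hypercomplex pair with simple $\mk{g}$. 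If $\mk{k}_s=\{0\}$ and $n$ is even, then $(sl(n+1,\CC),\{0\})$ is an $sl$-hypercomplex pair by the even-$n$ clause of Definition \ref{shp1}. If $\mk{k}_s\neq\{0\}$, then Remark \ref{remark for embedding in mk{su}} gives $\mk{k}_s\cong sl(n+3-2i,\CC)$ for some $2\leq i\leq d=\lfloor(n+1)/2\rfloor$ — so $n+3-2i\geq 2>0$ — embedded as in \eqref{remark for embedding in mk{su} matrices}, which is exactly an $sl$-hypercomplex pair. Conversely, if $(\mk{g},\mk{k}_s)$ is an $sl$-hypercomplex pair then Proposition \ref{prop mk{g}=sl(n+1,CC)} returns the value $0$; this completes the proof.

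The arithmetic is entirely contained in the four propositions just cited, so the work that remains is organizational, and the main obstacle is the equality statement rather than any computation. Proposition \ref{prop mk{g}=sl(n+1,CC)} is phrased for the reductive $\mk{k}$, whereas Definition \ref{shp1} and Remark \ref{remark for embedding in mk{su}} refer to a semisimple second member, so one must pass cleanly from $\mk{k}$ to $\mk{k}_s$, check that the $\Gamma_{\mk{k}}$-stemmed structure and the embedding pattern \eqref{remark for embedding in mk{su} matrices} are inherited, and confirm that the two degenerate subcases ($n$ even with $\mk{k}_s=\{0\}$, and $n=1$) fall on the correct side of the equivalence. Verifying that the Dynkin coincidences route every simple type into exactly one of the four boxes without overlap is the other bookkeeping point.
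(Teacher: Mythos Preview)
Your proposal is correct and follows essentially the same approach as the paper, which simply states that this proposition unites Propositions \ref{prop for srank(mk{g}) = 2 rank(mk{g})}, \ref{prop for so(2p) p odd}, \ref{prop for e_6}, \ref{prop mk{g}=sl(n+1,CC)} together with Definition \ref{shp1} and Remark \ref{remark for embedding in mk{su}}; your writeup spells out the organizational details (the classification, the low-rank coincidences, and the passage from $\mk{k}$ to $\mk{k}_s$) more explicitly than the paper does. One minor slip: $\mk{sp}(2,\CC)\cong\mk{so}(5,\CC)$ is not of type $A$ and is not ``treated below under type $A$'' --- both descriptions already fall under Proposition \ref{prop for srank(mk{g}) = 2 rank(mk{g})}, so the coincidence is harmless, but the sentence should be corrected.
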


Now we can  prove the final theorem of this section:

\begin{theorem} \label{schc1} Let $(\mk{u},\mk{k}_u)$ be a semisimple hypercomplex pair with complexification $(\mk{g},\mk{k})$. Let $\mk{u}=\mk{u}_1 \oplus \dots \oplus \mk{u}_p$, where $\{ \mk{u}_i \}_{i=1}^p$ are the simple ideals of $\mk{u}$.
 Then   for $i=1,\dots,p $ the pair  $(\mk{u}_i, \mk{u}_i \cap \mk{k}_u)$ is an $su$-hypercomplex pair and $\mk{k}_u = (\mk{u}_1 \cap \mk{k}_u) \oplus (\mk{u}_2 \cap \mk{k}_u) \dots \oplus (\mk{u}_p \cap \mk{k}_u)$.

 In particular every  semisimple hypercomplex pair is {\bf minimal}  i. e.
 $$
 rank(\mk{g}) + srank(\mk{k}) - rank(\mk{k})- srank(\mk{g}) =0.
 $$
\end{theorem}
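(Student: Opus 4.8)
The plan is to reduce the semisimple case to the simple case already handled by Proposition~\ref{theorem for simple mk{g}}, and then to combine the resulting rank/srank identities with the inequality in the definition of a hypercomplex pair to force equality everywhere.

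First I would analyze how the substem structure of $(\mk{u},\mk{k}_u)$ decomposes along the simple ideals. Write $\mk{g} = \mk{g}_1 \oplus \dots \oplus \mk{g}_p$ for the complexified simple ideals. By Corollary~\ref{coro abot mk{g} and mk{k}}, $\mk{k}$ is a $\Gamma_{\mk{k}}$-stemmed subalgebra of $\mk{g}$; since the stem $\Gamma$ of $\Delta^+$ is the disjoint union of the stems $\Gamma^{(i)}$ of the irreducible components $\Delta^{(i)}$ (each $\Theta_\gamma$ lies in a single component, cf.\ the proof of \eqref{pdep 6}), the substem $\Gamma_{\mk{k}}$ splits as $\Gamma_{\mk{k}} = \bigsqcup_i \Gamma_{\mk{k}}^{(i)}$ with $\Gamma_{\mk{k}}^{(i)} = \Gamma_{\mk{k}} \cap \Gamma^{(i)}$ a substem of $\Gamma^{(i)}$. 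Likewise $\Delta_{\mk{k}} = \bigsqcup_i (\Delta_{\mk{k}} \cap \Delta^{(i)})$, and the regular description \eqref{frak{k} is regular} of $\mk{k}$ shows that $\mk{k} = \bigoplus_i \mk{k}_i$ where $\mk{k}_i = \mk{k} \cap \mk{g}_i$ is a $\Gamma_{\mk{k}}^{(i)}$-stemmed subalgebra of the simple algebra $\mk{g}_i$. Restricting the conjugation $\tau$ gives $\mk{k}_u = \bigoplus_i (\mk{k}_u \cap \mk{u}_i)$, and each $(\mk{u}_i, \mk{u}_i \cap \mk{k}_u)$ is a weak semisimple hypercomplex pair (the dimension and rank constraints are not yet known componentwise — that is exactly what we must deduce).

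Next I would apply Proposition~\ref{theorem for simple mk{g}} to each simple $\mk{g}_i$: it gives $r_i := rank(\mk{g}_i) + srank(\mk{k}_i) - rank((\mk{k}_i)_s) - srank(\mk{g}_i) \le 0$ for every $i$, with equality precisely when $(\mk{g}_i, (\mk{k}_i)_s)$ is an $sl$-hypercomplex pair. Summing over $i$, and using that $rank$, $srank$ and "rank of the semisimple part" are all additive over direct sums, yields $rank(\mk{g}) + srank(\mk{k}) - rank(\mk{k}_s) - srank(\mk{g}) \le 0$. On the other hand, since $(\mk{u},\mk{k}_u)$ is a genuine hypercomplex pair, the third condition of Definition~\ref{shp} gives $rank(\mk{g}) + srank(\mk{k}) \ge rank(\mk{k}) + srank(\mk{g}) \ge rank(\mk{k}_s) + srank(\mk{g})$, the last step because $rank(\mk{k}) = rank(\mk{k}_s) + \dim(\mk{c}_{\mk{k}}) \ge rank(\mk{k}_s)$. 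Hence the sum of the $r_i$ is both $\le 0$ and $\ge 0$, so every $r_i = 0$; by the equality clause of Proposition~\ref{theorem for simple mk{g}}, each $(\mk{g}_i, (\mk{k}_i)_s)$ — equivalently each $(\mk{u}_i, \mk{u}_i \cap \mk{k}_u)$ after taking compact real forms and invoking Definition~\ref{shp1} and Remark~\ref{remark for embedding in mk{su}} — is an $su$-hypercomplex pair. In particular $\mk{k}_i$ is semisimple, so $\mk{k}$ is semisimple, $rank(\mk{k}) = rank(\mk{k}_s)$, and the chain of inequalities above collapses to $rank(\mk{g}) + srank(\mk{k}) - rank(\mk{k}) - srank(\mk{g}) = 0$, i.e.\ minimality.

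The main obstacle I anticipate is the bookkeeping in the first paragraph: making rigorous that the stem, the sets $\Theta_\gamma$, and the substem condition \eqref{property of Gamma_mk{k}} all respect the decomposition into irreducible components, so that $\mk{k}$ really is the direct sum of the $\mk{k}_i$ and each factor is separately a $\Gamma_{\mk{k}}^{(i)}$-stemmed subalgebra. Once that reduction is in place the rest is a short additivity-and-inequality argument; the only subtlety there is noting that an $su$-hypercomplex pair has semisimple (indeed $su$-type or zero) second member, which kills the center of $\mk{k}$ and lets $rank(\mk{k})$ be replaced by $rank(\mk{k}_s)$ in the final equality.
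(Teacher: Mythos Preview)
Your overall strategy matches the paper's proof exactly: decompose along simple ideals, apply Proposition~\ref{theorem for simple mk{g}} to each factor, sum, and squeeze against the rank inequality from Definition~\ref{shp}.

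There is one genuine ordering issue, precisely at the point you flagged as the ``main obstacle''. The claim in your first paragraph that $\mk{k} = \bigoplus_i (\mk{k}\cap\mk{g}_i)$ is \emph{not} just bookkeeping: a $\Gamma_{\mk{k}}$-stemmed subalgebra has the form $(\mk{o}\cap\mk{k}) \oplus \sum_{\gamma\in\Gamma_{\mk{k}}}(sl_\gamma(2)\oplus\mc{V}_\gamma)$, and the abelian piece $\mk{o}\cap\mk{k}$ is an \emph{arbitrary} subspace of $\mk{o}$ --- it need not respect the splitting $\mk{o}=\bigoplus_i\mk{o}^{(i)}$. (For a weak hypercomplex pair this really can fail.) Consequently your later assertion ``in particular $\mk{k}_i$ is semisimple'' does not follow from knowing that $(\mk{g}_i,(\mk{k}_i)_s)$ is an $sl$-pair; that statement constrains only the semisimple part, not the centre of $\mk{k}_i$. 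The paper sidesteps this by defining $\mk{k}_i$ from the outset as the subalgebra generated by the root spaces in $\Delta_{\mk{k}}\cap\Delta^{(i)}$ (so $\mk{k}_i$ is semisimple by construction and $\mk{k}_s=\bigoplus_i\mk{k}_i$), and only \emph{after} establishing $\sum_i r_i = 0$ does it argue that the chain $rank(\mk{g})+srank(\mk{k})\ge rank(\mk{k})+srank(\mk{g})\ge rank(\mk{k}_s)+srank(\mk{g})$ collapses, forcing $rank(\mk{k})=rank(\mk{k}_s)$ and hence $\mk{k}=\mk{k}_s$. You have all the ingredients for this (indeed you wrote the chain), but the logical order in your second paragraph is inverted: derive $\mk{k}=\mk{k}_s$ from the collapsed inequalities first, and only then conclude the direct-sum decomposition and the identification $(\mk{u}_i,\mk{u}_i\cap\mk{k}_u)\cong(\mk{g}_i,(\mk{k}_i)_s)^{\tau}$.

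One small misattribution: you cite Corollary~\ref{coro abot mk{g} and mk{k}} to get that $\mk{k}$ is stemmed, but that corollary lives in Section~\ref{nc0} and assumes an existing LIHCS. Here the stemmed property is simply part of Definition~\ref{shp}.
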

\begin{proof}  Let $\mk{h}, {\mc M}, \Delta, \Delta_{\mk{k}}$ be as in Definition \ref{shp} and Lemma \ref{pdp3} applied for $(\mk{u}, \mk{k}_u)$ and $(\mk{g},\mk{k})$, i. e.
\be \mk{k} = \mk{k} \cap \mk{h} + \sum_{\alpha \in \Delta_{\mk{k}}} \mk{g}(\alpha); \qquad \Delta_{\mk{k}} = \bigcup_{\gamma \in {\mc M}} \Theta_\gamma, \ee
\be \label{schc10} rank(\mk{g}) +  srank(\mk{k}) \geq rank(\mk{k}) + srank(\mk{g})\geq rank(\mk{k}_s) + srank(\mk{g}),  \ee
where we denote the semisimple part of $\mk{k}$ by $\mk{k}_s$ (if $\mk{k} \subset \mk{h}$, then we write $\mk{k}_s = \{0\}$).

  Let $\mk{g} = \oplus_{i=1}^p \mk{g}_i $ be the decomposition of $\mk{g}$ into complex simple ideals and $\Delta = \cup_{i=1}^p \Delta_{i}$ be the decomposition of $\Delta$ into irreducible root systems with positive roots $\{ \Delta_i^+ \}_{i=1}^p$. Obviously $\mk{g}_i$ is the complexification of $\mk{u}_i$. Besides
  \be \label{schc11}  rank(\mk{g}) = \sum_{i=1}^p rank(\mk{g}_i) \qquad srank(\mk{g})=\sum_{i=1}^p srank(\mk{g}_i).\ee
 Let for $i=1,\dots, p$ \be {\mc M}_i = {\mc M} \cap \Delta_i \quad \qquad \Delta_{\mk{k}_i} =  \bigcup_{\gamma \in {\mc M}_i} \Theta_\gamma = \Delta_{\mk{k}_i} \cap  \Delta_{\mk{k}}\ee
 and let $\mk{k}_i\subset \mk{g}_i$ be the minimal  subalgebra of $\mk{g}$, containing $\sum_{\alpha \in \Delta_{\mk{k}_i}} \mk{g}(\alpha)$. Then obviously $\mk{k}_i$ is semisimple, if it is not trivial, and  $( \mk{g}_i,\mk{k}_i )$ is complexified weak semisimple hypercomplex pair. Furthermore, $ \mk{k}_s $ is the minimal  subalgebra of $\mk{g}$, containing $\sum_{\alpha \in \Delta_{\mk{k}}} \mk{g}(\alpha)$ and it  is decomposed as follows
 $ \mk{k}_s = \oplus_{i=1}^p \mk{k}_i $, hence
 \be \label{schc12} rank(\mk{k}_s) = \sum_{i=1}^p rank(\mk{k}_i) \qquad \quad  srank(\mk{k}) = \sum_{i=1}^p srank(\mk{k}_i).\ee
 From \eqref{schc10}, \eqref{schc11} and \eqref{schc12} it follows that
 \begin{gather*} rank(\mk{g}) + srank(\mk{k}) - rank(\mk{k}_s)- srank(\mk{g})\\ =\sum_{i=1}^p rank(\mk{g}_i) + srank(\mk{k}_i) - rank(\mk{k}_i)- srank(\mk{g}_i) \geq 0. \end{gather*}
 Since  $( \mk{g}_i,\mk{k}_i )$ is complexified weak semisimple hypercomplex pair  and $\mk{g}_i$ is simple for $i \in \{1,\dots,p\}$, then from Theorem \ref{theorem for simple mk{g}} and the inequality above it follows that all the pairs $(\mk{g}_i, \mk{k}_i)$ for $i=1,\dots,p$ are $sl$-hypercomplex pairs and that \begin{gather*} rank(\mk{g}) + srank(\mk{k}) - rank(\mk{k}_s)- srank(\mk{g})\\ =\sum_{i=1}^p rank(\mk{g}_i) + srank(\mk{k}_i) - rank(\mk{k}_i)- srank(\mk{g}_i) = 0. \end{gather*}
  If  $\mk{k}_s$ was a proper subalgebra of $\mk{k}$ then we would obtain $rank(\mk{k}_s)<rank(\mk{k})$ and this would give $rank(\mk{g}) + srank(\mk{k}) - rank(\mk{k})- srank(\mk{g})< $ $ rank(\mk{g}) + srank(\mk{k}) - rank(\mk{k}_s)- srank(\mk{g}) = 0 $, which contradicts \eqref{schc10}. Therefore $\mk{k} = \mk{k}_s$, i. e. $\mk{k}$ is semisimple, if it is  non trivial, and  $\mk{k} = \oplus_{i=1}^p \mk{k}_i$ $=\oplus_{i=1}^p (\mk{k}\cap\mk{g}_i)$. Recalling that $\mk{g}_i$ is the complexification of $\mk{u}_i$ and that $\mk{k}$ is the complexification of $\mk{k}_u$, we see that $\mk{k}_u = \oplus_{i=1}^p (\mk{k}_u \cap \mk{u}_i)$. Besides we showed that $(\mk{g}_i, \mk{k}_i)$ are $sl$-hypercomplex pairs, therefore $(\mk{u}_i, \mk{k}_u \cap \mk{u}_i)$ are  $su$-hypercomplex pairs. The theorem is proved.
\end{proof}
Obviously, Theorem \ref{schc1} shows that the semisimple hypercomplex pairs are exactly the direct sums of $su$-hypercomplex pairs.

\subsection{HC spaces}
\label{schc} Here  we shall give a list  of all the simply connected, compact,  hypercomplex, homogeneous spaces (briefly { \bf HC-spaces}).

In Theorem \ref{schc1}  we characterized the semisimple hypercomplex pairs.    Here we shall prove:
\begin{theorem}\label{prop for HC space as hypercomplex pair} Any HC space  is  associated with a semisimple hypercomplex pair.
\end{theorem}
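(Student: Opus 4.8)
The plan is to reduce an arbitrary HC space $M = {\bf U}/{\bf K}_u$ (compact, simply connected, homogeneous, hypercomplex, with ${\bf U}$ acting effectively) to the semisimple case already handled by Theorem \ref{main theorem} and Theorem \ref{schc1}. By Theorem \ref{main theorem} we already know $(\mk{u},\mk{k}_u)$ is a hypercomplex pair and $(M,I_M,J_M)$ is associated with it, so the only thing left is to show that $\mk{u}$ can be taken semisimple, i.e. that the abelian summand $\mk{c}_u$ of the center plays no role. First I would argue that since $M$ is simply connected, the isotropy representation forces the torus acting through $\mk{c}_u$ to be absorbed: more precisely, write $\mk{u} = \mk{c}_u \oplus \mk{u}_s$ and let ${\bf U}_s$ be the connected semisimple subgroup with Lie algebra $\mk{u}_s$, and let ${\bf T}$ be the (closed) torus obtained from the image of $\mk{c}_u$. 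The orbit ${\bf U}_s \cdot o$ is a compact homogeneous submanifold; one shows it is in fact all of $M$. The standard way is: $M$ simply connected and compact homogeneous means the isotropy group ${\bf K}_u$ meets every connected component of ${\bf U}$ and, more to the point, $\pi_1(M)=1$ forces the composite ${\bf T} \hookrightarrow {\bf U} \to {\bf U}/{\bf U}_s{\bf K}_u$ to be surjective with the fibration ${\bf U}_s/({\bf U}_s\cap{\bf K}_u) \to M \to {\bf T}'$ (a torus quotient) having simply connected base only when the base is a point. Hence $M = {\bf U}_s/({\bf U}_s\cap{\bf K}_u)$ and we may replace $({\bf U},{\bf K}_u)$ by $({\bf U}_s, {\bf U}_s\cap{\bf K}_u)$, keeping effectiveness.

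Next I would check that the induced pair $(\mk{u}_s, \mk{u}_s\cap\mk{k}_u)$ is again a hypercomplex pair, in fact a semisimple one. That $(\mk{u}_s,\mk{u}_s\cap\mk{k}_u)$ is a hypercomplex pair follows from applying Theorem \ref{main theorem} to the new presentation of $M$ (the LIHCS $(I_M,J_M)$ is unchanged, and ${\bf U}_s$ still acts effectively, transitively, holomorphically). To see it is \emph{semisimple} in the sense of Definition \ref{semishp}, one must verify that for each simple ideal $\mk{a}_i$ of $\mk{u}_s$, the intersection $\mk{a}_i\cap(\mk{u}_s\cap\mk{k}_u)$ is a \emph{proper} subalgebra of $\mk{a}_i$. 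If it were not proper for some $i$, then $\mk{a}_i \subset \mk{k}_u$, and then the ${\bf U}_s$-action would not be effective on $M$ (the normal subgroup generated by $\mk{a}_i$ would lie in the isotropy, hence act trivially), contradicting our reduction. This gives exactly the conditions of Definition \ref{semishp}, so $(\mk{u}_s, \mk{u}_s\cap\mk{k}_u)$ is a semisimple hypercomplex pair, and by construction $(M,I_M,J_M)$ is associated with it.

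The main obstacle, and the step I would spend the most care on, is the first reduction: justifying rigorously that the center contributes nothing when $M$ is simply connected. The subtlety is that $\mk{c}_u \cap \mk{k}_u$ may be nonzero and one has to track how the torus part of ${\bf U}$ interacts with ${\bf K}_u$. The clean argument uses the fibration associated to ${\bf U}_s{\bf K}_u \supset {\bf K}_u$: the space ${\bf U}/{\bf U}_s{\bf K}_u$ is a quotient of the torus ${\bf U}/{\bf U}_s$ (a compact abelian group), hence itself a torus; the fiber bundle $ {\bf U}_s{\bf K}_u/{\bf K}_u \to M \to {\bf U}/{\bf U}_s{\bf K}_u$ has $M$ simply connected, and the long exact sequence of homotopy groups together with $\pi_1$ of a torus being free abelian forces the base to be trivial, i.e. ${\bf U} = {\bf U}_s{\bf K}_u$, so $M = {\bf U}_s/({\bf U}_s\cap{\bf K}_u)$. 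I would also remark that one should pass to ${\bf U}_s$ as the universal cover image if needed and invoke that a compact connected semisimple Lie group has finite center, so ${\bf U}_s\cap{\bf K}_u$ is closed and everything stays in the compact category; these are routine but should be stated. Combining this reduction with Theorem \ref{schc1} then identifies $(\mk{u}_s,\mk{u}_s\cap\mk{k}_u)$ as a direct sum of $su$-hypercomplex pairs, completing the proof.
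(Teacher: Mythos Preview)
Your reduction from a compact transitive group to its semisimple part is correct and is essentially a by-hand version of the result from \cite{Wang54} that the paper cites: the fibration ${\bf U}_s{\bf K}_u/{\bf K}_u \to M \to {\bf U}/{\bf U}_s{\bf K}_u$ has torus base, and $\pi_1(M)=0$ forces the base to collapse. Your use of effectiveness to get properness of $\mk{a}_i\cap\mk{k}_u$ is also fine and in fact makes Lemma~\ref{remark we may think mk{u}_i is proper sub of mk{k}_0} redundant (the paper proves that lemma without invoking effectiveness, but effectiveness is available and your shortcut is legitimate).

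The genuine gap is the starting point. You write ``an arbitrary HC space $M={\bf U}/{\bf K}_u$ \dots\ with ${\bf U}$ acting effectively'' and then immediately invoke Theorem~\ref{main theorem}. But an HC space is, by the paper's definition, just a simply connected compact homogeneous hypercomplex manifold: it is \emph{not} given to you as a quotient of a compact group acting by hypercomplex automorphisms. Producing such a group is nontrivial and is precisely what the paper spends the first half of its proof on: one takes the full biholomorphism group ${\bf G}_I$ of $(M,I_M)$, uses Bochner--Montgomery to make it a Lie group, passes to the closed subgroup ${\bf G}$ preserving $J_M$ as well, and then applies Montgomery's theorem \cite{Montgomery} (a compact simply connected homogeneous space admits a \emph{compact} transitive group) to extract a compact ${\bf U}_0\subset{\bf G}$. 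Only then is one in the setting of Theorem~\ref{main theorem}. Without this step you have no compact ${\bf U}$ to decompose as $\mk{c}_u\oplus\mk{u}_s$, and Theorem~\ref{main theorem} cannot be invoked. Once you supply this missing step, your argument and the paper's coincide up to the cosmetic difference of reproving Wang's transitivity result versus citing it.
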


So, we are given a  HC-space $(M, I_M, J_M)$.
Let ${\bf G}_I$ be the group of
all biholomorphisms of the complex manifold $(M, I_M)$.
 Bochner and Montgomery  \cite{MontgomeryBoch1} have shown that ${\bf G}_I$ has a
real analytic structure/ such that ${\bf G}_I$ is a Lie group and
the action
$$ {\bf G}_I \times M \rightarrow M
\qquad (f, m) \in {\bf G}_I \times M \mapsto f(m) \in M,
$$
 is smooth.

 Let us denote $ {\bf G}= \{ f \in {\bf G}_I\vert \rd f \circ J_M = J_M
\circ \rd f\}$.  Obviously ${\bf G}$ is a subgroup of ${\bf
G}_I$ and it is exactly the group of all hypercomplex
diffeomorphisms of $M$. One can easily show that  ${\bf G}$  is a closed subgroup of ${\bf
G}_I$ and therefore it is a Lie subgroup of ${\bf G}_I $.
Thus, the mapping
$$
{\bf G} \times M \rightarrow M \ \qquad (f,m) \mapsto f(m)
$$
is a smooth transitive action of ${\bf G}$ on M.

 By Montgomery (see \cite{Montgomery}) we know that ${\bf G}$ contains a compact subgroup, say ${\bf U}_0$,
which acts transitively on $M$.
Now from \cite[p. 4]{Wang54} we conclude that the maximal semisimple subgroup of ${\bf U}_0$, which we
shall denote by ${\bf U}$, acts transitively on $M$. Thus far, we obtained a semisimple, compact, connected Lie group
${\bf U}$, which acts transitively by hypercomplex diffeomorphisms on $M$. It acts effectively also, since ${\bf U} \subset {\bf G}$.
 Hence we obtain that $M$ is a coset of the type \be \label{M as coset} {\bf U}/ {\bf K}_u  \simeq M.
 \ee  Since $M$ is simply connected then ${\bf K}_u$ is
a connected subgroup.
We denote the Lie algebras of ${\bf U}$ and ${\bf K}_u$ by $\mk{u}$ and $\mk{k}_u$, respectively.
\begin{lemma} \label{remark we may think mk{u}_i is proper sub of mk{k}_0} Let $\mk{u} = \mk{u}_1 \oplus \mk{u}_2 \oplus \dots \oplus \mk{u}_p$ be the decomposition of the compact Lie algebra $\mk{u}$ into simple ideals. Then for any $i=1, \dots , k$ we may assume that $\mk{k}_u \cap \mk{u}_i $ is a proper subalgebra of $\mk{u}_i$.
\end{lemma}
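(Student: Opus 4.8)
The plan is to derive the statement directly from the effectiveness of the action of ${\bf U}$ on $M$, which was just recorded above (${\bf U}\subset{\bf G}$, the group of hypercomplex diffeomorphisms of $M$). In fact I will show that $\mk{k}_u\cap\mk{u}_i$ is \emph{always} a proper subalgebra of $\mk{u}_i$, so no genuine ``assumption'' is really needed.

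The key steps are as follows. Suppose, for contradiction, that $\mk{u}_i\subseteq\mk{k}_u$ for some $i$, and let ${\bf U}_i\subseteq{\bf U}$ be the connected subgroup with Lie algebra $\mk{u}_i$. First, since $\mk{u}_i$ is an ideal of $\mk{u}$ and ${\bf U}$ is connected, $\mathrm{Ad}(u)\mk{u}_i=\mk{u}_i$ for every $u\in{\bf U}$, so ${\bf U}_i$ is a normal subgroup of ${\bf U}$. Second, since ${\bf K}_u$ is connected with Lie algebra $\mk{k}_u$, we have $\exp(\mk{k}_u)\subseteq{\bf K}_u$, hence $\exp(\mk{u}_i)\subseteq{\bf K}_u$ and therefore ${\bf U}_i\subseteq{\bf K}_u$. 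Third, a connected normal subgroup of ${\bf U}$ lying in ${\bf K}_u$ acts trivially on $M={\bf U}/{\bf K}_u$: for $g\in{\bf U}_i$ and $h\in{\bf U}$ one has $g\cdot(h{\bf K}_u)=(gh){\bf K}_u=h\,(h^{-1}gh)\,{\bf K}_u=h{\bf K}_u$, because $h^{-1}gh\in{\bf U}_i\subseteq{\bf K}_u$. So ${\bf U}_i$ is contained in the kernel of the ${\bf U}$-action on $M$; by effectiveness ${\bf U}_i=\{e\}$, forcing $\mk{u}_i=\{0\}$ and contradicting the simplicity of $\mk{u}_i$. Hence $\mk{u}_i\not\subseteq\mk{k}_u$, i.e. $\mk{k}_u\cap\mk{u}_i\subsetneq\mk{u}_i$, for every $i$; in particular the stated ``assumption'' holds with no loss of generality (and one even has $\mk{k}_u=\bigoplus_i(\mk{k}_u\cap\mk{u}_i)$ is a proper sub of $\mk{u}$ in each component).

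The argument is entirely elementary; the only places requiring a little care are the two passages from Lie-algebra facts to group-level facts — from the inclusion $\mk{u}_i\subseteq\mk{k}_u$ to ${\bf U}_i\subseteq{\bf K}_u$ (using connectedness of ${\bf K}_u$), and from $\mk{u}_i$ being an ideal of $\mk{u}$ to ${\bf U}_i$ being normal in ${\bf U}$ (using connectedness of ${\bf U}$). I do not expect any real obstacle here.
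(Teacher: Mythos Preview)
Your argument is correct and is in fact cleaner than the paper's. The paper does not invoke effectiveness at this point; instead it argues constructively: if $\mk{u}_i\subset\mk{k}_u$, let ${\bf U}'$ be the connected (compact, semisimple) subgroup with Lie algebra $\mk{u}_2\oplus\cdots\oplus\mk{u}_p$, and check by a direct dimension count that the natural map ${\bf U}'/({\bf K}_u\cap{\bf U}')\to{\bf U}/{\bf K}_u$ is a diffeomorphism, so ${\bf U}'$ still acts transitively and one may replace ${\bf U}$ by ${\bf U}'$ and iterate. Your route exploits the already established effectiveness of the ${\bf U}$-action to show the bad case simply cannot occur, turning the ``we may assume'' into an outright conclusion; this is shorter and avoids the explicit reduction. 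The paper's approach, on the other hand, would survive in a setting where effectiveness has not yet been arranged.

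One small remark: your parenthetical claim at the end, that $\mk{k}_u=\bigoplus_i(\mk{k}_u\cap\mk{u}_i)$, is not a consequence of anything you proved here (it requires more structure and is in fact part of the later Theorem~\ref{schc1}); I would drop it. Also, the inclusion ${\bf U}_i\subset{\bf K}_u$ follows already from $\exp(\mk{u}_i)\subset\exp(\mk{k}_u)\subset{\bf K}_u$ for any Lie subgroup ${\bf K}_u$, so connectedness of ${\bf K}_u$ is not actually needed at that step.
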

\bpr If for some $i$ we have $\mk{k}_u \supset \mk{u}_i$, for
simplicity let us take $i=1$, then let us denote by ${\bf U}'$ the
connected subgroup of ${\bf U}$  generated by $ \mk{u}_2 \oplus
\dots \oplus \mk{u}_p$. Since $ \mk{u}_2 \oplus  \dots \oplus
\mk{u}_p$ is a compact semisimple Lie algebra, then ${\bf U}'$ is
a compact subgroup of ${\bf U}$. By $in:{\bf U}' \rightarrow {\bf
U}$ we shall denote the corresponding embedding. Now we consider
the coset space ${\bf U}' /({\bf K}_u \cap {\bf U}')$ and the map
\be \varphi: {\bf U}' /({\bf K}_u \cap {\bf U}') \rightarrow  {\bf
U} /{\bf K}_u \qquad u ({\bf K}_u \cap {\bf U}') \mapsto u {\bf
K}_u.   \ee  Let $\pi': {\bf U}' \rightarrow {\bf U}' /({\bf K}_u
\cap {\bf U}') $ and $\pi: {\bf U} \rightarrow {\bf U} /{\bf K}_u
$ be the corresponding projections. Obviously $\varphi \circ \pi'
= \pi \circ in$ and $\varphi $ is smooth  injection. Furthermore,
$\varphi$ is regular. Indeed, let $X \in  \mk{u}_2 \oplus  \dots
\oplus \mk{u}_p$ be such that $\rd \varphi (\rd \pi'(X)) = 0$,
hence $\rd \pi( \rd in (X) ) = 0$, which implies $X \in \mk{k}_u
\cap (\mk{u}_2 \oplus  \dots \oplus \mk{u}_p)$. Since the Lie
algebra of ${\bf K}_u \cap {\bf U}'$ is $\mk{k}_u \cap (\mk{u}_2
\oplus  \dots \oplus \mk{u}_p)$, then  $\rd \pi'(X) = 0$ and the
regularity of $\varphi$ is proved. On the other hand, since
$\mk{u}_1 \subset \mk{k}_u$, then $\mk{k}_u = \mk{u}_1 \oplus
\mk{k}_u \cap (\mk{u}_2 \oplus  \dots \oplus \mk{u}_p)$, i. e. $
dim({\bf K}_u \cap {\bf U}')= dim(\mk{k}_u) -  dim(\mk{u}_1)$.
Therefore $ dim({\bf U}) -  dim({\bf K}_u)=$ $ dim(\mk{u}) -
dim(\mk{k}_u) =  dim(\mk{u}) -  dim(\mk{u}_1) -  ( dim(\mk{k}_u) -
dim(\mk{u}_1))$ $= dim({\bf U}') -  dim({\bf K}_u \cap {\bf U}')$.
Thus we see that $ dim({\bf U}' /({\bf K}_u \cap {\bf U}')) =
dim({\bf U} /{\bf K}_u)$, hence $\varphi$ is an open map,
therefore it is surjective and therefore  ${\bf U}'$ acts
transitively on $M$ as well as ${\bf U}$. Therefore we may
substitute ${\bf U}'$ for ${\bf U}$. Now the validity of this
lemma is obvious.
 \epr
By the main Theorem \ref{main theorem}  $(\mk{u},\mk{k}_u )$ is a hypercomplex pair (see Definition \ref{shp}) and $M={\bf U}/{\bf K}_u$ is associated with it (see Definition \ref{hypercomplex space associated with a hyp}).
 Furthermore, Lemma \ref{remark we may think mk{u}_i is proper sub of mk{k}_0} shows that $(\mk{u},\mk{k}_u)$ is a semisimple hypercomplex pair and Theorem \ref{prop for HC space as hypercomplex pair} is proved.

Now we can prove:

\begin{coro}  \label{schc3} The HC spaces are exactly the coset spaces ${\bf U}/{\bf K}_u$,  with the LIHCS, given in the following list:
 \begin{gather} SU(n_1+1)/SU(n_1+3-2 k_1)  \times SU(n_2+1)/SU(n_2+3-2 k_2) \times  \dots \nonumber \\ \dots \times SU(n_l+1)/SU(n_l+3-2 k_l) \times SU(m_1+1) \times SU(m_2+1) \times \dots \times SU(m_p+1) \nonumber \end{gather}
where:
\begin{itemize}
    \item $l+p\geq 1, l,p \in \NN $;
    \item $ \forall i \in \{1,\dots,l\} \qquad   n_i \geq 2 , \ k_i\geq 2, \  n_i+3-2 k_i > 0 ;$
    \item $\forall i \in \{1,\dots,p\} \qquad   m_i \in 2 \NN, m_i \geq 2; $
    \item for $i \in \{1,\dots,l\}$    $ SU(n_i+3-2 k_1)$ is embedded in $SU(n_i+1)$ as shown in   \eqref{remark for embedding in mk{su} matrices}.
\end{itemize}
\end{coro}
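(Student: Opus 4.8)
The statement is an ``exactly'', so there are two inclusions, and both amount to transcribing the algebraic classification already established into the language of groups. For the inclusion \emph{every HC space lies in the list}, let $(M,I_M,J_M)$ be an HC space. Theorem \ref{prop for HC space as hypercomplex pair} says it is associated with a semisimple hypercomplex pair $(\mk{u},\mk{k}_u)$, and Theorem \ref{schc1} decomposes this pair as $\mk{u}=\mk{u}_1\oplus\dots\oplus\mk{u}_p$ with each $(\mk{u}_i,\mk{u}_i\cap\mk{k}_u)$ an $su$-hypercomplex pair and $\mk{k}_u=\bigoplus_i(\mk{u}_i\cap\mk{k}_u)$. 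By Definition \ref{shp1} this forces $\mk{u}_i\cong su(n_i+1)$, with $\mk{u}_i\cap\mk{k}_u$ equal either to $\{0\}$ (possible only for even $n_i$) or to a block-embedded copy of $su(n_i+3-2k_i)$ as in \eqref{remark for embedding in mk{su} matrices}, where $k_i\ge 2$, $n_i\ge 2$, $n_i+3-2k_i>0$. From the proof of Theorem \ref{prop for HC space as hypercomplex pair} one has $M\simeq{\bf U}/{\bf K}_u$ for a compact, connected, semisimple ${\bf U}$ with Lie algebra $\mk{u}$ and connected ${\bf K}_u$; the plan is to pass to the universal cover $p\colon\widetilde{\bf U}=\prod_i SU(n_i+1)\to{\bf U}$ and set $\widetilde{\bf K}_u=p^{-1}({\bf K}_u)$, so that $\widetilde{\bf U}/\widetilde{\bf K}_u\cong M$.

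Next I would check that $\widetilde{\bf K}_u$ is connected: the homotopy exact sequence of $\widetilde{\bf K}_u\to\widetilde{\bf U}\to M$ with $\pi_1(\widetilde{\bf U})=\pi_1(M)=0$ gives $\pi_0(\widetilde{\bf K}_u)=0$. Being connected with Lie algebra $\bigoplus_i(\mk{u}_i\cap\mk{k}_u)$, it must be the product over $i$ of the connected subgroups of $SU(n_i+1)$ carrying the block subalgebras; since the exponential map of the compact group $SU(m)$ is surjective, these subgroups are genuine closed copies of $SU(n_i+3-2k_i)$ embedded as in the statement (and collapse to $\{e\}$ exactly when $\mk{u}_i\cap\mk{k}_u=\{0\}$). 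Collecting the factors with nontrivial quotient into the first block of the list and the remaining pure factors $SU(m_j+1)$ into the second, we see that $M$ is one of the listed coset spaces. Finally $(\widetilde{\bf U},\widetilde{\bf K}_u)$ is a pair associated with $(\mk{u},\mk{k}_u)$ in the sense of Definition \ref{shp}, so the construction of Section \ref{hcp0} puts on $\widetilde{\bf U}/\widetilde{\bf K}_u$ a LIHCS associated with $(\mk{u},\mk{k}_u)$; since $(I_M,J_M)$ is associated with the same pair and a left-invariant structure is determined by its value at the base point, $(I_M,J_M)$ is exactly that LIHCS.

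For the reverse inclusion I would take the data of the list, form $(\mk{u},\mk{k}_u)=\bigoplus_i(su(n_i+1),su(n_i+3-2k_i))$ (with trivial $\mk{k}_u$-summands allowed for even $n_i$), and first observe that it is a hypercomplex pair: each summand is an $su$-hypercomplex pair, hence a hypercomplex pair (Definition \ref{shp1}, Proposition \ref{prop mk{g}=sl(n+1,CC)}), and a finite direct sum of hypercomplex pairs is again one — the dimension defect stays a positive multiple of four, the stem of the sum is the disjoint union of the stems, $\mk{k}$ is $\Gamma_{\mk{k}}$-stemmed with $\Gamma_{\mk{k}}$ the union of the substems, and the inequality $rank(\mk{g})+srank(\mk{k})\ge rank(\mk{k})+srank(\mk{g})$ adds over the factors. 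With $\widetilde{\bf U}=\prod_i SU(n_i+1)$ and the connected subgroup $\widetilde{\bf K}_u=\prod_i SU(n_i+3-2k_i)$ (block embeddings, trivial factors dropped), Lemma \ref{lijm0} and the paragraph following it endow $\widetilde{\bf U}/\widetilde{\bf K}_u$ with a LIHCS; the space is compact and homogeneous, and simply connected because each $SU(n_i+1)/SU(n_i+3-2k_i)$ is (homotopy sequence, both special unitary groups being simply connected) and each pure factor $SU(m_j+1)$ is. Hence every space in the list is an HC space.

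I expect the only genuine work to be the group-theoretic translation in the first inclusion — establishing connectedness of $\widetilde{\bf K}_u$, recognising the isotropy subgroups as $SU$-factors rather than covers or quotients of them, and checking that the LIHCS transported from $M$ coincides with the one manufactured in Section \ref{hcp0} — together with the bookkeeping of the edge cases; in particular one must note that a pure factor $su(m_j+1)$ can occur only for even $m_j$, since $\dim su(m_j+1)=m_j(m_j+2)$ is divisible by four exactly then, which is precisely what keeps the list from over-counting.
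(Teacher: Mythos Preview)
Your proposal is correct and follows essentially the same strategy as the paper: invoke Theorem \ref{prop for HC space as hypercomplex pair} and Theorem \ref{schc1} to reduce to $su$-hypercomplex pairs, then pass to simply connected groups to identify $M$ with a product in the list. The paper phrases the group-theoretic step dually --- it builds the model space ${\bf U}/{\bf K}_u$ with ${\bf U},{\bf K}_u$ simply connected and extends the identity on Lie algebras to a covering map onto $M$, which is then a diffeomorphism since both sides are simply connected --- whereas you pull the isotropy back to the universal cover and use the homotopy exact sequence to see connectedness; these are equivalent manoeuvres. Your treatment is in fact more explicit than the paper's on the reverse inclusion (the paper dispatches simple connectivity of $SU(n)/SU(m)$ in a footnote and does not spell out that direct sums of $su$-pairs remain hypercomplex pairs), and your closing remark on why the pure factors $SU(m_j+1)$ must have $m_j$ even is a useful sanity check that the paper leaves implicit in Definition \ref{shp1}.
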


\begin{proof}  From Theorem \ref{prop for HC space as hypercomplex pair} we see that any $HC$-space $(M, I_M, J_M)$  is of the type $M=\wt{{\bf U}}/ \wt{\bf K}_u$,
where the Lie algebras $\mk{u}$ and $\mk{k}$ of $\wt{\bf U}$ and $\wt{\bf K}_u$ are such that \textbf{$(\mk{u},\mk{k})$ is a semi-simple hypercomplex pair} and $(I_M, J_M)$ is a LIHCS  on $\wt{{\bf U}}/ \wt{\bf K}_u$, determined by hypercomplex structure $(I,J)$ on $\mk{p}_u$
 (see Definition \ref{hypercomplex space associated with a hyp}). Furthermore, the hypercomplex structure $(I,J)$ on $\mk{p}_u$ is obtained by the method in subsection \ref{chcs}.

 From Theorem \ref{schc1} and the definition of $su$-hypercomplex pair \ref{shp1}, we see that there is a coset  ${\bf U}/{\bf K}_u$ in our list, s. t. the  Lie algebras of ${\bf U}$,  ${\bf K}_u$ are $\mk{u} $, $\mk{k}$, respectively. The space ${\bf U}/{\bf K}_u$ together with the LIHCS on it obtained by left translation of  $(I,J)$ on $\mk{p}_u$  throughout ${\bf U}/{\bf K}_u$   give another HC space $({\bf U}/{\bf K}_u,I_{ U/ K}, J_{ U/ K})$ (see the end of subsection \ref{chcs})\footnote{Any coset of the type $SU(n)/SU(m)$ with $m<n$ is simply connected.}.

   Since ${\bf U}$,  ${\bf K}_u$ are simply connected, then one can easily show that the identity map $Id: (\mk{u}, \mk{k}) \rightarrow (\mk{u}, \mk{k})$ extends to a hypercomplex smooth map  $({\bf U}/{\bf K}_u,I_{ U/ K}, J_{ U/ K}) \rightarrow  (\wt{{\bf U}}/ \wt{\bf K}_u,I_M,J_M) $, furthermore this is a covering map. Since ${\bf U}/{\bf K}_u$,  $  \wt{{\bf U}}/ \wt{\bf K}_u$ are both simply-connected, then this map is diffeomorphism.
\end{proof}

\end{document}